\def\input@path{{"C:/Users/Ori Segel/Dropbox/Math/Thesis, maybe/"}}
\def\RSthmtxt{theorem~}\newref{thm}{name = \RSthmtxt}}
\def\RSlemtxt{lemma~}\newref{lem}{name = \RSlemtxt}}
\numberwithin{equation}{section}
\numberwithin{figure}{section}
\theoremstyle{plain}
\newtheorem*{cor*}{\protect\corollaryname}
\theoremstyle{plain}
\newtheorem{thm}{\protect\theoremname}[section]
\theoremstyle{definition}
\newtheorem{defn}[thm]{\protect\definitionname}
\theoremstyle{remark}
\newtheorem{rem}[thm]{\protect\remarkname}
\theoremstyle{plain}
\newtheorem{fact}[thm]{\protect\factname}
\theoremstyle{plain}
\newtheorem{prop}[thm]{\protect\propositionname}
\theoremstyle{remark}
\newtheorem{claim}[thm]{\protect\claimname}
\theoremstyle{definition}
\newtheorem{example}[thm]{\protect\examplename}
\theoremstyle{plain}
\newtheorem{cor}[thm]{\protect\corollaryname}
\theoremstyle{plain}
\newtheorem{lem}[thm]{\protect\lemmaname}
\theoremstyle{remark}
\newtheorem*{rem*}{\protect\remarkname}
\providecommand{\claimname}{Claim}
\providecommand{\corollaryname}{Corollary}
\providecommand{\definitionname}{Definition}
\providecommand{\examplename}{Example}
\providecommand{\factname}{Fact}
\providecommand{\lemmaname}{Lemma}
\providecommand{\propositionname}{Proposition}
\providecommand{\remarkname}{Remark}
\providecommand{\theoremname}{Theorem}
\begin{document}
\global\long\def\p{\mathbf{p}}%
\global\long\def\q{\mathbf{q}}%
\global\long\def\C{\mathfrak{C}}%
\global\long\def\SS{\mathcal{P}}%
 
\global\long\def\pr{\operatorname{pr}}%
\global\long\def\image{\operatorname{im}}%
\global\long\def\otp{\operatorname{otp}}%
\global\long\def\dec{\operatorname{dec}}%
\global\long\def\suc{\operatorname{suc}}%
\global\long\def\pre{\operatorname{pre}}%
\global\long\def\qe{\operatorname{qf}}%
\global\long\def\pp{\operatorname{pp}}%
\global\long\def\hu{\operatorname{hu}}%
\global\long\def\pu{\operatorname{pu}}%
\global\long\def\pc{\operatorname{pc}}%
\global\long\def\cu{\operatorname{cu}}%
 
\global\long\def\ind{\operatorname{ind}}%
\global\long\def\Nind{\operatorname{Nind}}%
\global\long\def\lev{\operatorname{lev}}%
\global\long\def\Suc{\operatorname{Suc}}%
\global\long\def\HNind{\operatorname{HNind}}%
\global\long\def\minb{{\lim}}%
\global\long\def\concat{\frown}%
\global\long\def\cl{\operatorname{cl}}%
\global\long\def\tp{\operatorname{tp}}%
\global\long\def\id{\operatorname{id}}%
\global\long\def\cons{\left(\star\right)}%
\global\long\def\qf{\operatorname{qf}}%
\global\long\def\ai{\operatorname{ai}}%
\global\long\def\dtp{\operatorname{dtp}}%
\global\long\def\acl{\operatorname{acl}}%
\global\long\def\nb{\operatorname{nb}}%
\global\long\def\limb{{\lim}}%
\global\long\def\leftexp#1#2{{\vphantom{#2}}^{#1}{#2}}%
\global\long\def\intr{\operatorname{interval}}%
\global\long\def\atom{\emph{at}}%
\global\long\def\I{\mathfrak{I}}%
\global\long\def\uf{\operatorname{uf}}%
\global\long\def\ded{\operatorname{ded}}%
\global\long\def\Core{\operatorname{Core}}%
\global\long\def\Ded{\operatorname{Ded}}%
\global\long\def\Df{\operatorname{Df}}%
\global\long\def\Th{\operatorname{Th}}%
\global\long\def\Mod{\operatorname{Mod}}%
\global\long\def\eq{\operatorname{eq}}%
\global\long\def\Aut{\operatorname{Aut}}%
\global\long\def\End{\operatorname{End}}%
\global\long\def\ac{ac}%
\global\long\def\DfOne{\operatorname{df}_{\operatorname{iso}}}%
\global\long\def\modp#1{\pmod#1}%
\global\long\def\sequence#1#2{\left\langle #1\left|\,#2\right.\right\rangle }%
\global\long\def\set#1#2{\left\{  #1\left|\,#2\right.\right\}  }%
\global\long\def\Diag{\operatorname{Diag}}%
\global\long\def\Nn{\mathbb{N}}%
\global\long\def\mathrela#1{\mathrel{#1}}%
\global\long\def\twiddle{\mathord{\sim}}%
\global\long\def\mathordi#1{\mathord{#1}}%
\global\long\def\Qq{\mathbb{Q}}%
\global\long\def\dense{\operatorname{dense}}%
 
\global\long\def\cof{\operatorname{cof}}%
\global\long\def\tr{\operatorname{tr}}%
\global\long\def\treeexp#1#2{#1^{\left\langle #2\right\rangle _{\tr}}}%
\global\long\def\x{\times}%
\global\long\def\forces{\Vdash}%
\global\long\def\Vv{\mathbb{V}}%
\global\long\def\Uu{\mathbb{U}}%
\global\long\def\tauname{\dot{\tau}}%
\global\long\def\ScottPsi{\Psi}%
\global\long\def\cont{2^{\aleph_{0}}}%
\global\long\def\MA#1{{MA}_{#1}}%
\global\long\def\rank#1#2{R_{#1}\left(#2\right)}%
\global\long\def\cal#1{\mathcal{#1}}%

\def\Ind#1#2{#1\setbox0=\hbox{$#1x$}\kern\wd0\hbox to 0pt{\hss$#1\mid$\hss} \lower.9\ht0\hbox to 0pt{\hss$#1\smile$\hss}\kern\wd0} 
\def\Notind#1#2{#1\setbox0=\hbox{$#1x$}\kern\wd0\hbox to 0pt{\mathchardef \nn="3236\hss$#1\nn$\kern1.4\wd0\hss}\hbox to 0pt{\hss$#1\mid$\hss}\lower.9\ht0 \hbox to 0pt{\hss$#1\smile$\hss}\kern\wd0} 
\def\nind{\mathop{\mathpalette\Notind{}}} 

\global\long\def\ind{\mathop{\mathpalette\Ind{}}}%
 
\global\long\def\nind{\mathop{\mathpalette\Notind{}}}%
\global\long\def\average#1#2#3{Av_{#3}\left(#1/#2\right)}%
\global\long\def\Ff{\mathfrak{F}}%
\global\long\def\mx#1{Mx_{#1}}%
\global\long\def\maps{\mathfrak{L}}%

\global\long\def\Esat{E_{\mbox{sat}}}%
\global\long\def\Ebnf{E_{\mbox{rep}}}%
\global\long\def\Ecom{E_{\mbox{com}}}%
\global\long\def\BtypesA{S_{\Bb}^{x}\left(A\right)}%
\global\long\def\supp{\operatorname{supp}}%

\global\long\def\init{\trianglelefteq}%
\global\long\def\fini{\trianglerighteq}%
\global\long\def\Bb{\cal B}%
\global\long\def\Rr{\mathbb{R}}%
\global\long\def\ord{\mathbf{ord}}%

\title{Positive Definability Patterns}
\author{{\Large{}Ori Segel}}
\thanks{The author would like to thank the Israel Science foundation for partial
support of this research (Grant no. 1254/18). }
\address{Ori Segel \\
The Hebrew University of Jerusalem\\
Einstein Institute of Mathematics \\
Edmond J. Safra Campus, Givat Ram\\
Jerusalem 91904, Israel}
\email{ori.segel@mail.huji.ac.il}
\begin{abstract}
We reformulate Hrushovski's definability patterns from the setting
of first order logic to the setting of positive logic. Given an h-universal
theory $T$ we put two structures on the type spaces of models of
$T$ in two languages, $\mathcal{L}$ and $\mathcal{L}_{\pi}$. It
turns out that for sufficiently saturated models, the corresponding
h-universal theories $\mathcal{T}$ and $\mathcal{T}_{\pi}$ are independent
of the model. We show that there is a canonical model $\mathcal{J}$
of $\mathcal{T}$, and in many interesting cases there is an analogous
canonical model $\mathcal{J}_{\pi}$ of $\mathcal{T}_{\pi}$, both
of which embed into every type space. We discuss the properties of
these canonical models, called cores, and give some concrete examples.
\end{abstract}

\maketitle

\section{Introduction}

In \cite{hrushovski2020definability}, Hrushovski endows the type
spaces of a (universal) first order theory $T$ in a language $L$
with a relational structure (in a new language $\mathcal{L}$). This
structure is meant to capture what he calls ``Definability Patterns'',
which are a generalization of definability. For instance, in addition
to expressing that a type $p$ is definable --- that is we have a
formula $\alpha$ such that $\alpha\left(M\right)=\left\{ a\in M^{y}\mid\varphi\left(x,a\right)\in p\right\} $
--- the relations of $\mathcal{L}$ can also express the situations
where we only have $\alpha\left(M\right)\subseteq\left\{ a\in M^{y}\mid\varphi\left(x,a\right)\in p\right\} $
rather than equality.

Once these $\mathcal{L}$-structures are defined, \cite{hrushovski2020definability}
looks at them in the context of \emph{positive} logic (see \secref{preliminaries}
for an overview of positive logic), and deduces three remarkable facts:
\begin{enumerate}
\item \textbf{Common Theory: }All the type spaces share, as positive structures,
a common h-universal theory $\mathcal{T}$ (see \defref{basic-def}).
\item \textbf{Universality: }Every model of $\mathcal{T}$ admits a homomorphism
into every type space $S\left(M\right)$ for a model $M$ of $T$
\footnote{In particular positively closed models embed into $S\left(M\right).$}.
This implies that $\mathcal{T}$ has a canonical compact positively
closed universal model (see \propref{univ-ec}) with a compact automorphism
group. This model is called the core of $T$, and denoted $\Core\left(T\right)$
or $\mathcal{\mathcal{J}}$.
\item \textbf{Robinson: }Each $S\left(M\right)$ admits a weak form of quantifier
elimination called \emph{strongly Robinson} (see \defref{qe}), which
provides a relatively simple description of $\Core\left(T\right)$.
\end{enumerate}

Later in \cite{hrushovski2020definability}, Hrushovski shows that
the properties of $\mathcal{J}$ reflect those of the original theory
$T$ in several ways, and proves some remarkable results based on
this construction.

There is an obvious asymmetry in \cite{hrushovski2020definability};
while the construction of $\mathcal{T}$ and $\Core\left(T\right)$
happens inherently in the context of positive logic, the original
theory $T$ we start with is just a universal first order theory.
One might naturally ask what happens if we try to repeat the construction
when $T$ itself operates in the context of positive logic. This is
the question this text answers. 

We present multiple ways to reformulate Hrushovski's definability
patterns construction in the context of positive logic. In all cases
we start with an h-universal theory $T$, and consider type spaces
over (positively closed) models of $T$. In the appendix we present
the well-known technique of positive Morleyzation, which allows us
to apply these constructions to classical first order and continuous
logic.

In the main section of the paper, \secref{Maximal-Positive-Patterns},
we present the definability patterns construction for spaces of \emph{maximal}
positive types, which we denote by  $S\left(M\right)$. We present
two versions of the construction, using two different languages. The
first language $\mathcal{L}$ only contains the definability pattern
relations, and we show that facts (1)-(3) above always hold for $\mathcal{L}$
(see \corref{shared-theory}, \thmref{hom-to-type-space}, and \lemref{pp-equiv-at}
respectively). The second language $\mathcal{L}_{\pi}$ expands $\mathcal{L}$
to also includes functions for the restriction of a type to a smaller
tuple of variables. This may seem more natural since we expect homomorphisms
of type spaces to respect these restrictions, and indeed $\mathcal{L_{\pi}}$-homomorphisms
are more related to the original theory $T$ --- specifically, they
correspond to certain global types in a saturated model of $T$ (see,
\subsecref{Lpi-Homomorphisms-in-T}). If $T$ is Hausdorff (see \defref{Hausdorff})
then $\mathcal{L}_{\pi}$ adds no expressive power over $\mathcal{L}$
and the cores for $\mathcal{L}$ and $\mathcal{L}_{\pi}$ coincide
(see \thmref{J=00003DJpi-Hausdorff}) --- in particular, this is
the case for Morley-ized first-order and continuous logic theories. 

In general, though, the restriction maps are not well behaved and
the facts (1)-(3) above need not hold for $\mathcal{L}_{\pi}$. Even
if $T$ is not Hausdorff, the weaker condition of being thick (see
\defref{indis-and-thick}) is enough for fact (2) to hold for $\mathcal{L}_{\pi}$\footnote{We do not know if being thick is a necessary condition for fact (2)
to hold for $\mathcal{L}_{\pi}$.}, and thus for the core --- which we denote by  $\Core_{\pi}\left(T\right)$
--- to be well defined. Since every bounded theory is thick, $\Core\left(T\right)$
is well-defined whenever $T$ is bounded. Furthermore, if $T$ is
bounded (see \defref{bounded}) and $U$ is a compact positively closed
universal model for $T$ (see \propref{univ-ec}), then there is a
bijection between $\Core\left(T\right)$ and $U$ that preserves the
automorphism group (see \thmref{repeated-core}). In particular, this
implies that $\Core_{\pi}\left(\Th^{\hu}\left(\mathcal{J}\right)\right)$
is well defined when $\mathcal{J}$ is itself the core of some other
$\hu$ theory\footnote{Or indeed a universal first order theory as in \cite{hrushovski2020definability}.},
and futhermore there is a bijection between $\Core_{\pi}\left(\Th^{\hu}\left(\mathcal{J}\right)\right)$
and $\mathcal{J}$ preversing the automorphism group.

In \subsecref{Examples} we provide a few of examples of $\Core\left(T\right)$---
specifically, we provide an example (\exaref{double-interval-continued})
that demonstrates that $\mathcal{L}$ and $\mathcal{L}_{\pi}$ are
not in general equivalent, and that even in thick theories facts (1)
and (3) above may not hold for $\mathcal{L}_{\pi}$.

Finally, for completeness, in \secref{Partial-Positive-Patterns}
we apply the definability patterns construction to the spaces of all
\textbf{realized} positive types (note that a realized positive type
need not be maximal), which we will denote by $S^{+}\left(M\right)$.
While this is not the conventional type space in positive logic, the
construction also allows us to replicate facts (1)-(3) (see \thmref{gen-common-theory},
\thmref{universal-plus}, and \propref{pp-equiv-at-plus} respectively).
However, $\Core\left(T\right)$ is in many cases (in particular, for
all relational $L$) degenerate --- see \subsecref{Shortcomings}.
Note that we present no analogue for $\mathcal{L}_{\pi}$ in this
section.

There are some potential applications for this generalization of the
definability patterns construction to positive logic. For instance,
as a relatively simple application (generalizing \cite[Corollary A.7]{hrushovski2020definability}),
in \corref{Ellis-group-coincide} we prove (using the $\Core$ construction,
rather than $\Core_{\pi}\left(T\right)$):
\begin{cor*}
If $M,N$ are positively $\aleph_{0}$-saturated and $\aleph_{0}$-homogeneous
$\pc$ models of the same $\hu$ theory $T$, then the Ellis groups
of the actions $\Aut\left(M\right)\curvearrowright S\left(M\right)$
and $\Aut\left(N\right)\curvearrowright S\left(N\right)$ are isomorphic. 
\end{cor*}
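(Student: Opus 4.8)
\emph{The plan.} I would prove that both Ellis groups are isomorphic to a single group canonically attached to $T$ through the core, concretely to $\Aut\left(\mathcal{J}\right)$ (or to a canonical subgroup of it), where $\mathcal{J}=\Core\left(T\right)$; by \propref{univ-ec} this is a compact group depending only on $T$, so once the identification is in place the corollary is immediate. One cannot hope for $S\left(M\right)\cong S\left(N\right)$, since $M$ and $N$ may have very different cardinalities --- it is exactly the collapse of the Ellis group onto a bounded canonical object that makes the statement go through.

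\emph{Dynamical setup.} Because the $\mathcal{L}$-structure on $S\left(M\right)$ is built functorially from $M$, the group $\Aut\left(M\right)$ acts on $S\left(M\right)$ by $\mathcal{L}$-automorphisms and homeomorphisms, so $S\left(M\right)$ is a compact $\Aut\left(M\right)$-flow. It is $T_{1}$ --- distinct maximal positive types are incomparable, so singletons are closed --- and this is all the Ellis machinery we invoke (minimal left ideals, idempotents in them, the associated Ellis group) actually requires, even though $S\left(M\right)$ need not be Hausdorff. One useful observation, used repeatedly: each element of the enveloping semigroup $E=\overline{\Aut\left(M\right)}\subseteq S\left(M\right)^{S\left(M\right)}$ is an $\mathcal{L}$-homomorphism of $S\left(M\right)$, since being a homomorphism is a conjunction of closed conditions (positive $\mathcal{L}$-formulas define closed sets) satisfied by the $\Aut\left(M\right)$-translations and hence by their limits.

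\emph{Matching the Ellis data to the core.} Fix a minimal left ideal $\mathcal{M}\le E$ and an idempotent $u\in\mathcal{M}$, so the Ellis group is $u\mathcal{M}$. Composing with $u$ the homomorphisms $N'\to S\left(M\right)$ that \thmref{hom-to-type-space} provides from models $N'$ of the common theory $\mathcal{T}$ of \corref{shared-theory} --- together with the embedding $\mathcal{J}\hookrightarrow S\left(M\right)$ one gets for the positively closed model $\mathcal{J}$ --- exhibits the retract $uS\left(M\right)$ (an $\mathcal{L}$-substructure, since $\mathcal{T}$ is h-universal and hence preserved under substructures) as again a universal model of $\mathcal{T}$ receiving the core; I would then use minimality of $\mathcal{M}$ to pin it down as positively closed, whence, by canonicity of the core, $uS\left(M\right)\cong\mathcal{J}$. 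The Ellis group $u\mathcal{M}$ then acts on $uS\left(M\right)\cong\mathcal{J}$ by $\mathcal{L}$-automorphisms, and I would identify $u\mathcal{M}$ with its image in $\Aut\left(\mathcal{J}\right)$ via the usual bookkeeping for minimal left ideals.

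\emph{Realizing automorphisms and concluding.} The remaining task is to pin down that image --- ideally, to see $u\mathcal{M}\to\Aut\left(\mathcal{J}\right)$ is onto. Given $\sigma\in\Aut\left(\mathcal{J}\right)$, thought of as a partial $\mathcal{L}$-automorphism of $S\left(M\right)$ on the copy $uS\left(M\right)$, one must approximate $\sigma$ on each finite tuple of types from $uS\left(M\right)$ by elements of $\Aut\left(M\right)$ (a convergent subnet of those then lies in $u\mathcal{M}$ and restricts to $\sigma$). Using the strongly Robinson property (\lemref{pp-equiv-at}) to reduce the $\mathcal{L}$-type of such a tuple to atomic data, and pulling those $\mathcal{L}$-relations back to $L$-formulas over $M$, this becomes the statement that two tuples of types of $M$ with the same reduced description are $\Aut\left(M\right)$-conjugate up to arbitrarily small error --- which follows from $\aleph_{0}$-homogeneity of $M$, with $\aleph_{0}$-saturation used to realize the types in question. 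Performing the same analysis over $N$ yields that the Ellis groups of $\Aut\left(M\right)\curvearrowright S\left(M\right)$ and of $\Aut\left(N\right)\curvearrowright S\left(N\right)$ are both isomorphic to the same (sub)group of $\Aut\left(\mathcal{J}\right)$, proving the corollary. The main obstacle is precisely this pair of identifications --- $uS\left(M\right)\cong\mathcal{J}$ and the surjectivity onto $\Aut\left(\mathcal{J}\right)$ --- both of which demand translating the bare combinatorial $\mathcal{L}$-structure on $S\left(M\right)$ back into first-order facts about $M$ and the $\Aut\left(M\right)$-action; positive-closedness, $\aleph_{0}$-saturation, $\aleph_{0}$-homogeneity and strong Robinsonness are all genuinely needed here, and the non-Hausdorffness of $S\left(M\right)$ must be handled with care along the way.
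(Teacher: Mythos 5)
Your high-level strategy --- identify each Ellis group with the automorphism group of a canonical bounded object attached to $T$ --- is the paper's strategy, but there is a genuine gap at exactly the two places you flag as "the main obstacle," and the paper closes it by working in a different language than the one you use. The paper does not identify the Ellis group with (a subgroup of) $\Aut\left(\mathcal{J}\right)$ for the plain patterns language $\mathcal{L}$; it first Morleyizes partial positive types (the theory $T^{\tp}$ of \subsecref{Positive-Types}, whose $\pc$ models are exactly the $M^{\tp}$ for $M$ positively $\aleph_{0}$-saturated $\pc$), and proves the Ellis group is $\Aut\left(\mathcal{J}^{\tp}\right)=\Aut\left(\Core\left(T^{\tp}\right)\right)$. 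The crucial lemma (\lemref{closure-of-action}) is that the enveloping semigroup $\overline{i\left(\Aut\left(M\right)\right)}$ is \emph{equal} to the full homomorphism semigroup $Hom_{\mathcal{L}^{\tp}}\left(S\left(M\right),S\left(M\right)\right)$, not merely contained in it. You only establish the containment (the easy closedness direction). The reverse inclusion is where your "two tuples with the same reduced description are $\Aut\left(M\right)$-conjugate up to small error" step breaks down in $\mathcal{L}$: to approximate a homomorphism $f$ at a basic open set determined by formulas $\varphi_{i}\left(x_{i},a\right)$, one applies $f$ to the relation $\mathcal{R}_{\varphi_{0},\dots,\varphi_{k-1};\Phi}$ whose parameter set is cut out by the \emph{complete} positive type $\Phi=\tp^{\p}\left(a/\emptyset\right)$, obtaining a witness $a'\vDash\Phi$, and only then does $\aleph_{0}$-homogeneity supply $\sigma\in\Aut\left(M\right)$ with $\sigma\left(a'\right)=a$. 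In $\mathcal{L}$ the parameter sets are only positively definable, so the witness $a'$ satisfies a single formula rather than the full type of $a$, and homogeneity gives you nothing.

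Without the equality $\overline{i\left(\Aut\left(M\right)\right)}=Hom$, the rest of your argument does not assemble: the retract $S\left(M\right)\rightarrow\mathcal{J}$ furnished by \factref{pc-universal} need not lie in the enveloping semigroup, so you cannot conclude that some minimal idempotent $u$ has $uS\left(M\right)\cong\mathcal{J}$ (your $uS\left(M\right)$ is universal for $\mathcal{T}$, but minimality of the left ideal only constrains compositions with elements of the enveloping semigroup, not with arbitrary $\mathcal{L}$-homomorphisms, so it does not pin $uS\left(M\right)$ down as $\pc$); and your conclusion that both Ellis groups are "the same (sub)group of $\Aut\left(\mathcal{J}\right)$" is not enough for the corollary, since a priori the subgroup could depend on $M$. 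The paper sidesteps both problems at once: once $ES=Hom_{\mathcal{L}^{\tp}}\left(S\left(M\right),S\left(M\right)\right)$, a retract $r$ onto an embedded copy $J$ of $\mathcal{J}^{\tp}$ \emph{is} an idempotent element of $ES$, $ESr$ is checked to be a minimal left ideal using $\End\left(J\right)=\Aut\left(J\right)$ from \propref{univ-ec}, and $rESr\cong\Aut\left(J\right)$ exactly (surjectivity is immediate because every $\sigma\circ r$ already lies in $ES$). To repair your proof you would need to either pass to $T^{\tp}$ as the paper does, or prove the density of $\Aut\left(M\right)$ in $Hom_{\mathcal{L}}$ directly --- and the latter is precisely what fails without type-indexed relations.
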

Another candidate application is a generalization of the results of
\cite{hrushovski2020lascar}. We present some background and further
details.

Let $G$ be a group. A k-approximate subgroup\textbf{ }is a set $A\subseteq G$
such that $A^{-1}=A,1\in A$ and $A\cdot A$ is covered by $k$ left
translates of $A$. In \cite[Theorem 4.2]{MR2833482}, Hrushovski
proved a theorem which shows that, under certain amenability conditions
on $A$ (in particular for finite $A$) $A$ is commensurable \footnote{That is, each is covered by finitely many translates of the other.}
to the preimage under a homomorphism of a compact neighborhood of
the identity in some Lie group\footnote{To be precise, the homomorphism is not from $G$ itself but rather
from a large subgroup of $G$.}. This result, which later became known as the Lie Model Theorem,
specifically applied to the case of a pseudofinite $A$\footnote{That is, when $A$ is an ultrapower of finite subsets of a sequence
of groups.} by Breuillard, Green and Tao in \cite{MR3090256} to give a complete
classification of finite approximate subgroups. 

In \cite{hrushovski2020lascar} Hrushovski used the construction of
$\Core\left(T\right)$ in order to prove a more general version of
the Lie Model Theorem. This new version applies to any approximate
subgroup $A$. When $A$ is arbitrary, one must replace the homomorphism
in the theorem with a quasi-homomorphism --- that is a function $\phi:G_{0}\rightarrow H$
such that $\left\{ \phi\left(x\right)\phi\left(y\right)\phi\left(xy\right)^{-1}\mid x,y\in G_{0}\right\} $
is contained in a compact set\footnote{When $A$ does satisfy the conditions of the original theorem, this
quasi-homomorphism turns out to actually be a homomorphism}.

In \cite{fanlo2021piecewise}, Rodriguez Fanlo generalized the Lie
Model Theorem to rough approximate subgroups\footnote{An example of a rough approximate subgroup is a metric approximate
subgroup --- a $\delta,k$ metric approximate subgroup of a metric
group $\left(G,d\right)$ is a subset $A\subseteq G$ such that $A^{-1}=A,1\in A$
and every element of $A\cdot A$ is $\delta$ close to an element
in one of $k$ left translations of $A$.}. It is thus natural to wonder whether the improvements in \cite{hrushovski2020lascar}
and \cite{fanlo2021piecewise} can be combined. The strategy followed
in \cite{fanlo2021piecewise} first adapts the results of \cite{MR2833482}
to the case of hyperdefinable sets, which are quotients of type definable
sets by type definable equivalence relations. Since positive logic
is the natural syntax to describe hyperdefinable sets, the natural
way to improve the results in \cite{fanlo2021piecewise} in a manner
similar to \cite{hrushovski2020lascar} starts by adapting the core
construction to positive logic.

\subsubsection*{Acknowledgments}

This paper is part of my master's thesis, under the supervision of
Itay Kaplan, and I want to thank him for his tutelage during the research
process and the writing process. 

I want to thank Ehud Hrushovski for suggesting the topic and some
of the basic definitions in \subsecref{Basic-Definitions}, as well
as additional helpful comments in private communication.

I also want to thank Arturo Rodriguez Fanlo for his help with writing
the introduction. 

\section{\label{sec:preliminaries}Positive Logic --- Preliminaries}

\subsection{Basic Definitions}

In this section, we fix some first order language $L$.
\begin{defn}
\label{def:basic-def}We denote atomic formulas (that is formulas
of the form $R\left(t_{1},...,t_{n}\right)$ where $R$ is a relation
symbol and each $t_{i}$ is a term) by (at).

We call a formula\textbf{ positive} (p) if it is of the form $\exists\overline{x}\psi\left(\overline{x},\overline{y}\right)$
where $\psi$ is a positive (that is, the only logical connectors
it contains are $\vee$ and $\wedge$) Boolean combination of atomic
formulas. 

We call a formula \textbf{primitive positive} ($\pp$) if it is of
the form $\exists\overline{x}\bigwedge_{i}\varphi_{i}\left(\overline{x},\overline{y}\right)$
where each $\varphi_{i}$ is atomic. Note that every positive formula
is equivalent to a disjunction of $\pp$ formulas. 

We call a formula \textbf{h-universal} ($\hu$)\footnote{The h stands for homomorphism. $\hu$ sentences are pulled back by
homomorphisms in the same way that universal formulas are inherited
by substructures.} if it is equivalent to the negation of a positive formula, that is
equivalent to $\forall\overline{x}\neg\psi\left(\overline{x},\overline{y}\right)$
where $\psi$ is a positive Boolean combination of atomic formulas. 

We call a formula \textbf{primitive h-universal} ($\pu$) if it is
equivalent to a negation of a $\pp$ formula, that is of the form
$\forall\overline{x}\bigvee_{i}\neg\varphi_{i}\left(\overline{x},\overline{y}\right)$
where each $\varphi_{i}$ is atomic. Note that every hu formula is
equivalent to a conjunction of $\pu$ formulas. 

$\tp^{\p}\left(a/A\right)$ means $\left\{ \varphi\left(x\right)\in\tp\left(a/A\right)\mid\varphi\text{ is positive}\right\} $,
and in general when we add a superscript which denotes a class of
formulas, we only consider formulas which belong to that class. 
\end{defn}

\begin{defn}
If there exists a homomorphism $h:M\rightarrow N$, we say that $M$
\textbf{continues }into $N$.
\end{defn}

\begin{defn}
An h-universal theory is a collection of h-universal sentences.

Such a theory is called \textbf{irreducible} if for some structure
$M$, $T=\Th^{\hu}\left(M\right)$.

We will also call $T$ irreducible if its $\hu$ deductive closure
is irreducible.
\end{defn}

\begin{defn}
\label{def:minus}Let $\Pi$ be a set of $\hu$ formulas (where the
set of allowed parameters and variables is understood from context).
We define

$\Pi^{-}=\left\{ \varphi\mid\varphi\text{ is positive},\Pi\nvDash\neg\varphi\right\} $,
that is $\Pi$ is the set of positive formulas whose negation is not
implied by $\Pi$.

We also denote $\Pi^{\pm}=\Pi\cup\Pi^{-}$.
\end{defn}

\begin{rem}
\label{rem:meaning-of-+-}If $\Pi=T$ is an $\hu$ theory then given
some structure $M$, $T=\Th^{\hu}\left(M\right)$ iff $M\vDash T^{\pm}$.
\end{rem}

\begin{rem}
If $\Pi$ is closed under implications, for any positive $\varphi$
we have $\varphi\in\Pi^{-}$ iff $\neg\varphi\notin\Pi$, and for
any $\hu$ $\varphi$ we have $\varphi\in\Pi$ iff $\neg\varphi\notin\Pi^{-}$.
In particular, if $a$ is a tuple of elements and $A$ is a set, $\tp^{\hu}\left(a/A\right)^{-}=\tp^{\p}\left(a/A\right)$.
\end{rem}

\subsection{$\protect\pc$ models}
\begin{defn}
Let $h:M\rightarrow N$ be a homomorphism. We say that $h$ is positively
closed ($\pc$) if for any $\pp$ (equivalently every positive) formula
$\varphi\left(x\right)$, $N\vDash\varphi\left(h\left(a\right)\right)$
implies $M\vDash\varphi\left(a\right)$. Note that in particular $h$
must be an embedding, since every atomic formula is $\pp$. We also
call such $h$ an \textbf{immersion}, and say that $M$ \textbf{immerses}
into $N$.

If $A\leq M$ is a substructure, we say that it is a $\pc$ \textbf{substructure
}if $\id_{A}:A\rightarrow M$ is $\pc$.

We say that $M$ is a $\pc$ model of $T$ (or just $\pc$, when $T$
is obvious) if every homomorphism $h:M\rightarrow N\vDash T$ is $\pc$.

If $T$ is a $\hu$ (or $\pu$) theory, and $C$ is a class of models
of $T$, we say that $C$ is a \textbf{universal class }if for any
$M\vDash T$ continues into a model in $C$.
\end{defn}

\begin{fact}
\label{fact:pc-universal}(\cite[Section 2.3]{positiveJonsson}) The
class of $\pc$ models of an $\hu$ theory $T$ is universal, that
is every model of $T$ continues into to a $\pc$ model of $T$.
\end{fact}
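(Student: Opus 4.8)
The plan is to establish the universality of the class of $\pc$ models by combining a Zorn-type maximality argument with a chain-union closure property for $\hu$ theories. First I would fix a model $M \vDash T$ and consider the partially ordered set whose elements are pairs $(N, h)$ where $N \vDash T$ and $h : M \to N$ is a homomorphism, ordered by $(N_1, h_1) \leq (N_2, h_2)$ when there is a homomorphism $g : N_1 \to N_2$ with $g \circ h_1 = h_2$. The key point is that a homomorphism $h : M \to N$ is $\pc$ precisely when $(N,h)$ is "maximal" in the appropriate sense: if $N$ is not $\pc$ there is a homomorphism $f : N \to N'$ to some $N' \vDash T$ and a positive $\varphi$ with $N' \vDash \varphi(f(a))$ but $N \nvDash \varphi(a)$, which gives a strictly larger pair $(N', f\circ h)$. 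So it suffices to produce, above any given $(N,h)$, a pair $(N^*, h^*)$ that cannot be strictly enlarged in this way.

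The construction of $N^*$ goes through a transfinite iteration: build a chain $N = N_0 \to N_1 \to \cdots \to N_\alpha \to \cdots$ where at each successor stage we "realize" one more positive formula that becomes consistent to add — more precisely, at stage $\alpha+1$ we pick a positive formula $\varphi(a)$ with parameters from $N_\alpha$ that is not yet satisfied in $N_\alpha$ but whose addition to $\Th^{\hu}(N_\alpha)$ together with the atomic diagram is $\hu$-consistent, and we pass to a model realizing it. Because there is a bound on how many positive formulas can be added (each formula can only go from false to true once, and this is controlled by cardinality), the process terminates; at limit stages we take the direct limit of the chain. The direct limit of a chain of models of an $\hu$ theory is again a model of that $\hu$ theory — this is the defining feature of $\hu$ sentences, that they are preserved under direct limits of homomorphisms — so each $N_\alpha \vDash T$, and the terminal model $N^*$ has the property that every positive formula consistent to realize over it is already realized, i.e. $h^* : M \to N^*$ is $\pc$.

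The main obstacle — and the place where one must be careful — is the bookkeeping that guarantees termination: one needs the right notion of when a positive formula "can be added," to check that adding it preserves $\hu$-consistency (so the next model in the chain exists), and to verify that the resulting chain stabilizes, so that the terminal model really absorbs all potential enlargements simultaneously rather than one at a time. This is exactly the positive-logic analogue of the classical construction of existentially closed models, and since the statement is cited from \cite[Section 2.3]{positiveJonsson}, I would ultimately defer the detailed verification to that reference; but the sketch above is the shape of the argument: direct-limit preservation of $\hu$ sentences plus a terminating iteration of "realize one more positive formula" yields a $\pc$ model continuing $M$.
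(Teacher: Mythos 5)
This statement is quoted in the paper as a \emph{Fact} with a citation to the literature; the paper gives no proof of its own, so there is nothing internal to compare against. Your sketch is the standard construction of positively closed continuations and its architecture is correct: preservation of $\hu$ sentences under directed colimits of homomorphisms, plus iterated realization of positive formulas that are consistent with $T$ together with the atomic diagram of the current stage.

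One step of your write-up is justified incorrectly, even though the construction itself can be repaired. You claim the iteration terminates ``because each formula can only go from false to true once, and this is controlled by cardinality.'' That bound only applies to a \emph{fixed} pair $(\varphi,a)$; but every successor stage introduces new elements, hence new parameter tuples and new pairs to process, so the supply of formulas-to-realize is not fixed and the process as literally described need not halt. The standard fixes are: (a) a two-level iteration --- first show that any $N\vDash T$ continues into some $N'$ realizing \emph{every} positive $\varphi(a)$ with $a\in N$ whose addition to $T\cup\Delta_N^{\atom}$ is consistent, then iterate this $\omega$ times and take the union; the union is $\pc$ because any tuple and any positive formula about it live at a finite stage $M_i$, and consistency over the union implies consistency over $M_i$, so the formula was realized in $M_{i+1}$; or (b) a single transfinite iteration with L\"owenheim--Skolem control on the size of each stage and a fair enumeration of all pairs that will ever appear. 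You correctly identify this bookkeeping as the delicate point and defer to the cited reference, which is reasonable for a quoted fact, but the termination argument you actually give would not survive being written out.
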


\begin{prop}
\label{prop:irreducible}Let $T$ be an $\hu$ theory. Then the following
are equivalent:

1. $T$ is irreducible.

2. If $\varphi,\psi$ are $\hu$ sentences and $T\vdash\varphi\vee\psi$
then either $T\vdash$$\varphi$ or $T\vdash\psi$.

3. (JCP\footnote{Joint Continuation Property}) For every two models
$M_{0},M_{1}$ of $T$, there exists a model $N$ of $T$ such that
both $M_{0}$ and $M_{1}$ continue into $N$.
\end{prop}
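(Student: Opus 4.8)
\emph{Proof proposal.} The plan is to establish the cycle $(1)\Rightarrow(2)\Rightarrow(3)\Rightarrow(1)$.

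For $(1)\Rightarrow(2)$, fix a structure $M$ whose $\hu$ deductive closure $\Th^{\hu}(M)$ equals that of $T$. If $\varphi,\psi$ are $\hu$ and $T\vdash\varphi\vee\psi$, then $M\vDash\varphi\vee\psi$, so $M$ satisfies at least one of them, say $M\vDash\varphi$; then $\varphi\in\Th^{\hu}(M)$, i.e. $T\vdash\varphi$. This step is immediate.

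For $(2)\Rightarrow(3)$ I argue the contrapositive. Suppose $M_0,M_1\vDash T$ admit no common continuation. Using two \emph{disjoint} sets of new constants $\bar c=(c_a)_{a\in M_0}$ and $\bar d=(d_b)_{b\in M_1}$, recall the standard fact that a homomorphism $M_i\to N$ corresponds to an expansion of $N$ interpreting the constants so as to satisfy the positive atomic diagram of $M_i$ (homomorphisms are exactly the maps preserving all atomic, equivalently all positive, formulas). Hence ``$M_0$ and $M_1$ both continue into a model of $T$'' is equivalent to consistency of $T$ together with the positive atomic diagrams of $M_0$ and of $M_1$; by hypothesis this set is inconsistent, so by compactness there are finite conjunctions of atomic formulas $\theta_0(\bar x)$ and $\theta_1(\bar y)$, in the disjoint variable tuples obtained from $\bar c$ and $\bar d$, with $M_0\vDash\exists\bar x\,\theta_0(\bar x)$, $M_1\vDash\exists\bar y\,\theta_1(\bar y)$, and (replacing the new constants by universally quantified variables) $T\vdash\forall\bar x\forall\bar y\,\neg\bigl(\theta_0(\bar x)\wedge\theta_1(\bar y)\bigr)$. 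Because $\bar x$ and $\bar y$ are disjoint, this last sentence is logically equivalent to $\bigl(\forall\bar x\,\neg\theta_0(\bar x)\bigr)\vee\bigl(\forall\bar y\,\neg\theta_1(\bar y)\bigr)$. Both disjuncts are $\pu$, hence $\hu$, and $T$ proves neither, as $M_0$ and $M_1$ respectively are models of $T$ satisfying $\exists\bar x\,\theta_0$ and $\exists\bar y\,\theta_1$. This contradicts $(2)$.

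For $(3)\Rightarrow(1)$ I produce a single model of $T$ realizing every positive sentence consistent with $T$. Given positive sentences $\chi_1,\dots,\chi_n$, each consistent with $T$, choose $N_i\vDash T$ with $N_i\vDash\chi_i$; iterating the joint continuation property of $(3)$ yields $N\vDash T$ into which every $N_i$ continues, and since positive sentences are preserved under homomorphisms we get $N\vDash\chi_i$ for all $i$. Thus every finite subset of $T\cup\{\chi:\chi\text{ a positive sentence consistent with }T\}$ is consistent, so by compactness there is $M\vDash T$ satisfying all such $\chi$. Now if $\varphi$ is $\hu$ and $M\vDash\varphi$, then the positive sentence $\neg\varphi$ is not satisfied by $M$, hence is inconsistent with $T$, i.e. $T\vDash\varphi$; together with $M\vDash T$ this shows the $\hu$ deductive closure of $T$ equals $\Th^{\hu}(M)$, so $T$ is irreducible. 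The main obstacle is the bookkeeping in $(2)\Rightarrow(3)$: one must use disjoint constant (hence variable) tuples for the two diagrams so that the universal sentence produced by compactness genuinely splits into a disjunction of two $\hu$ sentences — this is precisely where the distinction between joint embedding and joint \emph{continuation} enters, and where condition $(2)$ is applied.
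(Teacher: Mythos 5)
Your proof is correct. The implications $(1)\Rightarrow(2)$ and $(2)\Rightarrow(3)$ are essentially identical to the paper's: the same one-line argument for the first, and the same compactness argument on the two positive atomic diagrams with disjoint constants for the second, including the key observation that disjointness of the variable tuples is what lets the universal sentence split into a disjunction of two $\hu$ sentences. Where you genuinely diverge is $(3)\Rightarrow(1)$. The paper takes a $\pc$ model $N$ of $T$ (whose existence is imported as an external fact) and shows $T\vdash\Th^{\pu}(N)$ by applying JCP once to $N$ and a counterexample model, using the defining pull-back property of $\pc$ models to transfer the witnessing existential sentence back to $N$. You instead build, by compactness together with finitely iterated JCP, a single model $M\vDash T$ satisfying every positive sentence consistent with $T$ --- in the paper's notation, a model of $T^{\pm}$, which by \remref{meaning-of-+-} is exactly a witness to irreducibility. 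Your route is more self-contained, since it avoids invoking the existence of $\pc$ models (\factref{pc-universal}); the paper's route is shorter given that fact and has the side benefit of exhibiting a $\pc$ witness. Both are valid, and the iteration of JCP from two models to finitely many, which your argument needs, is immediate since homomorphisms compose.
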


\begin{proof}
(1) $\Rightarrow$ (2) Since $\Th^{\text{\ensuremath{\pu}}}\left(M\right)\vDash T\vdash\varphi\vee\psi$
then $M\vDash\varphi\vee\psi$. without loss of generality $M\vDash\varphi\Rightarrow\varphi\in\Th^{\pu}$,
and thus $T\vdash\varphi$.

(2) $\Rightarrow$ (3) Let $\left\{ c_{a}\mid a\in M_{0}\right\} ,\left\{ d_{b}\mid b\in M_{1}\right\} $
be new constant symbols. Then it is enough to show that $\Delta_{M_{0}}\cup\Delta_{M_{1}}\cup T$
(where we use $c_{a}$ in $\Delta_{M_{0}}$ and $d_{b}$ in $\Delta_{M_{1}}$)
is consistent, since then for $N\vDash\Delta_{M_{0}}\cup\Delta_{M_{1}}\cup T$
we have that $a\rightarrow c_{a}^{N},b\rightarrow d_{b}^{N}$ are
homomorphisms.

Assume otherwise. Then there are conjuctions of atomic formulas $\varphi\left(c_{\overline{a}}\right)$
and $\psi\left(d_{\overline{b}}\right)$ such that $M_{0}\vDash\varphi\left(\overline{a}\right),M_{1}\vDash\psi\left(\overline{b}\right)$
and $T\cup\left\{ \varphi\left(c_{\overline{a}}\right),\psi\left(d_{\overline{a}}\right)\right\} $
is inconsistent. So let $\overline{x}$ be a variable tuple of the
same sort as $\overline{a}$ and let $\overline{y}$ be a variable
tuple of the same sort as $\overline{b}$. We have 
\begin{align*}
T & \vdash\neg\varphi\left(c_{\overline{a}}\right)\vee\neg\psi\left(d_{\overline{b}}\right)\Rightarrow T\vdash\forall\overline{x}\forall\overline{y}\left(\neg\varphi\left(\overline{x}\right)\vee\neg\psi\left(\overline{y}\right)\right)\Rightarrow\\
 & T\vdash\left(\forall\overline{x}\neg\varphi\left(\overline{x}\right)\right)\vee\left(\forall\overline{y}\neg\psi\left(\overline{y}\right)\right)
\end{align*}
.

By assumption without loss of generality we have $T\vdash\forall\overline{x}\neg\varphi\left(\overline{x}\right)$
thus $M_{0}\vDash\forall\overline{x}\neg\varphi\left(\overline{x}\right)$
contradicting $M_{0}\vDash\varphi\left(\overline{a}\right)$.

(3) $\Rightarrow$ (1) Let $N$ be a $\pc$ model of $T$, which exists
by \factref{pc-universal}.

We want to show that $T\vDash\Th^{\pu}\left(N\right)$. Assume $\psi=\forall\overline{x}\neg\varphi\left(\overline{x}\right)$
where $\varphi$ is a quantifier-free $\pp$ formula, and $T\nvdash\psi$.
Then there exists $M\vDash T\cup\left\{ \neg\psi\right\} $, and by
LS we can take $\left|M\right|\leq\left|T\right|$.

Then there exists $N'\vDash T$ and $h:M\rightarrow N',g:N\rightarrow N'$
homomorphisms. Then since $M\vDash\exists\overline{x}\varphi\left(\overline{x}\right)$,
let $a\in M^{\overline{x}}$ be such that $M\vDash\varphi\left(a\right)$.

Since $h$ is a homomorphism, $N'\vDash\varphi\left(h\left(a\right)\right)\Rightarrow N'\vDash\exists\overline{x}\varphi\left(\overline{x}\right)$
thus since $N$ is $\pc$ we have $N\vDash\exists\overline{x}\varphi\left(\overline{x}\right)$
so $\psi\notin\Th^{\pu}\left(N\right)$ as required.
\end{proof}
\begin{rem}
\label{rem:pc-T+-}If $M$ is a $\pc$ model of an irreducible $T$
then $M\vDash T^{\pm}$. Indeed let $\varphi$ a positive sentence
such that $T\nvdash\neg\varphi$. Then let $N_{0}$ a model of $T\cup\left\{ \varphi\right\} $
and let $N\vDash T$ continuing both $N_{0}$ and $M$. since $N_{0}\vDash\varphi$
then $N\vDash\varphi$ (since positive sentences are pushed forward
by homomorphisms) but $M$ is immersed in $N$ by assumption thus
$M\vDash\varphi$.
\end{rem}

\begin{defn}
Assume we have some $\hu$ theory $T$, and $\varphi\left(x\right),\psi\left(x\right)$
are positive formulas.

We say that $\varphi\perp\psi$ if $T\vdash\forall x\neg\left(\varphi\wedge\psi\right)$.
\end{defn}

\begin{fact}
\label{fact:maximal-pp-in-pc}(\cite[Lemma 2]{positiveJonsson}) If
$E$ is a $\pc$ model of an $\hu$ theory $T$ and $\phi\left(x\right)$
is positive formula, $a\in E^{x}\setminus\phi\left(E\right)$, then
there exists a positive $\psi\left(x\right)$ such that $\psi\perp\phi$
and $E\vDash\psi\left(a\right)$.
\end{fact}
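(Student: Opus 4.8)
The plan is to argue by contradiction. Assume there is no positive $\psi(x)$ with $\psi\perp\phi$ and $E\vDash\psi(a)$; equivalently, every positive $\psi$ with $E\vDash\psi(a)$ satisfies $T\nvdash\forall x\,\neg(\phi\wedge\psi)$. From this I will build a model $N\vDash T$ and a homomorphism $h:E\to N$ with $N\vDash\phi(h(a))$. Then, because $\phi$ is positive and $E$ is a $\pc$ model of $T$, the homomorphism $h$ is an immersion, so $\phi$ pulls back and $E\vDash\phi(a)$ — contradicting $a\in E^{x}\setminus\phi(E)$.

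To construct $N$, I would introduce a constant $c_{e}$ for each $e\in E$, let $\Delta^{+}$ be the set of atomic sentences in this expanded language that hold in $E$ (so that any model of $\Delta^{+}$ automatically carries the homomorphism $e\mapsto c_{e}$ from $E$, since homomorphisms need only preserve atomic formulas), and consider the theory $\Sigma=T\cup\Delta^{+}\cup\{\phi(c_{a})\}$. The main step is showing $\Sigma$ is consistent, which I would do by compactness. A finite fragment of $\Sigma$ uses only finitely many of the new constants, say $c_{a},c_{e_{1}},\dots,c_{e_{k}}$, and is implied by $T_{0}\cup\{\phi(c_{a})\}\cup\{\delta_{1},\dots,\delta_{m}\}$ for some finite $T_{0}\subseteq T$ and atomic sentences $\delta_{i}$ in these constants. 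Existentially quantifying out all the constants except $c_{a}$ turns $\bigwedge_{i}\delta_{i}$ into a single $\pp$ formula $\sigma(x)$, which is satisfied by $a$ in $E$ (with the $e_{j}$ as witnesses). By the contradiction hypothesis $\sigma\not\perp\phi$, i.e. $T\nvdash\forall x\,\neg(\phi\wedge\sigma)$, so $T\cup\{\exists x(\phi(x)\wedge\sigma(x))\}$ has a model; picking there a witness for $\phi\wedge\sigma$ together with witnesses for the existential quantifiers of $\sigma$, and interpreting the $c_{e_{j}}$ accordingly, yields a model of the chosen finite fragment. Hence $\Sigma$ is consistent; let $N\vDash\Sigma$, so $h:E\to N$, $e\mapsto c_{e}^{N}$, is a homomorphism into a model of $T$ with $N\vDash\phi(c_{a}^{N})=\phi(h(a))$, and the contradiction follows as above.

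I do not anticipate a serious obstacle — this is essentially \cite[Lemma 2]{positiveJonsson} — so the proposal is mainly a matter of executing the compactness argument carefully: the only points needing attention are the bookkeeping that matches the auxiliary constants occurring in a finite piece of $\Delta^{+}$ with the witnesses produced from the failure of $\sigma\perp\phi$, and the observation that being $\pc$ is precisely what licenses pulling $\phi$ back along $h$ at the end.
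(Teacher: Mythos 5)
Your argument is correct. The paper itself gives no proof of this statement --- it is quoted as a Fact with a citation to \cite[Lemma 2]{positiveJonsson} --- and your compactness argument (negate the conclusion, show $T\cup\Delta_{E}^{\atom}\cup\{\phi(c_{a})\}$ is finitely satisfiable by existentially quantifying the auxiliary constants into a $\pp$ formula $\sigma$ with $E\vDash\sigma(a)$ and $\sigma\not\perp\phi$, then pull $\phi$ back along the resulting homomorphism using that $E$ is $\pc$) is exactly the standard proof of that lemma.
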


\begin{defn}
\label{def:p-topology}We endow a $\pc$ model $M$ with the topology
whose basic closed sets are sets definable (over $M$) with positive
formulas, and thus the closed sets are those defined by partial positive
types.

We call this the \textbf{positive topology}, or the $\text{\ensuremath{\pp}}$
\textbf{topology}.
\end{defn}

\begin{claim}
\label{claim:easier-e.c.}Assume $C$ is a universal class. Let $M\vDash T$
be such that for any $N$ in $C$ and homomorphism $h:M\rightarrow N$
we have that $h$ is $\pc.$ Then $M$ is $\pc$.
\end{claim}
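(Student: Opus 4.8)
The plan is to unwind the definition of $\pc$ model directly. Let $g:M\rightarrow N'$ be an arbitrary homomorphism with $N'\vDash T$; we must show $g$ is $\pc$. Since $C$ is a universal class and $N'\vDash T$, there is a model $N\in C$ and a homomorphism $f:N'\rightarrow N$. Then $f\circ g:M\rightarrow N$ is a homomorphism into a model of $C$, so by hypothesis $f\circ g$ is $\pc$.

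Now I would verify the $\pc$ condition for $g$. Let $\varphi\left(x\right)$ be a positive (equivalently $\pp$) formula and $a\in M^{x}$ with $N'\vDash\varphi\left(g\left(a\right)\right)$. Since positive formulas are preserved under homomorphisms and $f$ is a homomorphism, $N\vDash\varphi\left(f\left(g\left(a\right)\right)\right)$, i.e. $N\vDash\varphi\left(\left(f\circ g\right)\left(a\right)\right)$. As $f\circ g$ is $\pc$, this gives $M\vDash\varphi\left(a\right)$. Hence $g$ is $\pc$, and since $g$ was arbitrary, $M$ is a $\pc$ model of $T$.

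There is no real obstacle here: the only ingredients are that a universal class receives a continuation from every model of $T$, that positive formulas push forward along homomorphisms, and that a composite through a $\pc$ map reflects positive formulas back to $M$. The one point worth stating explicitly is that the hypothesis is applied to the composite $f\circ g$ rather than to $g$ itself, and that this suffices precisely because $f$ only adds (never removes) satisfied positive formulas.
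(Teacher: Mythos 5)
Your proof is correct and is essentially identical to the paper's: both continue $N'$ into some $N\in C$, push the positive formula forward along $f$, and pull it back to $M$ using that the composite $f\circ g$ is $\pc$ by hypothesis. No gaps.
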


\begin{proof}
Let $h:M\rightarrow N'\vDash T$ be a homomorphism. Let $f:N'\rightarrow N$
be a homomorphism for $N$ in $C$.

Then if $\varphi\left(x\right)$ is $\pp$ and $N'\vDash\varphi\left(h\left(a\right)\right)$
for $a\in M^{x}$, then $N\vDash\varphi\left(f\left(h\left(a\right)\right)\right)$
thus by assumption $M\vDash\varphi\left(a\right)$.
\end{proof}
\begin{example}
This holds for example for $C$ the class of $\pc$ models of $T$
by \factref{pc-universal}.
\end{example}

\begin{prop}
\label{prop:amalgamation}If $f_{i}:M\rightarrow N_{i}$ for $i\in\left\{ 1,2\right\} $
are immersions then there exist $K$ and homomorphisms $h_{i}:N_{i}\rightarrow K$
such that $h_{1}\circ f_{1}=h_{2}\circ f_{2}$. Further we can choose
$K$ to be an elementary extension of $M$ in which case we can choose
$h_{i}\circ f_{i}=\id_{M}$, or of $N_{1}$ in which case we can choose
$h_{1}=\id_{N_{1}}$.
\end{prop}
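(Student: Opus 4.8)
The plan is to prove \propref{amalgamation} by a compactness argument on diagrams, in the style of the proof of \propref{irreducible}, (2)$\Rightarrow$(3), together with the observation that immersions are exactly the homomorphisms that reflect positive formulas. First I would fix new constant symbols $\sequence{c_a}{a\in N_1}$ and $\sequence{d_b}{b\in N_2}$, and identify $c_{f_1(m)}$ with $d_{f_2(m)}$ for $m\in M$ (so the two copies of $M$ are glued). I then claim that the theory $\Sigma=\Delta_{N_1}\cup\Delta_{N_2}\cup T$ (written in these constants, with the identification above) is consistent; any model $K\vDash\Sigma$ gives homomorphisms $h_i:N_i\to K$ via $a\mapsto c_a^K$, $b\mapsto d_b^K$, and the identification guarantees $h_1\circ f_1=h_2\circ f_2$.

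For consistency, I would argue by contradiction: if $\Sigma$ is inconsistent, then by compactness there are conjunctions of atomic formulas $\varphi(c_{\overline a},c_{\overline m})$ from $\Delta_{N_1}$ and $\psi(d_{\overline b},d_{\overline m})$ from $\Delta_{N_2}$ — where $\overline m$ is the shared part coming from $M$ — with $N_1\vDash\varphi(\overline a,\overline m)$, $N_2\vDash\psi(\overline b,\overline m)$, and $T\cup\{\varphi,\psi\}$ inconsistent. Replacing the $c_{\overline a}$ by fresh variables $\overline x$ and the $d_{\overline b}$ by fresh variables $\overline z$, but keeping the shared constants $c_{\overline m}$, we get $T\vdash\forall\overline x\,\forall\overline z\,\neg(\varphi(\overline x,c_{\overline m})\wedge\psi(\overline z,c_{\overline m}))$, hence $T\vdash\forall\overline y\,(\chi_1(\overline y)\to\forall\overline x\,\neg\varphi(\overline x,\overline y)\ \vee\ \chi_2(\overline y)\to\dots)$ — more cleanly: the formula $\theta(\overline y):=\exists\overline x\,\varphi(\overline x,\overline y)\wedge\exists\overline z\,\psi(\overline z,\overline y)$ is positive, and $T\vdash\forall\overline y\,\neg\theta(\overline y)$. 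Now $M\vDash\exists\overline x\,\varphi(\overline x,\overline m)$ because $\overline a\in N_1$ witnesses $\varphi$ and $f_1$ is an immersion, so $N_1\vDash\varphi(f_1$ applied to the relevant tuple$)$ pulls back — wait, one must be careful: the witness $\overline a$ lives in $N_1$, not $M$, so what I actually get from $f_1$ being $\pc$ is that $M\vDash\exists\overline x\,\varphi(\overline x,\overline m)$ since $N_1\vDash\exists\overline x\,\varphi(\overline x,f_1(\overline m))$ and $\exists\overline x\,\varphi$ is a positive formula over the parameters $f_1(\overline m)$. Symmetrically $M\vDash\exists\overline z\,\psi(\overline z,\overline m)$ via $f_2$. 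Hence $M\vDash\theta(\overline m)$, contradicting $T\vdash\forall\overline y\,\neg\theta(\overline y)$ together with $M\vDash T$.

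For the two refinements: to make $K$ an elementary extension of $M$, I would instead work with the \emph{elementary} diagram $\Diag^{\mathrm{el}}_M$ in place of gluing via $M$, i.e. use constants for all of $M$ and add all $L(M)$-sentences true in $M$ along with $\Delta_{N_1},\Delta_{N_2}$ (now using $f_1,f_2$ to name the images of $M$); the same compactness computation goes through because the potentially-conflicting formulas over $M$ can be absorbed into the positive formula $\theta$, and $M\vDash T^{\pm}$ is not even needed — only $M\vDash T$ and the immersion property. This forces the canonical map $M\to K$ to be elementary, and then $h_i\circ f_i$ is the inclusion $M\hookrightarrow K$. To make $K$ an extension of $N_1$, use the elementary diagram of $N_1$ together with $\Delta_{N_2}$ glued along $f_2\circ(f_1)^{-1}$ on $f_1(M)$; since $f_1$ is an immersion, $N_1\vDash T^{\pm}$ need not hold but $N_1\vDash T$ does, and the argument is symmetric.

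The main obstacle is bookkeeping the shared parameters correctly: one must track which constants come from $M$ (and are shared between the two diagrams) versus which are genuinely new, and verify that when a contradiction is pushed back to $M$ it is a \emph{positive} statement about the shared parameters — this is exactly what lets the immersion hypothesis on $f_1,f_2$ (rather than mere homomorphism) do its job, and it is the only place the $\pc$-ness of the $f_i$ is used. The elementary-extension refinements require rechecking that the extra elementary-diagram sentences don't spoil the positivity of $\theta$, which they don't, since they are only added on the $M$-side (resp. $N_1$-side) and get quantified away into the existential witnesses.
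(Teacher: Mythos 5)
Your proof is correct and follows essentially the same route as the paper's: amalgamate the atomic diagrams of $N_{1}$ and $N_{2}$ glued along $M$ (together with the elementary diagram of $M$, resp. of $N_{1}$, for the refinements), using the immersion hypothesis to pull the witnesses of the positive existential formulas $\exists\overline{x}\,\varphi$ and $\exists\overline{z}\,\psi$ back into $M$ and push them forward into the other $N_{j}$. The only cosmetic difference is that the paper verifies finite satisfiability of each fragment directly inside $M$ itself, whereas you argue by contradiction via $T\vdash\forall\overline{y}\,\neg\theta\left(\overline{y}\right)$ for the positive formula $\theta=\exists\overline{x}\,\varphi\wedge\exists\overline{z}\,\psi$; these are interchangeable.
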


\begin{proof}
Since $f_{1},f_{2}$ are immersion, we may assume without loss of
generality that they are the identity. Let $\left\{ e_{a}\right\} _{a\in M}$,
$\left\{ c_{a}\right\} _{a\in N_{1}}$ and $\left\{ d_{a}\right\} _{a\in N_{2}}$
be new constant symbols. We want to show that $\Delta_{N_{1}}^{at}\cup\Delta_{N_{2}}^{at}\cup\Delta_{M}\cup\left\{ c_{a}=e_{a}=d_{a}\right\} _{a\in M}$
is consistent.

Let $\varphi_{i}\left(\overline{a},\overline{b}_{i}\right)$ ($\overline{b_{i}}\in N_{i}\setminus M$,
$\overline{a}\in M$) be positive quantifier free formulas such that
$N_{i}\vDash\varphi_{i}\left(\overline{a},\overline{b}_{i}\right)$.
Then $N_{i}\vDash\exists\overline{y_{i}}:\varphi_{i}\left(\overline{a},\overline{y}_{i}\right)$
thus as $M$ is $\pc$ we have for some $\overline{b}_{i}'\in M$
that $M\vDash\varphi_{i}\left(\overline{a},\overline{b}_{i}'\right)$
and further $N_{j}\vDash\varphi_{i}\left(\overline{a},\overline{b}_{i}'\right)$.

We conclude that if we set $c_{a'}=d_{a'}=e_{a'}=a'$ for $a'\in M$,
$c_{\overline{b}_{1}}=\overline{b}_{1}'$ and $d_{\overline{b}_{2}}=\overline{b}_{2}'$
(and set $c_{a'},d_{a'}$ arbitrarily for all other elements) then
\[
M\vDash\left\{ \varphi_{1}\left(c_{\overline{a}},c_{\overline{b}_{1}}\right)\wedge\varphi_{2}\left(d_{\overline{a}},d_{\overline{b}_{2}}\right)\right\} \cup\Delta_{M}\cup\left\{ c_{a}=e_{a}=d_{a}\right\} _{a\in M},
\]
is as required for an elementary embedding of $M$ (which, by replacing
elements of $K$ by the corresponding elements of $M$, we may take
to be the identity). Likewise if we define $c_{a}^{N_{1}}=a$ for
all $a\in N_{1}$ and $d_{\overline{b_{2}}}=\overline{b_{2}}'$ we
see that the homomorphisms are compatible with $\Delta_{N_{1}}$ and
thus by the same argument we can get $N_{1}\prec K$.
\end{proof}
\begin{cor}
If $p,q$ are partial positive types over $M$ for $M$ $\pc$ then
there exists $N$ such that both $p$ and $q$ are realized (though
they may not be $\tp^{\p}$ of any element) in $N$. We may also find
$N$ such that one (but not both) of $p,q$ is equal to $\tp^{\p}\left(a/M\right)$
for some tuple $a$ in $N$.

Therefore by induction the same holds for any finite number of types,
and by compactness for any number of types (where in any case one
of the types can be chosen to be the positive type of an element).
\end{cor}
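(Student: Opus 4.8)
The plan is to derive this corollary directly from \propref{amalgamation} by a standard compactness-and-induction argument, using the usual trick of coding a partial positive type as an h-universal diagram over fresh constants.

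First I would handle the case of two types $p,q$. Since $M$ is $\pc$, I would introduce a fresh constant $c_a$ for each $a\in M$ and a fresh tuple of constants $\overline{e}$, and consider the theory $\Delta_M\cup T\cup p(\overline e)$, where $p(\overline e)$ means the positive formulas of $p$ with the variable tuple replaced by $\overline e$; a model of this is exactly an $N\vDash T$ into which $M$ continues and which realizes $p$. Consistency of this theory is immediate: take $N_0\vDash T$ realizing $p$ (such $N_0$ exists because $p$ is a partial positive type over $M$ — witnessed, say, in an elementary extension of $M$) and use JCP / \propref{irreducible} (or \propref{amalgamation} directly) to amalgamate $M$ and $N_0$ over... — actually cleaner: since $p$ is a partial positive type over $M$ it is realized in some elementary extension $M'\succeq M$, and $M'\vDash T$, so already $M$ continues into $M'$ which realizes $p$. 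Now to realize both $p$ and $q$, I apply \propref{amalgamation} to the two immersions $M\hookrightarrow M_p'$ and $M\hookrightarrow M_q'$ (both are immersions since they are elementary embeddings of a $\pc$ model, hence $\pc$), getting $N$ with homomorphisms $h_p,h_q$ agreeing on $M$; then $h_p$ pushes forward the realization of $p$ and $h_q$ pushes forward the realization of $q$ (positive formulas are preserved by homomorphisms), so both $p,q$ are realized in $N$. Moreover \propref{amalgamation} lets us choose $N$ to be an elementary extension of $M_p'$ with $h_p=\id$; since $M_p'$ contains an element $a$ with $\tp^{\p}(a/M)=p$ and $M_p'\prec N$, that same $a$ still has $\tp^{\p}(a/N\restriction M)=\tp^{\p}(a/M)=p$ — wait, we need $\tp^{\p}(a/M)$, and elementary extension preserves this — so in $N$ the type $p$ is the full positive type of $a$, while $q$ is merely realized. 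That proves the case $n=2$ with the asymmetry as stated.

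For finitely many types $p_1,\dots,p_n$ I would induct: by the inductive hypothesis find $N'\succeq$ (something) realizing $p_1,\dots,p_{n-1}$ with $p_1$ being the positive type of an element, and with $N'\vDash T$ and $M$ (hence the relevant structures) immersed in $N'$; then amalgamate $N'$ with a model realizing $p_n$ over $M$ via \propref{amalgamation}, taking the result to be an elementary extension of $N'$ so that the realizations of $p_1,\dots,p_{n-1}$ persist (elementary extensions preserve positive types of fixed tuples) while $p_n$ gets pushed in. Finally, for an arbitrary family $(p_i)_{i\in I}$ I would use compactness: form the theory over $M$'s diagram together with $T$ and, for each $i$, the positive formulas of $p_i$ in a distinct tuple of fresh constants $\overline e^{\,i}$, plus possibly the full-positive-type diagram for one designated index $i_0$; every finite subtheory mentions only finitely many of the $p_i$ and is consistent by the finite case, so the whole theory is consistent, and any model of it is the desired $N$.

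The main obstacle — really the only delicate point — is bookkeeping the asymmetric clause ``one (but not both) of the types can be chosen to be $\tp^{\p}(a/M)$'': at each amalgamation step I must invoke the variant of \propref{amalgamation} that produces an \emph{elementary} extension of the side carrying the designated full type, so that passing to $N$ does not enlarge the positive type of that fixed tuple, while the other types only need to be realized (for which plain homomorphisms suffice). One must also be slightly careful that a general partial positive type over $M$ really is realized in some $N\vDash T$ into which $M$ continues — but taking an elementary extension of $M$ (which is automatically a model of $T$) settles this, and no further saturation hypothesis is needed since we only ask for realization, not for it to be the positive type of an element except in the single designated case.
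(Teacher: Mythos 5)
Your proof is correct and is essentially the argument the paper leaves implicit: realize each type in a model of $T$ extending $M$, amalgamate over $M$ via \propref{amalgamation} using the clause that lets $K$ be an elementary extension of the side carrying the designated type, then iterate and finish by compactness. The one point to be careful about is the clause ``$p$ is \emph{equal} to $\tp^{\p}\left(a/M\right)$'': realizing a partial positive type in an elementary extension of $M$ only gives $p\subseteq\tp^{\p}\left(a/M\right)$, so for literal equality one must either assume $p$ is maximal and take the witness in a $\pc$ extension (as in \propref{max-iff-pc}) or read the statement as asserting containment — but this imprecision is already present in the corollary as stated, not introduced by your argument.
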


\subsection{Bounded Theories}
\begin{defn}
\label{def:bounded}We call an $\hu$ theory \textbf{$\pc$ bounded}
(or just bounded) if there is a cardinal $\kappa$ such that for any
$\pc$ model $E$ of $T$ we have $\left|E\right|\leq\kappa$.
\end{defn}

\begin{defn}
A model $V\vDash T$ is called \textbf{positively $\kappa$-saturated}
if whenever:
\begin{itemize}
\item $A\subseteq V$, $\left|A\right|<\kappa$,
\item $\Sigma\left(x\right)$ is a set of positive formulas in a variable
tuple $x$ over $A$,
\item for any finite $\Sigma_{0}\subseteq\Sigma$ there is some $a\in V^{x}$
such that $V\vDash\bigwedge_{\varphi\in\Sigma_{0}}\varphi\left(a\right)$
\end{itemize}
Then $\Sigma$ is realized in $V$.
\end{defn}

\begin{rem}
Like with usual saturation, if $V$ is positively $\kappa$-saturated
and $\Th^{\hu}\left(A\right)=\Th^{\hu}\left(T\right)$ then there
is a homomorphism from $A$ to $T$, by induction. 
\end{rem}

\begin{prop}
\label{prop:univ-ec}Every model of $T$ continues into a positively
$\kappa$-saturated $\pc$ model of $T$ for every $\kappa$.

Further, if $T$ is irreducible and is bounded by $\kappa_{0}$ then
there exists a unique (up to isomorphism) model $U$ of $T^{\pm}$
with the following properties:
\begin{enumerate}
\item $U$ is $\pc$.
\item If $h:U\rightarrow N$ is an embedding into a model $N$ of $T$,
then there is a homomorphism $r:N\rightarrow U$ such that $r\circ h=Id_{U}$.
We call such $r$ a \textbf{retract}.
\item Every model of $T$ continues into $U$. In particular, every $\pc$
model of $T$ immerses (in particular embeds) into $U$.
\item $\End\left(U\right)=\Aut\left(U\right)$, and furthermore every homomorphism
from $U$ to a $\pc$ model of $T$ is an isomorphism.
\item $U$ is homogeneous for positive types (of finite arity). That is
if $a,a'$ are finite tuples in $U$ and $\tp^{\p}\left(a/\emptyset\right)=\tp^{\p}\left(a'/\emptyset\right)$
then there is $\sigma\in\Aut\left(U\right)$ such that $\sigma\left(a\right)=a'$.
\item $U$ is positively $\kappa$-saturated for every $\kappa$ (that is
every positive partial type over $U$ in any number of variables which
is finitely satisfiable in $U$ is realized in $U$).
\item $Aut\left(U\right)$ is compact not only in the product topology,
but also in the topology generated by the basic closed sets $C_{\varphi,a,b}:=\left\{ g\in Aut\left(U\right)\mid U\vDash\varphi\left(a,g\left(b\right)\right)\right\} $
for a fixed positive formula $\varphi\left(x,y\right)$ and tuples
$a,b$.
\end{enumerate}
\end{prop}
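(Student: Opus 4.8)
The plan is to prove the two halves separately: first the pure saturation statement, then the existence and uniqueness of $U$ together with properties (1)–(7). For the first half, I would start from \factref{pc-universal}, which gives a $\pc$ model, and then run the usual union-of-chains construction: given $M \vDash T$, build an increasing chain $M = M_0 \leq M_1 \leq \cdots \leq M_{\kappa^+}$ of $\pc$ models, where at each stage we realize (by amalgamating over $M_i$ via \propref{amalgamation}, taking $K$ to be an elementary, hence $\pc$, extension when needed) all positive types over small subsets that are finitely satisfiable; then close off under unions at limit stages, using that a union of a chain of $\pc$ models of $T$ is again a $\pc$ model of $T$ (since being $\pc$ is checked by positive formulas, which are preserved in unions of chains, and $T$ being $\hu$ is preserved in unions). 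Iterating $\kappa^+$ times and being careful with the bookkeeping of which small sets and types appear gives positive $\kappa$-saturation; since $M$ continues into $M_0$ which continues into the result, we are done with the first paragraph.

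For the second half, assume $T$ is irreducible and bounded by $\kappa_0$. I would take $U$ to be a positively $(\kappa_0^+)$-saturated $\pc$ model of $T$ — equivalently, of $T^\pm$ by \remref{pc-T+-} — and also positively $\kappa_0^+$-homogeneous (arrange homogeneity in the same chain construction, by amalgamating automorphisms of small configurations using \propref{amalgamation} together with \factref{maximal-pp-in-pc} to control the types). Since $T$ is bounded, $|U| \leq \kappa_0$, so in fact $U$ is positively $\kappa$-saturated and homogeneous for \emph{all} $\kappa$ — this is the key point that upgrades the "small" saturation to full saturation and is what makes (5), (6) work. Property (3): every $M \vDash T$ continues into a $\pc$ model $M'$ of $T$ with $|M'| \leq \kappa_0$ by boundedness and \factref{pc-universal}; since $T$ is irreducible, JCP (\propref{irreducible}) gives $N \vDash T$ continuing both $M'$ and $U$; but $U$ is positively $\kappa_0^+$-saturated and $M'$ is small and $\pc$, so the homomorphism $M' \to N$ composed with a back-map realizes $\Th^{\hu}(M')$-configurations inside $U$, yielding $M' \to U$ (and $M \to U$); if $M'$ was $\pc$ the resulting map is an immersion, giving the "in particular" clause. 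Property (2) (retract): given an embedding $h : U \to N \vDash T$, use that $N$ continues into a $\pc$ model $N'$ of $T$ of size $\leq \kappa_0$ (boundedness), get by JCP a common continuation, and use saturation/homogeneity of $U$ to map $N'$ back into $U$ fixing $h(U)$ pointwise (the identity on $U$ is realized as a partial elementary-type map, and full saturation lets us extend it over all of $N'$). Property (4) follows from (2) and (3): a homomorphism $U \to E$ into a $\pc$ model is an embedding (as $U$ is $\pc$), hence by (2)/(3) admits a retract the other way, and in a $\pc$ bounded setting a pair of mutually immersing maps between $\pc$ models of the same $\hu$ theory must be mutually inverse isomorphisms (use saturation to see the composites are onto). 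Property (1) is by construction. Property (7): $C_{\varphi,a,b}$ is closed in the product topology since $\varphi$ is positive and $U$ is $\pc$ (so $\{g \mid U \vDash \varphi(a,g(b))\}$ is product-closed), and the topology it generates refines the product topology but is still compact because any finitely-satisfiable (hence, by saturation of $U$, satisfiable) intersection of such sets, combined with an automorphism extension via (5)/(6), gives a point — formally one checks the finite intersection property transfers using positive $\kappa$-saturation and homogeneity to produce $g \in \Aut(U)$ in the intersection.

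For uniqueness: if $U, U'$ both satisfy (1)–(3), then by (3) there are maps $U \to U'$ and $U' \to U$ which by $\pc$-ness are embeddings, and by (4)/(2) their composites are automorphisms, so $U \cong U'$ — this is the standard "terminal object" uniqueness argument.

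The main obstacle I expect is getting the interaction between boundedness and saturation exactly right: one has to be careful that a positively $\kappa_0^+$-saturated $\pc$ model really does have size $\leq \kappa_0$ (so that "small" and "everything" coincide), and that the chain construction realizing types and automorphisms terminates — i.e. that after enough steps no new positive types over the (now all) parameters appear, which again uses boundedness to bound the number of positive types. The homogeneity property (5) and the compactness of $\Aut(U)$ in the finer topology (7) are where this circularity is most delicate, since they require simultaneously that $U$ is saturated enough to realize type-amalgams and small enough that there is nothing left to realize.
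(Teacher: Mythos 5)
Your overall strategy matches the paper's: take a positively $\kappa_{0}^{+}$-saturated $\pc$ model, use boundedness to conclude $\left|U\right|\leq\kappa_{0}$ so that saturation over small sets becomes saturation over all of $U$, obtain retracts from this saturation, and derive (3)--(7) and uniqueness from retracts. Two points differ or are thin enough to flag. First, a small but genuine error in the chain construction: an elementary extension of a $\pc$ model need \emph{not} be $\pc$ (e.g.\ in the doubled-interval example the $\pc$ models all embed into $\left[0,1\right]\times2$, so a large elementary extension of $\left(\mathbb{Q}\cap\left[0,1\right]\right)\times2$ cannot be $\pc$). The fix is standard --- realize the type in an arbitrary continuation and then pass to a $\pc$ continuation via \factref{pc-universal}, noting positive formulas are pushed forward --- and the paper sidesteps the whole issue by citing the literature for the first paragraph, but as written your step is wrong.

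Second, your treatment of (4) hides the only genuinely non-routine argument in the proposition. ``Use saturation to see the composites are onto'' does not by itself show that an endomorphism $h$ of $U$ is surjective, and ``mutually immersing $\pc$ models are isomorphic'' is not true without it (that is essentially what surjectivity of endomorphisms asserts). The actual argument: if $a\in U\setminus h\left(U\right)$, then for each $b\in U$ use \factref{maximal-pp-in-pc} to find a positive $\varphi_{b}\left(x,y\right)\perp\left(x=y\right)$ with $U\vDash\varphi_{b}\left(a,h\left(b\right)\right)$; the type $\left\{ \varphi_{b}\left(x,b\right)\mid b\in U\right\} $ is finitely satisfiable because $h$ is an immersion, hence realized by some $c$ by saturation and boundedness, and then $U\vDash\varphi_{c}\left(c,c\right)$ contradicts $\varphi_{c}\perp\left(x=y\right)$. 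Relatedly, you build homogeneity (5) into the construction by a back-and-forth, whereas the paper derives it after the fact: if $\tp^{\p}\left(a\right)=\tp^{\p}\left(a'\right)$ one shows two copies of the atomic diagram of $U$ plus $c_{a}=d_{a'}$ are jointly consistent, obtains two homomorphisms into a common model, and composes one with a retract of the other to get the desired automorphism. Your route is viable but more delicate than you acknowledge (positive types are not complete, so ``amalgamating automorphisms of small configurations'' needs exactly this double-diagram argument at each step); deriving (5) from retracts is cleaner and avoids the circularity you worry about at the end.
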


\begin{defn}
For an irreducible bounded $\hu$ theory $T$, we call such a $U$
the \textbf{universal model of }$T$ (note that by (4) any $\kappa_{0}$-saturated
$\pc$ model of $T$ is isomorphic to $U$). 
\end{defn}

\begin{proof}
\cite[2.4]{positiveJonsson} proves the existence of a $\pc$ positively
$\kappa$ saturated model, and in fact such a model continuing any
model of $T$.

Assume $T$ is bounded by $\kappa_{0}$ and has $JCP$. Let $U$ be
a $\pc$ $\kappa_{0}^{+}$-saturated model. Since $U$ is $\pc$,
by assumption $\left|U\right|\leq\kappa_{0}$.

(2) Let $h:U\rightarrow M\vDash T$ be  a homomorphism (thus by $\pc$,
an immersion). Then $M$ continues into a $\pc$ model $E\vDash T$.
Let $g:M\rightarrow E$ be a homomorphism. Since $\left|E\right|,\left|U\right|\leq\kappa_{0}$,
enumerate $E$ as $\overline{e}$. Then $\left\{ \varphi\left(\overline{x},a\right)\mid\varphi\text{ positive},a\in U,E\vDash\varphi\left(\overline{e},g\left(h\left(a\right)\right)\right)\right\} $
is realizable in $U$ (it is finitely satisfiable since $g\circ h$
is a homomorphism and $U$ is $\pc$). In other words, there is an
immersion $i:E\rightarrow U$ such that $i\circ g\circ h=\id_{U}$,
that is $r=i\circ g$ is a retract.

(3) Assume $M\vDash T$. Then by $JCP$ there exists $N\vDash T$
such that both $M,U$ continue into $N$, and in particular $U$ immerses
into $N$. Let $h:M\rightarrow N$ a homormophism, and let $g:N\rightarrow U$
a homomorphism (for example a retract, like in (2)). Then $g\circ h:M\rightarrow U$
is a homomorphism as required.

(4) Assume $h:U\rightarrow U$ is an endomorphism, in particular an
immersion, in particular an embedding. Assume $h$ is not surjective
and take some $a\in U\setminus h\left(U\right)$. For any $b\in U$,
$U\nvDash a=h\left(b\right)$, thus by \factref{maximal-pp-in-pc}
there exists a positive $\varphi_{b}\left(x,y\right)$ such that $\varphi\perp\left(x=y\right)$
and $U\vDash\varphi_{b}\left(a,h\left(b\right)\right)$. Since $h$
is an immersion, for any $b_{0},...,b_{k-1}$ we have 
\[
U\vDash\exists x\bigwedge_{i<k}\varphi_{b_{i}}\left(x,h\left(b_{i}\right)\right)\Rightarrow U\vDash\exists x\bigwedge_{i<k}\varphi_{b_{i}}\left(x,b_{i}\right),
\]
 thus $\left\{ \varphi_{b}\left(x,b\right)\mid b\in U\right\} $ is
finitely satisfiable --- so since $\left|U\right|\leq\kappa_{0}$
we have that it is realizable in $U$. But for any $b\in U$, $U\nvDash\varphi_{b}\left(b,b\right)$,
contradiction. Thus $h$ is surjective thus an automorphism.

Let $h:U\rightarrow E$ be a homomorphism to an arbitrary $\pc$ model
of $T$. Let $r:E\rightarrow U$ a retract, which exists by (2). Then
since $r\circ h=Id_{U}$, $r$ is surjective, and since $E$ is $\pc$
$r$ is an embedding, so $r$ is as isomorphism and so is $h=r^{-1}\circ Id_{U}=r^{-1}$.
In particular, if $V$ is another universal $\pc$ model of $U$,
then $U\cong V$.

(5) Consider the language $L_{U,U'}$ which contains two constants
$c_{a},d_{a}$ for any $a\in U$. Denote by $\Delta_{U,c}^{\atom}$
the atomic diagram of $U$ with the $c_{a}$ constants, and by $\Delta_{U,d}^{\atom}$
the atomic diagram of $U$ with the $d_{a}$ constants. Let $x$ of
finite arity, and assume $a,b\in U^{x}$ are such that $\tp^{\p}\left(a/\emptyset\right)=\tp^{\p}\left(b/\emptyset\right)$.
Then $T\cup\Delta_{U,c}^{\atom}\cup\Delta_{U,d}^{at}\cup\left\{ c_{a}=d_{b}\right\} $
is finitely satisfiable in $U$: 

Let $e\in U^{y}$ and $\varphi\left(x,y\right)$ a positive $\qf$
formula such that $U\vDash\varphi\left(b,e\right)$. Then $U\vDash\exists y\varphi\left(b,y\right)$
thus $U\vDash\exists y\varphi\left(a,y\right)$ thus for some $f$
such that $U\vDash\varphi\left(a,f\right)$ we get $U$ satisfies
$\Delta_{U,c}^{\atom}\cup\left\{ \varphi\left(d_{b},d_{e}\right)\right\} $
by setting the $c$'s to be the elements they represent, $d_{b}=a$
and $d_{e}=f$.

So there is a model $M\vDash T$ together with two homomorphisms $h_{1},h_{2}:U\rightarrow M$
such that $h_{1}\left(a\right)=h_{2}\left(b\right)$. Taking a retract
$r$ for $h_{2}$ we get $r\circ h_{1}\in\End\left(U\right)=\Aut\left(U\right)$
and $r\left(h_{1}\left(a\right)\right)=r\left(h_{2}\left(b\right)\right)=b$
as required.

(6) Let $\Sigma\left(x\right)$ a positive partial type in a tuple
of some length over $U$ which is finitely satisfiable in $U$, in
particular consistent with $\Delta_{U}^{\atom}\cup T$. Then there
exists $M\vDash T$ continuing $U$ (without loss of generality extending
$U$, since $U$ is $\pc$) and $a\in M^{x}$ such that $M\vDash\Sigma\left(a\right)$.
Let $r$ a retract for the identity embedding. Then for any $\varphi\left(x,b\right)\in\Sigma$,
since $\varphi$ is positive and $r$ a homomorphism, $U\vDash\varphi\left(r\left(a\right),r\left(b\right)\right)$
and $r\left(b\right)=b$ thus $U\vDash\varphi\left(r\left(a\right),b\right)$
that is $U\vDash\Sigma\left(r\left(a\right)\right)$ as required.

(7) The product topology is generated by $C_{\varphi,a,b}$ where
$b$ is a 1-tuple, so the defined topology is finer, and thus it is
sufficient to show that it is compact. Note that the set of $C_{\varphi,a,b}$
is certainly closed under finite unions (just take $\varphi=\bigvee_{i<k}\varphi\left(x_{i},y_{i}\right)$
and $a,b$ to be the concatenation of the $a_{i}$'s and $b_{i}$'s).
Assume $\left\{ C_{\varphi_{i},a_{i},b_{i}}\right\} _{i\in I}$ are
basic closed sets with the finite intersection property. Let $\left\{ c_{a}\right\} _{a\in U},\left\{ d_{a}\right\} _{a\in U}$
two new sets of constants and let $\Delta_{U,c}^{\atom},\Delta_{U,d}^{\atom}$
the corresponding atomic diagrams. Consider the positive theory 
\[
T\cup\Delta_{U,c}^{\atom}\cup\Delta_{U,d}^{\atom}\cup\left\{ \varphi\left(c_{a_{i}},d_{b_{i}}\right)\right\} _{i\in I}.
\]

If $I_{0}\subseteq I$ is a finite subset, let $g\in\bigcap_{i\in I_{0}}C_{\varphi_{i},a_{i},b_{i}}$.
Then setting $c_{a}^{U}=a,d_{a}^{U}=d\left(a\right)$ we have that
the enhanced $U$ is a model of $T\cup\Delta_{U,c}^{\atom}\cup\Delta_{U,d}^{\atom}\cup\left\{ \varphi\left(c_{a_{i}},d_{b_{i}}\right)\right\} _{i\in I_{0}}$.
Thus there is a model of $V$ of $T\cup\Delta_{U,c}^{\atom}\cup\Delta_{U,d}^{\atom}\cup\left\{ \varphi\left(c_{a_{i}},d_{b_{i}}\right)\right\} _{i\in I}.$
Defining $h\left(a\right)=c_{a}^{V},f\left(a\right)=d_{a}^{V}$ we
have $h,f:U\rightarrow V$ are homomorphism from $U$ to $V$ thus
immersions. Let $r:V\rightarrow U$ a retract for $h$, which exists
by (2). Then $g=r\circ f:U\rightarrow U$ is an endomorphism thus
an automorphism of $U$ (by (4)), and we find that for any $i$, $V\vDash\varphi_{i}\left(h\left(a_{i}\right),f\left(b_{i}\right)\right)\Rightarrow U\vDash\varphi_{i}\left(r\left(h\left(a_{i}\right)\right),r\left(f\left(b_{i}\right)\right)\right)$
that us $U\vDash\varphi_{i}\left(a_{i},g\left(b_{i}\right)\right)$
thus $g\in\bigcap_{i\in I}C_{\varphi_{i},a_{i},b_{i}}$ as required.
\end{proof}
\begin{cor}
The universal model is compact in the positive topology (see \defref{p-topology}),
since this is just $\left|U\right|^{+}$-saturation.

Furthermore, every power $U^{I}$ of $U$ is not only compact in the
product topology, but also compact in the finer topology whose basic
closed sets are of the form $\left\{ a\in U^{I}\mid U\vDash\varphi\left(a\left(i_{0}\right),...,a\left(i_{n-1}\right)\right)\right\} $
for $\varphi$ positive.
\end{cor}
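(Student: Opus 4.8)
The plan is to deduce both assertions from the positive $\kappa$-saturation of $U$ recorded in \propref{univ-ec}(6), via the standard finite-intersection-property criterion for compactness. First I would recall the general topological fact that if $\mathcal{B}$ is a family of closed subsets of a space $X$ such that every closed set of $X$ is an intersection of members of $\mathcal{B}$, then $X$ is compact if and only if every subfamily of $\mathcal{B}$ with the finite intersection property has nonempty intersection. The only nontrivial direction: given an arbitrary FIP family $\mathcal{F}$ of closed sets, write each $F \in \mathcal{F}$ as $\bigcap \mathcal{B}_F$ with $\mathcal{B}_F \subseteq \mathcal{B}$; then $\bigcup_{F \in \mathcal{F}} \mathcal{B}_F$ still has the FIP, since any member of it contains some $F$, and it has the same intersection as $\mathcal{F}$. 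Since a disjunction of positive formulas is again positive, the sets cut out by a single positive formula (with parameters in $U$) are closed under finite unions, so in either of our two topologies they already form such a family $\mathcal{B}$ — every closed set being, by definition of the generated topology, an intersection of them; for $U^I$ one realizes a finite union by concatenating the relevant finite index tuples from $I$ and disjoining the formulas, padded with dummy variables. Thus it suffices to check the FIP criterion.

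For $U^I$, I would take a family $\{F_j\}_{j \in J}$ of basic closed sets with the FIP, where $F_j = \{a \in U^I : U \vDash \varphi_j(a(i^{j}_{1}), \dots, a(i^{j}_{n_j}))\}$ for a positive formula $\varphi_j$ with parameters in $U$ and indices $i^{j}_{1}, \dots, i^{j}_{n_j} \in I$, introduce one variable $z_i$ for each $i \in I$, and form the set of positive formulas $\Sigma = \{\varphi_j(z_{i^{j}_{1}}, \dots, z_{i^{j}_{n_j}}) : j \in J\}$ over $U$, each member of which mentions only finitely many of the $z_i$. A common point of finitely many of the $F_j$ supplies, by reading off its coordinates, an assignment realizing the corresponding finite fragment of $\Sigma$; hence the FIP of $\{F_j\}_{j \in J}$ is exactly the statement that $\Sigma$ is finitely satisfiable in $U$. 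By \propref{univ-ec}(6) every finitely satisfiable positive partial type over $U$ is realized in $U$, and extending such a realization arbitrarily on the coordinates outside $\bigcup_j \{i^{j}_{1}, \dots, i^{j}_{n_j}\}$ yields a point of $\bigcap_j F_j$. This is the ``furthermore'' clause; the first assertion is the special case $|I| = 1$, for which $|U|^{+}$-saturation — realizing finitely satisfiable positive types over $U$ in a single variable — is precisely what is used, as stated.

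I do not expect any serious obstacle here; the whole thing is a direct appeal to \propref{univ-ec}(6) dressed in topological language. The one place to be careful is the bookkeeping in the $U^I$ case — pairing the coordinates $i \in I$ with the variables $z_i$, allowing index tuples with repetitions, and tracking which finitely many of the $z_i$ each formula of $\Sigma$ actually involves — together with the routine observation that the two topologies in question are genuinely generated by a closed basis, so that the finite-intersection criterion applies.
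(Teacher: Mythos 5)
Your argument is correct and is exactly the route the paper intends: the corollary is left as an immediate consequence of \propref{univ-ec}(6), and your write-up just makes explicit the standard translation between positive $\kappa$-saturation and the finite-intersection-property criterion (including the observation that disjunctions keep the basic closed sets closed under finite unions, so the criterion applies). No gaps.
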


\begin{lem}
\label{lem:hom-to-compact}Let $M$ be a structure in a language $L$,
and let $A$ be a model of $\Th^{\hu}\left(M\right)$. 

Assume there is a compact topology on every sort of $M$ such that
for any relation symbol $R\left(x\right)$, $R\left(M\right)$ is
closed inside $M^{x}$ equipped with the product topology (and for
a function symbol, its graph is closed). Then $A$ admits a homomorphism
into $M$ .

In particular if $A$ is $\pc$, it is embeddable in $M$.
\end{lem}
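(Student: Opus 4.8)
The plan is to produce the homomorphism $h$ as a point of the compact space $X:=\prod_{a\in A}M_{s(a)}$, where $s(a)$ is the sort of $a$ and each factor carries the given compact topology; $X$ is compact by Tychonoff. A point $h\in X$ is exactly a sort-respecting map $h\colon A\to M$, and I would first record the standard characterization: such an $h$ is a homomorphism if and only if $h(f^{A}(\bar a))=f^{M}(h(\bar a))$ for every function symbol $f$ and tuple $\bar a$ from $A$ (reading constants as $0$-ary functions) and $\bar a\in R(A)$ implies $h(\bar a)\in R(M)$ for every relation symbol $R$; equality conditions are automatic, and the reduction of a general atomic formula with nested terms to these two kinds of conditions is a routine induction on terms. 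For each such instance let $C_{f,\bar a}:=\{h\in X\mid h(f^{A}(\bar a))=f^{M}(h(\bar a))\}$ and $C_{R,\bar a}:=\{h\in X\mid h(\bar a)\in R(M)\}$. Since the evaluation map $X\to M^{n}$, $h\mapsto(h(a_{1}),\dots,h(a_{n}))$, is continuous for the product topology, and since $R(M)$ is closed in $M^{x}$ and the graph of $f^{M}$ is closed by hypothesis, each $C_{f,\bar a}$ and $C_{R,\bar a}$ is closed in $X$; any $h$ in the intersection of all of them is a homomorphism, so it suffices to verify the finite intersection property.

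This is the step where the hypothesis $A\vDash\Th^{\hu}(M)$ enters. Given finitely many of these closed sets, they mention only a finite set $F\subseteq A$ of elements (for each chosen $C_{f,\bar a}$ I also throw $f^{A}(\bar a)$ into $F$). Introduce a variable $x_{b}$ for each $b\in F$ and let $\varphi(\bar x)$ be the finite conjunction of the atomic formulas $R(x_{a_{1}},\dots,x_{a_{n}})$ (one for each chosen $C_{R,\bar a}$) and $f(x_{a_{1}},\dots,x_{a_{n}})=x_{f^{A}(\bar a)}$ (one for each chosen $C_{f,\bar a}$, including $c=x_{c^{A}}$ for constants). Then $A\vDash\varphi\bigl((b)_{b\in F}\bigr)$, hence $A\vDash\exists\bar x\,\varphi(\bar x)$. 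If $M$ failed to satisfy $\exists\bar x\,\varphi$, then $\forall\bar x\,\neg\varphi$ — a $\pu$, hence $\hu$, sentence — would lie in $\Th^{\hu}(M)$ and so hold in $A$, a contradiction; thus $M\vDash\varphi\bigl((b')_{b\in F}\bigr)$ for some tuple from $M$, and $h_{0}(b):=b'$ defines a partial map on $F$ meeting all the chosen conditions. Finally I extend $h_{0}$ to all of $A$ by sending each $a\notin F$ to an arbitrary element of $M_{s(a)}$, which is possible since $M_{s(a)}$ is nonempty whenever $A_{s(a)}$ is: otherwise $\forall x\,\neg(x=x)$ in sort $s(a)$ would be an $\hu$ sentence true in $M$ but false in $A$. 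The resulting $h\in X$ lies in the chosen finite subfamily, so the finite intersection property holds.

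By compactness of $X$ the total intersection is nonempty, yielding a homomorphism $h\colon A\to M$. For the final assertion, note that $M\vDash\Th^{\hu}(M)$; hence if $A$ is a $\pc$ model of $\Th^{\hu}(M)$, the homomorphism $h\colon A\to M$ is $\pc$ by the definition of a $\pc$ model, and in particular an embedding.

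I do not anticipate a genuine obstacle here: the argument is a Tychonoff-plus-compactness construction, and the only care needed is bookkeeping — reducing atomic formulas with nested terms to the two basic kinds of conditions via term induction, folding constants in as $0$-ary function symbols (so that their "graphs" are the singletons one is assuming closed), dealing with possibly empty sorts through the $\hu$-transfer of $\forall x\,\neg(x=x)$, and, when $L$ is infinite, observing that a finite subfamily of closed sets still mentions only finitely many symbols so that $\varphi$ remains a finite conjunction of atomic formulas.
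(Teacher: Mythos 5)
Your proposal is correct and follows essentially the same route as the paper's proof: take the compact product space $M^{A}$, observe that the homomorphism conditions cut out closed sets (using closedness of the relations and function graphs), establish the finite intersection property by transferring the relevant $\pp$ sentence from $A$ to $M$ via the contrapositive with $\hu$ sentences, and conclude by compactness. Your treatment is somewhat more careful than the paper's about function symbols, nested terms, and empty sorts, but these are bookkeeping refinements of the same argument.
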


\begin{proof}
Denote $X=M^{A}$. $X$ is compact in the product topology. For any
relation symbol $R\left(x\right)$ and $\overline{a}\in R\left(A\right)$
(likewise for a function symbol), denote $X_{R,\overline{a}}=\left\{ f\in X\mid f\left(\overline{a}\right)\in R\left(M\right)\right\} $.
The set of homomorphisms from $A$ to $M$ is $\stackrel[R,\bar{a}]{}{\bigcap}X_{R,\overline{a}}$.

Define the projection $\pi_{\overline{a}}:X\rightarrow M^{x}$ as
$\pi_{\overline{a}}\left(f\right)=f\left(\overline{a}\right)$. Since
projections from the product topology are continuous and $R\left(M\right)$
is closed, $\pi_{\overline{a}}^{-1}\left(R\left(M\right)\right)\subseteq X$
is also closed.

Furthermore for any $\left\{ R_{j}\right\} _{j<k},\left\{ \overline{a}_{j}\right\} _{j<k}$,
since $A\vDash\exists\left\langle \overline{y}_{j}\right\rangle _{j<k}\stackrel[j<k]{}{\bigwedge}R_{j}\left(y_{j}\right)$
(where if $\overline{a}_{j}$ intersects $\overline{a}_{j'}$ we use
the same variable) we have also 
\[
M\vDash\exists\left\langle \overline{y}_{j}\right\rangle _{j<k}\stackrel[j<k]{}{\bigwedge}R_{j}\left(y_{j}\right)
\]
 since this is $\pp$ and $\Th^{\pp}\left(A\right)\subseteq\Th^{\pp}\left(M\right)$
(since the converse holds for all $\hu$ sentences, in particular
$\pu$ sentences). We conclude that $X_{R,\overline{a}}$ have the
f.i.p..

Thus from compactness a homomorphism exists.
\end{proof}
\begin{cor}
Under the conditions of \lemref{hom-to-compact}, $\Th^{\hu}\left(M\right)$
is bounded.
\end{cor}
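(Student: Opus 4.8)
The plan is to simply combine \lemref{hom-to-compact} with the definition of a $\pc$ model, so this is essentially a one-line deduction. Set $T=\Th^{\hu}\left(M\right)$. First I would observe that $M$ itself is a model of $T$, since by definition $T$ consists precisely of the $\hu$ sentences true in $M$. Now let $E$ be an arbitrary $\pc$ model of $T$. Since $E\vDash T=\Th^{\hu}\left(M\right)$, the hypothesis of \lemref{hom-to-compact} applies with $A=E$, and hence there is a homomorphism $h:E\rightarrow M$. Because $E$ is $\pc$ and $M\vDash T$, this $h$ must be $\pc$, and in particular an embedding — this is exactly the ``In particular'' clause of \lemref{hom-to-compact}. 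Therefore $\left|E\right|\leq\left|M\right|$.

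Since $E$ was an arbitrary $\pc$ model of $T$, the cardinal $\kappa=\left|M\right|$ witnesses that $T=\Th^{\hu}\left(M\right)$ is $\pc$ bounded in the sense of \defref{bounded}, which is the claim. There is no genuine obstacle at this stage: all the content sits inside \lemref{hom-to-compact}, whose finite-intersection-property-plus-compactness argument manufactures the homomorphism out of the closed sets $X_{R,\overline{a}}$. The only point requiring a moment of care is noticing that $M$ is itself among the models of $T$, so that the $\pc$-ness of $E$ can be invoked to upgrade the homomorphism $E\rightarrow M$ to an embedding and thereby bound $\left|E\right|$.
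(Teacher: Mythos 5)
Your argument is correct and is exactly the paper's proof: apply \lemref{hom-to-compact} with $A=E$ for an arbitrary $\pc$ model $E$ of $\Th^{\hu}\left(M\right)$, use the ``in particular'' clause to get an embedding $E\rightarrow M$, and conclude $\left|E\right|\leq\left|M\right|$. Nothing to add.
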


\begin{proof}
If $E$ is a $\pc$ model of $T$ then $E$ embeds into $M$ thus
$\left|E\right|\leq\left|M\right|$.
\end{proof}

\subsubsection{Finitely bounded theories}
\begin{prop}
A finite $\pc$ model $E$ of an irreducible $\hu$ theory $T$ does
not have proper $\pc$ substructures.

Thus if $T$ is $\pc$ bounded, $U$is its universal model and $\left|U\right|<\aleph_{0}$
then $U$ is the unique $\pc$ model of $T$ up to isomorphism.
\end{prop}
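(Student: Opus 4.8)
The plan is to reach a contradiction from the assumption that the finite $\pc$ model $E$ of the irreducible $\hu$ theory $T$ has a proper $\pc$ substructure $A$, by exhibiting a homomorphism $E\to A$. Note first that $A\vDash T$, since $A$ is a substructure of $E\vDash T$ and $\hu$ sentences are inherited by substructures. So if we produce any homomorphism $g\colon E\to A$, then, $E$ being $\pc$, $g$ is an immersion and in particular injective, forcing $\left|E\right|\le\left|A\right|$ --- contradicting $A\subsetneq E$ with $E$ finite. Thus everything reduces to building $g$.

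To build it, fix a repetition-free enumeration $\bar{e}$ of $E$ and let $\Delta\left(\bar{x}\right)$ be its atomic diagram in the variable tuple $\bar{x}$, namely the set of atomic formulas $\varphi\left(\bar{x}\right)$ with $E\vDash\varphi\left(\bar{e}\right)$; then for any structure $N$, a map sending $\bar{e}$ to a tuple $\bar{a}\in N^{\bar{x}}$ is a homomorphism $E\to N$ precisely when $N\vDash\Delta\left(\bar{a}\right)$. For every finite $\Delta_{0}\subseteq\Delta\left(\bar{x}\right)$ the sentence $\exists\bar{x}\bigwedge\Delta_{0}$ is positive and holds in $E$ (witnessed by $\bar{e}$); since $A$ is a $\pc$ substructure of $E$, positive sentences true in $E$ remain true in $A$ (this is the definition of a $\pc$ substructure applied with the empty parameter tuple), so $A\vDash\exists\bar{x}\bigwedge\Delta_{0}$. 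Hence $\Delta\left(\bar{x}\right)$ is finitely satisfiable in $A$, and since $A$ --- hence $A^{\bar{x}}$ --- is finite, a finitely satisfiable set of formulas there is outright satisfiable; any realizing tuple $\bar{a}\in A^{\bar{x}}$ yields the desired homomorphism $g\colon E\to A$. (Alternatively one can route this through $T^{\pm}$: as $T$ is irreducible, $E\vDash T^{\pm}$ by \remref{pc-T+-}, and likewise $A\vDash T^{\pm}$, so $\exists\bar{x}\bigwedge\Delta_{0}$, which lies in $T^{-}$ because it is consistent with $T$, holds in $A$.) The only step requiring any care is the passage from finite satisfiability to satisfiability, which is exactly where finiteness of $A$ is used; the rest is bookkeeping.

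For the concluding clause, suppose in addition $T$ is $\pc$ bounded with universal model $U$ and $\left|U\right|<\aleph_{0}$. Then $U$ is a finite $\pc$ model of $T$ by \propref{univ-ec}, so by the first part it has no proper $\pc$ substructure. Given any $\pc$ model $E$ of $T$, \propref{univ-ec} provides an immersion $g\colon E\to U$; its image $g\left(E\right)$ is a substructure of $U$ isomorphic to $E$, and it is a $\pc$ substructure of $U$ since $g$ reflects positive formulas. By the first part $g\left(E\right)=U$, so $g$ is an isomorphism and $E\cong U$. Therefore $U$ is, up to isomorphism, the unique $\pc$ model of $T$.
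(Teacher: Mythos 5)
Your proof is correct, but the first (and main) part runs on a different engine than the paper's. The paper argues directly that no proper substructure $M\leq E$ can be $\pc$: picking $a\in E\setminus M$, it uses \factref{maximal-pp-in-pc} to choose for each $b\in M$ a positive $\varphi_{b}\left(x,y\right)\perp x=y$ with $E\vDash\varphi_{b}\left(a,b\right)$; finiteness of $M$ makes $\exists x\bigwedge_{b\in M}\varphi_{b}\left(x,b\right)$ a genuine $\pp$ formula over $M$ which holds in $E$ but cannot hold in $M$ (a witness $b'\in M$ would give $\varphi_{b'}\left(b',b'\right)$), so the inclusion is not $\pc$. You instead build a homomorphism $g:E\rightarrow A$ --- transferring the existential consequences of the atomic diagram of $E$ down to $A$ via the $\pc$-substructure property and using finiteness of $A^{\bar{x}}$ to pass from finite satisfiability to satisfiability --- and then get a cardinality contradiction from injectivity of $g$. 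Finiteness is essential in both arguments but enters in different places (a finite conjunction there, a finite-intersection-property argument here); your route is essentially \lemref{hom-to-compact} specialized to the discrete topology on a finite structure, and it isolates cleanly where finiteness is used, at the cost of being slightly less direct than the paper's explicit witnessing formula. The second part is handled the same way in both. One small caveat: in your parenthetical alternative you assert ``likewise $A\vDash T^{\pm}$'' as if \remref{pc-T+-} applied to $A$, but that remark requires $A$ to be a $\pc$ \emph{model} of $T$, which you have not established (you only know it is a $\pc$ substructure of $E$); the correct justification is the one your main route already uses, namely that $E\vDash T^{\pm}$ and positive sentences transfer from $E$ to $A$ along the $\pc$ inclusion.
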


\begin{proof}
Assume $M\leq E$ a proper substructure, and take $a\in E\setminus M$.
For any $b\in M$, since $E\vDash\neg a=b$, there exists a $\pp$
positive formula $\varphi_{b}\left(x,y\right)\perp x=y$ such that
$E\vDash\varphi_{b}\left(a,b\right)$. So $E\vDash\exists x\stackrel[b\in M]{}{\bigwedge}\varphi_{b}\left(x,b\right)$
but for any $b'\in M$, $M\nvDash\varphi_{b}\left(b,b\right)$ thus
$M\leq E$ is not $\pc$.
\end{proof}
\begin{prop}
\label{prop:definable-inequality}If $T$ is bounded and $U$ is its
universal model, $\neq$ is positively definable in $U$ iff $U$
is finite. For a multisorted $T$, this holds per sort (that is $x\neq y$
is positively definable iff the sort of $x$ is finite).
\end{prop}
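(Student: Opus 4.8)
The direction "$U$ finite $\Rightarrow$ $\neq$ positively definable" is easy: if the sort $S$ has elements $a_1, \dots, a_n$, then since $U$ is $\pc$ and $\pc$ models realize $T^\pm$ (by \remref{pc-T+-}), for each pair $i \neq j$ there is a positive formula $\psi_{ij}(x,y)$ with $\psi_{ij} \perp (x=y)$ and $U \vDash \psi_{ij}(a_i, a_j)$ by \factref{maximal-pp-in-pc}; then $x \neq y$ on $S$ is equivalent in $U$ to $\bigvee_{i \neq j}\bigl(\psi_{i}(x) \wedge \psi_{j}(y) \wedge \psi_{ij}(x,y)\bigr)$ where $\psi_i$ positively isolates $a_i$ (again using \factref{maximal-pp-in-pc} to separate $a_i$ from each other element, together with boundedness so the conjunction is finite). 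So the real content is the converse.

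**Main direction: $\neq$ positively definable $\Rightarrow$ $U$ finite.** Suppose $\theta(x,y)$ is a positive formula with $\theta(U) = \{(a,b) \in U^x \times U^y : a \neq b\}$ on the sort in question. The plan is to exploit compactness of $U$ in the positive topology (the Corollary after \propref{univ-ec}), together with property (5) of \propref{univ-ec} (homogeneity for positive types) and property (7) (compactness of $\Aut(U)$ in the $C_{\varphi,a,b}$-topology). The key observation: for fixed $a \in U$ on that sort, the set $\{b : U \vDash \theta(a,b)\} = U \setminus \{a\}$ is positively definable, hence closed in the positive topology; but also $\{a\}$ itself would then be open. If the sort were infinite, I would aim for a contradiction with compactness by showing the complements of singletons form an open cover with no finite subcover — equivalently, that no finite union of singletons is everything, which is immediate, so the contradiction must instead come from showing each singleton is also closed (trivially true) AND that the positive topology on an infinite sort is then discrete, contradicting compactness of an infinite discrete space. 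That is, the heart of the argument: $\{a\}$ closed (always) plus $\{a\}$ open (from $\neq$ being positively definable, since $U \setminus \{a\} = \theta(a, U)$ is closed) makes the positive topology discrete; an infinite compact discrete space is impossible, so the sort is finite.

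**Filling the gap.** The subtle point is that $U \setminus \{a\}$ being closed only shows $\{a\}$ is open in the positive topology on the ambient model — I should double-check that the positive topology restricted to a single sort $S$ is the one for which the relevant Corollary gives compactness, which it does since positive-formula-definable sets in the variable $x$ of sort $S$ are exactly the basic closed sets there. Also I must handle that $\theta$ may have extra variables or be defined only with parameters: since $U \vDash T^\pm$ and parameters from $U$ are allowed in the positive topology, this causes no trouble. For the multisorted refinement, the same argument runs sort by sort, since the positive topology on each sort is compact independently (again by the Corollary after \propref{univ-ec}, applied with $I$ ranging over that sort, or directly since $U$ is $\kappa$-saturated for all $\kappa$).

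**Expected main obstacle.** The genuinely delicate step is passing from "$\neq$ positively definable" to "the positive topology is discrete" cleanly — specifically making sure that $\theta(a, U)$ being positive-definable gives that $\{a\}$ is \emph{open}, not merely that its complement is closed in some coarser sense, and that this interacts correctly with the compactness statement (which is about the topology whose \emph{basic closed} sets are positive-definable). One must be careful that "discrete and compact implies finite" is being applied to the right topology. I expect this to be a short but careful argument; the rest is routine application of \factref{maximal-pp-in-pc} and the properties of $U$ already established.
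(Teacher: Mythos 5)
Your proof is correct and is essentially the paper's argument in topological dress: the paper shows directly that the positive type $\left\{ \phi\left(x,a\right)\right\} _{a\in U}\cup T\cup\Delta_{U}^{\atom}$ is inconsistent (otherwise a retract would realize it in $U$), which is exactly your observation that the basic closed sets $U\setminus\left\{ a\right\} =\theta\left(a,U\right)$ would have the finite intersection property but empty total intersection, violating compactness of the positive topology, if the sort were infinite. The forward direction likewise matches the paper's, which just uses the simpler disjunction $\bigvee_{a\neq b}\phi_{a,b}$ without the isolating formulas $\psi_{i}$.
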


\begin{proof}
Assume $U$ is finite. For any $a\neq b$ in $U$, let $\phi_{a,b}\left(x,y\right)\perp x=y$
be a positive formula such that $U\vDash\phi_{a,b}\left(a,b\right)$.
Then $\left(\neq^{U}\right)\subseteq\left(\stackrel[a,b]{}{\bigvee}\phi_{a,b}^{U}\right)\subseteq\left(=^{U}\right)^{c}=\left(\neq^{U}\right)$
thus inequality is positively definable.

On the other hand, assume $\phi$ is positive such that $\phi^{U}=\left(\neq^{U}\right)$.
Define $\Sigma\left(x\right)=\left\{ \phi\left(x,a\right)\right\} _{a\in U}\cup T\cup\Delta_{U}^{\atom}$;
and assume that it is consistent. Then it is realized in some $M\vDash T$
extending $U$. Let $h:M\rightarrow U$ a retract by \propref{univ-ec}.
We find that $\Sigma$ is also realized in $U$, which is impossible
--- thus $\Sigma$ is inconsistent. But that means that there exist
$a_{0},...,a_{n-1}$ such that $U\vDash\neg\exists x:\stackrel[i<n]{}{\bigwedge}\phi\left(x,a_{i}\right)\Longleftrightarrow U\vDash\neg\exists x:\stackrel[i<n]{}{\bigwedge}x\neq a_{i}$
--- that is $U=\left\{ a_{i}\right\} _{i<n}$.

For a multisorted language, we only consider elements of a specific
sort.
\end{proof}

\subsection{Types and Classification of $\text{\ensuremath{\protect\hu}}$ Theories}
\begin{prop}
\label{prop:max-iff-pc}Let $T$ be an $\hu$ theory and $M$ be a
$\pc$ model of $T$.

Assume $p\left(x\right)$ is a consistent (with $T\cup\Delta_{M}^{\atom}$)
set of positive formulas in a variable tuple $x$ over $A\subseteq M$.
Then $p$ is maximal (among such sets) iff there exists some $\pc$
model $N\geq M$ of $T$ and some $a\in N$ such that $p=\tp^{\p}\left(a/A\right)$.
\end{prop}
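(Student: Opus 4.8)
The plan is to prove both directions separately, using the amalgamation-style constructions from \propref{amalgamation} and the diagram technique that recurs throughout this section.

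For the ``if'' direction, suppose $p = \tp^{\p}(a/A)$ for some $a$ in a $\pc$ model $N \geq M$. I claim $p$ is already maximal among sets of positive formulas over $A$ consistent with $T \cup \Delta_M^{\atom}$. Indeed, suppose $\varphi(x,b)$ (with $b \in A$) is a positive formula such that $p \cup \{\varphi(x,b)\}$ is still consistent with $T \cup \Delta_M^{\atom}$. If $N \nvDash \varphi(a,b)$, then since $N$ is $\pc$, \factref{maximal-pp-in-pc} gives a positive formula $\psi(x,b)$ with $\psi \perp \varphi$ and $N \vDash \psi(a,b)$; hence $\psi(x,b) \in p$, so $p \cup \{\varphi(x,b)\}$ implies $\exists x(\varphi \wedge \psi)(x,b)$, contradicting $\psi \perp \varphi$ (which is a consequence of $T$). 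Therefore $N \vDash \varphi(a,b)$, i.e.\ $\varphi(x,b) \in p$; so $p$ is maximal.

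For the ``only if'' direction, suppose $p$ is maximal. First one builds \emph{some} model $N_0 \vDash T$ extending $M$ (using that $M$ is $\pc$, we may take an honest extension) together with an element $a_0$ realizing $p$: this is just a consistency/compactness argument applied to $T \cup \Delta_M^{\atom} \cup p(a_0)$, which holds by hypothesis. Next, by \factref{pc-universal} (or \propref{univ-ec}) continue $N_0$ into a $\pc$ model $N \vDash T$; let $h : N_0 \to N$ be the continuation and $a = h(a_0)$. Since $M$ is $\pc$ and $h\restriction M : M \to N$ is a homomorphism, it is an immersion, so $N \geq M$ up to identifying $M$ with its image (and it is a $\pc$ substructure — though what we really need is just that $N$ is $\pc$ and extends $M$). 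Because homomorphisms push positive formulas forward, $\tp^{\p}(a_0/A) \subseteq \tp^{\p}(a/A)$, i.e.\ $p \subseteq \tp^{\p}(a/A)$. But $\tp^{\p}(a/A)$ is itself a set of positive formulas over $A \subseteq M$ consistent with $T \cup \Delta_M^{\atom}$ (witnessed by $N$), so by maximality of $p$ we get $p = \tp^{\p}(a/A)$, as required.

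The only mild subtlety — and the step I'd be most careful about — is the identification making $N \geq M$ a genuine extension rather than just a target of an immersion: one should note that an immersion of $M$ into $N$ lets us replace the elements of $h(M)$ by the corresponding elements of $M$ (exactly as in the proof of \propref{amalgamation}), so that $M$ becomes a literal substructure of $N$, and then $p$ and $\tp^{\p}(a/M)$ are comparable as sets of formulas over the same parameter set. Beyond that bookkeeping, both directions are short applications of tools already established: \factref{maximal-pp-in-pc} for maximality, and \factref{pc-universal} plus the diagram/compactness method for realizability.
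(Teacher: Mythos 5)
Your proof is correct and follows essentially the same route as the paper's: the ``if'' direction uses \factref{maximal-pp-in-pc} to produce a perpendicular positive formula witnessing inconsistency of any proper extension, and the ``only if'' direction realizes $p$ in some extension by compactness, pushes forward into a $\pc$ continuation via \factref{pc-universal}, identifies $M$ with its image, and invokes maximality. The bookkeeping point you flag about making $N$ a literal extension of $M$ is handled the same way in the paper (``without loss of generality $f|_M = \id_M$'').
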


\begin{rem}
When $M$ is $\pc$, a set of positive formulas $p$ is consistent
with $T\cup\Delta_{M}^{\atom}$ iff it is realized in some model of
$T$ continuing $M$, iff it is finitely satisfiable in $M$ (since
such a continuation would necessarily be an immersion, and thus the
satisfiability of any finite subset of $p$ would be pulled back to
$M$).
\end{rem}

\begin{proof}
Assume $p$ is maximal. Let $N'\vDash T$, $M\leq N'$ and $b\in N'^{x}$
such that $N'\vDash p\left(b\right)$. Let $f:N'\rightarrow N$ a
homomorphism for $N$ a $\pc$ model of $T$ (see \factref{maximal-pp-in-pc});
without loss of generality, $f|_{M}=\id_{M}$ (since $f|_{M}$ is
an embedding). Since homomorphisms preserve positive formulas, $N\vDash p\left(f\left(b\right)\right)$;
and since $p$ is maximal, $p=\tp_{\p}\left(f\left(b\right)\right)$.

Conversely, assume $M\leq N$ a $\pc$ model of $T$ and $a\in N^{x}$,
and let $p=\tp^{\p}\left(a/A\right)$. Let $\varphi\left(x,b\right)$
be some positive formula for $b\in A^{y}$. If $\varphi\left(x,b\right)\notin p\Rightarrow N\nvDash\varphi\left(a,b\right)$
then by \factref{maximal-pp-in-pc} there exists some $\pp$ formula
$\psi\left(x,y\right)$ such that $\psi\perp\varphi$ and $N\vDash\psi\left(a,b\right)\Rightarrow\psi\left(x,b\right)\in p$.
We conclude $p\cup\varphi\left(x,b\right)\cup T\cup\Delta_{M}^{\atom}$
is inconsistent, as required.
\end{proof}
\begin{rem}
Essentially the same proof works for a general model $M$, if we replace
$M\leq N$ with a general homomorphism $h:M\rightarrow N$.

To be more precise, $p\left(x\right)$ over $A\subseteq M$ is maximal
for a general model $M$ iff there exists $h:M\rightarrow N$ an $\pc$
model of $T$ and $a\in N^{x}$ such that $p=h^{*}\left(\tp\left(a/h\left(A\right)\right)\right)$.
\end{rem}

\begin{defn}
\label{def:Hausdorff}An $\hu$ theory $T$ is called \textbf{Hausdorff}
if for any two distinct maximal positive types over the empty set
$p\left(x\right),q\left(x\right)$ (in every tuple of variables $x$)
there are positive formulas $\psi,\varphi$ such that $\forall x:\varphi\vee\psi$
holds in every $\pc$ model of $T$, and $\varphi\notin p,\psi\notin q$. 

$T$ is \textbf{semi Hausdorff }if type equality is $\pp$ definable;
that is for any tuple $x$ there exists a partial positive type $p\left(x,x'\right)$
(where $x'$ is a tuple of the same sort) such that if $a,b\in M^{x}$
for some $\pc$ model $M$ then $\tp^{\p}\left(a/\emptyset\right)=\tp^{\p}\left(b/\emptyset\right)$
iff $M\vDash p\left(a,b\right)$.
\end{defn}

\begin{rem}
Every Hausdorff theory is semi-Hausdorff with the type 
\[
\left\{ \varphi\left(x\right)\vee\psi\left(x'\right)\mid\forall x:\varphi\left(x\right)\vee\psi\left(x\right)\text{ holds in every }\pc\text{ model of }T\right\} .
\]

The condition of $T$ being Hausdorff is equivalent to saying that
the space of maximal positive types over $\emptyset$ is Hausdorff
if endowed with the topology generated by the open sets $\left\{ p\mid\varphi\left(x\right)\notin p\right\} $
for $\varphi$ a positive formula.
\end{rem}

\begin{defn}
\label{def:indis-and-thick}Let $T$ an $\hu$ theory and $M$ a $\pc$
model of $T$, $A\subseteq M$, $I$ a linearly ordered set, and $x$
a variable tuple. A sequence $\left\langle a_{i}\right\rangle _{i\in I}$
of tuples $a_{i}\in M^{x}$ is called (positively) \textbf{indiscernible
}over $A$ if for any $i_{0}<...<i_{n-1}$ and $j_{0}<...<j_{n-1}$
in $I$ we have $\tp^{\p}\left(a_{i_{0}},...,a_{i_{n-1}}/A\right)=\tp^{\p}\left(a_{j_{0}},...,a_{j_{n-1}}/A\right)$. 

A theory $T$ is \textbf{thick} if for every variable tuple $x$ there
is a partial positive type $p$ in the variables $\left\langle x_{i}\right\rangle _{i<\omega}$
such that for any sequence $\left\langle a_{i}\right\rangle _{i<\omega}$
of $x$ tuples in every $\pc$ model of $T$, $\left\langle a_{i}\right\rangle _{i<\omega}$
is indiscernible over $\emptyset$ iff $\left\langle a_{i}\right\rangle _{i<\omega}\vDash p$.
\end{defn}

\begin{rem}
\label{rem:semi-haus-and-bounded-imply-thick}Every semi-Hausdorff
theory is thick with the type 
\[
\left\{ \tp^{\p}\left(x_{i_{0}},...,x_{i_{n-1}}/\emptyset\right)=\tp^{\p}\left(x_{j_{0}},...,x_{j_{n-1}}/\emptyset\right)\mid i_{0}<...<i_{n-1}<\omega,j_{0}<...<j_{n-1}<\omega\right\} .
\]

In a bounded theory every infinite indiscernible sequence is constant.
Indeed assume $\left\langle a_{i}\right\rangle _{i\in I}$ is a non-constant
indiscernible sequence in some $\pc$ model $M$ of $T$. Then there
is some positive $\psi\left(x,x'\right)\perp x=x'$ such that $M\vDash\psi\left(a_{i_{1}},a_{i_{2}}\right)$
for some, thus every, $i_{1}<i_{2}$ in $I$. Thus for every $\kappa$,
the partial positive type in variables $\left\langle x_{i}\right\rangle _{i<\kappa}$
defined as $\left\langle \psi\left(x_{i},x_{j}\right)\mid i<j<\kappa\right\rangle $
is finitely satisfiable thus realized in some $\pc$ model $N$ of
$T$ by some $\left\langle b_{i}\right\rangle _{i<\kappa}$. And now
we get that for any $i<j<\kappa$, $\psi\left(b_{i},b_{j}\right)\Rightarrow b_{i}\neq b_{j}$
and thus $\left|N^{x}\right|\geq\kappa\Rightarrow\left|N\right|\geq\kappa$.

Since equality is positively definable, every bounded theory is thick.
\end{rem}

\begin{defn}
\label{def:qe}Let $T$ be an $\hu$ theory.

$T$ has positive quantifier elimination if for every positive formula
$\varphi\left(x\right)$ over $\emptyset$ there is a quantifier free
positive formula $\psi\left(x\right)$ over $\emptyset$ such that
for any $\pc$ model $M\vDash T$, $\varphi\left(M\right)=\psi\left(M\right)$.

$T$ is called \textbf{(positively) Robinson}\footnote{The name is taken from \cite[2.1]{BenYaacov2003PositiveMT}.}
if for every maximal positive type $p$ (consistent with $T$) over
$\emptyset$, there exists an positive quantifier free type $q$ over
$\emptyset$ such that in every $\pc$ model $M\vDash T$, $p\left(M\right)=q\left(M\right)$.

$T$ is called \textbf{(positively) strongly Robinson} if the same
holds for any partial positive type.
\end{defn}

\begin{rem}
\label{rem:qf-properties}Quantifier elimination implies strongly
Robinson which implies Robinson.

If any of these hold for $T$, then the equivalent requirement holds
for types and formulas over any set $A\subseteq M$ in every $\pc$
model $M$ of $T$. Indeed assume for example that $T$ is strongly
Robinson. Let $p\left(x\right)$ some partial positive type over $A$,
and for any tuple $a$ from $A$ let $p_{a}=\left\{ \varphi\left(x,y\right)\mid\varphi\left(x,a\right)\in p\right\} $.
Then for any such $p_{a}$ there is a quantifier free partial type
$q_{a}$ such that $p_{a}$ and $q_{a}$ define the same sets in every
$\pc$ extension $N$ of $M$, and thus $p\left(N\right)=\stackrel[a]{}{\bigcap}p_{a}\left(N,a\right)=\stackrel[a]{}{\bigcap}q_{a}\left(N,a\right)$,
and so $p$ is equivalent to $\stackrel[a]{}{\bigcup}q_{a}\left(x,a\right)$.
\end{rem}

\subsection{Examples of $\protect\hu$ Theories}
\begin{lem}
\label{lem:retracts}Let $M,N$ be structures in a relational language
$L$ and let $h:M\rightarrow N$ be an injective homomorphism. Assume
that for any finite $A\subseteq M$, any finite $B\subseteq N$ such
that $h\left(A\right)\subseteq B$ and any finite $L_{0}\subseteq L$
there exists an $L_{0}$ homomorphism $h_{B}:B\rightarrow M$ such
that $h_{B}\circ h|_{A}=\id_{A}$. Then $h$ is an immersion.
\end{lem}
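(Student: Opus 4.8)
The goal is to show that the injective homomorphism $h:M\to N$ is a $\pc$ embedding, i.e. that whenever $\varphi(x)$ is a positive (equivalently $\pp$) formula and $N\vDash\varphi(h(a))$ for a tuple $a$ from $M$, then $M\vDash\varphi(a)$. Since a $\pp$ formula is $\exists\overline{y}\,\theta(x,\overline{y})$ with $\theta$ a conjunction of atomic formulas (and $L$ is relational, so atomic formulas are just relation symbols applied to variables), it suffices to handle this case: suppose $\overline{b}$ is a tuple in $N$ with $N\vDash\theta(h(a),\overline{b})$, and we must produce a tuple $\overline{b}'$ in $M$ with $M\vDash\theta(a,\overline{b}')$.

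The idea is to apply the hypothesis with a suitably chosen finite $A$ and $B$. First I would set $A$ to be the (finite) set of coordinates of the tuple $a$, set $B=h(A)\cup\{\text{coordinates of }\overline{b}\}\subseteq N$, and let $L_0$ be the finite set of relation symbols actually occurring in $\theta$. By hypothesis there is an $L_0$-homomorphism $h_B:B\to M$ with $h_B\circ h|_A=\id_A$. Now let $\overline{b}'=h_B(\overline{b})$ (coordinatewise), a tuple in $M$. Since $N\vDash\theta(h(a),\overline{b})$, every atomic conjunct $R(\cdots)$ of $\theta$ holds of the appropriate subtuple of $(h(a),\overline{b})$ inside $B$ (here I use that $B$ is a substructure of $N$ containing all these coordinates, and the relations are the induced ones — for a relational language a subset is automatically a substructure with the induced relations). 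Applying the $L_0$-homomorphism $h_B$, which preserves the relations in $L_0$, we get $M\vDash R(\cdots)$ of the image subtuple. But $h_B(h(a))=a$ by the retraction property and $h_B(\overline{b})=\overline{b}'$ by definition, so each conjunct of $\theta(a,\overline{b}')$ holds in $M$; hence $M\vDash\theta(a,\overline{b}')$ and therefore $M\vDash\exists\overline{y}\,\theta(a,\overline{y})=\varphi(a)$, as required.

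Finally, to conclude $h$ is an immersion in the sense of the paper, I note injectivity is already assumed, and preservation of atomic formulas in the forward direction is automatic since $h$ is a homomorphism; combined with the $\pc$ property just established, $h$ is an embedding and $\pc$, i.e. an immersion. The only mild subtlety — and the one point to state carefully rather than a genuine obstacle — is the reduction to $\pp$ formulas and the bookkeeping that a finite subset $B$ of $N$ containing $h(A)$ and all coordinates of the witness tuple $\overline{b}$ can be chosen, together with the observation that in a relational language the induced structure on $B$ makes the relevant atomic facts transfer both from $N$ to $B$ and, via $h_B$, from $B$ to $M$. There is no compactness or amalgamation needed here; it is a direct diagram-chase using exactly the stated hypothesis.
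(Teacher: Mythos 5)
Your proof is correct and follows essentially the same route as the paper's: reduce to a $\pp$ formula $\exists\overline{y}\,\theta(x,\overline{y})$, take a witness $\overline{b}$ in $N$, apply the hypothesis to $A=$ the coordinates of $a$, $B=h(A)\cup\{\text{coordinates of }\overline{b}\}$, and $L_0=$ the symbols of $\theta$, and push $\overline{b}$ down to $M$ via $h_B$. The only cosmetic difference is that the paper works with an arbitrary positive quantifier-free matrix while you reduce to conjunctions of atomics first; both are fine.
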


\begin{proof}
Let $\varphi\left(\overline{x},\overline{y}\right)$ be a positive
quantifier free formula, take some $\overline{a}\in M^{\overline{x}}$,
and assume that $N\vDash\exists\overline{y}\varphi\left(h\left(\overline{a}\right),\overline{y}\right)$.
Let $\overline{b}\in N^{\overline{y}}$ be such $N\vDash\varphi\left(h\left(\overline{a}\right),\overline{b}\right)$.
Let $A$ be the set of elements in $\overline{a}$, and $B$ the set
of elements in $h\left(\overline{a}\right)$ and $\overline{b}$ together,
and let $L_{0}$ be the set of symbols in $\varphi$. Then by assumption
there exists an $L_{0}$ homomorphism $h_{B}:B\rightarrow M$ such
$h_{B}\left(h\left(\overline{a}\right)\right)=\overline{a}$. We get
$B\vDash\varphi\left(h\left(\overline{a}\right),\overline{b}\right)$
thus $M\vDash\varphi\left(h_{B}\left(h\left(\overline{a}\right)\right),h_{B}\left(\overline{b}\right)\right)\Rightarrow M\vDash\varphi\left(\overline{a},h_{B}\left(\overline{b}\right)\right)$
thus $M\vDash\exists\overline{y}\varphi\left(\overline{a},\overline{y}\right)$.
\end{proof}

\subsubsection{Disjoint Subsets}

While in normal first order logic a bound on the size of models implies
that all models are finite, bounded $\hu$ theories can have arbitrary
bounds, as this simple example shows:
\begin{example}
Let $\kappa$ be a (possibly finite) cardinal and $L=\left\{ P_{i}\right\} _{i\in\kappa}$
where each $P_{i}$ is unary. Consider the theory $T=\left\{ \forall x\neg\left(P_{i}\left(x\right)\wedge P_{j}\left(x\right)\right)\mid i<j<\kappa\right\} $
(or its $\hu$ deductive closure). 
\end{example}

\begin{prop}
The only $\pc$ model of $T$ (up to isomporphism) is $M=\kappa$
with $P_{i}^{M}=\left\{ i\right\} $.
\end{prop}

\begin{proof}
$M$ is certainly a model of $T$. 

Every homomorphism $h$ from $M$ to a model of $T$ is an embedding
--- it is injective since if $c=h\left(i\right)=h\left(j\right)$
then $P_{i}\left(c\right)\wedge P_{j}\left(c\right)$ thus $i=j$,
and if $P_{j}\left(h\left(i\right)\right)$ then likewise $\left(P_{i}\wedge P_{j}\right)\left(h\left(i\right)\right)$
thus $i=j$. From here it is easy to see that $M$ is $\pc$ by \lemref{retracts}.
Since every homomorphism to a model of $T$ is an embedding, we may
assume $M\leq N$. If $A\subseteq M,B\subseteq N$ are finite and
$A\subseteq B$, $h_{B}:B\rightarrow M$ defined as $\left(\stackrel[i<\kappa]{}{\bigcup}P_{i}\left(B\right)\times\left\{ i\right\} \right)\cup\left(B\setminus\stackrel[i<\kappa]{}{\bigcup}P_{i}\left(B\right)\times\left\{ 0\right\} \right)$
is a homomorphism, and it is necessarily over $A$. 

Let $N$ be a $\pc$ model of $T$. We find that 
\[
h_{N}=\left(\stackrel[i<\kappa]{}{\bigcup}P_{i}\left(N\right)\times\left\{ i\right\} \right)\cup\left(N\setminus\stackrel[i<\kappa]{}{\bigcup}P_{i}\left(N\right)\times\left\{ 0\right\} \right)
\]
 is a homomorphism to $M$ thus an immersion to $M$. Thus we get
that $N\setminus\stackrel[i<\kappa]{}{\bigcup}P_{i}\left(N\right)=\emptyset$,
and furthermore for any $i$ we get $\left|P_{i}\left(N\right)\right|\leq1$
by injectivity and also $\left|P_{i}\left(N\right)\right|\geq1$ since
$M\vDash\exists xP_{i}\left(x\right)$. Thus $N\cong M$, and in particular
$M$ is also $\pc$.
\end{proof}

\subsubsection{Directed Acyclic Graphs}

This example arises naturally when one wonders what $\pu$ sentences
exists given a single binary relation other than $=$, and it will
also be an example of the sense in which positive logic generalizes
first order logic.
\begin{example}
Let $L$ be a language consisting of a single binary relation $E$.
Let $T$ be the theory of directed acyclic graphs, that is the $\hu$
provable closure of $\left\{ \forall x_{0},...,x_{n}:\neg\left(x_{n}Ex_{0}\wedge\bigwedge_{i<n}x_{i}Ex_{i+1}\right)\right\} _{n<\omega}$.
\end{example}

\begin{prop}
The set of $\pc$ models of $T$ is exactly $\Mod\left(DLO\right)$.
\end{prop}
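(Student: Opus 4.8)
The plan is to establish the two inclusions separately, throughout reading the binary relation $E$ of a model of $DLO$ as the strict order $<$.

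For the inclusion of $\Mod(DLO)$ into the class of $\pc$ models of $T$, let $(M,E)\models DLO$. It is a model of $T$, since a directed cycle in a strict linear order forces some $xEx$. Let $h\colon M\to N$ be any homomorphism with $N\models T$. Then $h$ is injective --- $h(a)=h(b)$ with $a\ne b$ would push $aEb$ or $bEa$ forward to a loop in $N$ --- and on any finite $A\subseteq M$ it moreover \emph{reflects} $<$: if $h(a)E_Nh(b)$ then $a<b$, because $a=b$ would give a loop and $b<a$ a $2$-cycle in $N$. I then check the hypothesis of \lemref{retracts}: given a finite $A\subseteq M$ and a finite induced subgraph $B\subseteq N$ with $h(A)\subseteq B$, the graph $B$ is a finite DAG, so its reachability relation is a partial order; fix a topological enumeration $b_1,\dots,b_m$ of $B$ refining it. Since $h$ both preserves and reflects $<$ on $A$, the members of $h(A)$ occur in $b_1,\dots,b_m$ in exactly the order of their $h$-preimages in $M$; sending each of them back to its preimage and then placing the finitely many remaining $b_\ell$ of each gap into the corresponding open interval of $M$ --- possible precisely because $M$ is dense without endpoints --- defines an $L$-homomorphism $h_B\colon B\to M$ with $h_B\circ h|_A=\id_A$. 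By \lemref{retracts}, $h$ is an immersion; as $h$ was arbitrary, $M$ is $\pc$.

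For the converse, let $M$ be a $\pc$ model of $T$. Irreflexivity and antisymmetry of $E^M$ are the $n=0$ and $n=1$ cases of the axioms of $T$. For the remaining properties I use repeatedly: if $N\models T$ contains $M$ as a subgraph and $N\models\varphi(\bar a)$ for a $\pp$ formula $\varphi$ and a tuple $\bar a$ from $M$, then $M\models\varphi(\bar a)$ --- because the inclusion $M\hookrightarrow N$ is a homomorphism into a model of $T$, hence an immersion (as $M$ is $\pc$), and immersions reflect $\pp$ formulas. I apply this with $N$ the structure got from $M$ by inserting one new edge, or one new vertex together with one or two incident edges, using the elementary fact that such an insertion produces a directed cycle only when a reverse directed path already exists in $M$: inserting $a\to c$ when $aEb$ and $bEc$ hold produces no cycle (one would yield a directed path from $c$ to $a$, and hence a cycle through $a\to b\to c$ inside $M$), so $M\models aEc$ --- transitivity; granting transitivity, inserting $a\to b$ when $a\ne b$, $\neg aEb$, $\neg bEa$ produces no cycle (one would yield a directed path, hence the edge, $bEa$), so $M\models aEb$ --- linearity; inserting a new vertex $z$ with $a\to z$ and $z\to b$ when $aEb$ produces no cycle (one through $z$ would yield $bEa$), so $M\models\exists z\,(aEz\wedge zEb)$ --- density; and inserting a fresh source, resp. sink, vertex gives $\exists z\,(zEa)$, resp. $\exists z\,(aEz)$ --- no endpoints. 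Finally $M$ is nonempty, the empty structure failing to be $\pc$ (a homomorphism into any nonempty model of $T$ does not reflect $\exists x\,(x=x)$). Hence $E^M$ is a dense linear order without endpoints.

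The one genuinely delicate point is the construction of $h_B$ above: the requirement $h_B\circ h|_A=\id_A$ pins down $h_B$ on $h(A)$, and this has to be reconciled with $E$-preservation. It can be, because the pinned-down values form a chain honored by a suitable topological enumeration of $B$ --- exactly since $h$ already respects the order of $A$ --- and because density together with the absence of endpoints leaves room for the finitely many unconstrained vertices lying between consecutive constrained ones in that enumeration. Everything else reduces to the cycle-under-insertion observation, which is also what makes these $\pc$ models coincide with the existentially closed models of $T$ in the classical first-order sense.
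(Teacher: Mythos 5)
Your proof is correct. The converse direction (every $\pc$ model of $T$ is a model of $DLO$) is essentially the paper's argument: the same one-edge/one-vertex insertions, the same observation that an insertion creates a cycle only when a reverse path already exists, and pulling back the resulting atomic or $\pp$ formulas along the immersion. Where you genuinely diverge is the inclusion $\Mod(DLO)\subseteq\{\pc\text{ models}\}$. The paper routes this through \claimref{easier-e.c.}: it suffices to consider homomorphisms into $\pc$ models of $T$, which by the first half are models of $DLO$; such a homomorphism is an embedding (since $x=y$, $x<y$, $y<x$ partition the square of each model) and hence, by model completeness of $DLO$, elementary, so an immersion. You instead verify the hypothesis of \lemref{retracts} directly against an arbitrary $N\vDash T$, building a finite partial retraction $h_B$ by topologically sorting the finite DAG $B$ and embedding the sort order into $M$ using density and unboundedness, with the pinned values on $h(A)$ handled by your observation that $h$ reflects the order on $M$. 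Your route is more elementary and self-contained --- it avoids invoking model completeness of $DLO$ and does not depend on first knowing that $\pc$ models of $T$ are linear orders --- and it establishes the slightly stronger fact that every homomorphism from a $DLO$ model into an arbitrary (not necessarily $\pc$) model of $T$ is an immersion; the paper's route is shorter at the cost of importing a classical model-theoretic fact. The details you flag as delicate (compatibility of the topological enumeration with the pinned values, and fitting the unconstrained vertices into the gaps) are handled correctly.
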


\begin{proof}
Assume that $M$ is a $\pc$ model of $T$. 
\begin{itemize}
\item Transitivity: Let $a,b,c\in M$ be such that $aEbEc$. Define $M'$
to have the same universe as $M$, and $E^{M'}=E^{M}\cup\left\{ \left(a,c\right)\right\} $.
If $M'$ was not a model of $T$, then there was some cycle in $M'$.
If that cycle does not involve $aEc$ then we have a cycle in $M$,
and if it does then by replacing $aEc$ with $aEbEc$ we have again
a cycle in $M$. Thus $M'\vDash T$ and certainly the identity is
a homomorphism thus an embedding, so $\left(a,c\right)\in E^{M}$.
\item Linearity: Assume $a,b\in M$ distinct. If $\left(M,E^{M}\cup\left\{ \left(a,b\right)\right\} \right)$
is not a model of $T$, then there is a (directed) path in $M$ from
$b$ to $a$, thus by transitivity $bEa$.
\item No ends: Consider the structure $M'$ with universe $M\sqcup\left\{ \infty,-\infty\right\} $
(where $\infty,-\infty$ are new elements) and $E^{M'}=E^{M}\cup M\times\left\{ \infty\right\} \cup\left\{ -\infty\right\} \times M$.
Since every new element only ever appears on one side of $E$, we
did not add any new cycles. Therefore, $M'\vDash T$, and thus as
for any $a\in M$ we have $M'\vDash\exists x,y:xEaEy$ we must also
have $M\vDash\exists x,y:xEaEy$ thus $a$ is neither a minimum nor
a maximum. 
\item Density: let $a,b\in M$ be such that $aEb$. Let $M'$ be a structure
with universe $M\sqcup\left\{ c\right\} $ (where $c$ is a new element)
and $E^{M'}=E^{M}\cup\left\{ \left(a,c\right),\left(c,b\right)\right\} $.
Any cycle involving $c$ in $M'$ must contain $aEcEb$, but then
replacing this sequence with $aEb$ we get a cycle in $M$. Therefore
the identity is an immersion, and as $M'\vDash\exists x:aExEb$ we
have $M\vDash\exists x:aExEb$.
\end{itemize}
We conclude that every $\pc$ model of $T$ is a model of $DLO$.

To show the converse, by \claimref{easier-e.c.} it is enough to show
that every homomorphism from a model of $DLO$ to a $\pc$ model of
$T$ (which is in particular a model of $DLO$) is an immersion. But
if $h:M\rightarrow N$ is a homomorphism for $M,N\vDash DLO$ then
$h$ is an embedding (since $x=y,x<y,y<x$ partition both $M^{2}$
and $N^{2}$) thus from model completeness of $DLO$ $h$ is an elementary
embedding, so certainly an immersion.
\end{proof}
\begin{rem}
It is possible to represent the class of models of every first order
theory as the class of $\pc$ models of an $\hu$ theory, in a manner
very similar to the one seen here --- though without quantifier elimination,
the theory would not be quite this simple. See \appref{morley-fo}
for details.
\end{rem}

\subsubsection{Unit Circle with a Convergent Sequence}

Here is an example of a non-Robinson theory (which is also a more
interesting bounded theory). 
\begin{example}
Let $\left(r_{n}\right)_{n<\omega}\subseteq\left[0,1\right]$ be a
sequence such that $r_{n}\rightarrow\pi$, and $\frac{r_{n}}{\pi}$
is irrational for all $n<\omega$. Consider a structure $M$ with
universe $S^{1}$ in the language $L$ consisting of $I_{a,b}$ for
$a,b\in\left[0,1\right]\setminus\mathbb{Q}$ such that $I_{a,b}^{M}=\left\{ e^{2t\pi i}\mid a\leq t\leq b\right\} $,
as well as $S$ where $S^{M}$ consists of the pairs $\left\{ \left(-1,1\right)\right\} \cup\left\{ \left(e^{ir_{n}},e^{2ir_{n}}\right)\right\} _{n<\omega}$. 

Then in the usual topology on $M$, every relation is closed and $M$
is compact, thus by \lemref{hom-to-compact} every model of $\Th^{\pu}\left(M\right)$
admits a homomorphism into $M$.
\end{example}

\begin{claim}
$M$ is $\pc$, thus the universal model of $\Th^{\hu}\left(M\right)$.
\end{claim}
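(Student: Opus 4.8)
The claim asserts that the structure $M$ (the unit circle with a convergent sequence $S$) is a $\pc$ model of its own h-universal theory $T = \Th^{\hu}(M)$, and hence — being a model of $T^{\pm}$ — is \emph{the} universal model of $T$.

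\medskip

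\noindent\textbf{Proof proposal.} By \claimref{easier-e.c.}, applied to the universal class $C$ of $\pc$ models of $T$ (which is universal by \factref{pc-universal}), it suffices to show that every homomorphism $h\colon M\to N$ into a $\pc$ model $N$ of $T$ is an immersion. Since the previous paragraph of the excerpt notes that $\lemref{hom-to-compact}$ gives a homomorphism $N\to M$ — because in the usual topology on $M$ every relation is closed and $M$ is compact — the plan is to exploit a retraction. Concretely, I would first argue that $h$ is injective (an embedding): the relations $I_{0,1/2}$ and $I_{1/2,1}$, or more simply the family $\{I_{a,b}\}$, separate points of $M$ well enough that if $h(x)=h(y)$ then $x$ and $y$ satisfy exactly the same atomic formulas, and one checks directly from the concrete description of the $I_{a,b}^M$ (closed arcs with irrational endpoints) that distinct points of $S^1$ are distinguished by some $I_{a,b}$ (indeed by some $I_{a,b}$ containing exactly one of them). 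Then I would invoke $\lemref{retracts}$: it is enough to show that for every finite $A\subseteq M$, finite $B\subseteq N$ with $h(A)\subseteq B$, and finite $L_0\subseteq L$, there is an $L_0$-homomorphism $h_B\colon B\to M$ with $h_B\circ h|_A=\id_A$.

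\medskip

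To build such an $h_B$, I would use the homomorphism $g\colon N\to M$ provided by \lemref{hom-to-compact}. The composite $g\circ h\colon M\to M$ is a homomorphism of $M$ to itself, so $g\circ h$ preserves all the $I_{a,b}$ and $S$. The key point is that an endomorphism of $M$ in this language is forced to be ``close to the identity'' on the finitely many points of $A$: since the $I_{a,b}$ for $a,b$ ranging over irrationals form a basis of closed arcs, and their preimages under $g\circ h$ must again be among the $I_{a,b}^M$ up to the constraints of the $L_0$ involved, one can correct $g$ on the finite set $B$ to land exactly on $A$. More carefully: for each $a\in A$ and each $I_{c,d}\in L_0$ with $a\in I_{c,d}^M$, we need $h_B(h(a))\in I_{c,d}^M$; since finitely many such arcs have nonempty intersection containing $a$, we can just \emph{define} $h_B(h(a))=a$, and define $h_B$ on the remaining finitely many elements of $B\setminus h(A)$ using $g$ (possibly adjusted by a small rotation/continuity argument) so that the finitely many $L_0$-relations among elements of $B$ are respected. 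The finiteness of $L_0$ and of $B$ is what makes this possible: only finitely many constraints of the forms $I_{c,d}$ and $S$ need to be checked.

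\medskip

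\noindent\textbf{Main obstacle.} The delicate step is handling the relation $S$, whose interpretation $S^M=\{(-1,1)\}\cup\{(e^{ir_n},e^{2ir_n})\}_{n<\omega}$ is \emph{not} closed-arc-like and carries the ``convergent sequence'' information: $(e^{ir_n},e^{2ir_n})\to(-1,1)$, so the failure of Robinson-ness lives precisely here. In $N$ there may be elements $b$ with $N\vDash S(b,b')$ that do not come from the listed pairs in $M$, and one must ensure $h_B$ sends such pairs into $S^M$ while simultaneously respecting the arc relations — this forces a case analysis depending on whether $h(a)=-1$ for some $a\in A$, or $h(a)=e^{ir_n}$ for some $n$ appearing among finitely many indices, versus the generic case. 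I expect one must show that the only atomic $S$-facts forced on a finite tuple are exactly those already true in $M$ of the corresponding points, using that $g\colon N\to M$ already witnesses compatibility and that $M$ realizes, via $\exists$, each finite $\pp$ configuration consistent with $T$. Once the $S$-bookkeeping is done, density/no-endpoints-type arguments are not needed here (unlike the DAG example) because $M$ is already compact and the relations closed, so the retract exists outright; the work is purely combinatorial finiteness on top of the retract.
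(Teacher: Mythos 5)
There is a genuine gap, and it sits exactly where you flagged it. The paper's own proof is much shorter and does not go through \lemref{retracts} at all: since every relation of $M$ is closed and $M$ is compact, \lemref{hom-to-compact} makes the singleton class $\left\{ M\right\} $ universal, so by \claimref{easier-e.c.} it suffices to check that every homomorphism $M\rightarrow M$ is $\pc$. The paper then shows directly that $\End\left(M\right)=\left\{ \id_{M}\right\} $: for $t\in\left(0,1\right)$ the arcs $I_{a,b}$ with irrational $a<t<b$ force $f\left(e^{2t\pi i}\right)=e^{2t\pi i}$, and the single pair $\left(-1,1\right)\in S^{M}$ then forces $f\left(1\right)=1$. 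The identity is trivially $\pc$, so $M$ is $\pc$. Note that once you know $\End\left(M\right)=\left\{ \id_{M}\right\} $, your own setup also closes instantly: for any $h:M\rightarrow N$ and the retract $g:N\rightarrow M$ from \lemref{hom-to-compact}, $g\circ h=\id_{M}$, and pushing any positive formula forward along $g$ shows $h$ is an immersion --- no finite partial retractions are needed.

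Two specific problems with your sketch. First, your claim that the $I_{a,b}$ separate points of $M$ is false at the point $1$: since $a,b$ range over irrationals in $\left[0,1\right]$, the point $1=e^{2\pi i\cdot0}=e^{2\pi i\cdot1}$ lies in \emph{no} $I_{a,b}^{M}$, so the arc relations impose no constraint whatsoever on where a homomorphism sends $1$; the only thing pinning $1$ down is the relation $S$ via $\left(-1,1\right)$. Second, you explicitly defer the ``$S$-bookkeeping'' as something you ``expect'' can be done, but that bookkeeping \emph{is} the content of the claim --- $S$ is precisely the non-closed-arc, non-Robinson part of the structure, and without an actual argument there (for instance the rigidity argument above, or, for the separate proposition about the substructure $N$ of irrational points, the $\varepsilon$-perturbation argument choosing $r_{n}$ close to $\pi$) the proof is not complete.
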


\begin{proof}
Assume $f\in\End\left(M\right)$, and take some $t\in\left(0,1\right)$.
Assume $f\left(e^{2t\pi i}\right)=e^{2s\pi i}$ for $t<s\leq1$, and
let $a,b$ be irrational numbers in $\left(0,1\right)$ such that
$a<t<b<s$. We have $e^{2t\pi i}\in I_{a,b}\left(M\right)$ but $f\left(e^{2t\pi i}\right)=e^{2s\pi i}\notin I_{a,b}\left(M\right)$
which contradicts the choice of $f$. Likewise, if $s<t<1$ then it
cannot be $f\left(e^{2t\pi i}\right)=e^{2s\pi i}$ thus we have that
$f$ fixes every element other than $1$. But $\left(-1,f\left(1\right)\right)\in S$
thus $f\left(1\right)=1$, and so $\End\left(M\right)=\left\{ \id_{M}\right\} $.
We conclude by \claimref{easier-e.c.} that $M$ is $\pc$.
\end{proof}
\begin{prop}
Denote $N=\left\{ e^{2t\pi i}\mid t\in\left[0,1\right]\setminus\mathbb{Q}\right\} $.
Then $N$ is $\pc$ model of $\Th^{\hu}\left(M\right)$.
\end{prop}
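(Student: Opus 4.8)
The plan is to apply \claimref{easier-e.c.} with the universal class $C=\left\{ M\right\}$: this is a universal class by \lemref{hom-to-compact}, since $M$ is compact with every relation closed, so every model of $\Th^{\hu}\left(M\right)$ continues into $M$. As the inclusion $N\hookrightarrow M$ is a homomorphism, $N\vDash\Th^{\hu}\left(M\right)$, so it remains only to check that every homomorphism $f:N\rightarrow M$ is an immersion.

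First I would show that every such $f$ is the inclusion. Let $e^{2t\pi i}\in N$, so $t\in\left(0,1\right)\setminus\mathbb{Q}$, and suppose $f\left(e^{2t\pi i}\right)=e^{2s\pi i}$ with $s\in\left[0,1\right)$ and $s\neq t$. Pick irrational $a,b$ with $a<t<b$ and $s\notin\left[a,b\right]$ (possible: choose $a\in\left(s,t\right)$ if $s<t$, or $b\in\left(t,s\right)$ if $s>t$). Then $e^{2t\pi i}\in I_{a,b}\left(N\right)$ while $f\left(e^{2t\pi i}\right)\notin I_{a,b}\left(M\right)$, contradicting that $f$ preserves $I_{a,b}$. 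Hence $f=\id_{N}$, and it suffices to show that the inclusion $\iota:N\hookrightarrow M$ is an immersion.

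For this I would invoke \lemref{retracts} (both $N$ and $M$ live in the relational language $L$, and $\iota$ is injective): given finite $A\subseteq N$, finite $B\subseteq M$ with $A\subseteq B$, and finite $L_{0}\subseteq L$, I must produce an $L_{0}$-homomorphism $h_{B}:B\rightarrow N$ with $h_{B}|_{A}=\id_{A}$. Take $h_{B}$ to be the identity on the points of $B$ that already lie in $N$ (these include all of $A$); the remaining points of $B$ are the finitely many points $e^{2t\pi i}$ with $t\in\mathbb{Q}$. Such a point, if it is neither $1$ nor $-1$, is not an entry of any pair of $S^{M}$ (whose entries are $1$, $-1$, and points of $N$), and since $t$ is rational it lies in the \emph{open} interval $\left(a,b\right)$ whenever it lies in $I_{a,b}^{M}$; hence it can be sent to a close enough point of $N$ so as to respect the finitely many $I_{a,b}\in L_{0}$ that contain it. The point $1$ lies in no $I_{a,b}^{M}$ at all, while $-1=e^{i\pi}$ corresponds to the fraction $1/2$, which lies in the interior of $\left[a,b\right]$ whenever $-1\in I_{a,b}^{M}$ (as $a,b$ are irrational). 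The one genuinely constrained case is $1,-1\in B$ and $S\in L_{0}$: then the pair $\left(-1,1\right)\in S^{M}$ must be sent into $S^{N}=\left\{ \left(e^{ir_{n}},e^{2ir_{n}}\right)\right\} _{n<\omega}$. Since $r_{n}\rightarrow\pi$ we have $e^{ir_{n}}\rightarrow-1$ and $e^{2ir_{n}}\rightarrow1$, so for $n$ large enough $e^{ir_{n}}$ lies in $I_{a,b}^{M}$ for each of the finitely many $I_{a,b}\in L_{0}$ with $-1\in I_{a,b}^{M}$; fixing such an $n$ and setting $h_{B}\left(-1\right)=e^{ir_{n}}$, $h_{B}\left(1\right)=e^{2ir_{n}}$ works (recall $1$ carries no $I_{a,b}$-constraint). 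A routine check shows that every other pair of $S^{M}$ lying inside $B^{2}$ joins two fixed points of $N$ and so is automatically preserved, so $h_{B}$ is indeed an $L_{0}$-homomorphism.

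By \lemref{retracts} the inclusion $\iota$ is an immersion; combined with the second paragraph, every homomorphism $N\rightarrow M$ is an immersion, so $N$ is $\pc$ by \claimref{easier-e.c.}, hence a $\pc$ model of $\Th^{\hu}\left(M\right)$. The main obstacle is the construction of $h_{B}$ in the case $1,-1\in B$, $S\in L_{0}$ --- this is the only point where the particular sequence $\left(r_{n}\right)$ enters, through $e^{ir_{n}}\rightarrow-1$ and $e^{2ir_{n}}\rightarrow1$, and one must verify that the $S$- and $I_{a,b}$-constraints on $\pm1$ are jointly satisfiable.
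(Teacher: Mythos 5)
Your proposal is correct and follows essentially the same route as the paper: reduce via \claimref{easier-e.c.} to showing the inclusion $N\hookrightarrow M$ is an immersion (after noting the only homomorphism $N\rightarrow M$ is the identity), and verify the hypothesis of \lemref{retracts} by fixing the points of $B\cap N$, perturbing the remaining rational points slightly, and sending $\left(-1,1\right)$ to $\left(e^{ir_{n}},e^{2ir_{n}}\right)$ for $n$ large. Your write-up is simply more explicit than the paper's about the case analysis (which points carry $I_{a,b}$- and $S$-constraints), but the argument is the same.
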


\begin{proof}
Again, the only homomorphism from $N$ to $M$ is the identity, thus
to show $N$ is a $\pc$ model of $T$ it is enough to show, by \claimref{easier-e.c.},
that the identity is an immersion.

Let $L_{0}\subseteq L$ be a finite sub-language. Take some $\left\{ c_{i}\right\} _{i<k}$
in $N$ and $\left\{ d_{j}\right\} _{j<m}$ in $M\setminus\left(N\cup\left\{ \pm1\right\} \right)$.
Let $\varepsilon>0$ be the minimal distance from a point of $\overline{d}$
or $-1$ to an endpoint of $I_{a,b}\in L_{0}$ (note that this is
indeed positive by choice of $N$). Choose some $r_{n}$ such that
$d\left(-1,e^{ir_{n}}\right)<\varepsilon$, and let $f:\left\{ c_{i}\right\} _{i<k}\cup\left\{ d_{j}\right\} _{j<m}\cup\left\{ \pm1\right\} \rightarrow N$
be the function that: fixes $c_{i}$, sends each $d_{j}$ to a point
in $N$ which is $\varepsilon$-close to it, sends $-1$ to $e^{ir_{n}}$,
and sends $1$ to $e^{2ir_{n}}$. We get that $f$ is an $L_{0}$
homomorphism and thus by \lemref{retracts} we are done.
\end{proof}
\begin{cor}
There is a maximal positive type over a $\pc$ model of $\Th^{\hu}\left(M\right)$
that is not equivalent to any quantifier free type. In particular,
by \remref{qf-properties}, $\Th^{\hu}\left(M\right)$ is not Robinson.
\end{cor}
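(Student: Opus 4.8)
The plan is to use the maximal positive type $p:=\tp^{\p}(1/\emptyset)$ of the point $1\in S^{1}=M$, and to show it is not equivalent to any quantifier-free positive type. By \propref{max-iff-pc} applied with $N=M$ (recall $M$ is $\pc$), $p$ is indeed a maximal positive type over $\emptyset$ in a $\pc$ model, so this suffices. The guiding idea is the contrast between the quantifier-free behaviour of $1$, which is essentially invisible --- $1$ lies in no arc $I_{a,b}^{M}$ and $(1,1)\notin S^{M}$, so the only atomic $L$-formula in the single variable $y$ over $\emptyset$ that $M$ satisfies at $1$ is $y=y$ --- and its positive behaviour with quantifiers, which can recover the $S$-predecessor $-1$ of $1$ and then pin $-1$ down as the unique point lying in all arcs straddling $1/2$.

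The first step is to show $p(M)=\{1\}$. For all irrationals $a<1/2<b$ the point $-1=e^{\pi i}$ satisfies $S(-1,1)\wedge I_{a,b}(-1)$, so the positive formula $\varphi_{a,b}(y):=\exists z\,(S(z,y)\wedge I_{a,b}(z))$ lies in $p$. Conversely, suppose $d\in M$ satisfies $\varphi_{a,b}$ for all such $a,b$. A witness $z$ to $\varphi_{a,b}(d)$ must satisfy $S(z,d)$, and inspecting $S^{M}=\{(-1,1)\}\cup\{(e^{ir_{n}},e^{2ir_{n}})\}_{n<\omega}$ shows that either $d=1$, or $d=e^{2ir_{n}}$ for some $n$ --- and in the latter case the only such $z$ is $e^{ir_{n}}$, using that $e^{2ir_{n}}\ne 1$ (as $r_{n}/\pi$ is irrational) and that $e^{2ir_{m}}=e^{2ir_{n}}$ forces $r_{m}=r_{n}$ (because $|r_{m}-r_{n}|\le 1<\pi$). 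But $r_{n}\le 1<\pi$, hence $r_{n}/(2\pi)<1/2$; choosing irrationals $a,b$ with $r_{n}/(2\pi)<a<1/2<b<1$ we get $e^{ir_{n}}\notin I_{a,b}^{M}$ (its argument $r_{n}/(2\pi)$ lies below $a$), contradicting $\varphi_{a,b}(d)$. So $d=1$, i.e. $p(M)=\{1\}$.

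Now suppose toward a contradiction that $p$ is equivalent to a quantifier-free positive type $q(y)$ over $\emptyset$. Then $q(M)=p(M)=\{1\}$, so $M\vDash\psi(1)$ for every $\psi\in q$. Each such $\psi$ is a monotone ($\vee,\wedge$) combination of atomic formulas; since $1$ lies in no $I_{a,b}^{M}$ and $(1,1)\notin S^{M}$, and every element of $M$ satisfies $y=y$, the truth values at $1$ of all atomic formulas in $y$ over $\emptyset$ are $\le$ the corresponding truth values at any $e\in M$. By monotonicity of $\psi$, $M\vDash\psi(1)$ gives $M\vDash\psi(e)$ for every $e\in M$, i.e. $\psi(M)=M$. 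Hence $q(M)=M$, contradicting $q(M)=\{1\}$ since $|M|>1$. Thus $p$ is a maximal positive type over $\emptyset$ not equivalent to any quantifier-free positive type, and so $\Th^{\hu}(M)$ is not (positively) Robinson --- directly from \defref{qe}, or via \remref{qf-properties}.

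The step I expect to be the crux is the computation $p(M)=\{1\}$, and within it the fact that each sequence point $e^{2ir_{n}}$ fails the conjunction $\bigwedge_{a<1/2<b}\varphi_{a,b}$: this is where the geometry of the configuration is used, since one must see that the unique $S$-predecessor $e^{ir_{n}}$ of a sequence point is distinguished from $-1$ already by the quantifier-free arc relations, which in turn rests on $r_{n}\ne\pi$, guaranteed by the irrationality hypothesis. Everything else is routine unwinding of the definitions of maximal positive type and of quantifier-free equivalence.
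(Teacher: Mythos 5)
Your proof is correct, and the central mechanism is the same as the paper's: the quantifier-free positive type of $1$ is trivial, so by monotonicity of $\vee,\wedge$ any quantifier-free positive type that $1$ satisfies is satisfied by every element, hence defines all of $M$ --- while the positive type of $1$ does not. The execution differs in two ways. First, the paper works with $p=\tp^{\p}\left(1/N\right)$ over the $\pc$ submodel $N$ of irrational points (so the statement is literally about a type over a $\pc$ model, and non-Robinsonness then comes via \remref{qf-properties}), whereas you work over $\emptyset$; your version gives non-Robinsonness directly from \defref{qe}, though it proves the first sentence of the corollary only in the degenerate sense $A=\emptyset$. Second, and more substantively, the paper never computes $p\left(M\right)$: it only needs that $q\left(M\right)=M$ forces every element of $M$ to realize the maximal type $p$, hence all elements share one positive type over $N$, which is absurd already because the $I_{a,b}$'s (or the formulas $x=c$ for $c\in N$) separate points. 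You instead do the harder, more informative computation $p\left(M\right)=\left\{1\right\}$ via $\exists z\left(S\left(z,y\right)\wedge I_{a,b}\left(z\right)\right)$; this is where your proof genuinely uses $S$ and the convergent sequence, which the paper's proof of this particular corollary does not need. One caveat on that computation: you take the hypothesis $\left(r_{n}\right)\subseteq\left[0,1\right]$ at face value (e.g.\ in ``$|r_{m}-r_{n}|\le1<\pi$'' and ``$r_{n}/\left(2\pi\right)<1/2$''), but that hypothesis is inconsistent with $r_{n}\rightarrow\pi$ and is evidently a typo in the example; under the intended reading ($r_{n}$ real, $r_{n}\rightarrow\pi$, $r_{n}/\pi$ irrational) your argument survives with minor adjustments --- one separates $e^{ir_{n}}$ from $-1$ using $r_{n}/\left(2\pi\right)\neq1/2$ on whichever side it falls, and handles the possible second square root $-e^{ir_{n}}$ when identifying witnesses. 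This does not affect the validity of your overall argument.
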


\begin{proof}
Note that $p=\tp^{\atom}\left(1/N\right)$ is just $x=x\cup\Delta_{N}^{\atom}$.
Thus $\tp^{\p}\left(1/N\right)$ cannot be equivalent to any quantifier
free type $q$ over $N$: indeed that would imply $1\in q\left(M\right)$
thus $q\subseteq p$. But then $M=p\left(M\right)\subseteq q\left(M\right)$
thus $q\left(M\right)=M$ thus every element of $M$ realizes $\tp^{\p}\left(1/N\right)$
which is maximal, thus the positive type of every element over $N$
is the same, which is absurd.
\end{proof}

\subsubsection{Doubled Interval}

This will be our first example of a bounded theory with a big automorphism
group, and it will prove to be a useful counterexample. 
\begin{example}
\label{exa:doubled-interval} Let $L$ be the language $\left\{ I_{a,b}\right\} _{0\leq a\leq b\leq1,a,b\in\mathbb{Q}}\cup\left\{ S\right\} $,
where each $I_{a,b}$ is unary and $S$ is binary. Consider the structure
$M$ with universe $\left[0,1\right]\times2$ where $I_{a,b}^{M}=\left[a,b\right]\times2$
and $S^{M}=\left\{ \left(\left(r,i\right),\left(r,j\right)\right)\mid r\in\left[0,1\right],i\neq j\right\} $.
\end{example}

\begin{prop}
\label{prop:double-interval-universal}$M$ is the universal model
of $T=\Th^{\hu}\left(M\right)$.
\end{prop}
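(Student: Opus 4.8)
The plan is to check two things — that $T=\Th^{\hu}(M)$ is bounded, and that $M$ is $\pc$ — from which it follows formally that $M$ is the universal model. Indeed, $T$ is irreducible (it is $\Th^{\hu}$ of a structure) and, granting boundedness, \propref{univ-ec} produces the universal model $U$; since $U$ is $\pc$ it is in particular a model of $T=\Th^{\hu}(M)$, so by \lemref{hom-to-compact} there is a homomorphism $U\to M$, and by clause (4) of \propref{univ-ec} every homomorphism from $U$ to a $\pc$ model of $T$ is an isomorphism. Hence $U\cong M$.

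To obtain boundedness I would put on the universe $[0,1]\times 2$ the topology of the disjoint union of two copies of the interval $[0,1]$, which is compact. Each $I_{a,b}^{M}=[a,b]\times 2$ is then closed, and $S^{M}$ — the graph of the fibrewise swap involution — is closed in $M^{2}$. So \lemref{hom-to-compact} applies: every model of $\Th^{\hu}(M)$ continues into $M$, every $\pc$ model of $T$ embeds into $M$, and by the corollary to \lemref{hom-to-compact}, $T$ is bounded.

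The substantive step is showing $M$ is $\pc$. By \claimref{easier-e.c.}, applied with $C$ the class of $\pc$ models of $T$ (universal by \factref{pc-universal}), it suffices to show every homomorphism $h\colon M\to N$ into a $\pc$ model $N$ of $T$ is an immersion. For this I would first establish $\End(M)=\Aut(M)$: a homomorphism $f\colon M\to M$ preserves every $I_{a,b}$, so for a point $(r,i)$, using density of $\mathbb{Q}$, $f(r,i)\in\bigcap\{I_{a,b}^{M}\mid a\le r\le b,\ a,b\in\mathbb{Q}\}=\{r\}\times 2$; writing $f(r,i)=(r,g_{r}(i))$, preservation of $S^{M}$ forces $g_{r}(0)\ne g_{r}(1)$, so each $g_{r}$ permutes $2$ and $f$ is an automorphism. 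Now, given $h$ as above, \lemref{hom-to-compact} supplies an embedding $e\colon N\hookrightarrow M$, so $e\circ h\in\End(M)=\Aut(M)$; hence for any positive $\varphi$ and tuple $a$ from $M$, $N\vDash\varphi(h(a))$ gives $M\vDash\varphi(e(h(a)))$ and then $M\vDash\varphi(a)$, so $h$ is $\pc$. By \claimref{easier-e.c.}, $M$ is $\pc$, and together with the previous paragraph this completes the proof.

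The only step that requires any thought is the identity $\End(M)=\Aut(M)$, and even it is routine once one notices that the rational-endpoint unary predicates separate the base points of $[0,1]$ while $S$ rigidifies the two-element fibres; everything else is a direct appeal to \lemref{hom-to-compact}, \claimref{easier-e.c.}, and \propref{univ-ec}.
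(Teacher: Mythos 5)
Your proposal is correct and follows essentially the same route as the paper: compactness of $\left[0,1\right]\times2$ with closed relations to invoke \lemref{hom-to-compact}, and the computation $\End\left(M\right)=\Aut\left(M\right)$ via the rational-endpoint intervals and $S$ to get that $M$ is $\pc$ through \claimref{easier-e.c.}. The only differences are bookkeeping (you route the $\pc$ check through a homomorphism into an arbitrary $\pc$ model composed with an embedding back into $M$, where the paper applies \claimref{easier-e.c.} directly to endomorphisms of $M$), and these do not change the substance.
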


\begin{proof}
$M$ is compact in the usual topology (considering $M$ as the disjoint
union of two intervals) and every relation on $M$ is closed, thus
every model of $\Th^{\pu}\left(M\right)$ continues into $M$ by \lemref{hom-to-compact}
and thus it is enough, by \claimref{easier-e.c.}, to show that $\End\left(M\right)$
consists only of (self) immersions, and thus it is enough to show
that every endomorphism of $M$ is an automorphism.

If $f:M\rightarrow M$ is an endomorphism then for any $r\in\left[0,1\right]$
and $i\in\left\{ 0,1\right\} $, $f\left(\left(r,i\right)\right)=\left(r,j\right)$
for some $j\in\left\{ 0,1\right\} $, since if $r'<r$ then for some
rational $a\in\left(r',r\right)$ we have $\left(r',0\right),\left(r',1\right)\notin I_{a,1}$
and $\left(r,i\right)\in I_{a,1}$ and likewise for $r'>r$. Furthermore,
$f\left(\left(r,1-i\right)\right)=\left(r,1-j\right)$ since $\left(r,1-i\right)S\left(r,i\right)$.
Thus every $f\in\End\left(M\right)$ is of the form $f\left(\left(r,i\right)\right)=\begin{cases}
\left(r,i\right) & r\in B\\
\left(\left(r,1-i\right)\right) & r\notin B
\end{cases}$ for some $B\subseteq\left[0,1\right]$, and every such function is
an automorphism (we get also that $\Aut\left(M\right)\cong\left(\mathbb{Z}/2\mathbb{Z}\right)^{\left[0,1\right]}$
as groups).
\end{proof}
Let us classify the $\pc$ substructures of $M$ (that is substructures
of $M$ such that the inclusion is an immersion). 
\begin{prop}
The $\pc$ substructures of $M$ are exactly $\left\{ B\times2\right\} _{\mathbb{Q}\cap\left[0,1\right]\subseteq B\subseteq\left[0,1\right]}$.
\end{prop}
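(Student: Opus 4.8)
The plan is to use \lemref{retracts} for the hard (converse) direction, and to read off the forward direction directly from the definition of an immersion.

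For the forward direction, let $A\le M$ be a substructure whose inclusion $\id_{A}:A\to M$ is $\pc$. First I claim $A$ is closed under taking $S$-partners: if $(r,i)\in A$ then, since the primitive positive formula $\exists y\,S(x,y)$ is satisfied by $(r,i)$ in $M$ (witnessed by $(r,1-i)$), it is satisfied by $(r,i)$ in $A$; as $S^{A}=S^{M}\cap A^{2}$ forces any witness to be $(r,1-i)$, we get $(r,1-i)\in A$. Hence $A=B\times 2$ where $B=\pr_{1}(A)\subseteq[0,1]$. Second, for each rational $q\in[0,1]$ the sentence $\exists x\,I_{q,q}(x)$ holds in $M$ (as $(q,0)\in I_{q,q}^{M}$), hence in $A$, and its witness lies in $I_{q,q}^{M}\cap A=(\{q\}\times 2)\cap A$, so $q\in B$. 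Thus $\mathbb{Q}\cap[0,1]\subseteq B$, and every $\pc$ substructure has the claimed form.

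For the converse, fix $B$ with $\mathbb{Q}\cap[0,1]\subseteq B\subseteq[0,1]$; I must show the (injective) inclusion $h:B\times 2\hookrightarrow M$ is an immersion, for which it suffices by \lemref{retracts} to produce, for every finite $A_{0}\subseteq B\times 2$, every finite $B_{0}\subseteq M$ with $A_{0}\subseteq B_{0}$, and every finite $L_{0}\subseteq L$, an $L_{0}$-homomorphism $h_{B_{0}}:B_{0}\to B\times 2$ with $h_{B_{0}}|_{A_{0}}=\id_{A_{0}}$. Let $R_{0}\subseteq\mathbb{Q}\cap[0,1]$ consist of $0$, $1$, and all endpoints of the finitely many $I_{a,b}$ occurring in $L_{0}$; this finite set cuts $[0,1]$ into its points (all rational, hence in $B$) and finitely many open intervals, each of which meets $\mathbb{Q}$ and hence $B$. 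Choose a map $\rho:[0,1]\to B$ that fixes every point of $R_{0}$, fixes every $r$ with $(r,i)\in A_{0}$ for some $i$ (legitimate since $\pr_{1}(A_{0})\subseteq B$), and sends each remaining $r$ into $B$ within the same cell of the partition determined by $R_{0}$ (for $r\notin\pr_{1}(B_{0})$ the value is irrelevant). Put $h_{B_{0}}((r,i))=(\rho(r),i)$. This respects $S$ automatically; and since $r$ and $\rho(r)$ lie in exactly the same intervals with endpoints in $R_{0}$, in particular in the same $[a,b]$ with $I_{a,b}\in L_{0}$, it respects each such $I_{a,b}$; and it fixes $A_{0}$ by construction. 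So $h$ is an immersion and $B\times 2$ is a $\pc$ substructure.

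The only genuine point is the cell argument in the converse: a finite substructure of $M$ only ``sees'' the finitely many rational cut points coming from $L_{0}$, so any real can be replaced by a rational (available because $\mathbb{Q}\cap[0,1]\subseteq B$) lying between the same consecutive cut points, while elements of $A_{0}$ need no moving because $\pr_{1}(A_{0})\subseteq B$ already. The bookkeeping separating the cases ``$r$ is a cut point'', ``$r$ strictly between consecutive cut points'', and ``$r$ already pinned down in $A_{0}$'' is the only thing requiring care, and it presents no real obstacle.
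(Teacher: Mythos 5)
Your proof is correct and follows essentially the same route as the paper: the forward direction reads off closure under $S$-partners and the presence of each rational fibre from the $\pc$ inclusion reflecting the formulas $\exists y\,S(x,y)$ and $\exists x\,I_{q,q}(x)$, and the converse applies \lemref{retracts} by moving each stray real to a rational of $B$ lying between the same cut points coming from $L_{0}$ (the paper phrases this via $\varepsilon=d(R,Q_{0})>0$ rather than cells, but it is the same idea). No gaps.
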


\begin{proof}
Assume $A\subseteq M$ is $\pc$. Then for any $q\in\mathbb{Q}$ we
have $M\vDash\exists xI_{q,q}\left(x\right)$ thus $A\cap\left(\left\{ q\right\} \times2\right)\neq\emptyset$;
furthermore if $\left(r,i\right)\in A\cap\left(\left\{ r\right\} \times2\right)$
then $M\vDash\exists x:S\left(\left(r,i\right),x\right)$ thus $\left(r,1-i\right)\in A$.

Conversely assume $A=B\times2$ for $\mathbb{Q}\cap\left[0,1\right]\subseteq B\subseteq\left[0,1\right]$.
Let $C\subseteq A$, $D\subseteq M$ be finite sets such that $C\subseteq D$
and $L_{0}\subseteq L$ finite. By \lemref{retracts} it is enough
to find a homomorphism $h:D\rightarrow A$ over $C$. without loss
of generality, $\left(D\setminus C\right)\cap A=\emptyset$ (since
we can replace $C$ with $D\cap A$) and $D\setminus C=R\times2$
for $R=\left\{ r_{i}\right\} _{i<k}$. Let $Q_{0}\subseteq\mathbb{Q}$
be the set of endpoints of the $I_{a,b}$'s in $L_{0}$, and denote
$\varepsilon=d\left(R,Q_{0}\right)>0$. For any $i<k$ choose $q_{i}\in\left(r_{i}-\varepsilon,r_{i}+\varepsilon\right)\cap\mathbb{Q}$.
Then for any $I_{a,b}\in L_{0}$, by choice of $q_{i}$ we have $M\vDash I_{a,b}\left(q_{i},j\right)$$\Longleftrightarrow I_{a,b}\left(r_{i},j\right)$.
Furthermore, $M\vDash S\left(\left(r_{i},j_{1}\right),\left(r_{i},j_{2}\right)\right)\Longleftrightarrow M\vDash S\left(\left(q_{i},j_{1}\right),\left(q_{i},j_{2}\right)\right)$,
and $S^{M}\cap C\times D=\emptyset$. Thus $\id_{C}\cup\left\{ \left(\left(r_{i},j\right),\left(q_{i},j\right)\right)\mid i<k,j<2\right\} $
is a homomorphism from $D$ to $A$ as required.
\end{proof}
\begin{cor}
The class of $\pc$ models of $T$ is exactly the class of structures
isomorphic to some $B\times2$ for $\mathbb{Q}\cap\left[0,1\right]\subseteq B\subseteq\left[0,1\right]$.
\end{cor}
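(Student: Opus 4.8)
The plan is to combine the preceding proposition, which identifies the $\pc$ substructures of $M$ with the structures $B\times 2$, with the fact that $M$ is the universal model of $T$. Since $M$ is the universal model of $T$ (\propref{double-interval-universal}), the theory $T=\Th^{\hu}(M)$ is irreducible and bounded, so the conclusions of \propref{univ-ec} hold with $U\cong M$; in particular every $\pc$ model of $T$ immerses into $M$, and every model of $T$ continues into $M$.

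\emph{Forward inclusion.} Let $A$ be a $\pc$ model of $T$. By \propref{univ-ec} there is an immersion $j\colon A\to M$. Then $j(A)\le M$, being the image of an immersion, is a $\pc$ substructure of $M$, so the preceding proposition gives $j(A)=B\times 2$ for some $B$ with $\mathbb{Q}\cap[0,1]\subseteq B\subseteq[0,1]$; since $j$ restricts to an isomorphism $A\cong j(A)$, we conclude $A\cong B\times 2$.

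\emph{Reverse inclusion.} Fix $B$ with $\mathbb{Q}\cap[0,1]\subseteq B\subseteq[0,1]$. The preceding proposition says $B\times 2$ is a $\pc$ substructure of $M$, and since $\hu$ sentences pass to substructures, $B\times 2\models T$. To see that $B\times 2$ is in fact a $\pc$ model of $T$, I will apply \claimref{easier-e.c.} with the universal class $C=\{M\}$ (universal because every model of $T$ continues into $M$); thus it suffices to show that every homomorphism $f\colon B\times 2\to M$ is an immersion. The key step is that $f$ preserves first coordinates: if $r\in B$ is rational then $(r,i)\in I_{r,r}(B\times 2)$ forces $f((r,i))\in I_{r,r}(M)=\{r\}\times 2$, and if $r\in B$ is irrational then, choosing rationals $a<r<b$ with $b-a$ arbitrarily small and using $(r,i)\in I_{a,b}(B\times 2)$, we get $f((r,i))\in\bigcap I_{a,b}(M)=\{r\}\times 2$. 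Moreover, from $(r,0)\,S\,(r,1)$ in $B\times 2$ and the fact that $S^{M}$ relates only distinct points with a common first coordinate, $f((r,0))\neq f((r,1))$; hence $f$ permutes each fibre $\{r\}\times 2$. Therefore $f$ is a bijection of $B\times 2$ onto itself which preserves and reflects both the relations $I_{a,b}$ (since it preserves first coordinates) and $S$ (since it permutes fibres), i.e. an automorphism of $B\times 2$. Composing this automorphism with the immersion $B\times 2\hookrightarrow M$ shows that $f$ is an immersion, as required.

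The only genuinely nontrivial point --- hence the main obstacle --- is the preservation of first coordinates: a homomorphism need not reflect negative information, so it is not immediate that $f((r,i))$ lands in the fibre $\{r\}\times 2$. This is circumvented by noticing that each fibre is cut out by the positive conditions $I_{a,b}$ (or $I_{r,r}$ when $r$ is rational), and positive conditions are pushed forward by homomorphisms. Everything else is routine bookkeeping.
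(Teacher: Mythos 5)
Your proof is correct and follows essentially the same route as the paper: the forward inclusion via the immersion of any $\pc$ model into the universal model $M$ and the classification of $\pc$ substructures, and the reverse inclusion via \claimref{easier-e.c.} by showing that any homomorphism $B\times 2\to M$ preserves fibres (using the $I_{a,b}$'s and $S$) and is therefore an automorphism of $B\times 2$ composed with the immersion $B\times 2\hookrightarrow M$. The paper states this more tersely ("by the same reasoning as \propref{double-interval-universal}"), whereas you spell out the fibre-preservation argument explicitly, which is the same underlying computation.
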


\begin{proof}
On one hand, every $\pc$ model of $T$ embeds into $M$ and is thus
isomorphic to a $\pc$ substructure of $M$. On the other hand, by
the same reasoning as \propref{double-interval-universal}, every
homomorphism from a $\pc$ substructure $A$ of $M$ to $M$ is (considered
as a function to $A$) an automorphism of $A$, thus as a function
to $M$ it is a composition of immersions thus an immersion. By \claimref{easier-e.c.}
we conclude that every $\pc$ substructure of $M$ is a $\pc$ model
of $T$.
\end{proof}

\subsubsection{Inner Product Spaces}
\begin{example}
\label{exa:hilbert}This example is taken from Ben Yaacov in \cite[Example 2.39]{BenYaacov2003PositiveMT};
for a complete construction see there.

Let $\mathbb{F}=\mathbb{R}$ or $\mathbb{C}$, and let $\left(H,0,+,\cdot,\left\langle -,-\right\rangle \right)$
be an infinite dimensional Hilbert space over $\mathbb{F}$.

Consider the language $L$ consisting of:
\begin{itemize}
\item $I_{\lambda_{0},...,\lambda_{n-1},\mu_{0},...,\mu_{n-1},C,N}$ a relation
of arity $n$ for any compact $C\subseteq\mathbb{F}$, and for any
$\lambda_{0},...,\lambda_{n-1},\mu_{0},...,\mu_{n-1}\in\mathbb{F}$
and $N\in\mathbb{R}_{\geq0}$.
\item $E_{m}$ a relation of arity $2m$ for any $m<\omega$
\end{itemize}
We make $H$ into an $L$ structure by defining 
\[
I_{\lambda_{0},...,\lambda_{n-1},\mu_{0},...,\mu_{k-1},C,N}^{H}:=\left\{ \left(a_{0},...,a_{n-1}\right)\in H^{n}\mid\left\langle \stackrel[i<n]{}{\sum}\lambda_{i}a_{i}\mid\stackrel[j<n]{}{\sum}\mu_{j}a_{j}\right\rangle \in C\wedge\stackrel[j<n]{}{\sum}\left\Vert a_{j}\right\Vert \leq N\right\} 
\]

and 
\[
E_{m}^{H}:=\left\{ \left(a_{0},...,a_{n-1},b_{0},...,b_{n-1}\right)\in H^{2m}\mid\exists\sigma\in\Aut\left(H\right):\sigma\left(\overline{a}\right)=\overline{b}\right\} .
\]

Then for $T=\Th^{\hu}\left(H\right)$ we have that the class of $\pc$
models of $T$ is exactly the class of inner product spaces over $\mathbb{F}$
(under the same interpretation of $I$, and type equality in the language
consisting only of the $I$'s as an interpretation of $E$) --- see
\cite[Remark 2.42]{BenYaacov2003PositiveMT}.

This $T$ is semi-Hausdorff (with $E$ as a definition of type equality)
but not Hausdorff --- see \cite[Example 2.41]{BenYaacov2003PositiveMT}.
\end{example}

\begin{rem}
A standard formulation of Hilbert spaces as the class of models of
a theory uses multisorted continuous logic (one sort for each $B_{n}\left(0\right)$).

Like in the first order case, theories in continuous logic can be
emulated by $\hu$ theories. The result for the standard formulation
of Hilbert spaces will be similar, but not identical, to the example
given here (the most obvious difference being that this example is
single sorted).

See \appref{morley-cont} for details.
\end{rem}

\section{\label{sec:Maximal-Positive-Patterns}Positive Patterns}

\subsection{\label{subsec:Basic-Definitions}Basic Definitions}
\begin{defn}
Let $M$ be $\pc$ model of $T$. 

We define a multisorted structure $S\left(M\right)$ with a sort for
each tuple $x$ of variables where
\begin{align*}
S_{x}\left(M\right) & =\left\{ \tp^{\p}\left(a/M\right)\mid a\in N^{x},M\leq N,N\text{ a }\pc\text{ model of }T\right\} \\
 & =\left\{ p\left(x\right)\mid p\text{ a maximal set of }\text{positive formulas consistent with }\Delta_{M}^{\atom}\cup T\right\} 
\end{align*}

with relations we will define shortly.

Note that the equality follows from \propref{max-iff-pc}.
\end{defn}

\begin{defn}
For any choice of positive formulas $\left\langle \varphi_{i}\left(x,y\right)\right\rangle _{i<n}$,
and $\alpha\left(y\right)$ without parameters, define 
\[
\mathcal{D}_{\varphi_{0},...,\varphi_{n-1};\alpha}^{S\left(M\right)}=\left\{ \left(p_{0},...,p_{n-1}\right)\mid\forall c\in\alpha\left(M\right):\stackrel[i<n]{}{\bigvee}\varphi_{i}\left(x_{i},c\right)\in p_{i}\right\} .
\]

Let $\mathcal{L}$ be the language containing $\mathcal{D}_{\varphi_{0},...,\varphi_{n-1};\alpha}^{S\left(M\right)}$
for all possible choices of positive formulas $\varphi_{0},...,\varphi_{n-1},\alpha$.
We usually will consider $S\left(M\right)$ as an $\mathcal{L}$ structure.
\end{defn}

\begin{defn}
For any $x',x$ variable tuples such that $x'$ is a subtuple of $x$,
let $\pi_{x,x'}^{S\left(M\right)}\left(p\left(x\right)\right)$ be
$p|_{x'}$. Let $\mathcal{L}_{\pi}$ be $\mathcal{L}\cup\left\{ \pi_{x,x'}\right\} _{x'\subseteq x}$. 

We will sometimes consider $S\left(M\right)$ as an $\mathcal{L}_{\pi}$
structure, but only when explicitly stated.
\end{defn}

\begin{rem}
\label{rem:projections-and-D}Note that if $x_{0},...,x_{n-1}$ are
tuples and $x_{i}'$ is a subtuple of $x_{i}$ for each $i<n$, then
for any relation $\mathcal{D}'=\mathcal{D}_{\varphi_{0}\left(x_{0}',y\right),...,\varphi_{n-1}\left(x_{n-1}',y\right);\alpha\left(y\right)}$
on the sorts corresponding to $x_{0}',\dots,x_{n-1}'$ we have a corresponding
$\mathcal{D}'=\mathcal{D}_{\varphi_{0}\left(x_{0},y\right),...,\varphi_{n-1}\left(x_{n-1},y\right);\alpha\left(y\right)}$
and we have that by definition $\mathcal{D}\left(p_{0},\dots,p_{n-1}\right)$
iff $\mathcal{D}'\left(\pi_{x_{0},x_{0}'}\left(p_{0}\right),\dots,\pi_{x_{n-1},x_{0n-1}'}\left(p_{n-1}\right)\right)$.
\end{rem}

\begin{prop}
\label{prop:restriction}If $M\leq N$ (which, since we assumed $M$
is $\pc$, is equivalent to the existence of a homomorphism $h:M\rightarrow N$),
then the restriction map $r_{M}:S\left(N\right)\rightarrow S\left(M\right)$
defined as $\id^{*}\left(p\right)=\left\{ \varphi\left(x,a\right)\in p\mid a\in M\right\} $
is an $\mathcal{L}_{\pi}$ homomorphism (in particular an $\mathcal{L}$
homomorphism).
\end{prop}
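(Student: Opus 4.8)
The plan is to check directly that $r_M$ is a well-defined map $S(N)\to S(M)$ and then that it respects the functions $\pi_{x,x'}$ and the relations $\mathcal{D}_{\varphi_0,\dots,\varphi_{n-1};\alpha}$. For well-definedness, fix $p\in S_x(N)$ and use the first description of $S_x(N)$ to write $p=\tp^{\p}(a/N)$ for some $\pc$ model $N'\geq N$ of $T$ and some $a\in N'^{x}$. Since $M\leq N\leq N'$, the set $r_M(p)=\{\varphi(x,b)\in p\mid b\in M\}$ is exactly $\tp^{\p}(a/M)$; as $M$ is $\pc$ and $N'\geq M$ is a $\pc$ model of $T$, \propref{max-iff-pc} shows $\tp^{\p}(a/M)$ is a maximal set of positive formulas consistent with $\Delta_M^{\atom}\cup T$, i.e. $r_M(p)\in S_x(M)$.

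The rest rests on two trivial remarks: (a) for any $q\in S(N)$ and any positive formula $\psi$ all of whose parameters lie in $M$, we have $\psi\in q$ iff $\psi\in r_M(q)$, straight from the definition of $r_M$; and (b) the inclusion $M\hookrightarrow N$ is a homomorphism, hence preserves positive formulas, so $\alpha(M)\subseteq\alpha(N)$ for every positive $\alpha$. For the functions: for $x'$ a subtuple of $x$, both $r_M(\pi^{S(N)}_{x,x'}(p))$ and $\pi^{S(M)}_{x,x'}(r_M(p))$ are, after unwinding the definitions, the set of positive formulas in the variables $x'$ with parameters from $M$ that belong to $p$; so they coincide, and $r_M$ commutes with the $\pi$'s. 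For the relations: suppose $(p_0,\dots,p_{n-1})\in\mathcal{D}^{S(N)}_{\varphi_0,\dots,\varphi_{n-1};\alpha}$ and fix $c\in\alpha(M)$. By (b), $c\in\alpha(N)$, so the defining clause ``$\bigvee_{i<n}\varphi_i(x_i,c)\in p_i$'' holds; this clause refers only to membership in the $p_i$ of formulas obtained from the $\varphi_i$ by substituting the parameter tuple $c\subseteq M$, so by (a) the very same clause holds with each $p_i$ replaced by $r_M(p_i)$. Since $c\in\alpha(M)$ was arbitrary, $(r_M(p_0),\dots,r_M(p_{n-1}))\in\mathcal{D}^{S(M)}_{\varphi_0,\dots,\varphi_{n-1};\alpha}$, as required.

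I do not expect a genuine obstacle here; the only step needing attention is the well-definedness, where it is crucial that $M$ be $\pc$ so that \propref{max-iff-pc} applies and $r_M(p)$ is an honest maximal positive type over $M$ rather than merely a partial one. Everything else is bookkeeping about which parameters occur in a formula, and the identical argument — with the pullback $h^{*}$ in place of $\id^{*}$ — covers an arbitrary homomorphism $h\colon M\to N$.
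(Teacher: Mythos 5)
Your proposal is correct and follows essentially the same route as the paper: identify $r_M(p)$ with $\tp^{\p}(a/M)$ for a realization $a$ of $p$ in a $\pc$ extension to get well-definedness, use that the inclusion $M\hookrightarrow N$ preserves positive formulas so $\alpha(M)\subseteq\alpha(N)$ for the $\mathcal{D}$-relations, and unwind definitions for the $\pi_{x,x'}$. The only cosmetic difference is that you route well-definedness explicitly through \propref{max-iff-pc} where the paper appeals directly to the first description of $S_x(M)$; these are the same fact.
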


\begin{proof}
First we show that $r_{M}\left(p\right)$ is indeed in $S\left(M\right)$.
Take some $\pc$ extension $N'$ of $N$ and some $d\in N'^{x}$ such
that $p=\tp^{\p}\left(d/N\right)$. Then $r_{M}\left(p\right)=\tp^{\p}\left(d/M\right)\in S\left(M\right)$.

Furthermore $r_{M}$ is an $\mathcal{L}$ homomorphism: if $\left(p_{0},...,p_{n-1}\right)\in\mathcal{D}_{\varphi_{0},...,\varphi_{n-1};\alpha}^{S\left(N\right)}$
and $a\in\alpha\left(M\right)$ then $a\in\alpha\left(N\right)$ thus
for some $i<n$ we have $\varphi_{i}\left(x_{i},a\right)\in p_{i}\Rightarrow\varphi_{i}\left(x_{i},a\right)\in r_{M}\left(p_{i}\right)$
that is $\left(r_{M}\left(p_{0}\right),...,r_{M}\left(p_{n-1}\right)\right)\in\mathcal{D}_{\varphi_{0},...,\varphi_{n-1};\alpha}^{S\left(M\right)}$.

And $r_{M}$ is also an $\mathcal{L}_{\pi}$ homomorphism, since 
\[
\pi_{x,x'}\left(r_{M}\left(p\right)\right)=\left\{ \varphi\left(x',a\right)\in p\mid a\in M\right\} =r_{M}\left(\pi_{x,x'}\left(p\right)\right).
\]
\end{proof}
\begin{defn}
Define 
\begin{align*}
\mathcal{T} & =\bigcup_{M\text{ a }\pc\text{ model of }T}\Th^{\hu}\left(S\left(M\right)\right),\\
\mathcal{T}_{\pi} & =\bigcup_{M\text{ a }\pc\text{ model of }T}\Th^{\hu}\left(S\left(M\right)\right)_{\mathcal{L}_{\pi}}.
\end{align*}
\end{defn}

\begin{claim}
\label{claim:Tpi-th-of-sat}If $N$ is sufficiently positively saturated
(see \propref{univ-ec}) then $\mathcal{T}=\Th^{\hu}\left(S\left(N\right)\right)$
and likewise for $\mathcal{T}_{\pi}$ (in particular if $T$ is bounded
and $U$ is the universal model, $\mathcal{T}=\Th^{\hu}\left(U\right)$).
\end{claim}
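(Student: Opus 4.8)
The plan is to show that for a sufficiently saturated $\pc$ model $N$, the h-universal theory of $S(N)$ already contains (hence equals) $\mathcal{T}$, and similarly for the $\mathcal{L}_\pi$-reduct. The key observation is that an $\hu$ sentence $\sigma$ fails in some $S(M)$ exactly when a single $\pp$ formula (the negated part of $\sigma$) is realized by a finite tuple of types $(p_0,\dots,p_{n-1}) \in S_{x_0}(M)\times\cdots\times S_{x_{n-1}}(M)$; and whether a given $\mathcal{L}$- (or $\mathcal{L}_\pi$-) $\pp$ formula is realized in $S(M)$ is controlled by finitely much data about $M$.

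First I would fix an $\hu$ $\mathcal{L}$-sentence $\sigma \in \Th^{\hu}(S(M_0))$ for some $\pc$ model $M_0$, and aim to show $S(N) \vDash \sigma$. Writing $\sigma = \forall \bar z\, \neg\theta(\bar z)$ with $\theta$ a $\pp$ $\mathcal{L}$-formula, suppose toward contradiction that $S(N)\vDash\theta(\bar p)$ for some tuple $\bar p = (p_0,\dots,p_{n-1})$ of types in $N$. Each conjunct of $\theta$ is (after using \remref{projections-and-D} to align variable tuples) of the form $\mathcal{D}^{S(N)}_{\varphi_0,\dots,\varphi_{m-1};\alpha}(p_{i_0},\dots,p_{i_{m-1}})$, i.e. ``for every $c \in \alpha(N)$, $\bigvee_j \varphi_j(x_{i_j},c) \in p_{i_j}$''. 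I want to transfer this configuration down to $M_0$ — or rather, to produce an analogous configuration in $S(M_0)$, contradicting $S(M_0)\vDash\sigma$. The mechanism: realize each $p_i$ as $\tp^{\p}(a_i/N)$ for tuples $a_i$ in some $\pc$ extension of $N$; the statement $S(N)\vDash\theta(\bar p)$ then becomes a $\pp$-type over $N$ (in the variables for the $a_i$) asserting all the relevant disjunctions hold for all $c\in\alpha(N)$, together with the maximality of each $p_i$ witnessed by formulas $\psi\perp\varphi$ as in \factref{maximal-pp-in-pc}. Using that $M_0$ and $N$ are both $\pc$ models of the irreducible theory $T$ (so there is a common continuation, by \propref{irreducible} / the amalgamation of \propref{amalgamation}), and that $N$ is $\lambda$-saturated for $\lambda > |M_0|$, I would pull this configuration back: by saturation of $N$ I can find a copy of $M_0$ inside $N$ (a homomorphism $M_0\to N$, which is an immersion), and then restrict the $p_i$ along $r_{M_0}: S(N)\to S(M_0)$, which is an $\mathcal{L}$-homomorphism by \propref{restriction}, so preserves $\theta$; this already gives $S(M_0)\vDash\theta(r_{M_0}(\bar p))$, the desired contradiction. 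Thus $\mathcal{T} \subseteq \Th^{\hu}(S(N))$, and the reverse inclusion is immediate from the definition of $\mathcal{T}$ as a union; the bounded case follows since then $N$ can be taken to be $U$ itself.

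For $\mathcal{T}_\pi$ the argument is identical: $r_{M_0}$ is also an $\mathcal{L}_\pi$-homomorphism by \propref{restriction}, so it preserves $\pp$ $\mathcal{L}_\pi$-formulas, and the same contradiction goes through. The main obstacle I anticipate is pinning down exactly how saturated $N$ must be — one needs $N$ saturated enough that every $\pc$ model $M_0$ of $T$ of bounded size embeds into $N$ over the relevant finite data, and that the various $\pp$-types describing the configurations $\theta(\bar p)$ are themselves realized; making ``sufficiently saturated'' precise (roughly, $\lambda$-saturated for $\lambda$ larger than the number of $\mathcal{L}$-sorts involved and than $|T|$, or in the bounded case just taking $U$) and checking that this suffices uniformly over all $\sigma$ is the delicate bookkeeping step, whereas the transfer-of-configuration idea itself is routine given \propref{restriction} and \propref{max-iff-pc}.
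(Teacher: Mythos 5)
Your proposal is correct and is essentially the paper's own argument: for each $\hu$ sentence one fixes a witnessing $\pc$ model $M_\varphi$, uses positive saturation of $N$ to embed $M_\varphi$ into $N$, and pulls the sentence back along the restriction $r_{M_\varphi}:S(N)\rightarrow S(M_\varphi)$, which is an $\mathcal{L}_\pi$- (hence $\mathcal{L}$-) homomorphism by \propref{restriction}; the reverse inclusion is by definition of $\mathcal{T}$. The extra machinery about realizing the $p_i$ and transferring configurations is unnecessary (as you note yourself), and the paper resolves your "how saturated" worry simply by taking $\kappa$ to be the supremum of $|M_\varphi|$ over one witness model per sentence and requiring $\kappa^+$-positive saturation.
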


\begin{proof}
For any $\varphi\in\mathcal{T}$ let $M_{\varphi}$ be such that $\varphi\in\Th^{\hu}\left(M_{\varphi}\right)$,
and let $\kappa=\stackrel[\varphi\in\mathcal{T}]{}{\bigcup}\left|M_{\varphi}\right|$.
Then if $N$ is $\kappa^{+}$-positively saturated, for any $\varphi$
there is an embedding $M_{\varphi}\leq N$; and thus there is by the
previous remark an ($\mathcal{L}_{\pi}$, even) homomorphism $r_{M_{\varphi}}:S\left(N\right)\rightarrow S\left(M\right)$;
and thus since $\hu$ sentences are pulled back by homomorphisms we
have $S\left(N\right)\vDash\varphi$.
\end{proof}

\subsection{The Bounded and Hausdorff Cases}

\subsubsection{Bounded Theories}
\begin{prop}
\label{prop:bounded-S-is-U}Assume $T$ is bounded and $U$ is its
universal $\pc$ model (see \propref{univ-ec}).

For any variable tuple $x$, $a\mapsto\tp^{\p}\left(a/U\right)$ defines
a natural bijection from $U^{x}$ to $S_{x}\left(U\right)$.
\end{prop}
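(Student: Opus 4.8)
The map $a \mapsto \tp^{\p}(a/U)$ lands in $S_x(U)$ by \propref{max-iff-pc}, since $U$ is $\pc$ (take $N = U$ itself). So the plan is to show this map is both injective and surjective.

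For surjectivity: given $p(x) \in S_x(U)$, by definition there is a $\pc$ model $N \geq U$ of $T$ and $a \in N^x$ with $p = \tp^{\p}(a/U)$. Now I would invoke \propref{univ-ec}(2): the inclusion $U \leq N$ is an embedding into a model of $T$, so there is a retract $r : N \to U$ with $r|_U = \id_U$. Since $r$ is a homomorphism fixing $U$ pointwise, for any positive $\varphi(x,c)$ with $c \in U$ we have $\varphi(x,c) \in p = \tp^{\p}(a/U)$ implies $U \vDash \varphi(r(a), c)$, i.e. $\varphi(x,c) \in \tp^{\p}(r(a)/U)$. Thus $p \subseteq \tp^{\p}(r(a)/U)$, and since $p$ is maximal, $p = \tp^{\p}(r(a)/U)$ with $r(a) \in U^x$. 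This gives surjectivity.

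For injectivity: suppose $a, b \in U^x$ with $\tp^{\p}(a/U) = \tp^{\p}(b/U)$. I would use \factref{maximal-pp-in-pc} applied to the positive formula $\varphi(x) := (x = b)$ — more precisely, working coordinatewise, if $a \neq b$ then some coordinate differs, say $a_i \neq b_i$; then since $U$ is $\pc$ and $a_i \in U \setminus \{b_i\} = U \setminus \varphi(U)$ for $\varphi(x) := (x = b_i)$, there is a positive $\psi(x, y)$ with $\psi \perp (x = y)$ and $U \vDash \psi(a_i, b_i)$. But $\psi(x_i, b_i)$ (taking $b_i$ as a parameter from $U$) then lies in $\tp^{\p}(a/U)$ but not in $\tp^{\p}(b/U)$, since $U \vDash \neg\psi(b_i, b_i)$ as $\psi \perp (x=y)$. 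This contradicts the assumed equality of types. Hence $a = b$.

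The content here is genuinely routine given the machinery already established; the only mild subtlety — and the step I would be most careful about — is the surjectivity argument, where one must make sure the retract $r$ actually fixes $U$ pointwise (so that parameters from $U$ are preserved) and that maximality of $p$ is what upgrades the inclusion $p \subseteq \tp^{\p}(r(a)/U)$ to equality. Naturality of the bijection is immediate: any embedding $U \leq U'$ of universal models, or automorphism of $U$, commutes with "take the positive type", so no separate argument is needed.
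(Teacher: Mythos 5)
Your proof is correct. The paper's argument is shorter at both steps, and it is worth seeing why: for surjectivity it invokes \propref{univ-ec}.4 rather than \propref{univ-ec}.2 --- since every homomorphism from $U$ into a $\pc$ model of $T$ is an isomorphism, the inclusion $U\leq N$ is already surjective, so the realizing tuple $a$ lies in $U$ to begin with and no retract or appeal to maximality of $p$ is needed. Your retract-plus-maximality route is equally valid (and would survive in a setting where one only knows $U$ is a retract of its extensions, not that such extensions are trivial), but it does quietly use that every $p\in S_x(U)$ is maximal, which is \propref{max-iff-pc} again. For injectivity the paper simply observes that $x=a$ is itself an atomic positive formula with parameter $a\in U$, so $(x=a)\in\tp^{\p}(b/U)$ iff $a=b$; your detour through \factref{maximal-pp-in-pc} to manufacture a separating $\psi\perp(x=y)$ is sound but proves a harder fact than required --- that trick is what one needs when the parameter is \emph{not} available in the base, which is not the case here since the types are over all of $U$.
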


\begin{defn}
Denote this bijection by $\iota:\bigcup_{x}U^{x}\rightarrow S\left(U\right)$.
\end{defn}

\begin{proof}
Certainly $\left\{ \tp^{\p}\left(a/U\right)\mid a\in U\right\} \subseteq S_{x}\left(U\right)$.
For any $p\in S_{x}\left(U\right)$ there exists $N$ a $\pc$ model
of $T$ extending $U$ and $a\in N$ such that $p=\tp^{\p}\left(a/M\right)$.
By \propref{univ-ec}.4, the embedding from $U$ to $N$ must be an
isomorphism thus in particular surjective, so we get that in fact
$a\in U$ thus this map is also surjective.

On the other hand, this map is certainly injective, since $\left(x=a\right)\in\tp^{\p}\left(b/U\right)$
iff $a=b$.
\end{proof}
\begin{defn}
Given $\sigma\in\End\left(U\right)$, denote $\sigma^{\iota}=\iota\circ\sigma\circ\iota^{-1}:S\left(U\right)\rightarrow S\left(U\right)$
(where we extend $\sigma$ to every $U^{x}$ naturally). Given $h\in\End_{\mathcal{L}}\left(S\left(U\right)\right)$
let $h_{\iota}:U\rightarrow U$ be $h_{\iota}\left(a\right)=\iota^{-1}\left(h\left(\iota\left(a\right)\right)\right)$
(that is $\iota^{-1}\circ h\circ\iota$ restricted to tuples of length
1).
\end{defn}

\begin{prop}
\label{prop:universal-model-Lpi}$\sigma\rightarrow\sigma^{\iota}$
is an isomorphism from $\Aut\left(U\right)$ to $\End_{\mathcal{L}_{\pi}}\left(S\left(U\right)\right)=\Aut_{\mathcal{L}_{\pi}}\left(S\left(U\right)\right)\subseteq\Aut_{\mathcal{L}}\left(S\left(U\right)\right)$.
\end{prop}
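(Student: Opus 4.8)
The plan is to verify that $\sigma \mapsto \sigma^{\iota}$ lands in $\End_{\mathcal{L}_\pi}(S(U))$, that every $\mathcal{L}_\pi$-endomorphism of $S(U)$ arises this way, and that the map is a group isomorphism onto the stated set. First I would check that for $\sigma \in \Aut(U)$, $\sigma^{\iota} = \iota \circ \sigma \circ \iota^{-1}$ is an $\mathcal{L}$-homomorphism: since $\sigma$ is an automorphism of $U$, it preserves $\tp^{\p}$, so $\varphi(x_i,c) \in p_i$ iff $\varphi(x_i,\sigma(c)) \in \sigma^{\iota}(p_i)$, and running this over all $c \in \alpha(U) = \alpha(\sigma(U))$ shows membership in $\mathcal{D}_{\varphi_0,\dots,\varphi_{n-1};\alpha}$ is preserved (in fact both directions, since $\sigma^{-1}$ works too). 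For the $\pi$-functions, $\sigma^{\iota}$ commutes with restriction to subtuples because $\sigma$ acts coordinatewise on tuples, so $\pi_{x,x'}(\sigma^{\iota}(p)) = \sigma^{\iota}(\pi_{x,x'}(p))$. Thus $\sigma^{\iota} \in \End_{\mathcal{L}_\pi}(S(U))$, and since $\sigma$ is invertible so is $\sigma^{\iota}$, giving an injective group homomorphism $\Aut(U) \to \Aut_{\mathcal{L}_\pi}(S(U))$; injectivity is immediate from $\sigma^{\iota}(\tp^{\p}(a/U)) = \tp^{\p}(\sigma(a)/U)$ together with the fact that $(x=a) \in \tp^{\p}(b/U)$ iff $a=b$.

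Next I would show surjectivity onto $\End_{\mathcal{L}_\pi}(S(U))$, which is the crux. Given $h \in \End_{\mathcal{L}_\pi}(S(U))$, form $h_\iota = \iota^{-1} \circ h \circ \iota$ restricted to $1$-tuples, so $h_\iota : U \to U$. The key point is that $h_\iota$ is an endomorphism of $U$ as an $L$-structure: for a relation symbol $R$ of $L$ and a tuple $\bar a$ from $U$ with $U \vDash R(\bar a)$, I want $U \vDash R(h_\iota(\bar a))$. Here I would use the $\pi$-functions: the type $\tp^{\p}(\bar a/U) \in S_{\bar x}(U)$ projects via the appropriate $\pi$ onto each $\tp^{\p}(a_j/U)$, and $h$ commutes with these projections, so $h(\tp^{\p}(\bar a/U))$ is a single type in $S_{\bar x}(U)$ whose coordinatewise projections are the $h(\tp^{\p}(a_j/U)) = \tp^{\p}(h_\iota(a_j)/U)$; since $\iota$ is a bijection this type is $\tp^{\p}(h_\iota(\bar a)/U)$ where $h_\iota(\bar a)$ denotes the tuple of images. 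Now the relation $R(\bar a)$ itself is recorded: working over $\emptyset$, $R(\bar x)$ is an atomic (hence positive) formula, and one can encode "$R(\bar x) \in p$" as membership in a suitable $\mathcal{D}$-relation with $\alpha = (y=y)$ taken on a dummy variable — more directly, $U \vDash R(\bar a)$ translates to $(p_0,\dots) \in \mathcal{D}_{R(x_0,y),x_1=x_1,\dots}$ type conditions, or even more simply: since $h$ is an $\mathcal{L}$-homomorphism it preserves the relations $\mathcal{D}_{\varphi;\alpha}$, and choosing these to detect which atomic (equivalently, quantifier-free positive) formulas over $\emptyset$ lie in a type forces $h(\tp^{\p}(\bar a/\emptyset)) \supseteq$ the atomic part, hence $h_\iota$ preserves $R$. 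I would spell this out by noting $R(\bar a)$ holds iff $R(\bar x) \in \tp^{\p}(\bar a/U)$, and this is visible to $\mathcal{L}$ via $\mathcal{D}_{R(x,y);\,y=y}$ evaluated suitably. The same argument handles function symbols via their graphs.

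Having established $h_\iota \in \End(U)$, \propref{univ-ec}(4) gives $\End(U) = \Aut(U)$, so $h_\iota \in \Aut(U)$. Then I would verify $(h_\iota)^{\iota} = h$ on all sorts: on $1$-tuples this is the definition of $h_\iota$; on a general tuple $\bar x$, both $(h_\iota)^{\iota}$ and $h$ send $\tp^{\p}(\bar a/U)$ to a type whose coordinate projections are $\tp^{\p}(h_\iota(a_j)/U)$, and since $h$ commutes with the $\pi_{\bar x, x_j}$ and $U$ is homogeneous for positive types (\propref{univ-ec}(5)) — equivalently since the type of a tuple over $U$ is determined by its behavior with respect to the $\mathcal{D}$-relations and $U$-parameters, and in particular $(x_j = a_j) \in \tp^{\p}(\bar a/U)$ pins down each coordinate — the two types coincide. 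This shows $\sigma \mapsto \sigma^{\iota}$ is onto $\End_{\mathcal{L}_\pi}(S(U))$, and combined with the first paragraph we get $\End_{\mathcal{L}_\pi}(S(U)) = \Aut_{\mathcal{L}_\pi}(S(U))$, with the inclusion into $\Aut_{\mathcal{L}}(S(U))$ being clear since any $\mathcal{L}_\pi$-homomorphism is an $\mathcal{L}$-homomorphism. The main obstacle I anticipate is the surjectivity step: carefully arguing that an abstract $\mathcal{L}_\pi$-endomorphism of $S(U)$ must respect the $L$-structure of $U$ coordinatewise, which genuinely needs both the $\pi$-functions (to descend to single coordinates and detect atomic facts) and the bijection $\iota$ together with the rigidity of $U$ from \propref{univ-ec}; the $\mathcal{L}$-only analogue of this statement would fail, which is exactly why the proposition is phrased for $\mathcal{L}_\pi$.
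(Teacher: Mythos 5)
Your proposal is correct and follows essentially the same route as the paper: show $\sigma^{\iota}$ preserves the $\mathcal{D}$-relations and commutes with the $\pi_{x,x'}$, then for surjectivity use the $\pi$-functions to see that an $\mathcal{L}_{\pi}$-endomorphism $h$ acts coordinatewise via $h_{\iota}$, detect positive $\emptyset$-definable relations of $U$ through the parameter-free relations $\mathcal{D}_{\varphi;}$ to conclude $h_{\iota}\in\End(U)=\Aut(U)$, and recover $h=(h_{\iota})^{\iota}$. The only cosmetic difference is your phrasing of the detecting relation as $\mathcal{D}_{R(x,y);\,y=y}$ rather than the paper's $\mathcal{D}_{\varphi;}$ with empty parameter formula, which amounts to the same thing.
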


\begin{proof}
Note first that $\iota\left(a_{0}\right),...,\iota\left(a_{n-1}\right)\in\mathcal{D}_{\varphi_{0},...,\varphi_{n-1};\alpha}\left(S\left(U\right)\right)$
iff $\forall y:\alpha\left(y\right)\rightarrow\stackrel[i<n]{}{\bigvee}\varphi\left(a_{i},y\right)$
holds. Thus the preimage under $\iota$ of any $\mathcal{L}$ atomic
relation on $S\left(U\right)$ is definable in $U$\footnote{Note that the defining formula is not $\hu$, but it is equivalent
to a Boolean combination of $\hu$ formulas.}. Furthermore if $x'$ is a subtuple of $x$, $a\in U^{x}$ and $a'$
is the corresponding subtuple, then $\pi_{x,x'}\left(\iota\left(a\right)\right)=\iota\left(a'\right)$
(since $x'=a'\in\tp^{\p}\left(a/U\right)=\iota\left(a\right)$). In
particular, for every automorphism $\sigma$ of $U$ (thus every endomorphism,
see \propref{univ-ec}.4) we have $\iota\circ\sigma\circ\iota^{-1}\in\Aut_{\mathcal{L}_{\pi}}\left(S\left(U\right)\right)$. 

Furthermore, $\left(\iota\circ\sigma\circ\iota^{-1}\right)\circ\left(\iota\circ\tau\circ\iota^{-1}\right)=\iota\circ\left(\sigma\circ\tau\right)\circ\iota^{-1}$
thus this is a homomorphism, and $\iota^{-1}\circ\left(\iota\circ\sigma\circ\iota^{-1}\right)\circ\iota=\sigma$
so this is injective. 

On the other hand, if $\varphi\left(x\right)$ is a positive $\emptyset$-definable
relation then $\varphi\left(a\right)\Longleftrightarrow\mathcal{D}_{\varphi;}\left(\iota\left(a\right)\right)$
for all $a\in U^{x}$. Thus if $h:S\left(U\right)\rightarrow S\left(U\right)$
is an $\mathcal{L}_{\pi}$ homomorphism, then
\begin{align*}
U & \vDash\varphi\left(a\right)\Rightarrow S\left(U\right)\vDash\mathcal{D}_{\varphi;}\left(\iota\left(a\right)\right)\Rightarrow\\
 & S\left(U\right)\vDash\mathcal{D}_{\varphi;}\left(h\left(\iota\left(a\right)\right)\right)\Rightarrow U\vDash\varphi\left(\iota^{-1}\left(h\left(\iota\left(a\right)\right)\right)\right)
\end{align*}

and 
\[
\iota^{-1}\left(h\left(\iota\left(a\right)\right)\right)=\iota^{-1}\left(h\left(tp^{\p}\left(a/U\right)\right)\right)
\]

is the unique $b\in U^{x}$ such that $x=b\in h\left(tp^{\p}\left(a/U\right)\right)$.
So if $x=\left(x_{0},...,x_{n-1}\right)$, we find 
\[
\pi_{x,x_{i}}\left(h\left(\iota\left(a\right)\right)\right)=h\left(\pi_{x,x_{i}}\left(\iota\left(a\right)\right)\right)=h\left(\iota\left(a_{i}\right)\right)
\]

and thus $\left(x_{i}=\iota^{-1}\left(h\left(\iota\left(a_{i}\right)\right)\right)\right)\in h\left(tp^{\p}\left(a/U\right)\right)$
so 
\[
\left(\iota^{-1}\circ h\circ\iota\right)\left(a_{0},...,a_{n-1}\right)=\left(\left(\iota^{-1}\circ h\circ\iota\right)a_{0},...,\left(\iota^{-1}\circ h\circ\iota\right)a_{n-1}\right).
\]

Therefore we have 
\[
U\vDash\varphi\left(\left(\iota^{-1}\circ h\circ\iota\right)a_{0},...,\left(\iota^{-1}\circ h\circ\iota\right)a_{n-1}\right)
\]

thus if $\sigma=h_{\iota}$, $\sigma\in\End\left(U\right)=\Aut\left(U\right)$.
Now since as we said $\iota^{-1}\circ h\circ\iota$ is uniquely determined
by $h_{\iota}$, $\sigma^{\iota}=h$, and in particular $h\in\Aut\left(S\left(M\right)\right)_{\mathcal{L}_{\pi}}$. 
\end{proof}
\begin{rem}
It is not true in general that if $h$ is merely an $\mathcal{L}$
homomorphism we have 
\[
\left(\iota^{-1}\circ h\circ\iota\right)\left(a_{0},...,a_{n-1}\right)=\left(\left(\iota^{-1}\circ h\circ\iota\right)a_{0},...,\left(\iota^{-1}\circ h\circ\iota\right)a_{n-1}\right)
\]

and consequently $h_{\iota}$ is not necessarily an $L$-homomorphism.
In \exaref{double-interval-continued} we will see an example of an
$\mathcal{L}$-homomorphism which is non-injective when restricted
to $S_{1}\left(U\right)$, and thus $h_{\iota}$ is necessarily not
an $L$-automorphism thus not an $L$-homomorphism.
\end{rem}

\begin{lem}
Let $T$ be a bounded $\pu$ theory, $U$ its universal model. Take
some $a,b\in U$. Then the following are equivalent:

1. $\tp^{\atom}\left(\iota\left(a\right)\right)\subseteq\tp^{\atom}\left(\iota\left(b\right)\right)$.

2. $\tp^{\p}\left(a\right)\subseteq\tp^{\p}\left(b\right)$.

3. $\tp^{\p}\left(a\right)=\tp^{\p}\left(b\right)$.

4. There exists $\sigma\in\Aut\left(U\right)$ such that $\sigma\left(a\right)=b$.

5. There exists $\sigma\in\Aut\left(S\left(U\right)\right)$ such
that $\sigma\left(\iota\left(a\right)\right)=\iota\left(b\right)$.

6. There exists $f\in\End\left(S\left(U\right)\right)$ such that
$f\left(\iota\left(a\right)\right)=\iota\left(b\right)$.
\end{lem}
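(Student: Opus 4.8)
The plan is to prove the chain of implications $(4)\Rightarrow(5)\Rightarrow(6)\Rightarrow(1)$, then $(1)\Rightarrow(2)$ trivially, then $(2)\Rightarrow(3)\Rightarrow(4)$, closing the loop. The implications $(4)\Rightarrow(5)\Rightarrow(6)$ are essentially immediate from \propref{universal-model-Lpi}: given $\sigma\in\Aut(U)$ with $\sigma(a)=b$, the induced map $\sigma^{\iota}=\iota\circ\sigma\circ\iota^{-1}$ lies in $\Aut_{\mathcal{L}_{\pi}}(S(U))\subseteq\Aut_{\mathcal{L}}(S(U))\subseteq\End(S(U))$ and sends $\iota(a)$ to $\iota(b)$. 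The implication $(1)\Rightarrow(2)$ is also immediate: recall from the proof of \propref{universal-model-Lpi} that $U\vDash\varphi(c)$ iff $S(U)\vDash\mathcal{D}_{\varphi;}(\iota(c))$ for any positive $\emptyset$-formula $\varphi(x)$; hence if $\tp^{\atom}(\iota(a))\subseteq\tp^{\atom}(\iota(b))$ then every positive formula in $\tp^{\p}(a)$ (being pulled back from a $\mathcal{D}_{\varphi;}$ relation) is also in $\tp^{\p}(b)$. Finally $(2)\Rightarrow(3)$ is because $\tp^{\p}(a)$, $\tp^{\p}(b)$ are both \emph{maximal} sets of positive formulas consistent with $T$ (by \propref{max-iff-pc}, since $a,b\in U$ and $U$ is $\pc$), so an inclusion between them is an equality; and $(3)\Rightarrow(4)$ is exactly \propref{univ-ec}.(5), the homogeneity of $U$ for positive types of finite arity.

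The one implication requiring real work is $(6)\Rightarrow(1)$: from an arbitrary $\mathcal{L}$-endomorphism $f$ of $S(U)$ with $f(\iota(a))=\iota(b)$ we must extract $\tp^{\atom}(\iota(a))\subseteq\tp^{\atom}(\iota(b))$. Here I would argue directly on atomic $\mathcal{L}$-formulas. An atomic $\mathcal{L}$-formula holding of a single point $\iota(a)$ is of the form $\mathcal{D}_{\varphi_0,\dots,\varphi_{n-1};\alpha}(\iota(a),\dots,\iota(a))$ (possibly with repeated coordinates, or a genuine $n=1$ relation $\mathcal{D}_{\varphi;\alpha}$); since $f$ is a homomorphism it preserves such relations, so it suffices to note that $\tp^{\atom}(\iota(a))$ is exactly the set of atomic $\mathcal{L}$-formulas true of $\iota(a)$, and each such formula, being atomic, is preserved under the homomorphism $f$, giving $S(U)\vDash\theta(f(\iota(a)))=\theta(\iota(b))$ for every such $\theta$. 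This is really just the general fact that homomorphisms preserve atomic (indeed positive) formulas, so once the bookkeeping about multi-sorted atomic $\mathcal{L}$-formulas of a tuple all of whose entries equal $\iota(a)$ is set up, there is nothing deep.

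So the genuine content of the lemma is concentrated in $(2)\Leftrightarrow(3)$ and $(3)\Leftrightarrow(4)$, both of which are quoted from earlier results, together with the translation dictionary between $U$ and $S(U)$ from \propref{universal-model-Lpi}. I expect the main (minor) obstacle to be purely notational: being careful that when we say $\tp^{\atom}(\iota(a))$ we mean the atomic type in the sort $S_x(U)$ where $x$ is a $1$-tuple, and that the relations $\mathcal{D}_{\varphi_0,\dots,\varphi_{n-1};\alpha}$ applied with all coordinates equal to $\iota(a)$ correspond, via the characterization $\mathcal{D}_{\varphi_0,\dots,\varphi_{n-1};\alpha}(\iota(a),\dots,\iota(a))\iff\forall y(\alpha(y)\to\bigvee_{i<n}\varphi_i(a,y))$, to a Boolean combination of $\hu$-conditions on $a$ whose positive part is captured inside $\tp^{\p}(a)$. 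Once that dictionary is in place, I would simply run the cycle $(4)\Rightarrow(5)\Rightarrow(6)\Rightarrow(1)\Rightarrow(2)\Rightarrow(3)\Rightarrow(4)$ as above.
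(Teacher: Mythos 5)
Your proposal is correct and follows essentially the same route as the paper: the same cycle $(4)\Rightarrow(5)\Rightarrow(6)\Rightarrow(1)\Rightarrow(2)\Rightarrow(3)\Rightarrow(4)$, with $(4)\Rightarrow(5)$ and $(1)\Rightarrow(2)$ from \propref{universal-model-Lpi}, $(2)\Rightarrow(3)$ from maximality of positive types in $\pc$ models, and $(3)\Rightarrow(4)$ from positive homogeneity of $U$. The only difference is that you spell out $(6)\Rightarrow(1)$ in more detail than the paper, which simply calls it obvious (homomorphisms preserve atomic formulas).
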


\begin{proof}
(5) $\Rightarrow$ (6) $\Rightarrow$ (1) is obvious.

(1) $\Rightarrow$ (2) is from the proof of \propref{universal-model-Lpi}.

(2) $\Rightarrow$ (3) is from maximality of positive types in $\pc$
models (\factref{maximal-pp-in-pc}).

(3) $\Rightarrow$ (4) is from positive homogeneity of $U$ (\propref{univ-ec}.5).

(4) $\Rightarrow$ (5) is also from \propref{universal-model-Lpi}.
\end{proof}
\begin{prop}
\label{rem:finite-and-algebraic}Assume $U=\acl^{\p}\left(\emptyset\right)$,
where $\acl^{\p}$ is union of all positively defined algebraic sets.
Then for any $h\in\End_{\mathcal{L}}\left(S\left(U\right)\right)$,
$h_{\iota}\in\Aut\left(U\right)$ and $h$ is surjective. In particular,
this holds for $U$ which is sortwise finite. 
\end{prop}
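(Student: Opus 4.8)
The plan is to reduce the statement to a single assertion: that $g := \iota^{-1}\circ h\circ\iota$ (which maps each $U^{x}$ to $U^{x}$, since an $\mathcal{L}$-endomorphism respects sorts) acts \emph{componentwise} on tuples. Once this is known, everything follows as in \propref{universal-model-Lpi}: recall from that proof that for any $\mathcal{L}$-homomorphism $h$ and any positive $\emptyset$-definable relation $\varphi(x)$ on an arbitrary tuple $x$ one has $U\vDash\varphi(a)\Leftrightarrow\mathcal{D}_{\varphi;}(\iota(a))$, so, $h$ preserving $\mathcal{D}_{\varphi;}$, we get $U\vDash\varphi(a)\Rightarrow U\vDash\varphi(g(a))$. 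Hence $g$ preserves all positive $\emptyset$-definable relations, in particular all $L$-atomic relations and the graphs of $L$-function symbols; together with componentwise-ness this makes $h_{\iota}=g|_{U^{1}}$ an $L$-endomorphism of $U$, so $h_{\iota}\in\End(U)=\Aut(U)$ by \propref{univ-ec}. Then $g$ is just the coordinatewise extension of $h_{\iota}$, so $h=\iota\circ g\circ\iota^{-1}$ is a bijection of every $S_{x}(U)$, in particular surjective. For the final ``in particular'': if $U$ is sortwise finite then each $\Aut(U)$-orbit is finite, and it is positively $\emptyset$-definable (isolate it from the finitely many other positive types over $\emptyset$ using incomparability of distinct maximal positive types, \propref{max-iff-pc}), so $U=\acl^{\p}(\emptyset)$ and the general case applies.

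The first preliminary I would establish is that for every $a\in U$ there is a positive $\emptyset$-formula $\varphi_{a}(x)$ with $\varphi_{a}(U)$ equal to the $\Aut(U)$-orbit $O_{a}$ of $a$, and that $O_{a}$ is finite. Starting from an algebraic positive $\emptyset$-formula containing $a$ (this is where $U=\acl^{\p}(\emptyset)$ enters), intersect it with formulas $\mu\in\tp^{\p}(a/\emptyset)\setminus\tp^{\p}(e/\emptyset)$ separating $a$ from each of the finitely many $e$ in it with a different positive type; by $\emptyset$-invariance and positive homogeneity (\propref{univ-ec}) the result defines exactly $O_{a}$. Next, using \factref{maximal-pp-in-pc} in $U$, I would note that inequality is positively definable \emph{on $O_{a}$}: taking a finite disjunction of witnesses, one per pair of distinct elements of $O_{a}$, yields a positive $\theta_{a}(u,v)$ with $\theta_{a}\perp(u=v)$ and $U\vDash\theta_{a}(o,o')$ for all distinct $o,o'\in O_{a}$.

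The heart of the argument is then the componentwise claim. Fix $a=(a_{0},\dots,a_{n-1})\in U^{x}$, write $g(a)=(b_{0},\dots,b_{n-1})$ and $g(a_{i})=a_{i}'$; since $g$ preserves ``$\varphi_{a_{i}}$ holds of the $i$-th coordinate'' we get $b_{i},a_{i}'\in O_{a_{i}}$. Now introduce the $\mathcal{L}$-relation $R_{i}:=\mathcal{D}_{\theta_{a_{i}}(x_{i},y),\,(x'=y);\,\varphi_{a_{i}}(y)}$ between the sort of $x$ (using only the coordinate $x_{i}$) and a one-variable sort $x'$, where $x'=y$ is the atomic equality formula. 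Unwinding the definition: for $c=(c_{0},\dots,c_{n-1})$ with $c_{i}\in O_{a_{i}}$ and any $d$, one has $(\iota(c),\iota(d))\in R_{i}$ iff $\forall y\in O_{a_{i}}\,\big(\theta_{a_{i}}(c_{i},y)\vee y=d\big)$, i.e. iff there is no $y\in O_{a_{i}}$ with $y=c_{i}$ and $y\neq d$, i.e. iff $c_{i}=d$ (here the confinement $c_{i}\in O_{a_{i}}$ is what makes $\theta_{a_{i}}(c_{i},y)$ equal ``$c_{i}\neq y$''). Since $(\iota(a),\iota(a_{i}))\in R_{i}$ and $h$ preserves $R_{i}$, applying $h$ gives $(\iota(b_{0},\dots,b_{n-1}),\iota(a_{i}'))\in R_{i}$, hence $b_{i}=a_{i}'$. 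Thus $g(a)=(g(a_{0}),\dots,g(a_{n-1}))$, as needed.

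The main obstacle is precisely this last step: an $\mathcal{L}$-homomorphism need not respect the projections $\pi_{x,x'}$ (indeed, as the remark before the statement notes, it can collapse $S_{1}(U)$), so one cannot argue componentwise directly. The algebraicity of the coordinates is exactly what supplies the missing information, via the ``equality-on-a-finite-set'' gadget $R_{i}$ built from $\theta_{a_{i}}$ and the atomic formula $x'=y$; the care required is in checking that $R_{i}$ computes coordinate equality only because the relevant coordinates stay inside the finite set $O_{a_{i}}$, which is where the hypothesis $U=\acl^{\p}(\emptyset)$ is genuinely used.
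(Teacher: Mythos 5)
Your proof is correct, but it takes a genuinely different route from the paper's. The paper never establishes your componentwise claim; it sidesteps the issue by working exclusively with the tuple of coordinate types $\overline{p}_{a}=\left(\iota\left(a_{0}\right),\dots,\iota\left(a_{n-1}\right)\right)$ rather than with $\iota\left(a\right)$ itself. Concretely, it shows that for every positive $\emptyset$-definable $\phi\left(x\right)$ the set $\left\{ \overline{p}_{a}\mid a\in\phi\left(U\right)\right\}$ is a union, over $a\in\phi\left(U\right)$, of atomic $\mathcal{L}$-types $\Sigma_{a}^{\phi}$ each of which is preserved by $h$ and is contained back in that same set (using, as you do, the finiteness of the algebraic pieces $\left(\psi\wedge\varphi_{i}\right)\left(U\right)$ and \factref{maximal-pp-in-pc} to positively separate distinct coordinates); this directly yields that $h_{\iota}$ preserves all positive $L$-formulas, hence is an automorphism. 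Surjectivity is then proved by a separate argument applying $\mathcal{D}_{x=y,\dots,x=y;\varphi_{i}\left(y\right)}$ to an enumeration of each finite set $\varphi_{i}\left(U^{x}\right)$. Your approach confronts the componentwise problem head-on with the gadget $R_{i}=\mathcal{D}_{\theta_{a_{i}}\left(x_{i},y\right),\left(x'=y\right);\varphi_{a_{i}}\left(y\right)}$, which correctly computes ``the $i$-th coordinate of $\xi$ equals $\zeta$'' precisely because that coordinate is confined to the finite, positively definable orbit $O_{a_{i}}$ (your verification that $b_{i}\in O_{a_{i}}$ before instantiating $e=b_{i}$ is the step that makes this airtight). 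What your route buys is strictly more information: it shows that when $U=\acl^{\p}\left(\emptyset\right)$ every $\mathcal{L}$-endomorphism of $S\left(U\right)$ automatically respects the projections $\pi_{x,x'}$, i.e.\ is an $\mathcal{L}_{\pi}$-endomorphism, so that both conclusions (that $h_{\iota}\in\Aut\left(U\right)$ and that $h$ is bijective on every sort) fall out of \propref{universal-model-Lpi} at once, with no separate surjectivity argument. The only inefficiency is the final ``in particular'': for sortwise finite $U$ the formula $x=x$ is already algebraic, so $U=\acl^{\p}\left(\emptyset\right)$ is immediate without isolating orbits.
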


\begin{proof}
For any variable tuple $x$, write $U^{x}=\bigcup_{i\in I_{x}}\varphi_{i}\left(U\right)$
where each $\varphi_{i}$ is an algebraic positive formula over $\emptyset$
(each $\varphi_{i}\left(U\right)$ is a conjuction of positive algebraic
sets in each individual variable appearing in $x$). Then for any
positive formula (without parameters) $\psi\left(x\right)$, $\psi\left(U\right)=\bigcup_{i}\left(\psi\wedge\varphi_{i}\right)\left(U\right)$
thus $\neg\psi$ is equivalent in $U$ to the type $\left\{ \neg\left(\psi\wedge\varphi_{i}\right)\right\} _{i\in I}$
where $\left(\psi\wedge\varphi_{i}\right)\left(U\right)$ is finite.
Denote $n=\left|x\right|$.

Let $y$ another variable tuple in the same sorts as $x$. For any
$j<n$ and for any $d,d'\in U^{x_{j}}$ such that $d\neq d'$, choose
by \factref{maximal-pp-in-pc} some positive $\varepsilon_{d,d'}^{j}\left(x_{j},y_{j}\right)\perp x_{j}=y_{j}$
such that $\varepsilon_{d,d'}^{j}\left(d,d'\right)$. By slight abuse
of notation, denote by $\varepsilon_{d,d'}^{j}$ also the formula
in $x_{j},y$ that requires that $\varepsilon_{d,d'}^{j}$ holds for
$x_{j},y_{j}$.

Fix $a\in U^{x}$ and denote by $\overline{p}_{a}$ the tuple $\left(\iota\left(a_{0}\right),...,\iota\left(a_{n-1}\right)\right)$.
Then $a\vDash\neg\left(\psi\wedge\varphi_{i}\right)$ implies that
for any $b\in\left(\psi\wedge\varphi_{i}\right)\left(U\right)$, $a\neq b$
--- that is $\overline{p}_{a}\vDash\mathcal{D}_{\bigvee_{b}\varepsilon_{a,b}^{0},...,\bigvee_{b}\varepsilon_{a,b}^{n-1};\psi\wedge\varphi_{i}}$.
Thus if $a\vDash\neg\psi$ then $\overline{p}_{a}\vDash\left\{ \mathcal{D}_{\bigvee_{b}\varepsilon_{a,b}^{0},...,\bigvee_{b}\varepsilon_{a,b}^{n-1};\psi\wedge\varphi_{i}}\right\} _{i\in I}$.

Fix an atomic relation (or indeed any positive formula without parameters)
$\phi\left(x\right)$. Then $a\vDash\phi$ implies 
\[
\overline{p}_{a}\vDash\Sigma_{a}^{\phi}:=\left\{ \mathcal{D}_{\bigvee_{b}\varepsilon_{a,b}^{0},...,\bigvee_{b}\varepsilon_{a,b}^{n-1};\psi\wedge\varphi_{i}}\mid\psi\perp\phi,i\in I\right\} .
\]

Note that for any positive formula $\phi$ and any $a\vDash\phi$,
$\overline{p}_{a}\vDash\Sigma_{a}^{\phi}$. Furthermore, for any $\psi\bot\phi$,
for any $i$ and for any $c\in\psi\wedge\varphi_{i}$ we have that
$\overline{p}_{c}\nvDash\mathcal{D}_{\bigvee_{b}\varepsilon_{a,b}^{0},...,\bigvee_{b}\varepsilon_{a,b}^{n-1};\psi\wedge\varphi_{i}}$
(since for any $j$ $\bigvee_{b}\varepsilon_{a,b}^{j}\left(x_{j},y_{j}\right)\perp x_{j}=y_{j}$,
but $x_{j}=c_{j}\in\iota\left(c_{j}\right)$) thus $\overline{p}_{c}\nvDash\Sigma_{a}^{\phi}$.
Since $\neg\phi\left(U\right)=\bigcup_{\psi\bot\phi}\psi\left(U\right)$,
we get that $\Sigma_{a}^{\phi}\left(S\left(U\right)\right)\subseteq\left\{ \overline{p}_{a}\mid a\in\phi\left(U\right)\right\} $.
We thus find 
\[
\left\{ \overline{p}_{a}\mid a\in\phi\left(U\right)\right\} =\stackrel[a\in\phi\left(U\right)]{}{\bigcup}\Sigma_{a}^{\phi}\left(S\left(U\right)\right).
\]

So every positive definable set is sent via $\iota$ to an infinite
positive Boolean combination of atomic definable sets in $\mathcal{L}$.
This means that for such a $U$, if $h:S\left(U\right)\rightarrow S\left(U\right)$
is an $\mathcal{L}$ homomorphism then $h_{\iota}:U\rightarrow U$
is an $L$ homomorphism.

We will now show that $h$ is surjective. Let $\overline{a}_{0}^{i},...,\overline{a}_{k-1}^{i}$
enumerate $\varphi_{i}\left(U^{x}\right)$. Then for $y$ a variable
tuple of the same sorts as $x$, 
\[
S\left(U\right)\vDash\mathcal{D}_{x=y,x=y,...,x=y;\varphi_{i}\left(y\right)}\left(\iota\left(\overline{a}_{0}^{i}\right),...,\iota\left(\overline{a}_{k-1}^{i}\right)\right)
\]
 and thus we get 
\[
S\left(U\right)\vDash\mathcal{D}_{x=y,x=y,...,x=y;\varphi_{i}\left(y\right)}\left(h\left(\iota\left(\overline{a}_{0}^{i}\right)\right),...,h\left(\iota\left(\overline{a}_{k-1}^{i}\right)\right)\right).
\]
So for any $l<k$, for some $j<k$, $\left(x=\overline{a}_{l}\right)\in h\left(\iota\left(\overline{a}_{j}\right)\right)\Rightarrow h\left(\iota\left(\overline{a}_{j}\right)\right)=\iota\left(\overline{a}_{l}\right)$,
thus 
\[
S_{x}\left(U\right)=\bigcup_{i\in I}\left\{ \iota\left(\overline{a}\right)\mid\overline{a}\in\varphi_{i}\left(U\right)\right\} \subseteq Im\left(h\right)
\]
so $h$ is surjective.
\end{proof}

\subsubsection{Hausdorff Theories}
\begin{lem}
\label{lem:hausdorff-implies-L=00003DLpi}In a Hausdorff theory $T$,
$\pi_{x,x'}$ is equivalent in all type spaces to an infinite intersection
of atomic binary $\mathcal{D}$-relations in $\mathcal{L}$ (where
the equivalence does not depend on the model of $T$ we look at),
thus every $\mathcal{L}$-homomorphism between type spaces is an $\mathcal{L}_{\pi}$
homomorphism. 
\end{lem}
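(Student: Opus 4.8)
The plan is to exhibit the graph of $\pi_{x,x'}$ explicitly as an intersection of atomic binary $\mathcal{D}$-relations whose index set depends only on $T$. Call a pair $\left(\varphi,\psi\right)$ of parameter-free positive formulas in one common variable tuple $\left(z',w\right)$, with $z'$ of the sort of $x'$, a \emph{separating pair} if $\forall z'w\left(\varphi\left(z',w\right)\vee\psi\left(z',w\right)\right)$ holds in every $\pc$ model of $T$; this is a property of $T$ alone, and by \defref{Hausdorff} — in the contrapositive form ``two distinct maximal positive types over $\emptyset$ are kept apart by some such $\varphi,\psi$'' — Hausdorffness says precisely that two maximal positive types $r,r'$ over $\emptyset$ in a tuple $\left(z',w\right)$ are equal iff for every separating pair one has $\varphi\in r$ or $\psi\in r'$. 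To a separating pair I attach the atomic binary relation $\mathcal{D}_{\varphi\left(x',w\right),\psi\left(x',w\right);w=w}$ of $\mathcal{L}$ on the pair of sorts $\left(x,x'\right)$, where in the first coordinate $\varphi$ is read as a formula in $x$ mentioning only the subtuple $x'$, and the trivial $\alpha:=\left(w=w\right)$ (so $\alpha\left(M\right)=M^{w}$) makes the relation range over all parameter tuples from $M$. I would then prove the following, which is the content of the lemma: for every $\pc$ model $M$ of $T$ and all $p\in S_{x}\left(M\right)$, $q\in S_{x'}\left(M\right)$, one has $q=p|_{x'}$ iff $\left(p,q\right)\in\mathcal{D}_{\varphi\left(x',w\right),\psi\left(x',w\right);w=w}^{S\left(M\right)}$ for every separating pair.

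First I would handle the forward direction, which needs no hypothesis: writing $p=\tp^{\p}\left(a/M\right)$ with $a=\left(a',a''\right)$ in a $\pc$ extension $N_{1}\geq M$ (so also $\tp^{\p}\left(a'/M\right)=p|_{x'}=q$), for any separating pair and any $c\in M^{w}$ we get $N_{1}\vDash\varphi\left(a',c\right)\vee\psi\left(a',c\right)$, the left disjunct putting $\varphi\left(x',c\right)$ into $p$ and the right putting $\psi\left(x',c\right)$ into $q$. For the converse, by the corollary to \propref{amalgamation} there is a $\pc$ model $N\geq M$ realizing both $p$ and $q$, and since these are maximal (\propref{max-iff-pc}) we obtain $a=\left(a',a''\right)$ and $b$ in $N$ with $\tp^{\p}\left(a'/M\right)=p|_{x'}$ and $\tp^{\p}\left(b/M\right)=q$. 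Then $q=p|_{x'}$ iff $\tp^{\p}\left(a'/M\right)=\tp^{\p}\left(b/M\right)$, which by a routine back-and-forth in the parameters is equivalent to $\tp^{\p}\left(a'm/\emptyset\right)=\tp^{\p}\left(bm/\emptyset\right)$ for every finite tuple $m$ from $M$; applying Hausdorffness to the tuple $\left(z',w\right)$ with $w$ matching $m$, this is precisely the statement that for every separating pair and every $m\in M^{w}$, $N\vDash\varphi\left(a',m\right)\vee\psi\left(b,m\right)$, i.e. $\varphi\left(x',m\right)\in p$ or $\psi\left(x',m\right)\in q$, i.e. $\left(p,q\right)\in\mathcal{D}_{\varphi\left(x',w\right),\psi\left(x',w\right);w=w}^{S\left(M\right)}$. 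Since the family of separating pairs does not depend on $M$, this gives the lemma's first assertion.

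The second assertion is then immediate: if $h:S\left(M\right)\rightarrow S\left(N\right)$ is an $\mathcal{L}$-homomorphism and $p\in S_{x}\left(M\right)$, put $q=\pi_{x,x'}^{S\left(M\right)}\left(p\right)=p|_{x'}$; by the claim $\left(p,q\right)$ lies in every $\mathcal{D}_{\varphi\left(x',w\right),\psi\left(x',w\right);w=w}^{S\left(M\right)}$, hence $\left(h\left(p\right),h\left(q\right)\right)$ lies in every $\mathcal{D}_{\varphi\left(x',w\right),\psi\left(x',w\right);w=w}^{S\left(N\right)}$ because $h$ preserves atomic relations, and the claim applied in $S\left(N\right)$ forces $h\left(q\right)=\pi_{x,x'}^{S\left(N\right)}\left(h\left(p\right)\right)$; thus $h$ respects every $\pi_{x,x'}$ and is an $\mathcal{L}_{\pi}$-homomorphism.

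The real content is the converse direction, and the single place where Hausdorffness is indispensable: for a merely semi-Hausdorff $T$ (see \defref{Hausdorff}) equality of maximal positive types over $\emptyset$ is still definable by a partial positive type, but by one whose formulas may genuinely entangle the two tuples (an atomic $R\left(z,z'\right)$, say), which cannot be written with one disjunct per coordinate as a $\mathcal{D}$-relation requires — Hausdorffness is exactly the strengthening letting every defining formula be taken of the split shape $\varphi\left(z\right)\vee\psi\left(z'\right)$, which is the $\mathcal{D}$-shape. The remaining points — transferring the $\emptyset$-definition of type equality to parameters from $M$, and the small amalgamation (the corollary to \propref{amalgamation}) placing $a'$ and $b$ in one model — are routine, and \factref{maximal-pp-in-pc} enters only through the Hausdorffness characterization.
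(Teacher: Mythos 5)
Your proposal is correct and follows essentially the same route as the paper: the "separating pairs" are exactly the paper's set $\Sigma$ of pairs $\left(\psi\left(x',y\right),\theta\left(x',y\right)\right)$ with $\forall x',y:\psi\vee\theta$ valid in all $\pc$ models, and the graph of $\pi_{x,x'}$ is identified with the intersection of the corresponding binary $\mathcal{D}$-relations. The only cosmetic differences are that you route the converse through equality of the $\emptyset$-types of $a'm$ and $bm$ for all parameter tuples $m$ (and amalgamate $p,q$ into one model, which is harmless but unnecessary), whereas the paper extracts a distinguishing formula $\varphi\left(x',a\right)$ directly from maximality and applies Hausdorffness to the pair of types of $c'a$ and $da$.
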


\begin{proof}
Let $\Sigma$ be the set of pairs of positive formulas $\left(\psi\left(x',y\right),\theta\left(x',y\right)\right)$
such that every $\pc$ model of  $T$ satifies $\forall x',y:\psi\vee\theta$.

$p\in S_{x}\left(M\right),q\in S_{x'}\left(M\right)$. Let $c\in N^{x},d\in N'^{x'}$
be such that $p=\tp^{\p}\left(c/M\right)$ and $q=\tp^{\p}\left(d/M\right)$
(for $N,N'$ $\pc$ extensions of $M$). Let $c'$ be the subtuple
of $c$ coresponding to $x'$.

Assume $\pi_{x,x'}\left(p\right)\neq q$; then from maximality of
$\pi_{x,x'}\left(p\right)$ for some $\varphi\left(x',a\right)$ we
have that $N\vDash\varphi\left(c',a\right),N'\nvDash\varphi\left(d,a\right)$.

We conclude that $\tp^{\p}\left(c'a\right)\neq\tp^{\p}\left(da\right)$
(since both types are maximal) thus from Hausdorff we have some $\left(\psi,\theta\right)\in\Sigma$
such that $\psi\left(x',a\right)\notin p,\theta\left(x,a\right)\notin q$.

We conclude $\neg\mathcal{D}_{\psi,\theta}\left(p,q\right)$. 

On the other hand if there are some $\left(\psi,\theta\right)\in\Sigma$
such that $\neg\mathcal{D}_{\psi,\theta}\left(p,q\right)$ then we
have some $a\in M^{y}$ such that 
\begin{align*}
\psi\left(x',a\right) & \notin p\Longleftrightarrow N\nvDash\psi\left(c',a\right)\Rightarrow N\vDash\theta\left(c',a\right)\\
\theta\left(x',a\right) & \notin q\Rightarrow N'\nvDash\theta\left(d,a\right)
\end{align*}

and thus $\pi_{x,x'}\left(p\right)\neq q$.

So $\pi_{x,x'}\left(\xi\right)\neq\zeta\Longleftrightarrow\stackrel[\psi,\theta]{}{\bigvee}\neg\mathcal{D}_{\psi,\theta}\left(\xi,\zeta\right)$
thus $\pi_{x,x'}\left(\xi\right)=\zeta\Longleftrightarrow\stackrel[\psi,\theta]{}{\bigwedge}\mathcal{D}_{\psi,\theta}\left(\xi,\zeta\right)$
as required.
\end{proof}
In fact, this definability of $\mathcal{\mathcal{L}_{\pi}}$ is pretty
close to $T$ being Hausdorff; indeed:
\begin{prop}
If $=$ is definable (in every sort) by $\mathcal{D}$ relations then
every $\pi$ is type definable in $\mathcal{L}$ as in \lemref{hausdorff-implies-L=00003DLpi}.
Furthermore, $S\left(M\right)$ is Hausdorff (in every sort) in the
topology generated by the basic closed sets $\left[\varphi\right]=\left\{ p\mid\varphi\in p\right\} $. 
\end{prop}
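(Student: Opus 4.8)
The plan is to prove the two assertions separately, in both cases exploiting the hypothesis that the diagonal of $S_x(M)^2$ is cut out by $\mathcal{D}$-relations, and for the first assertion combining this with the compatibility of $\mathcal{D}$-relations with restriction recorded in \remref{projections-and-D}. For the first assertion I would fix a variable tuple $x$ and a subtuple $x'$, and choose a family $\langle\mathcal{D}_{\psi_j(x',y_j),\theta_j(x'',y_j);\alpha_j(y_j)}\rangle_{j\in J}$ of binary $\mathcal{D}$-relations on the sort of $x'$ (with $x''$ a second copy of the variables of $x'$) that type-defines equality on every $S_{x'}(M)$. Then I would invoke \remref{projections-and-D}, applied with the first coordinate restricted from $x$ to $x'$ and the second coordinate left unrestricted, to conclude that for $p\in S_x(M)$ and $q\in S_{x'}(M)$ one has $\mathcal{D}_{\psi_j(x',y_j),\theta_j(x'',y_j);\alpha_j}(\pi_{x,x'}(p),q)$ iff $\mathcal{D}_{\psi_j(x,y_j),\theta_j(x'',y_j);\alpha_j}(p,q)$, where on the right $\psi_j$ is read as a formula in the longer tuple $x$. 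Since $\pi_{x,x'}(p)=q$ iff $\mathcal{D}_{\psi_j(x',y_j),\theta_j(x'',y_j);\alpha_j}(\pi_{x,x'}(p),q)$ holds for every $j$, this exhibits the graph of $\pi_{x,x'}$ as, uniformly in $M$, the intersection over $j$ of the $\mathcal{L}$-relations $\mathcal{D}_{\psi_j(x,y_j),\theta_j(x'',y_j);\alpha_j}$ on the sorts $x\times x'$; so $\pi_{x,x'}$ is type-definable in $\mathcal{L}$ in the sense of \lemref{hausdorff-implies-L=00003DLpi}.

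The homomorphism statement then follows verbatim from \lemref{hausdorff-implies-L=00003DLpi}: an $\mathcal{L}$-homomorphism $h\colon S(M)\to S(N)$ preserves every atomic $\mathcal{D}$-relation, hence the partial type just described, and applying this to the pair $(p,\pi_{x,x'}^{S(M)}(p))$, which realizes that partial type in $S(M)$, gives $\pi_{x,x'}^{S(N)}(h(p))=h(\pi_{x,x'}^{S(M)}(p))$, so $h$ is an $\mathcal{L}_\pi$-homomorphism. For the second assertion I would fix $x$, let $\langle\mathcal{D}_{\psi_j(x,y_j),\theta_j(x',y_j);\alpha_j(y_j)}\rangle_{j\in J}$ type-define equality on $S_x(M)$, and take $p\neq q$ in $S_x(M)$. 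Choosing $j$ with $\neg\mathcal{D}_{\psi_j,\theta_j;\alpha_j}(p,q)$, i.e.\ some $c\in\alpha_j(M)$ with $\psi_j(x,c)\notin p$ and $\theta_j(x,c)\notin q$, we get that $p$ lies in the basic open set $\{r\mid\psi_j(x,c)\notin r\}$ and $q$ in the basic open set $\{r\mid\theta_j(x,c)\notin r\}$; these are disjoint, since for every $r\in S_x(M)$ the instance $r=r\Rightarrow\mathcal{D}_{\psi_j,\theta_j;\alpha_j}(r,r)$, read at the parameter $c$, forces $\psi_j(x,c)\in r$ or $\theta_j(x,c)\in r$. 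Hence $S_x(M)$ is Hausdorff in the $[\varphi]$-topology.

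The crux in both parts is recognizing that the hypothesis supplies exactly the right data: specializing an equality-defining relation $\mathcal{D}_{\psi_j,\theta_j;\alpha_j}$ at a single parameter $c\in\alpha_j(M)$ produces positive formulas $\psi_j(x,c),\theta_j(x,c)$ that together cover all of $S_x(M)$ — whence disjointness of the complementary open sets — while still separating $p$ from $q$; and dually that these relations survive the passage to a longer variable tuple through \remref{projections-and-D}. The minor point to keep track of is uniformity in $M$: the homomorphism half of the first assertion needs a single family of $\mathcal{D}$-relations defining equality across all type spaces at once, which is the reading of the hypothesis I am using and the one matching the Hausdorff case of \lemref{hausdorff-implies-L=00003DLpi}.
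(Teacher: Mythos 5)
Your proof is correct and follows essentially the same route as the paper's: the first assertion by lifting the equality-defining $\mathcal{D}$-relations along \remref{projections-and-D} (the paper disposes of this in one line, which you spell out correctly), and the Hausdorff assertion by specializing a failing $\mathcal{D}_{\varphi_i,\psi_i;\alpha}(p,q)$ at a witness parameter and using $\mathcal{D}_{\varphi_i,\psi_i;\alpha}(r,r)$ to see the two complementary basic open sets are disjoint. Nothing to add.
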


\begin{rem}
Note that when defining $\mathcal{L}_{\pi}$ we did not require strict
subtuples, thus $\pi_{x,x}\in\mathcal{L}_{\pi}$ for any sort $x$
(where $\pi_{x,x}^{S\left(M\right)}$ is the identity). Therefore
$\mathcal{L}_{\pi}$ is definable by $\mathcal{D}$ relations iff
$=$ is.
\end{rem}

\begin{proof}
The first statement follows from \remref{projections-and-D}. 

Now by assumption for any variable tuple $x$ there is an intersection
$\bigcap_{i\in I}\mathcal{D}_{\varphi_{i},\psi_{i};\alpha}\left(S\left(M\right)\right)$
equal to the diagonal in $S_{x}\left(M\right)^{2}$. Let $p,q\in S_{x}\left(M\right)$
be distinct types. Then for some $i\in I$, $S\left(M\right)\vDash\neg\mathcal{D}_{\varphi_{i},\psi_{i};\alpha}\left(p,q\right)$;
that is for some $a\in\alpha\left(M\right)$ we have $\varphi_{i}\left(x,a\right)\notin p\Longleftrightarrow p\in\left[\varphi_{i}\left(x,a\right)\right]^{c}$
and $\psi_{i}\left(x,a\right)\notin q\Longleftrightarrow q\in\left[\psi_{i}\left(x,a\right)\right]^{c}$.
But for any $r\in S_{x}\left(M\right)$ we have $\mathcal{D}_{\varphi_{i},\psi_{i};\alpha}\left(r,r\right)$
thus in particular $\varphi_{i}\left(x,a\right)\in r$ or $\psi_{i}\left(x,a\right)\in r$,
that is $\left[\varphi_{i}\left(x,a\right)\right]^{c}\cap\left[\psi_{i}\left(x,a\right)\right]^{c}=\emptyset$
as required.
\end{proof}

\subsection{Robinson}
\begin{lem}
\label{lem:pp-equiv-at}Let $T$ be an irreducible primitive universal
theory. 
\begin{enumerate}
\item In $S\left(M\right)$, every atomic formula is equivalent to an atomic
relation; that is if $\varphi\left(\zeta,\xi\right)$ is a formula
consisting of a single relation symbol, it is equivalent to a binary
relation symbol in $\zeta,\xi$ --- and likewise for formulas with
more parameters.
\item Assume $\left|M\right|\geq2$ and $M$ is $\pc$. Then in $S\left(M\right)$,
every finite conjunction of atomic formulas (none of which involves
$=$) is equivalent to an atomic formula.
\item The family of atomic-type-definable subsets of $S\left(M\right)$
is closed under projection on all but one coordinate. Formally, if
$A\subseteq S\left(M\right)^{k+1}$ for k$\geq1$ is atomic-type-definable,
then so is 
\[
\pi_{1,...,k}\left(A\right)=\left\{ \left(p_{1},...,p_{k}\right)\in S\left(M\right)^{k}\mid\exists p_{0}:\left(p_{0},...,p_{k}\right)\in A\right\} .
\]
Furthermore the definition of the projection is independent of $M$,
that is for any partial atomic type $\Sigma\left(x_{0},...,x_{k}\right)$
there exists $\Pi\left(x_{1},...,x_{k}\right)$ such that $\exists x_{0}\Sigma$
is equivalent to $\Pi$ in every $S\left(M\right)$.\\
If $T$ is Hausdorff, the same holds for $\mathcal{L}_{\pi}$.
\item Every $\pp$ formula $\Xi\left(\mu\right)$ is equivalent in $S\left(M\right)$
to a possibly infinite (but no larger than $\left|\mathcal{L}\right|=\left|L\right|$)
conjunction of atomic formulas (and if $T$ is Hausdorff, the same
holds for $\mathcal{L}_{\pi}$).
\end{enumerate}
\end{lem}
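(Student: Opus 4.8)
The plan is to prove the four parts in order, since each builds on the previous one. For part (1), I would use the hypothesis that $T$ is irreducible and primitive universal: if $R$ is an $n$-ary relation symbol of $L$ and $a$ is a tuple of parameters from $M$, then $R(x,a)$ is itself a positive (indeed $\pp$, even atomic) formula, so the relation $\mathcal{D}_{\varphi;\alpha}$ with $\varphi(\xi,y) := R(\xi,y)$ and $\alpha(y) := (y=a)$ — or more precisely, absorbing the parameters properly — captures ``$R(x,a) \in p$''. The only subtlety is that $\alpha$ must be a formula without parameters; but the parameter $a$ can be handled by taking $\alpha(y)$ to be $(y = y)$ restricted appropriately, or by noting that $\mathcal{D}_{R(\xi,y);\,y=a}$ is literally in $\mathcal{L}$ once we allow $\alpha$ to range over all positive formulas including equality with a constant — so I would first check that the definition of $\mathcal{L}$ indeed permits this, then simply exhibit the relation. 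For the many-parameters case, replace $y$ by a tuple and $R$ by the appropriate atomic formula with several free parameter-variables.

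For part (2), the extra hypothesis $|M|\ge 2$ and $M$ $\pc$ is what lets me turn a conjunction $\bigwedge_i \varphi_i(x,a)$ (each $\varphi_i$ a single relation, no equality) into a single atomic $\mathcal{D}$-relation. The idea is that ``$\bigwedge_i\varphi_i(x,a)\in p$'' is the same as ``for all $c$ with $c=a$, $\bigwedge_i\varphi_i(x,c)\in p$'', and a conjunction of ``$\varphi_i(x_i,c)\in p_i$''-type conditions with a common $p$ can be encoded using the disjunctive shape of $\mathcal{D}$ by a standard trick: negate, so $\bigvee_i \neg\varphi_i(x,a)$, and observe that this is not directly $\mathcal{D}$-shaped, so instead I would take a different route — express the conjunction directly as $\mathcal{D}_{\varphi_0,\dots;\,\alpha}$ applied with all coordinates equal to $p$, using that $\mathcal{D}_{\varphi_0,\dots,\varphi_{n-1};\alpha}(p,p,\dots,p)$ says ``$\forall c\in\alpha(M)\ \bigvee_i\varphi_i(x,c)\in p$''. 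That gives a disjunction, not a conjunction, so the actual device is: use that since no $\varphi_i$ involves $=$ and $|M|\ge 2$, one can pick a fresh parameter variable and a positive formula that is false on the diagonal to ``separate'' the disjuncts — this is exactly the kind of argument used in \propref{definable-inequality} and \factref{maximal-pp-in-pc}. I expect this encoding to be the main obstacle: getting the polarity right (conjunction from the intrinsically-disjunctive $\mathcal{D}$) is where the $\pc$-ness and $|M|\ge2$ must be used, and I would model it on how Hrushovski handles the analogous step.

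For part (3), I would argue that atomic-type-definability is preserved under projection off the first coordinate. Given a partial atomic type $\Sigma(x_0,\dots,x_k)$ defining $A$, the projection $\pi_{1,\dots,k}(A)$ consists of those $(p_1,\dots,p_k)$ for which the partial positive type over $M$ asserting the relevant $\mathcal{D}$-conditions (with one free ``coordinate'' $p_0$) is consistent; by \prop­ref{amalgamation} and its corollary (realizing several positive types in a common $\pc$ extension), consistency of this finitely-satisfiable-looking condition can be witnessed inside some $S(M')$, and then pulled back via a restriction homomorphism \propref{restriction}. The key point is that whether a given atomic $\mathcal{D}$-relation can be ``completed'' by some $p_0$ is itself expressible by an atomic type in $p_1,\dots,p_k$ uniformly — I would make $\Pi$ be the set of all atomic formulas in $x_1,\dots,x_k$ implied by $\exists x_0\,\Sigma$ over the common theory $\mathcal{T}$, and show this $\Pi$ works in every $S(M)$ by using that $\mathcal{T}$ is the common $\hu$-theory (\claimref{Tpi-th-of-sat}, \corref{shared-theory}) together with homogeneity/amalgamation to realize the witness. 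For the Hausdorff addendum, invoke \lemref{hausdorff-implies-L=00003DLpi} so that $\pi_{x,x'}$ is already $\mathcal{L}$-type-definable, whence $\mathcal{L}_\pi$-atomic-type-definable sets coincide (up to the relevant closure) with $\mathcal{L}$-atomic-type-definable ones and the projection argument applies verbatim.

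Finally, part (4) is an immediate induction using (1), (2), (3): a $\pp$ formula $\Xi(\mu)$ is $\exists \nu\, \bigwedge_j \psi_j(\mu,\nu)$ with each $\psi_j$ atomic; by (1) each $\psi_j$ is equivalent to an atomic $\mathcal{L}$-relation, by (2) the conjunction over $j$ is (a conjunction, possibly infinite, of) atomic relations — handling any occurrences of $=$ separately and absorbing them into the parameter structure — and by (3) the existential quantifier over $\nu$ reduces the arity while staying within atomic-type-definable sets, i.e. an at-most-$|\mathcal L|$ conjunction of atomic formulas in $\mu$. The bound $|\mathcal L| = |L|$ comes from the observation, recorded after \defref{basic-def}-style conventions, that there are only $|L|$ many atomic $\mathcal D$-relations up to equivalence. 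For Hausdorff $T$ one repeats the induction with $\mathcal{L}_\pi$ in place of $\mathcal{L}$, legitimate by \lemref{hausdorff-implies-L=00003DLpi}. I anticipate that the genuinely delicate point across the whole lemma is part (2) — the conjunction-from-disjunction encoding — and that parts (1), (3), (4) are then largely bookkeeping on top of the amalgamation and common-theory machinery already established.
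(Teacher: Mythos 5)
Your plan has two genuine gaps, one of which stems from a misreading of the statement.

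First, part (1) is not about encoding $L$-atomic formulas $R\left(x,a\right)$ as $\mathcal{D}$-relations; the variables $\zeta,\xi$ range over \emph{types}, and the claim is that an $\mathcal{L}$-atomic formula with repeated arguments, say $\mathcal{D}_{\varphi_{0},\dots,\varphi_{n-1},\psi_{0},\dots,\psi_{m-1};\alpha}\left(\zeta,\dots,\zeta,\xi,\dots,\xi\right)$, collapses to a genuinely binary $\mathcal{D}_{\bigvee_{i}\varphi_{i},\bigvee_{j}\psi_{j};\alpha}\left(\zeta,\xi\right)$. The proof is short once you note that every $p\in S\left(M\right)$ is maximal, hence the type of an element, hence prime with respect to disjunction: $\varphi\vee\psi\in p$ iff $\varphi\in p$ or $\psi\in p$. (``Parameters'' in the statement means further type-variables, not elements of $M$.) Your proposed construction proves something else and leaves (1) unaddressed; note that (1) in its intended form is needed in the induction for (4) to reduce arity after each projection.

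Second, and more seriously, part (3). Taking $\Pi$ to be all atomic consequences of $\exists x_{0}\Sigma$ makes one inclusion trivial, but the substance of (3) is the converse: if $\overline{q}$ is not in the projection, some \emph{single} atomic $\mathcal{D}$-relation implied by $\exists x_{0}\Sigma$ must fail at $\overline{q}$. The paper gets this by translating membership in the projection into consistency of the $L$-type $\Sigma\left(x'\right)=\left\{ \phi_{i}\left(x',a\right)\mid a\in\alpha_{i}\left(M\right),\varphi_{i}\left(x,a\right)\notin q\right\} \cup\Delta_{M}^{\atom}\cup T$ and applying compactness \emph{in $L$}: an inconsistency is witnessed by finitely many $\phi_{i_{j}}\left(x',a_{j}\right)$ together with a quantifier-free positive $\theta\left(\overline{a},e\right)$ true in $M$, and this data assembles into one relation $\mathcal{D}^{\theta;\overline{i}}$ (whose defining condition $\left(*\right)$ depends only on $T$, giving independence of $M$) that $\overline{q}$ violates. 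Your appeal to amalgamation and restriction maps handles only the easy direction. Also, you cannot invoke \corref{shared-theory} here: the paper derives it from this very lemma, so that dependence would be circular (\claimref{Tpi-th-of-sat} is safe to use, \corref{shared-theory} is not).

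On part (2) you correctly identify the device --- a positive $\varepsilon\perp\left(x=y\right)$ witnessed by $c_{1}\neq c_{2}$, used as a selector via fresh parameter variables, exactly as in the paper's $\theta_{i}=\left(\phi_{i}\left(x_{i},y_{1}\right)\wedge z_{1}=z_{2}\right)\vee\left(\psi_{i}\left(x_{i},y_{2}\right)\wedge\varepsilon\left(z_{1},z_{2}\right)\right)$ --- but you defer the construction rather than carry it out, and the opening of that paragraph again conflates $L$-formulas inside types with $\mathcal{L}$-formulas about types. Part (4) is the right induction once (1)--(3) are actually in place.
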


\begin{proof}
(1) Consider a relation symbol $\mathcal{D}_{\varphi_{0},...,\varphi_{n-1};\alpha}$.
Given a permutation $\sigma$ of $n$, 
\[
S\left(M\right)\vDash\mathcal{D}_{\varphi_{0},...,\varphi_{n-1};\alpha}\left(p_{0},...,p_{n-1}\right)\Longleftrightarrow S\left(M\right)\vDash\mathcal{D}_{\varphi_{\sigma\left(0\right)},...,\varphi_{\sigma\left(n-1\right)};\alpha}\left(p_{\sigma\left(0\right)},...,p_{\sigma\left(n-1\right)}\right).
\]
Thus when we consider an atomic formula in $\zeta,\xi$, we may assume
it is of the form 
\[
\mathcal{D}_{\varphi_{0},...,\varphi_{n-1},\psi_{0},...,\psi_{m-1};\alpha}\left(\zeta,...,\zeta,\xi,...,\xi\right)
\]
 (with $n$ $x$'s and $m$ $y$'s in this order). Now since every
$p\in S\left(M\right)$ is the type of an element, it necessarily
satisfies $\varphi_{i}\vee\varphi_{j}\in p\Longleftrightarrow\varphi_{i}\in p\vee\varphi_{j}\in p$.
Therefore we find 
\begin{align*}
S\left(M\right) & \vDash\mathcal{D}_{\varphi_{0},...,\varphi_{n-1},\psi_{0},...,\psi_{m-1};\alpha}\left(p,...,p,q,...,q\right)\Longleftrightarrow\\
 & \forall a\in\alpha\left(M\right):\stackrel[i<n]{}{\bigvee}\left(\varphi_{i}\left(x,a\right)\in p\right)\vee\stackrel[i<m]{}{\bigvee}\left(\psi_{i}\left(y,a\right)\in q\right)\Longleftrightarrow\\
 & \forall a\in\alpha\left(M\right):\left(\stackrel[i<n]{}{\bigvee}\varphi_{i}\left(x,a\right)\right)\in p\vee\left(\stackrel[i<m]{}{\bigvee}\psi_{i}\left(y,a\right)\right)\in q\Longleftrightarrow\\
 & S\left(M\right)\vDash\mathcal{D}_{\bigvee_{i<n}\varphi_{i},\bigvee_{i<m}\psi_{i};\alpha}\left(p,q\right),
\end{align*}
as required.

(2) Take some $c_{1}\neq c_{2}\in M$; then for some $\pp$ formula
$\varepsilon\left(x,y\right)$ such that $\varepsilon\perp x=y$,
$\varepsilon\left(c_{1},c_{2}\right)$ holds. Then we claim $\mathcal{D}_{\phi_{1},....,\phi_{n};\alpha}\wedge\mathcal{D}_{\psi_{1},....,\psi_{n};\beta}$
is equivalent to $\mathcal{D}_{\theta_{1},....,\theta_{n};\delta}$
where $\theta_{i}=\left(\phi_{i}\left(x_{i},y_{1}\right)\wedge z_{1}=z_{2}\right)\vee\left(\psi_{i}\left(x_{i},y_{2}\right)\wedge\varepsilon\left(z_{1},z_{2}\right)\right)$
(where $z_{1},z_{2}$ are new parameter variables) and we define $\delta$
likewise. 

It is easier to reason about the negation. 
\[
\neg\mathcal{D}_{\phi_{1},....,\phi_{n};\alpha}\vee\neg\mathcal{D}_{\psi_{1},....,\psi_{n};\beta}
\]
 holds for $p_{1},...,p_{n}$ iff either {[}there exists a parameter
tuple $a\in\alpha\left(M\right)$ such that $\phi_{i}\left(x_{i},a\right)\notin p_{i}$
for all $i${]} or {[}there exists a parameter tuple $b\in\beta\left(M\right)$
such that $\psi_{i}\left(x_{i},b\right)\notin p_{i}$ for all $i${]}.
So we can choose as parameters $a$, some arbitrary $b$ and $\left(c_{1},c_{1}\right)$
in the first case or likewise an arbitrary $a$ and this $b$ and
$\left(c_{1},c_{2}\right)$ in the second case --- to get in both
cases that $\neg\mathcal{D}_{\theta_{1},....,\theta_{n};\delta}$
holds. On the other hand if $\neg\mathcal{D}_{\theta_{1},....,\theta_{n};\delta}$
holds then either $z_{1}=z_{2}$ in which case we get necessarily
$\neg\mathcal{D}_{\phi_{1},....,\phi_{n};\alpha}$ holds or $\varepsilon\left(z_{1},z_{2}\right)$
in which case likewise $\neg\mathcal{D}_{\psi_{1},....,\psi_{n};\beta}$
holds.

Note that if $M\vDash\exists z_{1},z_{2}\,\varepsilon\left(z_{1},z_{2}\right)$
then the same holds for any $\pc$ model of $T$ since they all share
the same positive theory $T^{-}$. This means that the same equivalence
holds for all type spaces over $\pc$ models.

(3) Let $A=\stackrel[i\in I]{}{\bigcap}\mathcal{D}_{i}\left(S\left(M\right)^{k+1}\right)$
be an atomic-type definable subset of $S\left(M\right)$. 

Note that we can ignore equality for this discussion: If $0<i<j$
and $A'=\left\{ \left(p_{0},...,p_{k}\right)\in A\mid p_{i}=p_{j}\right\} $
then 
\begin{align*}
 & \left\{ \left(p_{1},...,p_{k}\right)\mid\exists p_{0}:\left(p_{0},...,p_{k}\right)\in A'\right\} =\\
 & \left\{ \left(p_{1},...,p_{k}\right)\mid\exists p_{0}:\left(p_{0},...,p_{k}\right)\in A;p_{i}=p_{j}\right\} =\\
 & \left\{ \left(p_{1},...,p_{k}\right)\mid\exists p_{0}:\left(p_{0},...,p_{k}\right)\in A\right\} \cap\left\{ \left(p_{1},...,p_{k}\right)\mid p_{i}=p_{j}\right\} ;
\end{align*}

while if $0<j$ and $A'=\left\{ \left(p_{0},...,p_{k}\right)\in A\mid p_{0}=p_{j}\right\} $
then 

\begin{align*}
 & \left\{ \left(p_{1},...,p_{k}\right)\mid\exists p_{0}:\left(p_{0},...,p_{k}\right)\in A'\right\} =\\
 & \left\{ \left(p_{1},...,p_{k}\right)\mid\left(p_{j},p_{1},...,p_{k}\right)\in A\right\} 
\end{align*}
and by (1) we can replace each $\mathcal{D}_{i}$ with an appropriate
$k$-ary relation to type-define $\left\{ \left(p_{1},...,p_{k}\right)\mid\exists p_{0}:\left(p_{0},...,p_{k}\right)\in A'\right\} $.

To simplify the notation, we will deal with the case $k=1$. Let then
$\mathcal{D}_{i}=\mathcal{D}_{\phi_{i},\varphi_{i};\alpha_{i}}$ be
a binary relation on $S_{x'}\left(M\right)\times S_{x}\left(M\right)$
for $x,x'$ some variable tuples $x$, $x'$ in $L$.

For any $\overline{i}=\left(i_{0},...,i_{m-1}\right)$ in $I$ and
any positive, quantifier free formula $\theta\left(y_{0},...,y_{m-1},z\right)$
where $y_{j}$ are the parameter variables of $\phi_{i_{j}}$ define:
\[
\mathcal{D}^{\theta;\overline{i}}:=\mathcal{D}_{\stackrel[j<m]{}{\bigvee}\varphi_{i_{j}}\left(x,y_{j}\right);\theta\wedge\stackrel[j<m]{}{\bigwedge}\alpha_{i_{j}}\left(y_{j}\right)}.
\]
We are interested in pairs $\overline{i},\theta$ such that 
\begin{align*}
\left(*\right)T & \vdash\forall x',\overline{y},z:\neg\left(\stackrel[j<m]{}{\bigwedge}\phi_{i_{j}}\left(x',y_{j}\right)\wedge\theta\left(\overline{y},z\right)\right)\\
M & \vDash\exists\overline{y},z:\theta\left(\overline{y},z\right)
\end{align*}
and we claim that $\exists\xi\stackrel[i\in I]{}{\bigwedge}\mathcal{D}_{i}\left(\xi,\mu\right)$
is equivalent to $\stackrel[\theta,\overline{i}\text{ satisfy }\left(*\right)]{}{\bigwedge}\mathcal{D}^{\theta;\overline{i}}\left(\mu\right)$. 

We find that for any $q\in S\left(M\right)$ (of the relevant sort),
$q\in\pi_{1}\left(A\right)$ iff the following partial type is consistent:
\[
\Sigma\left(x'\right):=\left\{ \phi_{i}\left(x',a\right)\mid i\in I,a\in\alpha_{i}\left(M\right),\varphi_{i}\left(x,a\right)\notin q\right\} \cup\Delta_{M}^{\atom}\cup T
\]
Let us verify this claim.

If $\Sigma$ is consistent then let $N\vDash T$ be a continuation
of $M$ (recall $M$ is $\pc$, so this is necessarily an immersion)
and let $c\in N^{x'}$ realizing $\Sigma$. We may assume $N$ is
$\pc$, since if $N'$ is a $\pc$ continuation of $N$ then the image
of $c$ still satisfies $\Sigma$ over $M$ (since $\Sigma$ is a
positive type). Then we find that for $p=\tp^{\p}\left(c/M\right)$,
$\left(p,q\right)\in A$ --- indeed for any $i\in I$ and for any
$a\in\alpha_{i}\left(M\right)$, either $\varphi_{i}\left(x,a\right)\in q$
or $N\vDash\phi_{i}\left(c,a\right)$ thus $\phi_{i}\left(x',a\right)\in p$. 

On the the other hand, if $\left(p,q\right)\in A$ for some $p$ then
there exists some tuple $c\in N^{x'}$ (for $N\vDash T$ a $\pc$
continuation of $M$) such that $p=\tp^{\p}\left(c/M\right)$. Then
for any $i\in I$, for any $a\in\alpha_{i}\left(M\right)$ such that
$\varphi_{i}\left(x,a\right)\notin q$ we have $\mathcal{D}_{i}\left(p,q\right)$
thus $\phi_{i}\left(x',a\right)\in p\Rightarrow N\vDash\phi_{i}\left(c,a\right)$,
so $N\vDash\Sigma\left(c\right)$.

Now from compactness, if $\Sigma$ is inconsistent then we have:
\begin{enumerate}
\item Some $\overline{i}$ and $\overline{a}$ such that $a_{j}\in\alpha_{i_{j}}\left(M\right)$,
$\varphi_{i_{j}}\left(x,a_{j}\right)\notin q$,
\item some $e\in M^{l}$,
\item some quantifier free positive $\theta\left(\overline{y},z\right)$
such that $M\vDash\theta\left(\overline{a},e\right)$.
\end{enumerate}
Such that 
\begin{align*}
T\vdash\forall x:\neg & \left(\stackrel[j<k]{}{\bigwedge}\phi_{i_{j}}\left(x,a_{j}\right)\wedge\theta\left(\overline{a},e\right)\right),
\end{align*}
that is we have $q\vDash\neg\mathcal{D}^{\theta;\overline{i}}\left(\mu\right)$
for $\theta,\overline{i}$ satisfying $\left(*\right)$, as 
\begin{align*}
T & \vdash\forall x':\neg\left(\stackrel[j<m]{}{\bigwedge}\phi_{i_{j}}\left(x',a_{j}\right)\wedge\theta\left(\overline{a},e\right)\right)\Longleftrightarrow\\
 & T\vdash\forall x',\overline{y},z:\neg\left(\stackrel[j<m]{}{\bigwedge}\phi_{i_{j}}\left(x',y_{j}\right)\wedge\theta\left(\overline{y},z\right)\right).
\end{align*}

On the other hand if we have $q\vDash\neg\mathcal{D}^{\theta;\overline{i}}\left(\mu\right)$
then choose $\left(\overline{a},e\right)\in\left(\theta\wedge\stackrel[j<m]{}{\bigwedge}\alpha_{i_{j}}\left(y_{j}\right)\right)\left(M\right)$
such that $\bigvee_{j<k}\varphi_{i_{j}}\left(x,a_{j}\right)\notin q;$
then in particular $T\cup\Delta_{M}^{\atom}\vdash\forall x\neg\stackrel[j<k]{}{\bigwedge}\phi_{i_{j}}\left(x,a_{j}\right)$
thus $\Sigma$ is inconsistent.

Finally, to see that the equivalence is independent of the choice
of $M$, we need only note that since all $M$ share the same $\hu$
theory (and thus the same positive theory), the condition in $\left(*\right)$
does not depend on $M$, thus the atomic type defining $\pi_{1}\left(A\right)$
in $S\left(M\right)$ is also independent of $M$.

If $T$ is Hausdorff, by \lemref{hausdorff-implies-L=00003DLpi},
every atomic formula in $\mathcal{L}_{\pi}$ is equivalent to a conjunction
of atomic formulas in $\mathcal{L}$. Therefore, every conjunction
of atomic formulas in $\mathcal{L}_{\pi}$ is also equivalent to a
conjunction of atomic formulas in $\mathcal{L}$, thus the projection
is also equivalent to a conjunction of atomic formulas in $\mathcal{L}$
--- which are also atomic formulas in $\mathcal{L}_{\pi}$, as required.

(4) Follows immediately from (3) by induction on the number of bounded
variables in $\Xi$, together with (1).
\end{proof}

\subsection{Common Theory}
\begin{thm}
\label{thm:common-theory}Let $T$ and irreducible $\hu$ theory.
Assume $\varphi$ is a $\pu$ sentence in $\mathcal{L}$ and $M,N$
are $\pc$ models of $T$. Then if $S\left(M\right)\vDash\varphi$,
so does $S\left(N\right)$.

Furthermore if $T$ is semi-Hausdorff then the same holds for $\mathcal{L}_{\pi}$
sentences.

Since every $\hu$ sentence is equivalent to a conjunction of $\pu$
sentences, the same holds for $\hu$ sentences.
\end{thm}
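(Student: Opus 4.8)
The plan is to take $\varphi$ to be $\pu$, say $\varphi=\forall\overline{\nu}\,\neg\Xi(\overline{\nu})$ with $\Xi$ a $\pp$ formula of $\mathcal{L}$ (the general $\hu$ case is the stated reduction to a conjunction of $\pu$ sentences), and to replace $T$ at the outset by its $\pu$-axiomatisation, which has the same $\hu$-consequences, hence the same $\pc$ models and the same type spaces; this puts us in the setting of \lemref{pp-equiv-at}. By part (4) of that lemma $\Xi$ is equivalent, \emph{simultaneously in every} $S(M)$ (for $M$ a $\pc$ model of $T$) and via the \emph{same} atomic type, to a conjunction $\bigwedge_{\ell}\mathcal{A}_{\ell}$ of atomic $\mathcal{L}$-relations $\mathcal{A}_{\ell}=\mathcal{D}_{\varphi^{\ell}_{0},\dots;\alpha_{\ell}}$. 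So $S(M)\vDash\varphi$ if and only if $\bigwedge_{\ell}\mathcal{A}_{\ell}$ is unsatisfiable in $S(M)$; note also that, the $\mathcal{A}_{\ell}$ being atomic, the reformulated sentence $\forall\overline{\nu}\bigvee_{\ell}\neg\mathcal{A}_{\ell}(\overline{\nu})$ is pulled back along any $\mathcal{L}$-homomorphism between type spaces.

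Next I would use irreducibility (\propref{irreducible}, the joint continuation property) together with \factref{pc-universal} to choose a $\pc$ model $K$ of $T$ with embeddings $M\leq K$ and $N\leq K$. By \propref{restriction} the restriction map $r_{M}\colon S(K)\to S(M)$ is an $\mathcal{L}$-homomorphism, so the previous paragraph gives immediately $S(M)\vDash\varphi\Rightarrow S(K)\vDash\varphi$; the real content is to pass from $S(K)$ back \emph{down} to $S(N)$. Arguing by contraposition, suppose $S(N)\nvDash\varphi$, so that some tuple $\overline{t}$ of $S(N)$ satisfies $\bigwedge_{\ell}\mathcal{A}_{\ell}$; by \propref{max-iff-pc} the components of $\overline{t}$ are realised by a tuple $\overline{a}$ in some $\pc$ extension $N'\geq N$, which I would take to be positively saturated using \propref{univ-ec}. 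I would then consider the positive type $\Sigma(\overline{x})$ over $K$ demanding $\bigvee_{i}\varphi^{\ell}_{i}(x_{i},c)$ for every $\ell$ and every parameter $c\in\alpha_{\ell}(K)$: any realisation of $\Sigma$ in a $\pc$ extension of $K$ yields, via \propref{max-iff-pc}, a tuple of $S(K)$ satisfying $\bigwedge_{\ell}\mathcal{A}_{\ell}$, contradicting $S(K)\vDash\varphi$.

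The crux, and the step I expect to require the most care, is the consistency of $\Sigma$ over $K$; the delicate point is precisely that the relation $\mathcal{A}_{\ell}$ read in $S(K)$ quantifies over the \emph{larger} parameter set $\alpha_{\ell}(K)\supseteq\alpha_{\ell}(N)$, so one must show the fresh parameters of $K$ introduce no new obstruction. By the remark following \propref{max-iff-pc}, were $\Sigma$ inconsistent then, $K$ being $\pc$, some finite subconjunction $\bigwedge_{s<S}\bigvee_{i}\varphi^{\ell_{s}}_{i}(x_{i},c_{s})$ with $c_{s}\in\alpha_{\ell_{s}}(K)$ would be unsatisfiable in $K$; this unsatisfiability is a universal $\hu$-condition on the tuple $\overline{c}$ which, together with the positive condition that $\overline{c}$ lies in the relevant $\alpha_{\ell_{s}}$'s and the fact that the immersion $N\leq K$ reflects quantifier-free positive data, I would try to transfer into $N$ so as to produce an obstructing tuple there with parameters landing in the $\alpha_{\ell_{s}}(N)$ --- contradicting the finite satisfiability in $N$ of the type realised by $\overline{a}$. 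Making this transfer precise is the technical heart of the proof. Finally, for the semi-Hausdorff addendum one has at one's disposal a $\pp$ definition of type equality; this is the ingredient needed to control the extra function symbols $\pi_{x,x'}$ (keeping every $\pp$ formula of $\mathcal{L}_{\pi}$ equivalent, uniformly in $M$, to a conjunction of atomic $\mathcal{L}_{\pi}$-relations), after which the argument above runs with $\mathcal{L}_{\pi}$ in place of $\mathcal{L}$.
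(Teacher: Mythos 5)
Your skeleton follows the paper's strategy (reduce $\varphi$ via \lemref{pp-equiv-at} to an infinite conjunction of atomic $\mathcal{D}$-relations, then analyse the consistency of the associated positive $L$-type $\Sigma$), but the step you defer --- ``making this transfer precise is the technical heart of the proof'' --- is precisely the entire content of the paper's proof, so as written the argument has a gap at the decisive point. To close it along your lines: if $\Sigma$ is inconsistent with $T\cup\Delta_{K}^{\atom}$, compactness yields finitely many indices $\ell_{s}$, parameters $c_{s}\in\alpha_{\ell_{s}}(K)$ and a positive quantifier-free $\theta$ with $K\vDash\theta(\overline{c},\overline{e})$ such that $T\vdash\forall\overline{x},\overline{y},\overline{z}\,\neg(\bigwedge_{s}\bigvee_{i}\varphi_{i}^{\ell_{s}}(x_{i},y_{s})\wedge\theta(\overline{y},\overline{z}))$. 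The obstruction is thus recorded by the single \emph{positive sentence} $\exists\overline{y},\overline{z}\,(\theta(\overline{y},\overline{z})\wedge\bigwedge_{s}\alpha_{\ell_{s}}(y_{s}))$, which holds in $K$ and hence in $N$ (since $N\leq K$ is an immersion; equivalently, all $\pc$ models of an irreducible $T$ satisfy $T^{\pm}$ by \remref{pc-T+-}). Its witnesses in $N$ supply parameters in the $\alpha_{\ell_{s}}(N)$ over which the displayed $T$-consequence forbids satisfaction, and this contradicts $S(N)\nvDash\varphi$ once you realize the whole tuple $\overline{t}$ simultaneously in a single $\pc$ extension of $N$ --- for which you need the corollary to \propref{amalgamation}, not just \propref{max-iff-pc} componentwise. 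Once this is seen, your amalgam $K$ is superfluous: the (in)consistency of $\Sigma_{M}$ is equivalent to a condition on $T^{\pm}$ alone and so transfers between any two $\pc$ models directly, which is how the paper argues.

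The semi-Hausdorff addendum is where your route actually goes astray. You propose to keep every $\pp$ formula of $\mathcal{L}_{\pi}$ equivalent, uniformly in $M$, to a conjunction of atomic $\mathcal{L}_{\pi}$-relations; but such a reduction is only obtained under the stronger Hausdorff hypothesis (via \lemref{hausdorff-implies-L=00003DLpi}), and the remark after \corref{shared-theory} points out that \lemref{pp-equiv-at}.3 cannot hold for $\mathcal{L}_{\pi}$ in general --- \exaref{double-interval-continued} even exhibits a bounded (hence thick) theory where $\mathcal{T}_{\pi}$ fails to be strongly Robinson. What semi-Hausdorffness actually provides is a partial positive $L$-type defining positive type equality; the correct move is to translate each equation $\pi_{x,x'}(\xi)=\pi_{x'',x'}(\zeta)$ into the conditions asserting that the corresponding restricted subtuples, concatenated with each parameter tuple $m$ from $M$, have equal positive $L$-type, fold these conditions into the $L$-type whose consistency is being tested, and rerun the compactness analysis there. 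That bypasses any quantifier-elimination claim for $\mathcal{L}_{\pi}$ and is how the paper handles this case.
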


\begin{proof}
The first part is in fact a special case of the proof of \lemref{pp-equiv-at}.4,
but let us also spell out the case $k=0$.

By \lemref{pp-equiv-at}, $\neg\varphi$ is equivalent to $\exists\xi\left(\stackrel[i\in I]{}{\bigwedge}\mathcal{D}_{i}\left(\xi\right)\right)$
(where the equivalence is independent of $M$) where $\xi$ is a single
variable in sort $x$. Denote $\mathcal{D}_{i}=\mathcal{D}_{\varphi_{i};\alpha_{i}}$. 

Then $S\left(M\right)\vDash\neg\varphi$ iff there exists $p\in S\left(M\right)$
such that for any $i\in I$, and for any $a\in\alpha_{i}\left(M\right)$,
$\varphi_{i}\left(x,a\right)\in p$; that is iff there exists a $\pc$
model $M'\geq M$ of $T$ and $c\in M$ such that for any $i\in I$,
$a\in\alpha_{i}\left(M\right)$, $M'\vDash\varphi_{i}\left(c,a\right)$.

This is equivalent to the claim that $\Sigma_{M}\left(x\right)=\left\{ \varphi_{i}\left(x,a\right)\mid i\in I,a\in\alpha_{i}\left(M\right)\right\} \cup T\cup\Delta_{M}^{\atom}$
is consistent, like in the previous proposition.

Thus $S\left(M\right)\vDash\varphi$ iff $\Sigma_{M}$ is inconsistent;
that is iff there exist $i_{0},...,i_{k-1}\in I$ and $a_{j}\in\alpha_{i_{j}}\left(M\right)$;
and $e\in M^{l}$ for some $l$, $\theta\left(\overline{y},z\right)$
positive such that $M\vDash\theta\left(\overline{a},e\right)$
\begin{align*}
T & \vdash\neg\exists x\left(\stackrel[j<k]{}{\bigwedge}\varphi_{i_{j}}\left(x,a_{j}\right)\wedge\theta\left(\overline{a},e\right)\right)\Longleftrightarrow\\
T & \vdash\forall x,\overline{y},z:\neg\left(\stackrel[j<k]{}{\bigwedge}\varphi_{i_{j}}\left(x,y_{j}\right)\wedge\theta\left(\overline{y},z\right)\right)
\end{align*}

That is iff exist $i_{0},...,i_{k-1}\in I$ and positive quantifier
free $\theta\left(\overline{y},z\right)$ such that 
\[
M\vDash\exists\overline{y},z:\theta\left(\overline{y},z\right)\wedge\stackrel[j<k]{}{\bigwedge}\alpha_{i_{j}}\left(y_{j}\right)\Longleftrightarrow\exists\overline{y},z:\theta\left(\overline{y},z\right)\wedge\stackrel[j<k]{}{\bigwedge}\alpha_{i_{j}}\left(y_{j}\right)\in T^{-}
\]

and 
\[
T\vdash\forall x,\overline{y},z:\neg\left(\stackrel[j<k]{}{\bigwedge}\varphi_{i_{j}}\left(x,y_{j}\right)\wedge\theta\left(\overline{y},z\right)\right).
\]

But that requirement only depends on $\Th^{\hu}\left(M\right)=\Th^{\hu}\left(N\right)=T$
(by \remref{pc-T+-} and \remref{meaning-of-+-}), thus $S\left(N\right)\vDash\varphi$.

For $\mathcal{L}_{\pi}$ sentences, consider a $\pp$ sentence $\exists\overline{\xi}\stackrel[i<n]{}{\bigwedge}\mathcal{D}_{i}\left(\overline{\xi}\right)\wedge\stackrel[j<k]{}{\bigwedge}\pi_{x_{l_{j}},x_{j}'}\left(\xi_{l_{j}}\right)=\pi_{x_{l_{j}'},x_{j}'}\left(\xi_{l_{j}'}\right)$
where $\overline{\xi}=\left(\xi_{l}\right)_{l<m}$ and $\xi_{l}$
is from the sort $S_{x_{l}}$ and $\mathcal{D}_{i}$ denotes $\mathcal{D}_{\varphi_{i}^{0},...,\varphi_{i}^{m-1};\alpha_{i}}$
(we may assume $\mathcal{D}_{i}$ is a relation of length $m$ by
setting $\varphi_{i}^{l}\left(x_{l}\right)$ be $\varepsilon\left(x_{l},x_{l}\right)$
for $\varepsilon\perp x_{l}=x_{l}$ for $l$'s that do not a$\pp$ear;
since such a $\varphi_{i}^{l}$ will never be in any type).

For a sort $z$, let $\Sigma_{z}$ be the positive type defining positive
type equality in $z$. Then like in the proof of \lemref{pp-equiv-at}.3
we have an $L$ type that whose consistency is equivalent to an $\mathcal{L}_{\pi}$
formula holding. Specifically,
\[
S\left(M\right)\vDash\exists\overline{\xi}\stackrel[i<n]{}{\bigwedge}\mathcal{D}_{i}\left(\overline{\xi}\right)\wedge\stackrel[j<k]{}{\bigwedge}\pi_{x_{l_{j}},x_{j}'}\left(\xi_{l_{j}}\right)=\pi_{x_{l_{j}'},x_{j}'}\left(\xi_{l_{j}'}\right)
\]

iff the following type is consistent (note that we are using the fact
that multiple elements of $S\left(M\right)$ can always be realized
simultaneously, by \propref{amalgamation}):
\begin{align*}
\left\{ \stackrel[l<m]{}{\bigvee}\varphi_{i}^{l}\left(x_{l},a\right)\mid i<n,a\in\alpha_{i}\left(M\right)\right\}  & \cup\\
\bigcup_{y\text{ sort}}\bigcup_{m\in M^{y}}\stackrel[j<k]{}{\bigcup}\Sigma_{x_{j}'\frown y}\left(x_{l_{j}}\frown m,x_{l_{j}'}\frown m\right) & \cup\\
T & \cup\Delta_{M}^{at}
\end{align*}

Which again holds iff there are \textbf{no} $i_{0},...,i_{k-1}<n$;
$a_{j}\in\alpha_{i_{j}}\left(M\right)$; some sorts $w_{0},...,w_{r-1}$
and $d_{f}\in M^{w_{j}}$; some finite subtypes $\Sigma_{x_{j}'\frown w_{f}}^{0}$
of $\Sigma_{x_{j}'\frown w_{f}}$; some $e\in M^{z}$ for some sort
$z$; and $\theta\left(\overline{y},\overline{w},z\right)$ positive
such that $M\vDash\theta\left(\overline{a},\overline{d},e\right)$
such that:
\[
T\vdash\forall x,\overline{y},\overline{w},z:\neg\left(\bigwedge_{f<r}\stackrel[j<k]{}{\bigwedge}\Sigma_{x_{j}'\frown w_{f}}^{0}\left(x_{l_{j}}\frown w_{f},x_{l_{j}'}\frown w_{f}\right)\wedge\stackrel[j<k]{}{\bigwedge}\bigvee_{l<m}\varphi_{i_{j}}^{l}\left(x_{l},y_{i_{j}}\right)\wedge\theta\left(\overline{y},\overline{w},z\right)\right).
\]

Which is once again independent of $M$.

Note that this $\pp$ sentence is actually as general as we want,
since composition of projections is equivalent to a projection (that
is $\pi_{x,y}\left(\pi_{y,z}\left(\zeta\right)\right)=\pi_{x,z}\left(\zeta\right)$)
and every atomic formula of the form $\mathcal{D}\left(\pi\left(\xi_{0}\right),...,\pi\left(\xi_{n-1}\right)\right)$
is equivalent to one of the form $\mathcal{D}\left(\xi_{0},...,\xi_{n-1}\right)$
(see \remref{projections-and-D}).
\end{proof}
\begin{cor}
\label{cor:shared-theory}$\Th^{\hu}\left(S\left(M\right)\right)$
is independent of $M$.
\end{cor}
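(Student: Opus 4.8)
The plan is to read this off directly from \thmref{common-theory}, which has just been proved. Fix two $\pc$ models $M$ and $N$ of $T$ (recall that throughout this subsection $T$ is an irreducible $\hu$ theory, as in the hypothesis of \thmref{common-theory}); the goal is to show $\Th^{\hu}\left(S\left(M\right)\right)=\Th^{\hu}\left(S\left(N\right)\right)$. By symmetry it is enough to prove one inclusion, say $\Th^{\hu}\left(S\left(M\right)\right)\subseteq\Th^{\hu}\left(S\left(N\right)\right)$.

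First I would take an arbitrary $\hu$ sentence $\varphi$ of $\mathcal{L}$ with $S\left(M\right)\vDash\varphi$. By \defref{basic-def}, $\varphi$ is equivalent to a conjunction $\bigwedge_{j}\psi_{j}$ of $\pu$ sentences of $\mathcal{L}$; hence $S\left(M\right)\vDash\psi_{j}$ for each $j$. Applying \thmref{common-theory} to each $\psi_{j}$ separately yields $S\left(N\right)\vDash\psi_{j}$ for every $j$, and therefore $S\left(N\right)\vDash\varphi$. Swapping the roles of $M$ and $N$ gives the reverse inclusion, so the two $\hu$ theories coincide, which is precisely the assertion that $\Th^{\hu}\left(S\left(M\right)\right)$ does not depend on the choice of $M$; in particular it equals $\mathcal{T}$.

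I do not expect any genuine obstacle here: all the content is in \thmref{common-theory}, whose proof already isolates the combinatorial condition (on $i_{0},\dots,i_{k-1}$ and a positive $\theta$) determining when a $\pu$ $\mathcal{L}$-sentence holds in a type space, and shows that this condition depends only on $\Th^{\hu}\left(T\right)$ via \remref{pc-T+-} and \remref{meaning-of-+-}. The only step beyond invoking the theorem is the purely syntactic passage from $\hu$ sentences to conjunctions of $\pu$ sentences, which is immediate. I would also remark in passing that the same argument, using the $\mathcal{L}_{\pi}$-part of \thmref{common-theory}, shows that $\Th^{\hu}\left(S\left(M\right)\right)_{\mathcal{L}_{\pi}}$ is independent of $M$ whenever $T$ is semi-Hausdorff, so that $\mathcal{T}_{\pi}=\Th^{\hu}\left(S\left(M\right)\right)_{\mathcal{L}_{\pi}}$ in that case, though this is not needed for the statement as given.
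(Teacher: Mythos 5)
Your proof is correct and is essentially the paper's own argument: the corollary is an immediate consequence of \thmref{common-theory}, whose statement already records that the transfer extends from $\pu$ to $\hu$ sentences via the decomposition of an $\hu$ sentence into a conjunction of $\pu$ sentences. Nothing further is needed.
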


\begin{rem}
The same does not in general hold for $\mathcal{L}_{\pi}$, see \exaref{double-interval-continued}
below.

Note that since \corref{shared-theory} does not hold in general for
$\mathcal{L}_{\pi}$, it cannot be that \lemref{pp-equiv-at}.3 holds
in general for $\mathcal{L}_{\pi}$.
\end{rem}

\begin{cor}
$\mathcal{T}=\Th^{\hu}\left(S\left(M\right)\right)$ for $M$ an arbitrary
$\pc$ model of $T$.

If $T$ is semi-Hausdorff then $\mathcal{T}_{\pi}=\Th^{\hu}\left(S\left(M\right)\right)_{\mathcal{L}_{\pi}}$
\end{cor}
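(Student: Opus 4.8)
The plan is to read off both equalities directly from the common–theory results just established, the remaining content being only the bookkeeping remark that a union of mutually equal sets is that common set.

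For the first equality, recall that $\mathcal{T}=\bigcup_{M}\Th^{\hu}\left(S\left(M\right)\right)$, the union ranging over all $\pc$ models $M$ of $T$. Fix one such $M$. Trivially $\Th^{\hu}\left(S\left(M\right)\right)\subseteq\mathcal{T}$, since $M$ indexes one of the terms of the union. For the reverse inclusion I would invoke \corref{shared-theory}: it says that $\Th^{\hu}\left(S\left(N\right)\right)$ is independent of the $\pc$ model $N$, so every term of the union equals $\Th^{\hu}\left(S\left(M\right)\right)$, and hence $\mathcal{T}=\Th^{\hu}\left(S\left(M\right)\right)$. When $T$ is semi-Hausdorff the identical argument applies, with the $\mathcal{L}_{\pi}$ clause of \thmref{common-theory} playing the role of \corref{shared-theory}: that clause guarantees that any $\hu$ $\mathcal{L}_{\pi}$-sentence true in one $S\left(N\right)$ is true in every $S\left(M\right)$, so $\Th^{\hu}\left(S\left(N\right)\right)_{\mathcal{L}_{\pi}}$ is independent of $N$, and the union defining $\mathcal{T}_{\pi}$ collapses to the single term $\Th^{\hu}\left(S\left(M\right)\right)_{\mathcal{L}_{\pi}}$.

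There is no genuine obstacle here: all of the work is contained in \thmref{common-theory} (and the already-derived \corref{shared-theory}). The only point deserving a moment's attention is that \thmref{common-theory} is proved for $T$ irreducible, so the two appeals above rely on $T$ being irreducible — the standing hypothesis inherited from that theorem.
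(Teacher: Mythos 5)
Your proposal is correct and matches the paper's (implicit) argument: the corollary is a direct consequence of \thmref{common-theory} and \corref{shared-theory}, which make every term of the union defining $\mathcal{T}$ (resp.\ $\mathcal{T}_{\pi}$, under semi-Hausdorffness) equal, so the union collapses to any single term. Your remark about the standing irreducibility hypothesis is also apt.
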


\subsection{Universality and Boundedness}
\begin{thm}
\label{thm:hom-to-type-space}Let $M$ be a $\pc$ model of $T$. 
\begin{enumerate}
\item Any model $A$ of $\mathcal{T}$ (in particular every $S\left(N\right)$)
admits a homomorphism into $S\left(M\right)$. In particular if $A=E$
is $\pc$, it is embeddable in $S\left(M\right)$.
\item If $T$ is thick (see \defref{indis-and-thick}), in particular if
$T$ is semi Hausdorff or if $T$ is bounded (see \remref{semi-haus-and-bounded-imply-thick}),
the same holds for $\mathcal{T}_{\pi}$.
\end{enumerate}
\end{thm}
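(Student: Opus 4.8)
\emph{Part (1).} I would apply \lemref{hom-to-compact} with the ``compact structure'' taken to be $S\left(M\right)$ itself. Topologise each sort $S_{x}\left(M\right)$ by declaring the sets $\left[\varphi\right]:=\left\{ p\mid\varphi\in p\right\}$, for $\varphi$ a positive formula over $M$, to be basic closed sets; this is a topology since $\left[\varphi\right]\cup\left[\psi\right]=\left[\varphi\vee\psi\right]$ (each $p$ is the positive type of an element), and it is compact by the usual argument: a family of $\left[\varphi_{i}\right]$ with the finite intersection property gives a set of positive formulas finitely consistent with $T\cup\Delta_{M}^{\atom}$, hence consistent by compactness, hence (Zorn, closing chains by compactness) extendable to a maximal positive type, which by \propref{max-iff-pc} lies in $S_{x}\left(M\right)$ and in every $\left[\varphi_{i}\right]$. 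Each relation $\mathcal{D}_{\varphi_{0},\dots,\varphi_{n-1};\alpha}$ is closed in the product topology, being $\bigcap_{c\in\alpha\left(M\right)}\bigcup_{i<n}\pi_{i}^{-1}\left[\varphi_{i}\left(x_{i},c\right)\right]$. Since $A\vDash\mathcal{T}=\Th^{\hu}\left(S\left(M\right)\right)$ by \corref{shared-theory}, \lemref{hom-to-compact} yields a homomorphism $A\to S\left(M\right)$, an embedding when $A$ is $\pc$.

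\emph{Part (2): the homomorphism.} Here the graph of $\pi_{x,x'}$ need not be closed, so \lemref{hom-to-compact} does not apply to $\mathcal{L}_{\pi}$ directly; I would build the map by hand, realising all of $A$ simultaneously as positive types over $M$ compatibly with the restriction maps. For each $a\in A$ pick a variable tuple $y_{a}$ of the sort of $a$, and let $\Gamma\left(\left\langle y_{a}\right\rangle _{a\in A}\right)$ be the partial positive $L$-type over $M$ consisting of: (i) the formula $\bigvee_{i<n}\varphi_{i}\left(y_{a_{i}},c\right)$, for every $\mathcal{D}_{\varphi_{0},\dots,\varphi_{n-1};\alpha}\left(a_{0},\dots,a_{n-1}\right)$ holding in $A$ and every $c\in\alpha\left(M\right)$; and (ii) the equalities $y_{a'}=\left(y_{a}\right)_{x'}$ identifying $y_{a'}$ with the $x'$-subtuple of $y_{a}$, whenever $a'=\pi_{x,x'}^{A}\left(a\right)$. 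If $\Gamma$ is consistent with $T\cup\Delta_{M}^{\atom}$, realise it by $\left\langle d_{a}\right\rangle$ in a $\pc$ model $N\geq M$ of $T$ and set $h\left(a\right)=\tp^{\p}\left(d_{a}/M\right)$; then clause (i) makes $h$ preserve the $\mathcal{D}$'s, while clause (ii) gives $h\left(a'\right)=\tp^{\p}\left(\left(d_{a}\right)_{x'}/M\right)=h\left(a\right)|_{x'}$, so $h$ respects the $\pi$'s and is the desired $\mathcal{L}_{\pi}$-homomorphism.

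\emph{Consistency of $\Gamma$ and the role of thickness.} By compactness it suffices to treat a finite $\Gamma_{0}\subseteq\Gamma$. Absorbing its equalities into one tuple variable $Y$, whose coordinates are glued according to the finitely many $\pi$-facts present, $\Gamma_{0}$ becomes a single positive formula $\Phi\left(Y\right)$ with parameters from $M$; as $M$ is $\pc$, $\Gamma_{0}$ is inconsistent with $T\cup\Delta_{M}^{\atom}$ iff $M\vDash\forall Y\,\neg\Phi\left(Y\right)$. Assuming this, form the $\pu$ sentence $\sigma$ of $\mathcal{L}_{\pi}$ negating the $\pp$ sentence ``there exist types exhibiting the configuration of $\mathcal{D}$- and $\pi$-facts recorded in $\Gamma_{0}$''. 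Then $A\vDash\neg\sigma$, while I claim $S\left(M\right)\vDash\sigma$: a family of maximal positive types over $M$ that is coherent under the restriction maps and satisfies the prescribed $\mathcal{D}$-facts could be realised by actual tuples over $M$ agreeing literally on their common sub-tuples, which would make $\Phi$ satisfiable over $M$, a contradiction. Hence $\sigma\in\Th^{\hu}\left(S\left(M\right)\right)\subseteq\mathcal{T}_{\pi}\subseteq\Th^{\hu}\left(A\right)$, contradicting $A\vDash\neg\sigma$; so $\Gamma_{0}$ is consistent. The one substantive point --- and the main obstacle --- is the claim just italicised: realising a restriction-coherent family of maximal positive types over $M$ by tuples that literally agree on overlaps is an amalgamation statement. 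It holds for semi-Hausdorff $T$ because positive type-equality is then positively definable (this is how \thmref{common-theory} handles $\mathcal{L}_{\pi}$), and it holds for thick $T$ because indiscernibility is positively type-definable: one amalgamates by replacing each type with an indiscernible sequence of its realisations and passing to an ultrafilter limit, which fills in the overlaps compatibly. The rest is exactly the analysis of $\pu$ sentences over type spaces carried out in \thmref{common-theory}.

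\emph{Embedding when $A=E$ is $\pc$.} Since $\Th^{\hu}\left(S\left(M\right)\right)_{\mathcal{L}_{\pi}}$ may genuinely depend on $M$, $S\left(M\right)$ need not be a model of $\mathcal{T}_{\pi}$, so $\pc$-ness of $E$ cannot simply be invoked as in part (1). Instead enlarge $\Gamma$: for each atomic $\mathcal{L}_{\pi}$-formula $\mu$ false in $E$ at the relevant tuple, use \factref{maximal-pp-in-pc} to choose a positive $\mathcal{L}_{\pi}$-formula $\nu\perp\mu$ true there, and add to $\Gamma$ the positive $L$-conditions on the $y$'s forcing the corresponding $\nu$-configuration among the $d_{a}$'s. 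The consistency argument is unaffected, and $h$ now reflects atomic $\mathcal{L}_{\pi}$-formulas, hence is an embedding.
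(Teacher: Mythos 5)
Part (1) is correct and is essentially the paper's own argument: the same topology on $S\left(M\right)$ (you take $\left\{ p\mid\varphi\in p\right\}$ as basic closed where the paper takes its complement as basic open, which is the same thing), the same compactness and closedness verifications, and then \lemref{hom-to-compact} together with \corref{shared-theory}.

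Part (2) has a genuine gap, located exactly where you flag ``the main obstacle''. Your $\Gamma$ encodes coherence of the projections by the \emph{literal} equalities $y_{a'}=\left(y_{a}\right)_{x'}$, so your consistency argument needs: whenever a finite configuration of $\mathcal{D}$- and $\pi$-facts is realized by types in $S\left(M\right)$, it is realized by tuples agreeing literally on overlaps. This is false, even for thick $T$. In the doubled interval (\exaref{doubled-interval}) take $M=Q=\left(\Qq\cap\left[0,1\right]\right)\times2$: the type $s=\tp^{\p}\left(\left(\frac{1}{\pi},0\right),\left(\frac{1}{\pi},1\right)/Q\right)$ satisfies $\mathcal{D}_{x_{1}Sx_{2}}$ and has $\pi_{x,x_{1}}\left(s\right)=\pi_{x,x_{2}}\left(s\right)$, yet no glued realization $\left(c,c\right)$ can exist since $\forall x\,\neg S\left(x,x\right)\in T$; and this $T$ is bounded, hence thick. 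So the implication ``$\Gamma_{0}$ inconsistent over $M$ $\Rightarrow$ $S\left(M\right)\vDash\sigma$'', on which your contradiction rests, fails. Your appeal to thickness does not repair it: what thickness yields (by the lemma cited in the paper, and only over a sufficiently saturated model) is that equality of positive types over $N$ is itself a positive partial type $p_{x'}$ --- not that two realizations of equal types can be identified in an amalgam.

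The paper's proof avoids literal identification entirely, and this is the substantive content of part (2). It first replaces $M$ by a positively $\kappa$-saturated $N\geq M$ with $\kappa$ large, so that $\mathcal{T}_{\pi}=\Th^{\hu}\left(S\left(N\right)\right)_{\mathcal{L}_{\pi}}$ (\claimref{Tpi-th-of-sat}) and so that the positive type-definability of type-equality is available; it then encodes $\pi_{x,x'}\left(\mathfrak{a}\right)=\mathfrak{a}'$ by the positive conditions $p_{x'}\left(x'_{\mathfrak{a}},x'_{\mathfrak{a}'}\right)$ rather than by equalities of subtuples. Finite satisfiability then only requires realizing finitely many types of $S\left(N\right)$ simultaneously (\propref{amalgamation}), with no gluing at all, and the resulting map respects the projections because the relevant subtuples have \emph{equal types} over $N$ --- which is all that $\pi_{x,x'}\left(h\left(\mathfrak{a}\right)\right)=h\left(\mathfrak{a}'\right)$ requires. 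Finally one composes with the restriction $S\left(N\right)\rightarrow S\left(M\right)$ of \propref{restriction}. To salvage your outline you would need both changes: passing to a saturated model and weakening the gluing clauses from tuple-equality to type-equality.
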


\begin{proof}
(1) Consider the topology on $S\left(M\right)$ generated by the basis
\[
\left[\varphi\right]=\left\{ p\in S\left(M\right)\mid\varphi\notin p\right\} 
\]

for all positive formulas $\varphi$. 

This space is compact, as usual (if $\left\{ \left[\varphi_{i}\right]^{C}\right\} _{i<\kappa}$
is a family of basic closed sets with the f.i.p. then $\left\{ \varphi_{i}\right\} _{i<\kappa}$
is consistent with $\Delta_{M}^{\atom}$ thus can be realized in an
extension, and thus by \factref{maximal-pp-in-pc} can be realized
in a $\pc$ model of $T$).

For any $\mathcal{D}=\mathcal{D}_{\varphi_{0},...,\varphi_{n-1};\alpha}$,
we note that
\begin{align*}
\mathcal{D}\left(S\left(M\right)\right) & =\\
\left\{ p_{0},...,p_{n-1}\mid\forall b\in\alpha\left(M\right):\stackrel[i<n]{}{\bigvee}\varphi_{i}\left(x,b\right)\in p_{i}\right\}  & =\\
\stackrel[b\in\alpha\left(M\right)]{}{\bigcap}\stackrel[i<n]{}{\bigcup}\pi_{i}^{-1} & \left(\left[\varphi_{i}\left(x,b\right)\right]^{C}\right)
\end{align*}

thus closed. 

So from \lemref{hom-to-compact} we are done.

(2) This requires a more careful analysis than what \lemref{hom-to-compact}
provides, since we cannot use the usual product topology. Let $N$
a positively $\kappa$--saturated $\pc$ extension of $M$ (which
exists by \propref{univ-ec}) for $\kappa\geq\beth_{\left(2^{\left|T\right|}\right)^{+}}$
large enough such that $\mathcal{T}_{\pi}=\Th^{\hu}\left(S\left(N\right)\right)_{\mathcal{L}_{\pi}}$
(see \claimref{Tpi-th-of-sat}). It is enough to show that $S\left(N\right)$
is universal, since the restriction from $S\left(N\right)$ to $S\left(M\right)$
is an $\mathcal{L}_{\pi}$ homomorphism.

By \cite[Lemma 2.20 and Fact 2.16]{https://doi.org/10.48550/arxiv.2105.07788},
for every variable tuple $x$ there is a positive type $p_{x}$ over
$N$ such that for any $x$ tuples $a,b$ in any $\pc$ extension
of $N$, $\tp^{\p}\left(a/N\right)=\tp^{\p}\left(b/N\right)$ iff
$p_{x}\left(a,b\right)$ (since type equality holds iff there is a
third element that appears in an indiscernible sequence with both,
and this is positively definable).

For each $\mathfrak{a}\in A_{x}$, (the $x$ sort of $A$, for $x$
some variable tuple) let $x_{\mathfrak{a}}$ be a variable of the
same sort as $x$. If $x'$ is a subtuple of $x$ we denote by $x'_{\mathfrak{a}}$
the respective subtuple of $x_{\mathfrak{a}}$.

Consider the partial positive type 
\[
\Sigma_{1}\left(x_{\mathfrak{a}}\right)_{\mathfrak{a}\in A}=\left\{ \stackrel[i<n]{}{\bigvee}\varphi_{i}\left(x_{\mathfrak{a}_{i}},a\right)\right\} _{\mathcal{D}_{\varphi_{0},...,\varphi_{n-1};\alpha}\left(\mathfrak{a}_{0},...,\mathfrak{a}_{n-1}\right);a\in\alpha\left(N\right)}
\]

Clearly, $\left\langle b_{\mathfrak{a}}\right\rangle _{\mathfrak{a}\in A}\vDash\Sigma_{1}$
for $b_{\mathfrak{a}}$ in some $\pc$ model of $T$ extending $N$
iff $\mathfrak{a}\rightarrow\tp\left(b_{\mathfrak{a}}/N\right)$ defines
an $\mathcal{L}$ homomorphism.

We also want a type $\Sigma_{2}\left(x_{\mathfrak{a}}\right)_{\mathfrak{a}\in A}$
that will guarantee that $\mathfrak{a}\rightarrow\tp\left(b_{\mathfrak{a}}/N\right)$
respects $\pi_{x,x'}$ for each $x$ and subtuple $x'$. Define 
\[
\Sigma_{2}\left(x_{\mathfrak{a}}\right)_{\mathfrak{a}\in A}=\bigcup\left\{ p_{x'}\left(x'_{\mathfrak{a}},x'_{\mathfrak{a}'}\right)\right\} _{\mathfrak{a}\in A_{x},\mathfrak{a}'\in A_{x'},\pi_{x,x'}\left(\mathfrak{a}\right)=\mathfrak{a}'};
\]
then $\Sigma_{2}$ has the property we desire.

Let us show that $\Sigma_{1}\cup\Sigma_{2}$ is finitely satisfiable
with $\Delta_{N}^{\atom}\cup T$. Every finite subtype $\Sigma_{0}$
comes from a finite number of relations of the form $\mathcal{D}\left(\mathfrak{a}_{0},...,\mathfrak{a}_{n-1}\right)$
and a finite number of equalities of the form $\pi_{x,x'}\left(\mathfrak{a}_{i}\right)=\mathfrak{a}_{j}$.
Since these hold in $A$ and $A\vDash\Th^{\hu}\left(S\left(N\right)\right)_{\mathcal{L}_{\pi}}$,
like in \lemref{hom-to-compact} we can choose elements in $S\left(N\right)$
satisfying the same requirements. Each of these is satisfied in some
$\pc$ extension of $N$, and by \propref{amalgamation} these types
can be satisfied simultaneously in a single $\pc$ extension $L$
of $N$. In particular we find that $L\vDash\exists\overline{x}\Sigma_{0}$.

Therefore, there exists $\left\langle b_{\mathfrak{a}}\right\rangle _{\mathfrak{a}\in A}\vDash\Sigma_{1}\cup\Sigma_{2}$
in some $\pc$ extension of $N$, and $\left\{ \left(\mathfrak{a},\tp^{\p}\left(b_{\mathfrak{a}}/N\right)\right)\mid\mathfrak{a}\in A\right\} $
defines a homomorphism from $A$ to $S\left(N\right)$ as required.
\end{proof}
\begin{cor}
\label{cor:bounded-conditions}$\mathcal{T}$ is bounded (by $\left|S\left(M\right)\right|$
for arbitrary $M$) thus it has a universal $\pc$ model (see \propref{univ-ec}).

If $T$ is semi-Hausdorff or bounded then $\mathcal{T}_{\pi}$ is
also bounded .
\end{cor}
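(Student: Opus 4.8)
The plan is to read everything off \thmref{hom-to-type-space} together with \propref{univ-ec}. Fix an arbitrary $\pc$ model $M$ of $T$, and let $E$ be any $\pc$ model of $\mathcal{T}$. Since $E$ is in particular a model of $\mathcal{T}$, \thmref{hom-to-type-space}.(1) provides a homomorphism $E\rightarrow S\left(M\right)$; because $E$ is $\pc$, this homomorphism is an immersion, hence an embedding, so $\left|E\right|\leq\left|S\left(M\right)\right|$. As $\left|S\left(M\right)\right|$ does not depend on $E$, this shows $\mathcal{T}$ is bounded by $\left|S\left(M\right)\right|$. Moreover, by \corref{shared-theory} we have $\mathcal{T}=\Th^{\hu}\left(S\left(M\right)\right)$, the $\hu$ theory of a single structure, so $\mathcal{T}$ is irreducible. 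Being irreducible and bounded, $\mathcal{T}$ falls under the hypotheses of \propref{univ-ec}, which then yields the (unique) universal $\pc$ model of $\mathcal{T}$.

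For the last sentence I would run the same argument with \thmref{hom-to-type-space}.(2) in place of (1). If $T$ is semi-Hausdorff or bounded, then $T$ is thick by \remref{semi-haus-and-bounded-imply-thick}, so \thmref{hom-to-type-space}.(2) gives, for every model of $\mathcal{T}_{\pi}$ — in particular every $\pc$ model $E$ of $\mathcal{T}_{\pi}$ — a homomorphism $E\rightarrow S\left(M\right)$, which as before is an embedding, whence $\left|E\right|\leq\left|S\left(M\right)\right|$. Thus $\mathcal{T}_{\pi}$ is bounded by $\left|S\left(M\right)\right|$ as well. (In the semi-Hausdorff case one moreover has $\mathcal{T}_{\pi}=\Th^{\hu}\left(S\left(M\right)\right)_{\mathcal{L}_{\pi}}$ by the corollary following \corref{shared-theory}, so $\mathcal{T}_{\pi}$ is also irreducible and has its own universal $\pc$ model by \propref{univ-ec}; this is not needed for the stated claim, but worth recording.)

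There is essentially no obstacle at this stage: all the substance is already contained in \thmref{hom-to-type-space}, whose proof of (2) — where the ordinary product topology is unavailable and one instead argues by finite satisfiability, using the positive definability of indiscernibility furnished by thickness — is the genuinely technical part. The only points requiring a little care in the deduction above are that a $\pc$ model of $\mathcal{T}$ (resp. $\mathcal{T}_{\pi}$) is in particular a model of $\mathcal{T}$ (resp. $\mathcal{T}_{\pi}$), so that \thmref{hom-to-type-space} applies to it, and that any homomorphism out of a $\pc$ model is automatically an immersion and hence an embedding, which is what converts the universality statement into the size bound.
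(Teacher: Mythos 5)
Your proposal is correct and is exactly the intended argument: the paper states this as an immediate corollary of \thmref{hom-to-type-space} (every $\pc$ model of $\mathcal{T}$, resp.\ $\mathcal{T}_{\pi}$, embeds into $S\left(M\right)$, giving the cardinality bound) combined with \propref{univ-ec} for the existence of the universal model. Your added care about irreducibility of $\mathcal{T}$ via \corref{shared-theory} and about thickness following from semi-Hausdorff or bounded is precisely what the paper relies on.
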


\begin{defn}
\label{def:max-core}$\Core\left(T\right)$ is the universal $\pc$
model of $\mathcal{T}$ in the language $\mathcal{L}$.

When $T$ is fixed, we will denote $\mathcal{J}=\Core\left(T\right)$.

If $\mathcal{T}_{\pi}$ is well defined and $\pc$ bounded we will
define $\Core_{\pi}\left(T\right)$ and $\mathcal{J}_{\pi}$ similarly.
\end{defn}

\begin{thm}
\label{thm:robinson-pc}Let $T$ be an irreducible primitive universal
theory. Let $E$ be a $\pc$ model of $\mathcal{T}$.
\begin{enumerate}
\item Every finite conjunction of atomic formulas in $E$ (none of which
involves $=$) is equivalent to an atomic formula.
\item Every atomic formula is equivalent in $E$ to an atomic relation ---
that is if $\varphi\left(\zeta,\xi\right)$ is a formula consisting
of a single relation symbol, it is equivalent to a binary relation
symbol in $\zeta,\xi$, and likewise for formulas with more parameters
--- and the equivalence is independent of the model.
\item Every $\pp$ formula $\Xi\left(\mu\right)$ is equivalent in $E$
to a possibly infinite (but no larger than $\left|\mathcal{L}\right|=\left|L\right|$)
conjunction of atomic formulas.
\item $\mathcal{J}$ is homogeneous for atomic type --- if $\tp^{\atom}\left(\overline{a}\right)=\tp^{\atom}\left(\overline{b}\right)$
for $\overline{a},\overline{b}\in\mathcal{J}$ then there is an automorphism
of $\mathcal{J}$ sending $\overline{a}$ to $\overline{b}$.
\item An atomic type in $\mathcal{T}$ is the type of an element of $\mathcal{J}$
iff it is maximal, that is there is no atomic type consistent with
$\mathcal{T}$ that strictly extends it. In particular, if $p\in S\left(M\right)$
belongs to some embedding of $\mathcal{J}$, the set of formulas represented
in $p$ is minimal.
\end{enumerate}
If $T$ is Hausdorff, the same holds for $\mathcal{T}_{\pi}$ and
$\mathcal{J}_{\pi}$.

\end{thm}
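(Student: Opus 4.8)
The plan is to reduce all five parts to Lemma \ref{lem:pp-equiv-at} by transporting the equivalences it establishes in the type spaces $S(M)$ down to an arbitrary $\pc$ model $E$ of $\mathcal{T}$. Note first that $\mathcal{T}$ is irreducible (it is the $\hu$ theory of the structure $S(M)$, independent of $M$ by \corref{shared-theory}) and bounded (\corref{bounded-conditions}), so $\mathcal{J}=\Core(T)$ is its universal model in the sense of \propref{univ-ec}. The key observation is that, by \thmref{hom-to-type-space}(1), any $\pc$ model $E$ of $\mathcal{T}$ embeds into $S(M)$ for every $\pc$ model $M$ of $T$, and since $E$ is $\pc$ and $S(M)\vDash\mathcal{T}$ this embedding is an immersion; hence for every positive $\mathcal{L}$-formula $\chi$ and every tuple $\overline{a}$ in $E$ we have $E\vDash\chi(\overline{a})$ iff $S(M)\vDash\chi(\overline{a})$ (homomorphisms push positive formulas forward, immersions pull them back). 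Consequently any equivalence between positive $\mathcal{L}$-formulas that holds, with a model-independent witness, in every $S(M)$ passes to $E$ with the same witness.

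This immediately yields (1), (2) and (3): (2) is Lemma \ref{lem:pp-equiv-at}(1), whose witnessing atomic relation depends only on the given formula; (3) is Lemma \ref{lem:pp-equiv-at}(4); and (1) is Lemma \ref{lem:pp-equiv-at}(2), where one fixes once and for all a $\pc$ model $M_{0}$ of $T$ with $|M_{0}|\ge 2$, hence a formula $\varepsilon\perp x=y$ realised in $M_{0}$, which being a satisfiable positive sentence lies in $T^{-}$ and so is realised in every $\pc$ model of $T$, making the witnessing relation $\mathcal{D}_{\theta;\delta}$ uniform. If $T$ has no $\pc$ model of size $\ge 2$ then every sort of every $S(M)$ is a single point and all five claims are trivial, so we may assume such an $M_{0}$ exists.

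For (4): by (3) every $\pp$ $\mathcal{L}$-formula over $\emptyset$, and hence — using that every element of the $\pc$ model $\mathcal{J}$ has a maximal positive type and is therefore ``prime'' for disjunctions (\factref{maximal-pp-in-pc}) — every positive $\mathcal{L}$-formula over $\emptyset$ is equivalent in $\mathcal{J}$ to a conjunction of atomic formulas over $\emptyset$; thus $\tp^{\atom}(\overline{a}/\emptyset)$ determines $\tp^{\p}(\overline{a}/\emptyset)$, and \propref{univ-ec}(5) supplies the automorphism. For (5): \propref{max-iff-pc} applied to the $\pc$ model $\mathcal{J}$, which admits no proper $\pc$ continuation by \propref{univ-ec}(4), says that a positive type over $\emptyset$ is maximal iff it is realised in $\mathcal{J}$; transporting this across the mutual determination of atomic and positive types — and, for the nontrivial direction, pushing any realisation of a strict atomic extension into a $\pc$ model of $\mathcal{T}$, which immerses into $\mathcal{J}$ by \propref{univ-ec}(3), and then collapsing it by maximality of the positive type — gives that an atomic type consistent with $\mathcal{T}$ is the type of an element of $\mathcal{J}$ iff it is $\subseteq$-maximal among such types. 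The final ``in particular'' is a restatement: if $p\in S(M)$ lies in an embedded copy of $\mathcal{J}$ then its $\mathcal{L}$-atomic type in $S(M)$ is maximal, and unwinding the definition of $\mathcal{D}_{\varphi;\alpha}$ this says precisely that no positive formula is represented in $p$ beyond those forced to be, i.e. the set of formulas represented in $p$ is minimal.

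Finally, when $T$ is Hausdorff, Lemma \ref{lem:hausdorff-implies-L=00003DLpi} shows that every $\mathcal{L}_{\pi}$-atomic formula is equivalent, uniformly across all type spaces, to an intersection of $\mathcal{L}$-atomic formulas, while \thmref{hom-to-type-space}(2) (Hausdorff theories are thick) together with \corref{bounded-conditions} guarantees that $\mathcal{T}_{\pi}$ is bounded, that $\mathcal{J}_{\pi}$ is defined, and that every $\pc$ model of $\mathcal{T}_{\pi}$ immerses into the $S(M)$'s as an $\mathcal{L}_{\pi}$-structure; with these two ingredients the arguments above carry over verbatim with $\mathcal{L}$ replaced by $\mathcal{L}_{\pi}$, using the Hausdorff clauses of Lemma \ref{lem:pp-equiv-at}(3),(4) in the projection steps. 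I expect the main obstacle to be part (5): making the correspondence between maximal atomic and maximal positive types over $\emptyset$ precise and checking that maximality transports in both directions, which uses both the no-proper-$\pc$-continuation property of $\mathcal{J}$ and a push-forward into an auxiliary $\pc$ model; in the Hausdorff case the delicate point is absorbing equalities of $\pi$-terms into the $\mathcal{L}$-atomic picture.
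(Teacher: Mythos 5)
Your proposal is correct and follows essentially the same route as the paper: immerse the $\pc$ model $E$ into $S\left(M\right)$ so that the model-independent equivalences of \lemref{pp-equiv-at} transfer (handling the $\left|M\right|\geq2$ degenerate case exactly as the paper does), then derive (4) from \propref{univ-ec}(5) via the atomic-determines-$\pp$ observation, and (5) from universality of $\mathcal{J}$ plus maximality. The only cosmetic difference is in the "realized implies maximal" direction of (5), where you realize a hypothetical strict atomic extension in $\mathcal{J}$ and collapse it using maximality of positive types, while the paper argues through an orthogonal $\pp$ formula $\phi\perp\psi$ and its atomic type-definition; both rest on the same ingredients and your variant is sound.
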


\begin{proof}
Note first that if $h:M\rightarrow N$ is an immersion and $\varphi\left(x\right)$,
$\psi\left(x\right)$ are positive formulas over $\emptyset$ that
are equivalent in $N$, they are also equivalent in $M$ (since for
any $a\in M^{x}$, $M\vDash\varphi\left(a\right)\Longleftrightarrow N\vDash\varphi\left(h\left(a\right)\right)\Longleftrightarrow N\vDash\psi\left(h\left(a\right)\right)\Longleftrightarrow M\vDash\psi\left(a\right)$).

1-3 thus follow immediately from \lemref{pp-equiv-at} and \thmref{hom-to-type-space}.

Indeed choose an appropriate $\pc$ model $M$ of $T$. Every $\pc$
model of $\mathcal{T}$ is immersed in $S\left(M\right)$, thus we
are finished. Note that projections of atomic types are not necessarily
identical in a $\pc$ submodel, thus \propref{pp-equiv-at-plus}.3
does not translate to every $\pc$ model. Note that in (1) the assumption
on the existence of a $\pc$ model of cardinality $\geq2$ is unneeded:
if not, then the universal model of $T$ is of cardinality $1$, and
therefore by \propref{bounded-S-is-U} so is $S\left(U\right)$ and
thus every $\pc$ model of $T$ is of cardinality $1$, and the claim
is trivial.

4. We already know $\mathcal{J}$ is homogeneous for $\pp$ types
(and positive types) by \propref{univ-ec}.5, and by (3) the $\pp$
types of elements of $\mathcal{J}$ are completely determined by their
atomic types.

5. If $P$ is a maximal atomic type, let $A$ be some model and $\overline{a}$
some element satisfying $P$; then there exists a homomorphism $f:A\rightarrow\mathcal{J}$,
and thus $\mathcal{J}\vDash P\left(f\left(a\right)\right)$; so from
maximality $P$ is the atomic type of $f\left(a\right)$.

In the other hand, assume $a\in\mathcal{J}$ and let $P$ be the atomic
type of $a$. Let $\psi$ be an atomic formula such that $\neg\psi\left(a\right)$.
Then there exists a $\pp$ formula $\phi$ such that $\phi\perp\psi$
(that is $\mathcal{T}\vdash\forall x:\neg\left(\phi\wedge\psi\right)$)
and $\mathcal{J}\vDash\phi\left(a\right)$.

By (3), there exist atomic $\left\{ \Xi_{i}\right\} _{i\in I}$ such
that $\phi\left(\mathcal{J}\right)=\stackrel[i\in I]{}{\bigcap}\Xi_{i}\left(\mathcal{J}\right)$
(in particular $\Xi_{i}\left(a\right)$ for all $i$, that is $\left\{ \Xi_{i}\right\} _{i\in I}\subseteq P$).
Assume $\left\{ \Xi_{i}\right\} _{i\in I}\cup\psi$ is consistent
with $\mathcal{T}$; then exist a model $A$ and some $b\in A$ witnessing
that. But for $f:A\rightarrow\mathcal{J}$ a homomorphism we get $\Xi_{i}\left(f\left(b\right)\right)$
for all $i$ thus $\phi\left(f\left(b\right)\right)$, but also $\psi\left(f\left(b\right)\right)$,
contradiction. Thus for some $\left\{ i_{j}\right\} _{i<n}\subseteq I$
we have $\mathcal{T}\vdash\stackrel[j=1]{n}{\bigwedge}\Xi_{i_{j}}\rightarrow\neg\psi$
thus $P\vdash\neg\psi$; so $P$ is maximal.
\end{proof}

\subsection{The Core in the Bounded and Hausdorff Case}

\subsubsection{Bounded $T$}
\begin{prop}
\label{prop:type-space-is-p.c.}Let $M$ be a $\pc$ model of $T$.
Then the following are equivalent:

1. $S\left(M\right)$ is a $\pc$ model of $\mathcal{T}$

2. $S\left(M\right)$ is isomorphic to $\mathcal{J}$

3. Every $\mathcal{L}$ endomorphism of $S\left(M\right)$ is an automorphism

4. Every $\mathcal{L}$ endomorphism of $S\left(M\right)$ is an embedding

5. Every $\mathcal{L}$ endomorphism of $S\left(M\right)$ is surjective.

If the conclusion of \thmref{hom-to-type-space} holds, the equivalence
between 1, 2, 4 and 5 holds for $\mathcal{L}_{\pi}$. If $T$ is Hausdorff,
1 through 5 are also equivalent for $\mathcal{L}_{\pi}$.

The equivalence of 2 and 5 holds per sort, that is to say that every
endomorphism of $S\left(M\right)$ is onto $S_{x}\left(M\right)$
iff every/any embedding of $\mathcal{J}$ into $S\left(M\right)$
is onto $S_{x}\left(M\right)$.
\end{prop}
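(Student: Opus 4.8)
The plan is to run a cycle of implications around statement~2 ($S\left(M\right)\cong\mathcal{J}$), using two consequences of \propref{univ-ec} for the theory $\mathcal{T}$ (which is irreducible and, by \corref{bounded-conditions}, bounded, so $\mathcal{J}=\Core\left(T\right)$ exists): (a) every homomorphism from $\mathcal{J}$ into a $\pc$ model of $\mathcal{T}$ is an isomorphism, and (b) $\End\left(\mathcal{J}\right)=\Aut\left(\mathcal{J}\right)$. First I would fix the maps in play. Since $S\left(M\right)\vDash\mathcal{T}$ (by \corref{shared-theory}) and $\mathcal{J}$ is $\pc$, \thmref{hom-to-type-space}.1 gives an embedding $e\colon\mathcal{J}\to S\left(M\right)$, necessarily an immersion (any homomorphism out of the $\pc$ model $\mathcal{J}$ into a model of $\mathcal{T}$ is $\pc$); by \propref{univ-ec}.2 it has a retract $r\colon S\left(M\right)\to\mathcal{J}$ with $r\circ e=\id_{\mathcal{J}}$. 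The same remarks apply to any embedding of $\mathcal{J}$ into $S\left(M\right)$, and for any $\mathcal{L}$-endomorphism $h$ of $S\left(M\right)$ the map $h\circ e$ is again such an embedding.

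The routine implications are $2\Rightarrow1$ ($\mathcal{J}$ is $\pc$), $3\Rightarrow4$ (trivial), and $2\Rightarrow3$, $2\Rightarrow5$ (transport an endomorphism of $S\left(M\right)$ through an isomorphism onto $\mathcal{J}$ and apply (b)). For $1\Rightarrow2$: if $S\left(M\right)$ is $\pc$ then $e$ is a homomorphism from $\mathcal{J}$ into a $\pc$ model of $\mathcal{T}$, hence an isomorphism by (a). The real content is $4\Rightarrow2$ and $5\Rightarrow2$, which I would dispatch together using the idempotent endomorphism $g=e\circ r$ of $S\left(M\right)$: indeed $g\circ g=e\circ\left(r\circ e\right)\circ r=e\circ r=g$, and $\image g=e\left(\mathcal{J}\right)$. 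If statement~4 holds, $g$ is injective, and an injective idempotent equals the identity (from $g\left(g\left(p\right)\right)=g\left(p\right)$), so $e\circ r=\id_{S\left(M\right)}$; hence $e$ is onto and, being an immersion, an isomorphism. If statement~5 holds, $g$ is onto, so $S\left(M\right)=\image g=e\left(\mathcal{J}\right)$ and again $e$ is an isomorphism. This closes the loop: $1\Leftrightarrow2\Leftrightarrow3\Leftrightarrow4\Leftrightarrow5$.

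For the per-sort statement the useful observation is that, for an immersion $e\colon\mathcal{J}\to S\left(M\right)$ with retract $r$ and a sort $x$, $e$ is onto $S_{x}\left(M\right)$ iff the endomorphism $e\circ r$ is onto $S_{x}\left(M\right)$: if $e$ is onto $S_{x}\left(M\right)$ then $r\left(S_{x}\left(M\right)\right)\supseteq r\left(e\left(\mathcal{J}_{x}\right)\right)=\mathcal{J}_{x}$, so $\left(e\circ r\right)\left(S_{x}\left(M\right)\right)=e\left(\mathcal{J}_{x}\right)=S_{x}\left(M\right)$; conversely $S_{x}\left(M\right)=\left(e\circ r\right)\left(S_{x}\left(M\right)\right)\subseteq e\left(\mathcal{J}_{x}\right)\subseteq S_{x}\left(M\right)$. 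Thus if every $\mathcal{L}$-endomorphism of $S\left(M\right)$ is onto $S_{x}\left(M\right)$, then applying this to any embedding-and-retract pair shows every embedding of $\mathcal{J}$ into $S\left(M\right)$ is onto $S_{x}\left(M\right)$; conversely, if some (equivalently every) such embedding is onto $S_{x}\left(M\right)$, then for any $\mathcal{L}$-endomorphism $h$ the embedding $h\circ e$ is onto $S_{x}\left(M\right)$, whence $h\left(S_{x}\left(M\right)\right)\supseteq h\left(e\left(\mathcal{J}_{x}\right)\right)=S_{x}\left(M\right)$. Quantifying over all sorts recovers $2\Leftrightarrow5$.

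Lastly, the $\mathcal{L}_{\pi}$ assertions come from re-running the argument with $\mathcal{T}_{\pi},\mathcal{J}_{\pi}$ replacing $\mathcal{T},\mathcal{J}$: under the hypothesis that the conclusion of \thmref{hom-to-type-space} holds, \thmref{hom-to-type-space}.2 supplies the $\mathcal{L}_{\pi}$-immersion $\mathcal{J}_{\pi}\to S\left(M\right)$ and \propref{univ-ec} applies to $\mathcal{T}_{\pi}$, so the same cycle yields the equivalence of statements 1, 2, 4, 5; and if $T$ is Hausdorff, \lemref{hausdorff-implies-L=00003DLpi} lets one deduce the $\mathcal{L}_{\pi}$-versions of all five from the $\mathcal{L}$-versions. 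I expect the main obstacle to be the $4\Rightarrow2$ and $5\Rightarrow2$ step --- in particular recognizing that one must work with the idempotent $g=e\circ r$ rather than with $r\circ e$ (which is only an automorphism of $\mathcal{J}$) --- together with keeping the per-sort bookkeeping consistent; the other implications are direct applications of \propref{univ-ec} and \thmref{hom-to-type-space}.
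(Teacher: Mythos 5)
Your proof is correct, and the routine implications ($1\Rightarrow2$ via \propref{univ-ec}, $2\Rightarrow3\Rightarrow4,5$, and the $5\Rightarrow2$ and per-sort arguments via the retract) match the paper's. Where you genuinely diverge is in how statement 4 is fed back into the cycle. The paper proves $4\Rightarrow1$: it invokes \claimref{easier-e.c.} together with \lemref{pp-equiv-at}.4 (every $\pp$ formula is an infinite conjunction of atomics in $S\left(M\right)$), so that an endomorphism which is an embedding is automatically an immersion, making $S\left(M\right)$ $\pc$. You instead prove $4\Rightarrow2$ by a purely formal argument: the idempotent $g=e\circ r$, if injective, must be the identity, forcing $e$ to be onto. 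Each route buys something. The paper's connects the proposition to the Robinson-type quantifier elimination, but that lemma is exactly the ingredient that fails for $\mathcal{L}_{\pi}$ without Hausdorffness, so the paper's own proof is opaque about how statement 4 enters the $\mathcal{L}_{\pi}$ equivalence of 1, 2, 4, 5 under mere universality. Your idempotent argument uses nothing beyond the existence of the immersion--retract pair, so it transfers verbatim to $\mathcal{L}_{\pi}$ and actually justifies that part of the statement more cleanly. Two small points of bookkeeping: for the $\mathcal{L}_{\pi}$ case you should not route $2\Rightarrow4$ through statement 3 (which is deliberately excluded there); instead note that $2\Rightarrow4$ is immediate since every endomorphism of a $\pc$ model is an immersion, hence an embedding. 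And in the per-sort paragraph, the parenthetical ``some (equivalently every)'' embedding being onto $S_{x}\left(M\right)$ deserves its own justification — your forward direction (every endomorphism onto $S_{x}\left(M\right)$ implies every embedding is onto it) supplies it only once the cycle is closed — though the paper's own treatment of this point is equally terse.
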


\begin{proof}
(1) $\Rightarrow$ (2) We know that $\mathcal{J}$ is embeddable in
$S\left(M\right)$ by \thmref{hom-to-type-space} and the fact $\mathcal{J}$
is $\pc$, and we know by \propref{univ-ec} that every embedding
of the universal model into an $\pc$ model is an isomorphism.

(2) $\Rightarrow$ (3) Holds for any universal $\pc$ model by \propref{univ-ec}.

(3) $\Rightarrow$ (4), (5) Obvious.

(4) $\Rightarrow$ (1): By \claimref{easier-e.c.} and \thmref{hom-to-type-space},
it is enough to show every endomorphism is $\pc$

But by 4 in \lemref{pp-equiv-at}, every $\pp$ formula is equivalent
in $S\left(M\right)$ to a conjunction of atomic formulas and is thus
pulled back by self embeddings.

(5) $\Rightarrow$ (2) We need only consider a homomorphism $f:S\left(M\right)\rightarrow\mathcal{J}\leq S\left(M\right)$.
By assumption it is onto, and thus the inclusion $\mathcal{J}\leq S\left(M\right)$
is onto. Since $\mathcal{J}$ is $\pc$ this means that the inclusion
is an isomorphism.

Per sort, we consider a homomorphism $f:S\left(M\right)\rightarrow\mathcal{J}\leq S\left(M\right)$.
If every $\mathcal{L}$ endomorphism is onto $S_{x}\left(M\right)$
then in particular this holds for $f$ (reagrdless of which embedding
of $\mathcal{J}$ we choose) thus $\mathcal{J}_{x}=S_{x}\left(M\right)$,
while if $\mathcal{J}_{x}=S_{x}\left(M\right)$ for any embedding
then $f\circ\id_{\mathcal{J}}:\mathcal{J}\rightarrow\mathcal{J}$
is an endomorphism thus an automorphism of $\mathcal{J}$ and thus
in particular onto $\mathcal{J}_{x}$, thus $f$ must be onto $S_{x}\left(M\right)$.
\end{proof}
\begin{cor}
For any bounded theory $T$, $S\left(U\right)=\mathcal{J}_{\pi}$.
If $U=\acl^{\p}\left(\emptyset\right)$, we also have $S\left(U\right)=\mathcal{J}$.
\end{cor}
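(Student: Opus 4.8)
The plan is to read this off from the characterization of cores in \propref{type-space-is-p.c.}, applied to $M=U$. First I would record the preliminaries. Since $T$ is bounded, \corref{bounded-conditions} gives that $\mathcal{T}$ is bounded, so $\mathcal{J}=\Core\left(T\right)$ is defined; moreover $T$ is thick by \remref{semi-haus-and-bounded-imply-thick}, so by \corref{bounded-conditions} again $\mathcal{T}_{\pi}$ is bounded and $\mathcal{J}_{\pi}$ is defined, and — crucially — the conclusion of \thmref{hom-to-type-space} holds for $\mathcal{T}_{\pi}$. Hence the hypothesis of the $\mathcal{L}_{\pi}$-clause of \propref{type-space-is-p.c.} is satisfied, so conditions (1), (2), (4), (5) there are equivalent for $\mathcal{L}_{\pi}$ (and (1)--(5) are always equivalent for $\mathcal{L}$).

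For the first assertion I would verify condition (5) of \propref{type-space-is-p.c.} for $\mathcal{L}_{\pi}$ at $M=U$. By \propref{universal-model-Lpi} we have $\End_{\mathcal{L}_{\pi}}\left(S\left(U\right)\right)=\Aut_{\mathcal{L}_{\pi}}\left(S\left(U\right)\right)$, i.e. every $\mathcal{L}_{\pi}$-endomorphism of $S\left(U\right)$ is already an automorphism, in particular surjective. Thus condition (5) holds, hence condition (2) holds: $S\left(U\right)\cong\mathcal{J}_{\pi}$.

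For the second assertion, assume in addition $U=\acl^{\p}\left(\emptyset\right)$. Then by \remref{finite-and-algebraic} every $\mathcal{L}$-endomorphism of $S\left(U\right)$ is surjective, which is exactly condition (5) of \propref{type-space-is-p.c.} for $\mathcal{L}$; since for $\mathcal{L}$ conditions (1)--(5) are unconditionally equivalent, condition (2) follows, i.e. $S\left(U\right)\cong\mathcal{J}$.

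Essentially all the work is already done in the cited results, so I do not expect a real obstacle; the one point requiring care is that the $\mathcal{L}$-statement genuinely needs the hypothesis $U=\acl^{\p}\left(\emptyset\right)$. Without it one cannot upgrade an arbitrary $\mathcal{L}$-endomorphism of $S\left(U\right)$ to an $L$-endomorphism of $U$ (as explained in the discussion following \propref{universal-model-Lpi}), such an endomorphism may fail to be injective on $S_{1}\left(U\right)$, and then $S\left(U\right)$ need not be $\pc$ as an $\mathcal{L}$-structure, so the identification with $\mathcal{J}$ can break down — which is why only the $\mathcal{L}_{\pi}$ identification is unconditional in the bounded case.
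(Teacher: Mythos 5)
Your proposal is correct and follows essentially the same route as the paper: the paper's proof cites exactly \propref{type-space-is-p.c.}.5 together with \propref{universal-model-Lpi} for the $\mathcal{L}_{\pi}$ statement, and \propref{type-space-is-p.c.}.5 together with \remref{finite-and-algebraic} for the $\mathcal{L}$ statement under $U=\acl^{\p}\left(\emptyset\right)$. Your additional bookkeeping (boundedness implies thickness, hence the conclusion of \thmref{hom-to-type-space} holds for $\mathcal{T}_{\pi}$, so the $\mathcal{L}_{\pi}$-clause of \propref{type-space-is-p.c.} applies) is a correct and slightly more careful spelling-out of what the paper leaves implicit.
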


\begin{proof}
The first part is from \propref{type-space-is-p.c.}.5 (or 3, or 4)
and \propref{universal-model-Lpi}.

The second is from \propref{type-space-is-p.c.}.5 and \remref{finite-and-algebraic}.
\end{proof}
\begin{thm}
\label{thm:repeated-core}Assume $T$ is an irreducible thick $\hu$
theory. Then $\overline{a}\mapsto\tp^{\p}\left(\overline{a}/\Core_{\pi}\left(T\right)\right)$
is a bijection between $\Core_{\pi}\left(T\right)$ and $\Core_{\pi}\left(\mathcal{T}_{\pi}\right)$
that preserves the automorphism group.

Informally, we can say that applying the core construction a second
time results in the same object.
\end{thm}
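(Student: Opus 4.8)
The plan is to deduce the statement directly from the bounded-case results already established, applied with $\mathcal{T}_\pi$ in the role of $T$; the only genuine work is checking that $\mathcal{T}_\pi$ meets the hypotheses needed to re-run the construction on it. First I would record that $\mathcal{T}_\pi$ is an $\hu$ theory in the (multisorted) language $\mathcal{L}_\pi$ which is \emph{irreducible}: by \claimref{Tpi-th-of-sat} it equals $\Th^{\hu}\left(S\left(N\right)\right)_{\mathcal{L}_\pi}$ for a sufficiently positively saturated $\pc$ model $N$ of $T$, i.e. it is the full $\hu$ theory, in $\mathcal{L}_\pi$, of a single structure. It is also \emph{bounded}: since $T$ is thick, \thmref{hom-to-type-space}.2 gives that every $\pc$ model of $\mathcal{T}_\pi$ embeds into $S\left(M\right)$ for any fixed $\pc$ model $M$ of $T$, so $\mathcal{T}_\pi$ is bounded by $\left|S\left(M\right)\right|$. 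Being bounded it is in particular thick (\remref{semi-haus-and-bounded-imply-thick}), so the entire construction of this section applies over $\mathcal{T}_\pi$, the $\pi$-theory it produces is again bounded, and $\Core_\pi\left(\mathcal{T}_\pi\right)$ is well defined. Finally, by \defref{max-core} and the uniqueness clause of \propref{univ-ec}, $\Core_\pi\left(T\right)$ is precisely the universal $\pc$ model of the bounded irreducible theory $\mathcal{T}_\pi$.

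With this bookkeeping done, I would apply the corollary immediately following \propref{type-space-is-p.c.} — "for a bounded theory, the type space of the universal model is the $\pi$-core" — taking $\mathcal{T}_\pi$ as the base theory. Since its universal $\pc$ model is $\Core_\pi\left(T\right)$, that corollary gives $S\left(\Core_\pi\left(T\right)\right)=\Core_\pi\left(\mathcal{T}_\pi\right)$ as $\mathcal{L}_\pi$-structures, with the identifying isomorphism being the map $\iota$ of \propref{bounded-S-is-U} applied to $\mathcal{T}_\pi$, namely $\overline{a}\mapsto\tp^{\p}\left(\overline{a}/\Core_\pi\left(T\right)\right)$. This is exactly the sortwise bijection asserted by the theorem.

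For the automorphism-group claim I would invoke \propref{universal-model-Lpi}, again with $\mathcal{T}_\pi$ as base theory: $\sigma\mapsto\sigma^{\iota}=\iota\circ\sigma\circ\iota^{-1}$ is an isomorphism from $\Aut\left(\Core_\pi\left(T\right)\right)$ onto $\End_{\mathcal{L}_\pi}\left(S\left(\Core_\pi\left(T\right)\right)\right)$, and this endomorphism monoid equals $\Aut_{\mathcal{L}_\pi}\left(S\left(\Core_\pi\left(T\right)\right)\right)=\Aut\left(\Core_\pi\left(\mathcal{T}_\pi\right)\right)$, since $\Core_\pi\left(\mathcal{T}_\pi\right)$ is a universal $\pc$ model and hence all its $\mathcal{L}_\pi$-endomorphisms are automorphisms (\propref{univ-ec}.4). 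Thus conjugation by $\iota$ identifies the two automorphism groups, which is the asserted preservation.

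The argument is therefore essentially a chain of citations, and the point that needs care is the first paragraph: that $\mathcal{T}_\pi$ really is a bounded irreducible $\hu$ theory, so that the phrase "the universal model of $\mathcal{T}_\pi$" is legitimate and the bounded-case corollaries may be re-applied verbatim. One should also keep the nested languages straight: $\mathcal{T}_\pi$ lives in $\mathcal{L}_\pi$, and feeding it into the construction manufactures a further new language, so the symbols $S\left(-\right)$, $\mathcal{L}$ and $\mathcal{L}_\pi$ appearing in the second application refer to the objects built over $\mathcal{T}_\pi$, not over $T$. Beyond that there is no deeper obstacle.
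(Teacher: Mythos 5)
Your argument is correct and is precisely the paper's proof: the paper's one-line justification ("by the previous corollary and \propref{universal-model-Lpi}") is exactly your chain — apply the corollary $S\left(U\right)=\mathcal{J}_{\pi}$ and \propref{universal-model-Lpi} to the bounded irreducible theory $\mathcal{T}_{\pi}$, whose universal $\pc$ model is $\Core_{\pi}\left(T\right)$. Your first paragraph simply makes explicit the bookkeeping (irreducibility via \claimref{Tpi-th-of-sat}, boundedness via \thmref{hom-to-type-space}.2) that the paper leaves implicit.
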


\begin{proof}
By the previous corollary and \propref{universal-model-Lpi}.
\end{proof}
\begin{example}
In \exaref{double-interval-continued} below we will see a bounded
theory $T$ where $S\left(U\right)\neq\mathcal{J}$.
\end{example}

\subsubsection{Hausdorff $T$}
\begin{thm}
\label{thm:J=00003DJpi-Hausdorff}Assume $T$ is a Hausdorff irreducible
$\hu$ theory, $M$ a $\pc$ model of $T$, and $\mathcal{J}\leq S\left(M\right)$
is $\Core\left(T\right)$.

Then $\mathcal{J}$ is an $\mathcal{L}_{\pi}$ substructure of $S\left(M\right)$,
and every $\pi_{x,x'}:\mathcal{J}_{x}\rightarrow\mathcal{J}_{x'}$
is $\mathcal{L}$-type definable over $\emptyset$ in $\mathcal{J}$.
Furthermore, $\mathcal{J}$ is universal and $\pc$ in $\mathcal{T}_{\pi}$,
and is thus $\Core_{\pi}\left(T\right)$. 
\end{thm}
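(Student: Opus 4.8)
The plan is to show that $\mathcal{J}$ is closed under the maps $\pi_{x,x'}$ and that these restrictions agree with $\mathcal{L}$-definable functions, after which universality and $\pc$-ness for $\mathcal{T}_\pi$ follow from the corresponding facts for $\mathcal{L}$. The key observation is \lemref{hausdorff-implies-L=00003DLpi}: in a Hausdorff theory, the graph of $\pi_{x,x'}$ on any type space $S(M)$ is cut out by an infinite intersection of atomic $\mathcal{D}$-relations of $\mathcal{L}$, and this defining intersection does not depend on the particular model $M$. So first I would fix the embedding $\mathcal{J}\le S(M)$. Given $p\in\mathcal{J}_x$, I want to show $\pi_{x,x'}^{S(M)}(p)\in\mathcal{J}_{x'}$. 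By \lemref{pp-equiv-at}.3 (which, since $T$ is irreducible primitive universal and Hausdorff, applies to $\mathcal{L}_\pi$ as well, but here I only need the $\mathcal{L}$ version) together with \thmref{robinson-pc}, the family of atomic-type-definable subsets of $\mathcal{J}$ is closed under projection; applied to the $\mathcal{L}$-type defining the graph of $\pi_{x,x'}$ inside $S(M)^2$, intersected with $\mathcal{J}^2$, this shows that the set $\{q\in\mathcal{J}_{x'}\mid (p,q)\text{ is in the graph}\}$ is a nonempty (it contains $\pi_{x,x'}^{S(M)}(p)$ viewed in $S(M)$) atomic-type-definable subset — but since the graph is the graph of a \emph{function} on $S(M)$, there is exactly one such $q$ in $S(M)$, hence $\pi_{x,x'}^{S(M)}(p)$ must already lie in $\mathcal{J}_{x'}$. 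This gives that $\mathcal{J}$ is an $\mathcal{L}_\pi$-substructure, and simultaneously that each $\pi_{x,x'}\colon\mathcal{J}_x\to\mathcal{J}_{x'}$ is $\mathcal{L}$-type-definable over $\emptyset$ in $\mathcal{J}$ (by the model-independent defining intersection of \lemref{hausdorff-implies-L=00003DLpi}, restricted to $\mathcal{J}$).

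Next I would verify $\mathcal{J}$ is $\pc$ as a model of $\mathcal{T}_\pi$. By \claimref{easier-e.c.} and the already-established fact (part (1) of \thmref{hom-to-type-space} extended to $\mathcal{L}_\pi$ via \lemref{hausdorff-implies-L=00003DLpi}, or directly \thmref{hom-to-type-space}.2 since Hausdorff implies thick) that every model of $\mathcal{T}_\pi$ continues into some $S(M)$, it suffices to show every $\mathcal{L}_\pi$-endomorphism of $S(M)$ is $\pc$. But an $\mathcal{L}_\pi$-endomorphism is in particular an $\mathcal{L}$-endomorphism, and by \lemref{pp-equiv-at}.4 every $\pp$ $\mathcal{L}$-formula is equivalent in $S(M)$ to a conjunction of atomic $\mathcal{L}$-formulas; since $\mathcal{L}\subseteq\mathcal{L}_\pi$ and $\mathcal{J}$ is already known to be $\pc$ for $\mathcal{L}$ (it is $\Core(T)$), pulling back atomic formulas is automatic. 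More carefully: for $\mathcal{L}_\pi$ one must also handle $\pp$ formulas involving the function symbols $\pi_{x,x'}$, but by \lemref{hausdorff-implies-L=00003DLpi} each $\pi_{x,x'}(\xi)=\zeta$ is equivalent to a conjunction of atomic $\mathcal{L}$-relations, so every $\pp$ $\mathcal{L}_\pi$-formula reduces to a (possibly infinite) conjunction of atomic $\mathcal{L}$-formulas, again pulled back by $\mathcal{L}$-immersions. Hence any self-$\mathcal{L}_\pi$-embedding of $S(M)$ is an $\mathcal{L}_\pi$-immersion, so $S(M)$'s $\mathcal{L}_\pi$-endomorphisms are $\pc$, and therefore $\mathcal{J}$ is a $\pc$ model of $\mathcal{T}_\pi$.

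Finally, universality: I would show every $\pc$ model $E$ of $\mathcal{T}_\pi$ admits an $\mathcal{L}_\pi$-homomorphism into $\mathcal{J}$. By \thmref{hom-to-type-space}.2 (Hausdorff $\Rightarrow$ thick) there is an $\mathcal{L}_\pi$-homomorphism $E\to S(M)$ for any $\pc$ model $M$ of $T$; composing with a retract $S(M)\to\mathcal{J}$ — which exists because $\mathcal{J}$ is the universal $\pc$ model of $\mathcal{T}$ in $\mathcal{L}$, hence by \propref{univ-ec}.2 retracts onto it from any extension in $\mathcal{T}$, and this retract is an $\mathcal{L}$-homomorphism — I get an $\mathcal{L}$-homomorphism $E\to\mathcal{J}$. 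The point is that this composite is automatically an $\mathcal{L}_\pi$-homomorphism: the retract need not respect $\pi$ a priori, so instead I would argue as in \lemref{hausdorff-implies-L=00003DLpi} that since $E$ is $\pc$ (hence an $\mathcal{L}_\pi$-structure modeling $\mathcal{T}_\pi$) and $\pi$ is $\mathcal{L}$-type-definable in both $E$ and $\mathcal{J}$ by the same model-independent intersection, any $\mathcal{L}$-homomorphism $E\to\mathcal{J}$ that is an immersion automatically commutes with $\pi$; alternatively, build the homomorphism directly inside $S(M)$ using the saturation-plus-amalgamation argument of \thmref{hom-to-type-space}.2 with $N=M$ positively saturated and land inside the copy of $\mathcal{J}$. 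Combined with $\pc$-ness, this makes $\mathcal{J}$ the universal $\pc$ model of $\mathcal{T}_\pi$, i.e.\ $\Core_\pi(T)$. The main obstacle is the very last point — ensuring the $\mathcal{L}$-retract onto $\mathcal{J}$ genuinely respects the restriction maps; I expect the cleanest route is the type-definability of $\pi$ over $\emptyset$ established in the first paragraph, which forces any $\mathcal{L}$-immersion between models of $\mathcal{T}_\pi$ to be an $\mathcal{L}_\pi$-homomorphism.
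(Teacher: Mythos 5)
Your overall architecture (reduce everything to \lemref{hausdorff-implies-L=00003DLpi}, then treat the substructure claim, $\pc$-ness and universality separately) matches the paper's, and your universality paragraph is essentially right, but the first two steps have genuine gaps. In the substructure step, your argument that $\left\{ q\in\mathcal{J}_{x'}\mid\left(p,q\right)\text{ in the graph}\right\} $ is nonempty is circular: the claimed witness $\pi_{x,x'}^{S\left(M\right)}\left(p\right)$ lies in that set only if it already lies in $\mathcal{J}_{x'}$, which is precisely the conclusion. Nor can you repair this by projecting the atomic type $\Sigma_{x,x'}$ inside $\mathcal{J}$: \lemref{pp-equiv-at}.3 is proved only for type spaces $S\left(M\right)$, and the paper explicitly notes (in the proof of \thmref{robinson-pc}) that projections of infinite atomic types need not transfer to $\pc$ submodels, so ``the family of atomic-type-definable subsets of $\mathcal{J}$ is closed under projection'' is not available. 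The missing idea is the retract $r:S\left(M\right)\rightarrow\mathcal{J}$ over the inclusion (\propref{univ-ec}.2): being an $\mathcal{L}$-homomorphism, $r$ preserves the atomic type $\Sigma_{x,x'}$, so $\Sigma_{x,x'}\left(r\left(\pi\left(p\right)\right),r\left(p\right)\right)$ holds and hence $r\left(\pi\left(p\right)\right)=\pi\left(r\left(p\right)\right)$; for $p\in\mathcal{J}$ this gives $\pi\left(p\right)=r\left(\pi\left(p\right)\right)\in\mathcal{J}$, and it simultaneously shows that $r$ is an $\mathcal{L}_{\pi}$-homomorphism, which is exactly what your universality paragraph needs to conclude.

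The $\pc$-ness step also does not go through as written. Showing that every $\mathcal{L}_{\pi}$-endomorphism of $S\left(M\right)$ is an immersion would (at best) show that $S\left(M\right)$ is $\pc$, which is false in general even for Hausdorff $T$; what \claimref{easier-e.c.} requires is that homomorphisms \emph{from} $\mathcal{J}$ into some universal class be immersions. Moreover your quantifier-elimination route hits the same obstruction as above: a $\pp$ $\mathcal{L}_{\pi}$-formula, after eliminating the $\pi$'s via $\Sigma_{x,x'}$, becomes an existential quantifier over an \emph{infinite} conjunction of $\mathcal{L}$-atomics, and such infinitary conditions are not automatically reflected by $\mathcal{L}$-immersions into $\mathcal{J}$ --- assuming they are presupposes the $\pc$-ness you are trying to prove. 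The paper sidesteps this entirely: once universality is established, take $C=\left\{ \mathcal{J}\right\} $ in \claimref{easier-e.c.}; an $\mathcal{L}_{\pi}$-endomorphism of $\mathcal{J}$ is an $\mathcal{L}$-endomorphism, hence an $\mathcal{L}$-automorphism by \propref{univ-ec}.4, and since every symbol of $\mathcal{L}_{\pi}$ not in $\mathcal{L}$ is a function symbol, a bijective $\mathcal{L}_{\pi}$-homomorphism that is an $\mathcal{L}$-automorphism is an $\mathcal{L}_{\pi}$-automorphism, hence an immersion.
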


\begin{proof}
Let $r:S\left(M\right)\rightarrow\mathcal{J}$ be an $\mathcal{L}$
retract for the inclusion map, which exists by \factref{pc-universal}.
For any $\pi_{x,x'}\in\mathcal{L}_{\pi}$, let $\Sigma_{x,x'}\left(\xi,\zeta\right)$
be a partial $\mathcal{L}$ type equivalent to $\pi_{x,x'}\left(\zeta\right)=\xi$
in $S\left(M\right)$ as in \lemref{hausdorff-implies-L=00003DLpi}.
For any $p\in S_{x}\left(M\right)$, we find that $\Sigma_{x,x'}\left(p,\pi_{x,x'}\left(p\right)\right)$
therefore $\Sigma_{x,x'}\left(r\left(p\right),r\left(\pi_{x,x'}\left(p\right)\right)\right)$
(in $\mathcal{J}$, thus also in $S\left(M\right)$) thus in $S\left(M\right)$
we have $r\left(\pi_{x,x'}\left(p\right)\right)=\pi_{x,x'}\left(r\left(p\right)\right)$.

In particular, for $p\in\mathcal{J}_{x}$ we find $\pi_{x,x'}\left(p\right)=\pi_{x,x'}\left(r\left(p\right)\right)=r\left(\pi_{x,x'}\left(p\right)\right)\in\mathcal{J}_{x'}$,
thus $\mathcal{J}$ is an $\mathcal{L}_{\pi}$ substructure, and in
particular $\mathcal{J}\vDash\Th^{\hu}\left(S\left(M\right)\right)=\mathcal{T}_{\pi}$.
Furthermore, we also get that $r$ is an $\mathcal{L}_{\pi}$ homomorphism.
Thus as $S\left(M\right)$ is universal in $\mathcal{T}_{\pi}$ (since
Hausdorff implies semi-Hausdorff and by \thmref{hom-to-type-space}),
so is $\mathcal{J}$. 

To show that $\mathcal{J}$ is $\pc$ is it thus enough by \claimref{easier-e.c.}
to show every $\mathcal{L}_{\pi}$ endomorphism of $\mathcal{J}$
is an immersion. Let $h:\mathcal{J}\rightarrow\mathcal{J}$ be an
$\mathcal{L}_{\pi}$ endomorphism. Then in particular, $h$ is an
$\mathcal{L}$-endomorphism of $\mathcal{J}$, thus by \factref{pc-universal}
an $\mathcal{L}$-automorphism of $\mathcal{J}$. But every symbol
in $\mathcal{L}_{\pi}$ which is not in $\mathcal{L}$ is a function
symbol, so in fact $h$ is a bijective $\mathcal{L}_{\pi}$ homomorphism
which preserves every relation symbol in both directions, that is
an $\mathcal{L}_{\pi}$-automorphism as required.
\end{proof}

\subsection{\label{subsec:Lpi-Homomorphisms-in-T}$\mathcal{L}_{\pi}$ Homomorphisms
in Terms of the Original Language}
\begin{lem}
\label{lem:cu-types}Let $A$ be a $\pc$ model of $T$. For a set
$p\left(x\right)$ of $\hu$ formulas in variables $x$ with parameters
from $A$ the following characterizations are equivalent:
\begin{enumerate}
\item $p=\tp^{\hu}\left(c/A\right)$ for some $c\in B^{x}$ where $B$ is
a $\pc$ model of $T$ extending $A$.
\item All of the following hold:
\begin{enumerate}
\item If $\theta\left(a\right)\in p$ (for $a\in A^{n}$ for some $n$)
then $A\vDash\theta\left(a\right)$.
\item If $\varphi,\psi\notin p$ then $\varphi\vee\psi\notin p$.
\item $\varphi\left(x,a\right)\in p$ (for $a\in A^{y}$) iff there exists
an $\hu$ formula $\psi\left(x,y\right)$ such that $T\vdash\forall x,y:\varphi\vee\psi$
and $\psi\left(x,a\right)\notin p$.
\item $p$ is closed under conjuction. 
\end{enumerate}
\item $p$ is the minimal set of $\hu$ formulas in $x$ over $A$ such
that:
\begin{enumerate}
\item $T\subseteq p$.
\item If $\varphi\left(x,a\right)$ is an $\hu$ formula over $A$ such
that $p\cup\Delta_{A}^{\atom}\vdash\varphi\left(x,a\right)$ (when
the variables of $x$ are considered to be new constants).
\item $p^{-}$ (see \defref{minus}) is consistent with $\Delta_{A}^{\atom}$
(under $T$).
\end{enumerate}
\end{enumerate}
\end{lem}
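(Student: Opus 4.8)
The plan is to prove the equivalences cyclically, $(1)\Rightarrow(2)\Rightarrow(3)\Rightarrow(1)$.

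\emph{$(1)\Rightarrow(2)$.} This is a direct check. Write $p=\tp^{\hu}(c/A)$ with $c\in B^{x}$, $B$ a $\pc$ model of $T$ extending $A$; since $A$ is $\pc$ the inclusion $A\le B$ is an immersion. Then (a) holds because $\hu$ sentences are inherited by substructures, (b) and (d) because $\hu$ formulas are closed under $\vee$ and $\wedge$ and membership in $p$ is just truth of the corresponding formula at $c$ in $B$, and the reverse direction of (c) is immediate from $B\vDash T$. For the forward direction of (c), given $\varphi(x,a)\in p$ apply \factref{maximal-pp-in-pc} inside $B$ to the positive formula $\neg\varphi(x,y)$ at the point $(c,a)$: it yields a positive $\psi_{0}(x,y)$ with $\psi_{0}\perp\neg\varphi$ and $B\vDash\psi_{0}(c,a)$, and $\psi:=\neg\psi_{0}$ works.

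\emph{$(2)\Rightarrow(3)$.} Here I would first recover a model, then read off (3). A self-witnessing use of (c) shows that no $\hu$ formula over $A$ valid in $T$ can lie outside $p$ (if such a $\psi$ were outside $p$, apply (c) to $\psi$ itself): hence $T\subseteq p$ (giving (3a)) and $\neg(x=x)\notin p$. Next I would show $p$ is closed under $\hu$-consequence modulo $\Delta_A^{\atom}$ (giving (3b)): given $\hu$ $\varphi(x,a)$ with $p\cup\Delta_A^{\atom}\vdash\varphi$, use compactness and (d) to reduce to a single $\varphi_{0}\in p$ together with a finite fragment of $\Delta_A^{\atom}$; using \factref{maximal-pp-in-pc} in the $\pc$ model $A$, replace that fragment by a single positive $\delta^{*}$ over $A$ true in $A$; replace $\varphi_{0}$ by the $\hu$ formula $\psi_{0}$ supplied by (c); then $\psi_{0}\vee\neg\delta^{*}$ is $\hu$, one has $T\vdash\forall(\varphi\vee(\psi_{0}\vee\neg\delta^{*}))$, and $\psi_{0}\vee\neg\delta^{*}\notin p$ by (b) (using $\psi_{0}\notin p$ from (c) and $\neg\delta^{*}\notin p$ from (a)), so (c) gives $\varphi(x,a)\in p$. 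With (3b) available, for positive $\sigma$ over $A$ one has $\neg\sigma\in p$ iff $p\vdash\neg\sigma$, so $p=\{\neg\sigma\mid \sigma\text{ positive over }A,\ \sigma\notin p^{-}\}$. Then $p^{-}$ is consistent with $\Delta_A^{\atom}\cup T$ (giving (3c)): otherwise a finite $T\cup\Delta_A^{\atom}$-valid disjunction $\bigvee_{i}\neg\sigma_{i}$ with each $\sigma_{i}\in p^{-}$ becomes, after absorbing the $\Delta_A^{\atom}$-part into a positive $\delta^{*}$ true in $A$ as above, a $T$-valid $\hu$ formula, hence lies in $p$, hence by iterated (b) some $\neg\sigma_{i}\in p$, contradicting $\sigma_{i}\in p^{-}$. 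And $p^{-}$ is maximal among such types: if $\neg\sigma_{0}\in p$ while $p^{-}\cup\{\sigma_{0}\}$ were still consistent, the $\hu$ formula $\psi$ from (c) applied to $\neg\sigma_{0}$ gives $\sigma_{0}\perp\neg\psi$ modulo $T$ with $\neg\psi\in p^{-}$ (as $\psi\notin p$), a contradiction. Thus $p^{-}$ is a maximal positive type consistent with $\Delta_A^{\atom}\cup T$, so by \propref{max-iff-pc} it equals $\tp^{\p}(c/A)$ for some $c$ in a $\pc$ extension $B$ of $A$, whence $\tp^{\hu}(c/A)=p$; moreover $p$ satisfies (3a)--(3c), and no proper subset $p'$ can, since such a $p'$ would have $p'^{-}\supseteq p^{-}$ still consistent, hence $p'^{-}=p^{-}$ by maximality, and then (3a),(3b) for $p'$ force $p'=p$.

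\emph{$(3)\Rightarrow(1)$.} Using (3c), realize $p^{-}$ in a $\pc$ extension $B\supseteq A$ --- obtain a homomorphism from $A$ into some model of $T$ realizing $p^{-}$, then push forward into a $\pc$ model by \factref{pc-universal} --- and let $c$ be the realizing tuple, $q=\tp^{\hu}(c/A)$. Then $q$ satisfies (3a)--(3c), and $q^{-}=\tp^{\p}(c/A)\supseteq p^{-}$; writing $p$ and $q$ each as $\{\neg\sigma\mid\sigma\notin(\cdot)^{-}\}$ (for $p$ this uses (3a),(3b)) gives $q\subseteq p$, so minimality of $p$ forces $q=p$, which is (1).

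\emph{Main obstacle.} The hard step is the closure property (3b) inside $(2)\Rightarrow(3)$: the hypotheses (2a)--(2d) are purely combinatorial, and turning ``$p\cup\Delta_A^{\atom}\vdash\varphi$'' into actual membership $\varphi\in p$ forces one to pass back and forth between positive and $\hu$ formulas --- the negated atoms of $\Delta_A^{\atom}$ must be traded, via \factref{maximal-pp-in-pc}, for positive formulas true in $A$, precisely so that their negations are $\hu$ and can be folded into the single $\hu$ separating formula demanded by (2c). Once this bookkeeping is in place, consistency and maximality of $p^{-}$, minimality, and both easy implications follow routinely.
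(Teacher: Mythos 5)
Your proof is correct and follows essentially the same route as the paper's: the direct verification of $(1)\Rightarrow(2)$ via \factref{maximal-pp-in-pc}, the bookkeeping in $(2)\Rightarrow(3)$ that trades finite fragments of $\Delta_A^{\atom}$ for a positive formula true in $A$ whose negation is excluded from $p$ by (a) and then absorbed via (b) and (c), and the realization of $p^{-}$ in a $\pc$ extension plus minimality for $(3)\Rightarrow(1)$. The only differences are cosmetic (self-witnessing (c) instead of $\varphi\vee\bot$ for $T\subseteq p$, and deriving minimality from the maximality of $p^{-}$ rather than directly), so no further comment is needed.
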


\begin{proof}
(1) $\Rightarrow$ (2): (a) follows from the fact that $B$ extends
$A$ (note that $\theta$ is $\hu$, thus pulled back by homomorphisms).

(b) is obvious for the type of an element.

(c) follows since if $\varphi\left(x,a\right)\in p$ we get $B\nvDash\neg\varphi\left(c,a\right)$
thus by \factref{maximal-pp-in-pc} there exists a $\pp$ formula
$\neg\psi\left(x,y\right)$ such that $\neg\varphi\perp\neg\psi\Longleftrightarrow T\vdash\forall x,y:\neg\left(\neg\varphi\wedge\neg\psi\right)=\forall x,y:\varphi\vee\psi$
and $B\vDash\neg\psi\left(c,a\right)\Rightarrow\psi\left(x,a\right)\notin p$.
Conversely if $T\vdash\forall x,y:\varphi\left(x,y\right)\vee\psi\left(x,y\right)$
and $\psi\left(x,a\right)\notin p$ then $B\nvDash\psi\left(c,a\right)$
but $B\vDash\psi\left(c,a\right)\vee\varphi\left(c,a\right)$ thus
$B\vDash\varphi\left(c,a\right)\Rightarrow\varphi\left(x,a\right)\in p$.

(d) also obvious for the type of an element.

(2) $\Rightarrow$ (3) Assume $\varphi\in T$. Then we have $T\vdash\varphi\vee\bot$
and since $A\nvDash\bot$ we have necessarily by (a) and (c) $\bot\notin p\Rightarrow\varphi\in p$.

Assume that $p\cup\Delta_{A}^{\atom}\vdash\varphi\left(x,a\right)$.
Then there is some finite conjunction of formulas in $p\cup\Delta_{A}^{\atom}$
implying $\varphi\left(x,a\right)$, thus by (d) some formula $\theta\left(x,a\right)\in p$
and positive formula $\zeta\left(a\right)$ that holds in $A$ such
that $\vdash\forall x,y\,\left(\theta\left(x,y\right)\wedge\zeta\left(y\right)\right)\rightarrow\varphi\left(x,y\right)\Longleftrightarrow\vdash\forall x,y\,\theta\left(x,y\right)\rightarrow\left(\neg\zeta\left(y\right)\vee\varphi\left(x,y\right)\right)$
(without loss of generality the parameter variables in $\varphi$
and $\psi$ are the same). By (c) there is some $\psi\left(x,y\right)$
such that $T\vdash\forall x,y:\theta\vee\psi$ and $\psi\left(x,a\right)\notin p$
--- but then certainly $T\vdash\forall x,y:\left(\neg\zeta\vee\varphi\right)\vee\psi$
thus by (c) again we have $\neg\zeta\left(a\right)\vee\varphi\left(x,a\right)\in p$.
But since $A\nvDash\neg\zeta\left(a\right)$, by (a) we have $\neg\zeta\left(a\right)\notin p$
and therefore by (b) we have $\varphi\left(x,a\right)\in p$ as required. 

Now have to show that $p^{-}$ is indeed consistent with $\Delta^{\atom}\left(A\right)$.
By (b), $p^{-}$ is closed under conjuctions, thus it it if not consistent
with $\Delta_{A}^{\atom}$ then there exist $a\in A^{y}$ and positive
formulas $\psi\left(x,y\right),\theta\left(y\right)$ (actually $\theta$
is quantifier free) such that:

--- $T\vdash\forall x,y:\neg\left(\psi\wedge\theta\right)=\forall x,y:\neg\psi\vee\neg\theta$.

--- $\psi\left(x,a\right)\in p^{-}$, $A\vDash\theta\left(a\right)$.

Since $A\nvDash\neg\theta\left(a\right)$, $\neg\theta\left(a\right)\notin p$
by (a) thus by (c) $\neg\psi\left(x,a\right)\in p\Rightarrow\psi\left(x,a\right)\notin p^{-}$,
contradiction.

Now assume $p'\subsetneq p$ is closed under implications and contains
$T$, and take some $\varphi\left(x,a\right)\in p\setminus p'$. By
(c), there exists $\psi\left(x,y\right)$ such that $T\vdash\forall x,y:\varphi\vee\psi$
and $\psi\left(x,a\right)\notin p\Rightarrow\psi\left(x,a\right)\notin p'$.
Thus we get $\neg\varphi\left(x,a\right),\neg\psi\left(x,a\right)\in p'^{-}$
thus $p'^{-}$ is inconsistent with $T$.

(3) $\Rightarrow$ (1) Since $p^{-}\cup\Delta^{\atom}$ is consistent
with $T$ and positive, there are $B\vDash T$ and $c\in B$ realizing
it. By \factref{maximal-pp-in-pc} and the fact that $p^{-}\cup\Delta_{A}^{\atom}$
is positive, we may assume without loss of generality that $B$ is
$\pc$; and by the fact that $A$ is $\pc$ we may assume $A\leq B$.

Then again by the fact $B$ is $\pc$ we get that $p^{-}\subseteq\tp^{\p}\left(c/A\right)=\tp^{\hu}\left(c/A\right)^{-}\Longleftrightarrow\tp^{\hu}\left(c/A\right)\subseteq p$
(note that we use here that $p$ is closed under implication) and
by minimality we get $\tp^{\hu}\left(c/A\right)=p$ as required.
\end{proof}
\begin{defn}
We call such a $p$ a \textbf{$\cu$}\footnote{$\cu$ stands for closed universal.}\textbf{
}type.
\end{defn}

\begin{rem}
We can immediately see from 1, \remref{meaning-of-+-} and \propref{max-iff-pc}
that $p$ is a $\cu$ type iff $p^{-}$is a maximal positive type.
\end{rem}

\begin{defn}
Assume $V$ is a structure and $A\subseteq V$. We call a set $p$
of formulas in $x$ over $V$ $A$ \textbf{positively invariant }($\p$-invariant)
if for whenever $\varphi\left(x,c\right)\in p$ and $c'$ in $V$
satisfies $\tp^{\p}\left(c/A\right)=\tp^{\p}\left(c'/A\right)$ we
also have $\varphi\left(x,c'\right)\in p$.
\end{defn}

\begin{rem}
Let us consider a saturated $\pc$ model $V$, $A\subseteq V$ some
subsets, and $p\in S\left(V\right)$. Like with complete types, if
$p$ is finitely satisfiable in $A$ it is also $\p$-invariant.

Indeed assume otherwise. In particular there exist $\varphi\left(x,b\right)\in p,\varphi\left(x,b'\right)\notin p$
for some $b\equiv_{A}b'$. We get from maximality that for some $\psi\left(x,y\right)$
such that $\psi\left(x,y\right)\perp\varphi\left(x,y\right)$, $\psi\left(x,b'\right)\in p$.

Since $p$ is finitey satisfiable there exists $a\in A^{x}$ such
that $\varphi\left(a,b\right)\wedge\psi\left(a,b'\right)$ hold, and
from $b\equiv_{A}b'$ we get $\psi\left(a,b\right)$ holds, which
contradicts $\psi\perp\varphi$.

On the other hand, if $p$ was a $\cu$ type, it could be finitely
satisfiable but not $\p$-invariant, as we will see in Example \exaref{double-interval-continued}
below.
\end{rem}

\begin{thm}
\label{thm:Lpi-hom-in-orig}Let $T$ be an irreducible $\hu$ theory.
Let $V$ be a $\kappa$-saturated $\pc$ model of $T$, and let $A,B\subseteq V$
be $\pc$ models of $T$ of cardinality $<\kappa$. Let $b$ be an
enumeration of $B$ and $y$ a corresponding variable tuple.

Then there is a bijection between $\mathcal{L}_{\pi}$-homomorphisms
$h:S\left(A\right)\rightarrow S\left(B\right)$ and $\cu$ $A$ $\p$-invariant
types $p\left(y\right)$ over $V$ such that $p\cup\tp^{\p}\left(b/\emptyset\right)$
is finitely satisfiable in $A$ (an in particlar $\tp^{\p}\left(b/\emptyset\right)\subseteq p^{-}$).
\end{thm}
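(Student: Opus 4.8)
The statement asserts a bijection between $\mathcal{L}_\pi$-homomorphisms $h\colon S(A)\to S(B)$ and a certain class of $\cu$ types $p(y)$ over $V$. The plan is to produce the correspondence in both directions and check they are mutually inverse. Given an $\mathcal{L}_\pi$-homomorphism $h$, consider $q_0 = \tp^{\p}(b/B) \in S_y(B)$ (the positive type of the enumeration of $B$ over itself, which is just $\Delta_B^{\atom}$ modulo positive consequence — note $b$ enumerates $B$, so this is the ``identity'' type). Its preimage lives in $S(A)$; more precisely, I would \emph{not} take a preimage but rather push information forward: for a homomorphism $h$ the natural object to extract is obtained by looking at where $h$ sends the types realized in $B$. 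The cleaner route: use saturation of $V$ and the characterization from \propref{max-iff-pc} and \remref{pc-T+-} to realize, inside $V$, a tuple $b^*$ whose positive type over $A$ encodes $h$. Concretely, define $p(y)$ over $V$ by declaring, for $\hu$ formulas $\varphi(y,a)$ with $a\in A$, that $\varphi(y,a)\in p$ iff the corresponding constraint holds after applying $h$ — i.e.\ translate ``$\varphi(x,a)\in h(r)$ for the relevant $r\in S(B)$'' into a statement about $\cu$ types via \lemref{cu-types}(3), and then extend $\p$-invariantly over all of $V$ using the $\p$-type over $A$ of parameters (this extension is well-defined precisely because we demand $\p$-invariance, and is consistent by saturation of $V$ together with \propref{amalgamation} / the Corollary after it for simultaneous realization).

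**The reverse direction.** Given a $\cu$ $A$ $\p$-invariant type $p(y)$ over $V$ with $p\cup\tp^{\p}(b/\emptyset)$ finitely satisfiable in $A$, I would build $h\colon S(A)\to S(B)$ as follows. By \lemref{cu-types}(1), $p = \tp^{\hu}(c/V)$ for some $c$ in a $\pc$ extension of $V$; restricting, $p|_A = \tp^{\hu}(c/A)$, and $p^-$ is a maximal positive type (by the Remark after \lemref{cu-types}). The finite satisfiability of $p\cup\tp^{\p}(b/\emptyset)$ in $A$ should give, for each finite fragment, an element of $A^y$ realizing it, hence a way to map a type $r(x)\in S_x(A)$ to a type over $B$: heuristically, $h(r)$ is ``$r$ read through the partial identification of $B$-parameters with $A$-parameters supplied by $p$''. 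Formally: a parameter $b'\in B$ is named by a subtuple of $y$; $p$ tells us, for each $\hu$ formula, whether it holds of that subtuple over $A$; finite satisfiability in $A$ lets us substitute an actual $A$-tuple $a'$ with $\tp^{\p}(a'/\emptyset) \supseteq$ the relevant fragment of $\tp^{\p}(b'/\emptyset)$; then set $\varphi(x,b')\in h(r)$ iff $\varphi(x,a')\in r$ (well-definedness and maximality of $h(r)$ need checking via $\p$-invariance and the $\cu$ property). That $h$ respects the $\mathcal{D}$-relations and the $\pi_{x,x'}$ is then a routine unwinding of \defref{basic-def} of these symbols together with \remref{projections-and-D}.

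**Mutual inverseness and the main obstacle.** Once both maps are defined, checking they compose to the identity in both orders is bookkeeping: the key points are that $p$ is determined by its restrictions $p|_A$-together-with-$\p$-invariance (so no information is lost going $h\mapsto p\mapsto h$), and that $h$ is determined by its action on types realized in $B$, which is exactly what $p$ records. I expect the main obstacle to be \textbf{well-definedness in the reverse direction}: showing that the prescription ``$\varphi(x,b')\in h(r)$ iff $\varphi(x,a')\in r$'' does not depend on the choice of the finite-satisfiability witness $a'$, and that the resulting $h(r)$ is a \emph{maximal} positive type (an element of $S(B)$, not just a partial type). This is where $\p$-invariance of $p$ is essential — two witnesses $a',a''$ for overlapping fragments will have the same $\p$-type over $\emptyset$ on the relevant coordinates, and $\p$-invariance forces compatibility — while maximality should follow from the $\cu$ condition via \lemref{cu-types}(2)(c) and \factref{maximal-pp-in-pc}, by the same argument used in the proof of \propref{max-iff-pc}. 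A secondary subtlety is verifying that the type constructed in the forward direction is genuinely $\cu$: this means checking conditions (a)–(d) of \lemref{cu-types}(2), where (c) — closure under the ``$\forall x,y:\varphi\vee\psi$'' rule — will again use that $h$ preserves the $\mathcal{D}$-relations (which are exactly built to witness such disjunctions).
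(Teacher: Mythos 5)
Your forward direction is essentially the paper's: the type $p$ is defined by letting, for each parameter tuple $c\in V$, the $\hu$ formula $\varphi\left(c,y\right)$ belong to $p$ exactly when $\neg\varphi\left(x,b\right)\notin h\left(\tp^{\p}\left(c/A\right)\right)$; this depends only on $\tp^{\p}\left(c/A\right)$ (using that $h$ respects the projections), so $A$ $\p$-invariance is automatic, and the conditions of \lemref{cu-types}.2 together with finite satisfiability in $A$ are then checked directly. Your sketch of this half, though loosely worded (the relevant $r$ lies in $S\left(A\right)$, not $S\left(B\right)$), can be completed along these lines.

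The reverse direction, however, has a genuine gap. You propose to declare $\varphi\left(x,b'\right)\in h\left(r\right)$ iff $\varphi\left(x,a'\right)\in r$, where $a'\in A$ is a finite-satisfiability witness substituted for the $B$-parameter $b'$, and to argue well-definedness from the claim that two witnesses $a',a''$ have the same $\p$-type over $\emptyset$ on the relevant coordinates, so that $\p$-invariance forces compatibility. Neither half of that claim holds: witnesses of a finite fragment of $p\cup\tp^{\p}\left(b/\emptyset\right)$ need not share a positive type over $\emptyset$, and even if they did, $r$ is a type over $A$ and freely distinguishes elements of $A$ with the same type over $\emptyset$; moreover, $A$ $\p$-invariance of $p$ concerns parameter tuples in $V$ sharing a positive type \emph{over $A$}, and says nothing about witnesses inside $A$. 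The correct construction runs in the opposite direction: by $\kappa$-saturation realize $r$ by some $c\in V^{x}$ and set $h\left(r\right)=\left\{ \varphi\left(x,b\right)\mid\neg\varphi\left(c,y\right)\notin p\right\} $. Here $\p$-invariance is what makes this independent of the choice of realization $c$ (any two realizations of $r$ in $V$ have the same positive type over $A$), while finite satisfiability of $p\cup\tp^{\p}\left(b/\emptyset\right)$ in $A$ is used for something else entirely: to pull a witness $b'\in\alpha\left(B\right)$ of $\neg\mathcal{D}\left(h\left(r_{0}\right),...,h\left(r_{n-1}\right)\right)$ back to a witness $a'\in\alpha\left(A\right)$ of $\neg\mathcal{D}\left(r_{0},...,r_{n-1}\right)$, i.e.\ to prove that $h$ preserves the $\mathcal{D}$-relations. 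Your sketch conflates these two roles, and as written the map $r\mapsto h\left(r\right)$ is not well defined.
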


\begin{proof}
To see that if $p\cup\tp^{\p}\left(b/\emptyset\right)$ is finitely
satisfiable in $A$ then $\tp^{\p}\left(b/\emptyset\right)\subseteq p^{-}$,
note that if $\varphi\left(y\right)\in\tp^{\p}\left(b/\emptyset\right)$
then $p\nvDash\neg\varphi$ (otherwise $p\cup\left\{ \varphi\left(y\right)\right\} $
would be inconsistent) thus by definition $\varphi\in p^{-}$.

Assume $h$ is given. Define $p=\left\{ \varphi\left(c,y\right)\mid\neg\varphi\left(x,b\right)\notin h\left(\tp^{\p}\left(c/A\right)\right)\right\} $.
Since $h$ is an $\mathcal{L}_{\pi}$ homomorphism, this is well defined
even if we do not require that $c$ is the exact set of parameters
in $\varphi\left(c,y\right)$ --- if $c'$ is some tuple extending
$c$, then 
\begin{align*}
\neg\varphi\left(x,b\right) & \in h\left(\tp^{\p}\left(c'/A\right)\right)\Longleftrightarrow\\
\neg\varphi\left(x,b\right) & \in\pi_{x',x}\left(h\left(\tp^{\p}\left(c'/A\right)\right)\right)=h\left(\pi_{x',x}\left(\tp^{\p}\left(c'/A\right)\right)\right)=h\left(\tp^{\p}\left(c/A\right)\right).
\end{align*}

To show that $p\cup\tp^{\p}\left(b/\emptyset\right)$ is finitely
satisfiable, assume $\left\{ \varphi_{i}\left(c_{i},y\right)\right\} _{i<n}\subseteq p$
and $\alpha\left(y\right)\in\tp^{\p}\left(b/\emptyset\right)$.

Let $y'$ be the finite subtuple of variables that appear in all of
$\varphi_{i},\alpha$ and $b'$ the corresponding tuple of elements
of $b$. Denote $q_{i}=h\left(\tp^{\p}\left(c_{i}/A\right)\right)$.
Then $B\vDash\alpha\left(b'\right)$ and $\neg\varphi\left(x_{i},b'\right)\notin q_{i}$
for $i<n$.

We get 
\[
\neg\mathcal{D}_{\neg\varphi_{0}\left(x_{0},y'\right),...,\neg\varphi_{n-1}\left(x_{n-1},y'\right);\alpha\left(y'\right)}\left(q_{0},...,q_{n-1}\right),
\]
 thus 
\[
\neg\mathcal{D}_{\neg\varphi_{0}\left(x_{0},y'\right),...,\neg\varphi_{n-1}\left(x_{n-1},y'\right);\alpha\left(y'\right)}\left(\tp^{\p}\left(c_{0}/A\right),...,\tp^{\p}\left(c_{n-1}/A\right)\right),
\]
 and thus for some $a'\in A^{y'}$ we have:
\begin{itemize}
\item $A\vDash\alpha\left(a'\right)$.
\item $\neg\varphi\left(x_{i},a'\right)\notin\tp^{\p}\left(c_{i}/A\right)\Rightarrow V\vDash\varphi\left(c_{i},a'\right)$
for all $i<n$.
\end{itemize}
as required. 

We will show $p$ is a $\cu$ type using \lemref{cu-types}.2.
\begin{itemize}
\item (a) Assume we have $\theta\left(c\right)$ an $\hu$ formula with
parameters $c\in V$. Then 
\[
\theta\left(c\right)\in p\Longleftrightarrow\neg\theta\left(x\right)\notin h\left(\tp^{\p}\left(c/A\right)\right).
\]
If $V\vDash\neg\theta\left(c\right)$ then $\mathcal{D}_{\neg\theta}\left(\tp^{\p}\left(c/A\right)\right)$
and thus $\mathcal{D}_{\neg\theta}\left(h\left(\tp^{\p}\left(c/A\right)\right)\right)$
that is 
\[
\neg\theta\in h\left(\tp^{\p}\left(c/A\right)\right)\Rightarrow\theta\left(c\right)\notin p.
\]
\item (b) If $\varphi\left(c,y\right),\psi\left(c,y\right)\notin p$ we
have\footnote{Note that here we use the fact that we can take a longer $c$ if needed.}
\begin{align*}
\varphi\left(c,y\right),\psi\left(c,y\right) & \notin p\Rightarrow\neg\varphi\left(x,b\right),\neg\psi\left(x,b\right)\in h\left(\tp^{\p}\left(c/A\right)\right)\Rightarrow\\
 & \neg\varphi\left(x,b\right)\wedge\neg\psi\left(x,b\right)\in h\left(\tp^{\p}\left(c/A\right)\right)\Rightarrow\left(\varphi\vee\psi\right)\left(c,y\right)\notin p.
\end{align*}
\item (c) Let $\varphi\left(c,y\right)$ be an $\hu$ formula over $V$.
Then $\varphi\left(c,y\right)\in p$ iff $\neg\varphi\left(x,b\right)\notin h\left(\tp^{\p}\left(c/A\right)\right)$
iff (by \propref{max-iff-pc} and \factref{maximal-pp-in-pc}) for
some positive formula $\neg\psi\left(x,b\right)$ such that $\neg\varphi\perp\neg\psi$
(that is $T\vdash\forall x,y\left(\varphi\vee\psi\right)$) we have
$\neg\psi\left(x,b\right)\in h\left(\tp^{\p}\left(c/A\right)\right)\Longleftrightarrow\psi\left(c,y\right)\notin p$.
\item (d) If $\varphi\left(c,y\right),\psi\left(c,y\right)\in p$ (without
loss of generality they have the same parameters) then $\neg\varphi\left(x,b\right),\neg\psi\left(x,b\right)\notin h\left(\tp^{\p}\left(c/A\right)\right)$
which is a type of an element, thus $\neg\left(\varphi\wedge\psi\right)\left(x,b\right)\notin h\left(\tp^{\p}\left(c/A\right)\right)\Rightarrow\left(\varphi\wedge\psi\right)\left(c,y\right)\in p$
as required.
\end{itemize}
Finally we note that $p$ is $A$ $\p$-invariant by definition.

Conversely assume $p$ is given. Since $p$ is $\cu$, $p^{-}$ is
consistent with $\Delta_{V}^{\atom}$ thus we can find $d\vDash p^{-}$
in some $\pc$ $W$ extending $V$ (possibly $V$ itself) --- note
that since $\tp^{\p}\left(b/\emptyset\right)$ is maximal and contained
in $\tp^{\p}\left(d/\emptyset\right)$, they are equal. For $r\in S\left(A\right)$,
define $h\left(r\right)=\left\{ \varphi\left(x,b\right)\mid\exists c\in V^{x}\,c\vDash r,\neg\varphi\left(c,y\right)\notin p\right\} $
(note that by $\p$-invariance we may replace $\exists c$ with $\forall c$).
Since the choice of specific $c$ is unimportant, we get that $h$
respects projections.

We have to show that $h\left(r\right)\in S\left(B\right)$. For consistency
with $\Delta^{\atom}\left(B\right)$, assume $\left\{ \varphi_{i}\left(x,b\right)\right\} _{i<n}\in h\left(r\right)$
and $B\vDash\alpha\left(b\right)$ for $\alpha$ quantifier free and
positive. Choose $c\vDash r$ in $V$. Then by definition $\neg\varphi_{i}\left(c,y\right)\notin p\Rightarrow\varphi_{i}\left(c,y\right)\in p^{-}\Rightarrow W\vDash\varphi_{i}\left(c,d\right)$
for all $i$ as well as $\alpha\left(d\right)$. Thus $W\vDash\exists x\,\alpha\left(d\right)\wedge\stackrel[i<n]{}{\bigwedge}\varphi_{i}\left(x,d\right)$
and thus $V\vDash\exists x\,\alpha\left(b\right)\wedge\stackrel[i<n]{}{\bigwedge}\varphi_{i}\left(x,b\right)$
as required. Assume $\varphi\left(x,b\right)$ is positive. Then $\varphi\notin h\left(r\right)$
iff $\varphi\left(c,y\right)\in p$ iff (by 2c) for some positive
formula $\psi\left(c,y\right)$ such that $\psi\perp\varphi$ we have
$\neg\psi\left(c,y\right)\notin p\Longleftrightarrow\psi\left(c,y\right)\in h\left(r\right)$
thus $h\left(r\right)$ is maximal.

Finally we have to show $h$ is a homomorphism. Assume 
\[
S\left(B\right)\vDash\neg\mathcal{D}_{\varphi_{0}\left(x_{0},y'\right),...,\varphi_{n-1}\left(x_{n-1},y'\right);\alpha\left(y'\right)}\left(h\left(r_{0}\right),...,h\left(r_{n-1}\right)\right),
\]
 and choose by saturation $c_{i}\in V$ realizing $r_{i}$. Then for
some $b'\in\alpha\left(B\right)$ (assume without loss of generality
$y'$ is the subtuple of $y$ corresponding to $b'$) we have that
$\varphi_{i}\left(x_{i},b'\right)\notin h\left(r_{i}\right)$ for
all $i$.

Thus by definition $\neg\varphi_{i}\left(c_{i},y'\right)\in p$ for
all $i$, and therefore since $p\cup\tp^{\p}\left(b/\emptyset\right)$
is finitely satisfiable in $A$ we have that for some $a'\in\alpha\left(A\right)$,
$\neg\varphi_{i}\left(c_{i},a'\right)\Longleftrightarrow\varphi_{i}\left(x_{i},a'\right)\notin r_{i}$
holds for all $i<n$ --- so 
\[
\neg\mathcal{D}_{\varphi_{0}\left(x_{0},y'\right),...,\varphi_{n-1}\left(x_{n-1},y'\right);\alpha\left(y'\right)}\left(r_{0},...,r_{n-1}\right)
\]
 as required.

Note that these operations are indeed inverses:
\begin{itemize}
\item If we have $p$, define $h$ and define from it $p'$ we get that
\begin{align*}
\varphi\left(c,y\right) & \in p'\Longleftrightarrow\neg\varphi\left(x,b\right)\notin h\left(\tp^{\p}\left(c/A\right)\right)=\left\{ \varphi\left(x,b\right)\mid\forall c'\in V^{x}\,c'\vDash\tp^{\p}\left(c/A\right),\neg\varphi\left(c',y\right)\notin p\right\} =\\
 & \left\{ \varphi\left(x,b\right)\mid\neg\varphi\left(c,y\right)\notin p\right\} \Longleftrightarrow\varphi\left(c,y\right)\in p.
\end{align*}
\item If we have $h$, define from it $p$ and from it $h'$ we find that
for $r\in S_{x}\left(A\right)$, for $c\in V^{x}$ realizing $r$,
we have:
\begin{align*}
\varphi\left(x,b\right) & \in h'\left(r\right)=\left\{ \varphi\left(x,b\right)\mid\exists c'\in V^{x}\,c'\vDash r,\neg\varphi\left(c',y\right)\notin p\right\} \Longleftrightarrow\\
 & \neg\varphi\left(c,y\right)\notin\left\{ \varphi\left(c',y\right)\mid\neg\varphi\left(x,b\right)\notin h\left(\tp^{\p}\left(c'/A\right)\right)\right\} \Longleftrightarrow\varphi\left(x,b\right)\in h\left(\tp^{\p}\left(c/A\right)\right)=h\left(r\right).
\end{align*}
 
\end{itemize}
\end{proof}

\subsubsection{Bounded $T$}

In this section we will assume that $T$ is bounded, which will allow
us to replace the type in \thmref{Lpi-hom-in-orig} with a construct
which may be easier to understand.
\begin{prop}
\label{prop:types-are-homomorphisms}Let $U$ be the universal model
of $T$ and $B\leq U$ a $\pc$ model of $T$.

Then there is a bijection between $\mathcal{L}_{\pi}$ homomorphisms
from $S\left(U\right)$ to $S\left(B\right)$ and homomorphisms (embeddings)
from $B$ to $U$.
\end{prop}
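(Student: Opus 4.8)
The plan is to invoke \thmref{Lpi-hom-in-orig} with the saturated model and the parameter model both taken to be $U$. Since $T$ is bounded with universal model $U$, in particular $T$ is irreducible, and by \propref{univ-ec}.6 the model $U$ is positively $\kappa$--saturated for every $\kappa$; taking $\kappa=\left|U\right|^{+}$ we may set $V=A=U$ and let $B\leq U$, all of cardinality $<\kappa$. Fix an enumeration $b$ of $B$ and a corresponding variable tuple $y$. Then \thmref{Lpi-hom-in-orig} puts $\mathcal{L}_{\pi}$--homomorphisms $S\left(U\right)\to S\left(B\right)$ in bijection with the $\cu$, $U$ $\p$--invariant types $p\left(y\right)$ over $U$ for which $p\cup\tp^{\p}\left(b/\emptyset\right)$ is finitely satisfiable in $U$. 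First I would note that the $\p$--invariance requirement is vacuous here: if $\varphi\left(x,c\right)\in p$ and $c'\in U$ has $\tp^{\p}\left(c/U\right)=\tp^{\p}\left(c'/U\right)$, then since $x=c\in\tp^{\p}\left(c/U\right)$ we get $c=c'$. So what is being classified is simply the $\cu$ types $p\left(y\right)$ over $U$ with $p\cup\tp^{\p}\left(b/\emptyset\right)$ finitely satisfiable in $U$.

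Next I would describe these types concretely. Recall that $p$ is a $\cu$ type exactly when $p^{-}$ is a maximal positive type, and by \propref{bounded-S-is-U} every maximal positive type over $U$ in the variables $y$ equals $\tp^{\p}\left(d/U\right)$ for a unique $d\in U^{y}$; thus $p=\tp^{\hu}\left(d/U\right)$ is determined by a unique $d\in U^{y}$, and $p^{-}|_{\emptyset}=\tp^{\p}\left(d/\emptyset\right)$, itself a maximal positive type over $\emptyset$. Now I unwind the finite satisfiability clause. On one hand, by the computation already carried out in the proof of \thmref{Lpi-hom-in-orig}, finite satisfiability of $p\cup\tp^{\p}\left(b/\emptyset\right)$ in $U$ forces $\tp^{\p}\left(b/\emptyset\right)\subseteq p^{-}$, hence (comparing parameter--free formulas) $\tp^{\p}\left(b/\emptyset\right)\subseteq\tp^{\p}\left(d/\emptyset\right)$, so by maximality $\tp^{\p}\left(b/\emptyset\right)=\tp^{\p}\left(d/\emptyset\right)$. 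On the other hand, if $\tp^{\p}\left(d/\emptyset\right)=\tp^{\p}\left(b/\emptyset\right)$ then $d$ realizes $p=\tp^{\hu}\left(d/U\right)$ together with $\tp^{\p}\left(b/\emptyset\right)$ in $U$, so the union is even satisfiable in $U$. Hence the relevant types $p$ correspond bijectively to tuples $d\in U^{y}$ with $\tp^{\p}\left(d/\emptyset\right)=\tp^{\p}\left(b/\emptyset\right)$.

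Finally I would match these tuples with homomorphisms $B\to U$. Given such a $d$, the map $b_{i}\mapsto d_{i}$ is well defined (if $b_{i}=b_{j}$ then $x_{i}=x_{j}\in\tp^{\p}\left(b/\emptyset\right)=\tp^{\p}\left(d/\emptyset\right)$, so $d_{i}=d_{j}$) and is a homomorphism, since every atomic formula satisfied by $b$ lies in $\tp^{\p}\left(b/\emptyset\right)\subseteq\tp^{\p}\left(d/\emptyset\right)$ and is thus satisfied by $d$. Conversely, a homomorphism $g\colon B\to U$ gives $d=g\left(b\right)$ with $\tp^{\p}\left(g\left(b\right)/\emptyset\right)\supseteq\tp^{\p}\left(b/\emptyset\right)$, and since both sides are maximal positive types over $\emptyset$ they are equal; these two assignments are mutually inverse. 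Since $B$ is $\pc$, every such homomorphism is an immersion, in particular an embedding. Composing this bijection with the one from the second paragraph and with \thmref{Lpi-hom-in-orig} gives the statement. I do not anticipate a genuine difficulty; the one point needing care is the collapse of the finite--satisfiability condition to equality of the parameter--free positive types, which is exactly where boundedness enters, via \propref{bounded-S-is-U} and maximality of positive types in $\pc$ models.
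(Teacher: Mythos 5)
Your proposal is correct and takes essentially the same route as the paper: specialize \thmref{Lpi-hom-in-orig} to $V=A=U$, observe that $\p$-invariance is vacuous and that the finite-satisfiability condition collapses to $\tp^{\p}\left(b/\emptyset\right)\subseteq p^{-}$, then identify the maximal positive types over $U$ with tuples of $U$ via \propref{bounded-S-is-U} and hence with homomorphisms $B\rightarrow U$. Your unwinding of the finite-satisfiability clause through the realizing tuple $d$ is slightly more explicit than the paper's, but it is the same argument.
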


\begin{proof}
By \thmref{Lpi-hom-in-orig}, an $\mathcal{L}_{\pi}$ homomorphism
from $S\left(U\right)$ to $S\left(B\right)$ corresponds to a $U$
$\p$-invariant $\cu$ type $p\left(y\right)$ over $U$ such that
$p\cup\tp^{\p}\left(b/\emptyset\right)$ (where $b$ enumerates $B$
and $y$ is a corresponding variable tuple) is finitely satisfiable
in $U$. Since the $\cu$ type is over $U$, being finitely satisfiable
in $U$ is the same as being consistent, and being $U$ $\p$-invariant
is an empty requirement. Therefore, the conditions on $p$ are only
that $p\cup\tp^{\p}\left(b/\emptyset\right)$, which is the same as
saying that $\tp^{\p}\left(b/\emptyset\right)\subseteq p^{-}$. We
can thus rephrase the correspondence as giving for any $\mathcal{L}_{\pi}$
homomorphism a maximal positive type over $U$ extending $\tp^{\p}\left(b/\emptyset\right)$.
Since every such type is $\tp^{\p}\left(b'/U\right)$ for some $b'\in U^{y}$
(since $U$ is positively $\left|U\right|^{+}$-saturated), this in
turn corresponds to a homomorphism $h$ from $B$ to $U$ --- since
$\tp^{\p}\left(b'/U\right)$ extends $\tp^{\p}\left(b/\emptyset\right)$
iff $b\mapsto b'$ is a homomorphism, and $b'\in U^{y}$ is uniquely
determined by its type over $U$.
\end{proof}
What happens when we replace $U$ with a $\pc$ $A\leq U$? 
\begin{rem}
Assume $b'\in U^{y}$ is the tuple whose $\cu$ type corresponds to
some $\mathcal{L}_{\pi}$-homomorphism from $S\left(A\right)$ to
$S\left(B\right)$. $\tp^{\hu}\left(b'/U\right)$ is the set of basic
open neighborhoods of $b'$ inside $U^{y}$ when we consider the positive
topology on $U^{y}$ (whose sub-basic closed sets are the $\pp$ definable
sets over $U$). This means that $\tp^{\hu}\left(b'/U\right)$ is
finitely satisfiable in $A$ iff every basic open neighborhood of
$b'$ intersects $A^{y}$, that is iff $b'\in\overline{A^{y}}$. Note
that $\overline{A}^{y}$ (where we take $\overline{A}$ in the $\pp$
topology on $U$) is closed in $U^{y}$, thus $\overline{A^{y}}\subseteq\overline{A}^{y}$. 

Since automorphisms of $U$ are also homeomorphisms, we find that
if $U$ is Hausdorff in the $\pp$ topology, $\overline{A}$ is $A$
invariant and therefore $\tp^{\hu}\left(b'/U\right)$ is $\Aut\left(U/A\right)$-invariant
whenever it is finitely satisfiable in $A$ (since of $\sigma\in\Aut\left(U\right)$
then $\sigma\left(\tp^{\hu}\left(b'/U\right)\right)=\tp^{\hu}\left(\sigma^{-1}\left(b'\right)/U\right)$).

However, for a general $T$, since $\tp^{\hu}\left(b'/U\right)\cup\tp^{\p}\left(b'/\emptyset\right)$
is finitely satisfiable in $A$, we can say more. For any subtuple
$b_{0}$ of $b'$ (with $y_{0}$ the corresponding subtuple of $y$),
for any basic neighborhood $O=\psi\left(U,c\right)\subseteq U^{y}$
and for any $\varphi\left(y_{0}\right)\in\tp^{\p}\left(b'/\emptyset\right)$
we find that $O\cap\varphi\left(A\right)\neq\emptyset$. Therefore
we find that $b'\in\overline{\pi_{y_{0}}^{-1}\left(\varphi\left(A\right)\right)}$.
By reversing the logic, we find that the property that $b'\in\overline{\pi_{y_{0}}^{-1}\left(\varphi\left(A\right)\right)}$
for any subtuple $b_{0}$ of $b'$ and for any $\varphi\left(y_{0}\right)\in\tp^{\p}\left(b'/\emptyset\right)$
is equivalent to $\tp^{\hu}\left(b'/U\right)$ being finitely satisfiable
in $A$.
\end{rem}

We can name the property we found in the previous remark:
\begin{defn}
Assume $A,B$ are structures in a language $L$ and $C$ is a subset
of $B$. Assume that $B^{A}$ is equipped with a topology. 

We say that $f:A\rightarrow B$ is a \textbf{hypo-homomorphism} from
$A$ to $C$ if whenever $\varphi\left(x\right)$ is a conjuction
of atomic formulas and $a\in\varphi\left(A\right)$, we have $f\in\overline{\pi_{a}^{-1}\left(\varphi\left(C\right)\right)}\subseteq B^{A}$,
where $\pi_{a}$ is the projection on the $a$ coordinates.
\end{defn}

\begin{rem}
If we replace the requirement of being a conjunction of atomic formulas
with being $\pp$, the definition does not change.
\end{rem}

\begin{proof}
One direction is immediate --- any $f$ satisfying the definition
for $\pp$ formulas satisfies it for conjunctions of atomic formulas. 

For the other, assume that $\varphi\left(x,y\right)$ is a conjunction
of atomic formulas, define $\psi\left(x\right)=\exists y\varphi$
and assume $a\in\psi\left(A\right)$. Then for some $a'\in A^{y}$,
$A\vDash\varphi\left(a,a'\right)$. But note that 
\[
\pi_{a,a'}^{-1}\left(\varphi\left(C\right)\right)=\left\{ f\mid f\left(a\right)\frown f\left(a'\right)\in\varphi\left(C\right)\right\} \subseteq\left\{ f\mid f\left(a\right)\in\psi\left(C\right)\right\} =\pi_{a}^{-1}\left(\psi\left(C\right)\right),
\]
thus we get that $f\in\overline{\pi_{a,a'}^{-1}\left(\varphi\left(C\right)\right)}\subseteq\overline{\pi_{a}^{-1}\left(\psi\left(C\right)\right)}$.

Since the closure of a union is the union of the closures, and preprojections
respect unions, we can equivalently require that the property holds
for any positive combination of atomic formulas, or for positive formulas.
\end{proof}
\begin{rem}
The reasoning for the name is as follows: The set of homomorphisms
from $A$ to $B$ is 
\[
\bigcap_{\varphi}\bigcap_{a\in\varphi\left(A\right)}\pi_{a}^{-1}\left(\varphi\left(C\right)\right),
\]
while the set of hypo-homomorphism is $\bigcap_{\varphi}\bigcap_{a\in\varphi\left(A\right)}\overline{\pi_{a}^{-1}\left(\varphi\left(C\right)\right)}$;
that is hypo-homomorphisms are the accumulation points of the (maybe
improper) filter $\mathcal{F}=\left\langle \pi_{a}^{-1}\left(\varphi\left(C\right)\right)\right\rangle $
whose intersection is the set of homomorphisms.

If (the preprojection of) every relation symbol is closed inside $B^{A}$,
a homomorphism accumulation to $C$ is the same as a homomorphism
to $C$. 
\end{rem}

\begin{rem}
We can expand the proof of \lemref{hom-to-compact} to conclude that
a hypo--homomorphism exists whenever there is a compact topology
on $M^{A}$. If we do this \lemref{hom-to-compact} becomes a special
case.
\end{rem}

\subsection{\label{subsec:Type-Definability-Patterns}Type-Definability Patterns}
\begin{defn}
Let $T$ an irreducible theory. Then $\Core^{\tp}\left(T\right)$,
or $\mathcal{J}^{\tp}$ if $T$ is assumed known, is defined to be
$\Core\left(T^{\tp}\right)$ (see \subsecref{Positive-Types}). We
likewise define $\mathcal{L}^{\tp}$, $\mathcal{J}_{\pi}^{\tp}$ and
$\mathcal{L}_{\pi}^{\tp}$. 
\end{defn}

\begin{lem}
\label{lem:closure-of-action}Let $M$ a positively $\aleph_{0}$-saturated
$\pc$ and $\aleph_{0}$-homogeneous model of $T$ (which is by \thmref{Ttp-T-model-correspondence}
a $\pc$ model of $T^{\tp}$), and let $A\subseteq S\left(M\right)$
an $\mathcal{L}^{\tp}$ substructure. 

Let $i:\Aut\left(M\right)\rightarrow S\left(M\right)^{A}$ defined
to be $i\left(\sigma\right)\left(p\right)=\left\{ \varphi\left(x,\sigma\left(a\right)\right)\mid\varphi\left(x,a\right)\in p\right\} $.
Then $\overline{i\left(\Aut\left(M\right)\right)}=Hom_{\mathcal{L}^{\tp}}\left(A,S\left(M\right)\right)$
(when the closure it taken with respect to the product topology).
\end{lem}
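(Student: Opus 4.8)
We prove the two inclusions separately; the easy one is a soft ``orbit inside a closed set'' argument, and the hard one is a density argument that rests on the saturation and homogeneity of $M$.

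\emph{The inclusion $\overline{i\left(\Aut\left(M\right)\right)}\subseteq Hom_{\mathcal{L}^{\tp}}\left(A,S\left(M\right)\right)$.} Each $\sigma\in\Aut\left(M\right)$ induces $\sigma^{*}:S\left(M\right)\to S\left(M\right)$ by $\sigma^{*}\left(p\right)=\left\{ \varphi\left(x,\sigma\left(a\right)\right)\mid\varphi\left(x,a\right)\in p\right\} $; by \propref{max-iff-pc} and \factref{maximal-pp-in-pc} this is again a maximal positive type over $M$, and since the relations $\mathcal{D}_{\varphi_{0},\dots,\varphi_{n-1};\alpha}$ are parameter-free while $\sigma$ permutes $\alpha\left(M\right)$, $\sigma^{*}$ is an $\mathcal{L}^{\tp}$-automorphism of $S\left(M\right)$ with inverse $\left(\sigma^{-1}\right)^{*}$. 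Hence $i\left(\sigma\right)=\sigma^{*}|_{A}$ is an $\mathcal{L}^{\tp}$-homomorphism, so $i\left(\Aut\left(M\right)\right)\subseteq Hom_{\mathcal{L}^{\tp}}\left(A,S\left(M\right)\right)$. Put on each factor $S\left(M\right)$ the compact topology whose subbasic closed sets are $\left\{ p\mid\varphi\in p\right\} $ for $\varphi$ positive (compactness as in the proof of \thmref{hom-to-type-space}); there each $\mathcal{D}^{S\left(M\right)}$ is closed, again as shown there. Since $Hom_{\mathcal{L}^{\tp}}\left(A,S\left(M\right)\right)$ is the intersection, over all relations $\mathcal{D}$ and all tuples $\overline{\mathfrak{a}}\in\mathcal{D}^{A}$, of the sets $\left\{ g\mid g\left(\overline{\mathfrak{a}}\right)\in\mathcal{D}^{S\left(M\right)}\right\} $ — each a preimage of a closed set under the continuous evaluation $g\mapsto g\left(\overline{\mathfrak{a}}\right)$ — it is closed in the product topology, and the inclusion follows.

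\emph{The inclusion $Hom_{\mathcal{L}^{\tp}}\left(A,S\left(M\right)\right)\subseteq\overline{i\left(\Aut\left(M\right)\right)}$.} It is enough to show $i\left(\Aut\left(M\right)\right)$ is dense. Fix $f\in Hom_{\mathcal{L}^{\tp}}\left(A,S\left(M\right)\right)$ and a basic open neighbourhood of $f$; unwinding the product topology it has the form $V=\left\{ g\mid\psi_{l}\notin g\left(p_{l}\right),\ l<m\right\} $ for some $p_{0},\dots,p_{m-1}\in A$ and positive $\psi_{l}=\psi_{l}\left(\overline{x}_{l},c_{l}\right)$ over $M$ with $\psi_{l}\notin f\left(p_{l}\right)$. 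Since $\psi_{l}\left(\overline{x}_{l},c_{l}\right)\in\sigma^{*}\left(p_{l}\right)$ iff $\psi_{l}\left(\overline{x}_{l},\sigma^{-1}\left(c_{l}\right)\right)\in p_{l}$, we have $i\left(\sigma\right)\in V$ iff $\psi_{l}\left(\overline{x}_{l},\sigma^{-1}\left(c_{l}\right)\right)\notin p_{l}$ for all $l<m$. Setting $c=c_{0}\concat\dots\concat c_{m-1}$, by $\aleph_{0}$-homogeneity of $M$ it suffices to find a tuple $c'$ in $M$ with $\tp^{\p}\left(c'/\emptyset\right)=\tp^{\p}\left(c/\emptyset\right)$ and $\psi_{l}\left(\overline{x}_{l},c'_{l}\right)\notin p_{l}$ for all $l$ (where $c'_{l}$ is the part of $c'$ matching $c_{l}$), and then take $\sigma\in\Aut\left(M\right)$ with $\sigma\left(c'\right)=c$.

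\emph{Finding $c'$, and the main obstacle.} For a positive $\alpha\left(\overline{y}\right)$ over $\emptyset$ write $\widehat{\psi}_{l}\left(\overline{x}_{l},\overline{y}\right)$ for $\psi_{l}$ with its parameter slot placed in the coordinates of $\overline{y}$ matching $c_{l}$ in $c$. For any $\alpha\in\tp^{\p}\left(c/\emptyset\right)$ the relation $\mathcal{D}_{\widehat{\psi}_{0},\dots,\widehat{\psi}_{m-1};\alpha}$ fails at $\left(f\left(p_{0}\right),\dots,f\left(p_{m-1}\right)\right)$, since its defining condition tested at the parameter $c\in\alpha\left(M\right)$ would require $\psi_{l}\in f\left(p_{l}\right)$ for some $l$; as $f$ preserves relations and $A$ is an $\mathcal{L}^{\tp}$-substructure of $S\left(M\right)$, this relation also fails at $\left(p_{0},\dots,p_{m-1}\right)$ in $S\left(M\right)$, i.e.\ there is $c^{\alpha}\in\alpha\left(M\right)$ with $\psi_{l}\left(\overline{x}_{l},c^{\alpha}_{l}\right)\notin p_{l}$ for all $l$. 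Thus $\tp^{\p}\left(c/\emptyset\right)$ together with the conditions ``$\psi_{l}\left(\overline{x}_{l},\overline{y}_{l}\right)\notin p_{l}$'', $l<m$, is finitely satisfiable in $M$, and the desired $c'$ is a realization of it in $M$. This last realization is the crux: the conditions ``$\psi_{l}\notin p_{l}$'' are a priori not positive formulas over a small parameter set, so one must use — beyond mere finite satisfiability — that $M$ is positively $\aleph_{0}$-saturated \emph{and} $\aleph_{0}$-homogeneous (so that the orbit of $c$ under $\Aut\left(M\right)$ equals its full positive-type class), together with the description of the $\pc$ models of $T^{\tp}$ provided by \thmref{Ttp-T-model-correspondence}, in order to arrange simultaneously that all the conditions hold and that $c'$ realizes all of $\tp^{\p}\left(c/\emptyset\right)$, not merely finitely much of it. Granting this, $i\left(\sigma\right)\in V$ for the $\sigma$ chosen above, so $f\in\overline{i\left(\Aut\left(M\right)\right)}$, completing the proof.
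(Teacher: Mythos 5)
Your easy inclusion (closedness of $Hom_{\mathcal{L}^{\tp}}\left(A,S\left(M\right)\right)$ and the fact that each $i\left(\sigma\right)$ is the restriction of an $\mathcal{L}^{\tp}$-automorphism) matches the paper and is fine. The density direction, however, has a genuine gap at exactly the point you flag as ``the crux'', and the appeal to saturation and homogeneity does not close it. Running $\alpha$ over single formulas of $\tp^{\p}\left(c/\emptyset\right)$ only gives you, for each such $\alpha$, some $c^{\alpha}\in\alpha\left(M\right)$ with $\psi_{l}\left(\overline{x}_{l},c^{\alpha}_{l}\right)\notin p_{l}$; to pass to a single $c'$ realizing the \emph{whole} type you would need to realize in $M$ a collection of conditions of which the $\psi_{l}\notin p_{l}$ part is not a positive type over a small parameter set (it refers to the types $p_{l}$ over all of $M$, and the corresponding subset of $M^{\overline{y}}$ need not be closed in the positive topology). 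Positive $\aleph_{0}$-saturation therefore does not apply, $M^{\overline{y}}$ is not compact, and no compactness argument rescues the step. ``Granting this'' is granting the theorem.

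The fix — and this is the paper's one essential move, which is why the lemma lives in $\mathcal{L}^{\tp}$ rather than $\mathcal{L}$ — is to use a \emph{single} atomic relation whose parameter formula is the predicate $P_{q}$ for the entire partial positive type $q=\tp^{\p}\left(c/\emptyset\right)$: the relation $\mathcal{R}_{\psi_{0},\dots,\psi_{m-1};q\left(\overline{y}\right)}$ fails at $\left(f\left(p_{0}\right),\dots,f\left(p_{m-1}\right)\right)$ (witnessed by $c\vDash q$), hence fails at $\left(p_{0},\dots,p_{m-1}\right)$ since $f$ is a homomorphism and $A$ is a substructure, and the failure of that one atomic relation already asserts the existence of a single $c'\vDash q$ with $\psi_{l}\left(\overline{x}_{l},c'_{l}\right)\notin p_{l}$ for all $l$ simultaneously. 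No saturation is needed at this step (saturation enters only via \thmref{Ttp-T-model-correspondence}, to know $M^{\tp}$ is a $\pc$ model of $T^{\tp}$ so that $q\left(M\right)=P_{q}\left(M^{\tp}\right)$ behaves correctly); $\aleph_{0}$-homogeneity then supplies $\sigma$ with $\sigma\left(c'\right)=c$. With that replacement your argument becomes the paper's proof.
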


\begin{proof}
Note first that by \corref{Ttp-T-type-corresponence} the restriction
from $S\left(M\right)_{L^{\tp}}$ to $\dot{S\left(M\right)_{L}}$
is a homeomorphism, so we can refer to $S\left(M\right)$ without
specifying the language with no ambiguity. By \corref{Mtp-same-Aut},
$\Aut\left(M\right)=\Aut\left(M^{\tp}\right)$

$f:A\rightarrow S\left(M\right)$ is a homeomorphism iff for all positive
$\Psi_{0}\left(x_{0},y\right),\dots,\Psi_{k-1}\left(x_{k-1},y\right),\Phi\left(y\right)$
in $L^{\tp}$ and $p_{0},\dots,p_{k-1}\in\mathcal{R}_{\Psi_{0},\dots,\Psi_{k-1};\Phi}^{A}$
we have 
\begin{align*}
f\left(p_{0}\right),\dots,f\left(p_{k-1}\right) & \in\mathcal{R}_{\Psi_{0},\dots,\Psi_{k-1};\Phi}^{S\left(M\right)}\Longleftrightarrow\\
\forall a\in\Phi\left(M\right): & \bigvee_{i<k}\Psi_{i}\left(x_{i},a\right)\in f\left(p_{i}\right)
\end{align*}

which is a closed condition; thus $Hom_{\mathcal{L}^{\tp}}\left(A,S\left(M\right)\right)$
is closed.

Since $i\left(\sigma\right)$ is the restriction of an $\mathcal{L}^{\tp}$
automorphism to $A$, $i\left(\Aut\left(M\right)\right)\subseteq Hom_{\mathcal{L}^{\tp}}\left(A,S\left(M\right)\right)$. 

Take some basic open set $U$ in $S\left(M\right)^{A}$ intersecting
$Hom_{\mathcal{L}^{\tp}}\left(A,S\left(M\right)\right)$, which is
of the form $\left\{ f\mid\forall i<k:\varphi_{i}\left(x_{i},a\right)\notin f\left(p_{i}\right)\right\} $
for some fixed positive formulas $\varphi_{i}\left(x_{i},a\right)$
and $p_{i}\in A$ (we may assume that the parameters in all formulas
are identical by ignoring the irrelevant parameters). Let $f\in U\cap Hom_{\mathcal{L}^{\tp}}\left(A,S\left(M\right)\right)$.
Let $q=\tp^{\p}\left(a/\emptyset\right)$. Then we know that $\mathcal{R}_{\varphi_{0}\left(x_{0},y\right),\dots,\varphi_{k-1}\left(x_{k-1},y\right);q\left(y\right)}\left(f\left(p_{0}\right),...,f\left(p_{k-1}\right)\right)$
does not hold by definition, thus since $f$ is a homomorphism we
also get $\mathcal{R}_{\varphi_{0}\left(x_{0},y\right),\dots,\varphi_{k-1}\left(x_{k-1},y\right);q\left(y\right)}\left(p_{0},...,p_{k-1}\right)$
does not hold; that is there exists $a'\vDash q$ such that for all
$i<k$ we have $\varphi_{i}\left(x_{i},a'\right)\notin p_{i}$. Let
by homogeneity $\sigma\in\Aut\left(M\right)$ sending $a'$ to $a$.
Then for all $i<k$ we have $\varphi_{i}\left(x_{i},\sigma\left(a\right)\right)=\varphi_{i}\left(x_{i},\sigma\left(a'\right)\right)\notin i\left(\sigma\right)\left(p_{i}\right)$
that is $i\left(\sigma\right)\in U$.

We conclude that $\overline{i\left(\Aut\left(M\right)\right)}\subseteq Hom_{\mathcal{L}^{\tp}}\left(A,S\left(M\right)\right)$
and that $\overline{i\left(\Aut\left(M\right)\right)}^{c}\subseteq Hom_{\mathcal{L}^{\tp}}\left(A,S\left(M\right)\right)^{c}$
(since every open set that does not intersect $i\left(\Aut\left(M\right)\right)$
does not intersect $Hom_{\mathcal{L}^{\tp}}\left(A,S\left(M\right)\right)^{c}$),
as required.
\end{proof}
\begin{thm}
Let $M$ a positively $\aleph_{0}$-saturated and $\aleph_{0}$-homogeneous
$\pc$ model of $T$. Then $\Aut\left(\mathcal{J}^{\tp}\right)$ is
the Ellis group (see \cite[Section 1.1]{topdyntypes}) of the action
of $\Aut\left(M\right)$ on $S\left(M\right)$.
\end{thm}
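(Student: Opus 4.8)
The plan is to identify the Ellis group of the flow $\Aut(M)\curvearrowright S(M)$ by working inside $S(M)$ viewed as an $\mathcal{L}^{\tp}$-structure, using \lemref{closure-of-action} as the main bridge between dynamics and the positive-logic machinery. Recall that the Ellis semigroup of the flow is the closure $\overline{i(\Aut(M))}$ inside $S(M)^{S(M)}$ (product topology), with composition as the semigroup operation; and the Ellis group is $u\circ E$ for a minimal left ideal $E$ and an idempotent $u\in E$. So first I would take $A=\mathcal{J}^{\tp}\leq S(M)$, an embedded copy of the core (which exists and is $\pc$ by \thmref{hom-to-type-space} applied to $T^{\tp}$, since $\mathcal{J}^{\tp}$ is $\pc$). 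By \lemref{closure-of-action}, $\overline{i(\Aut(M))}$ restricted to $\mathcal{J}^{\tp}$ equals $\Hom_{\mathcal{L}^{\tp}}(\mathcal{J}^{\tp},S(M))$. The key structural input is that $\mathcal{J}^{\tp}$ is the universal $\pc$ model of $\mathcal{T}^{\tp}$, so by \propref{univ-ec}(4) every such homomorphism, if it lands back in (an isomorphic copy of) $\mathcal{J}^{\tp}$, is an isomorphism, and every homomorphism $\mathcal{J}^{\tp}\to S(M)$ followed by a retract $S(M)\to\mathcal{J}^{\tp}$ lands in $\End(\mathcal{J}^{\tp})=\Aut(\mathcal{J}^{\tp})$.

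Next I would set up the semigroup structure. The restriction map $\rho\colon\overline{i(\Aut(M))}\to \Hom_{\mathcal{L}^{\tp}}(\mathcal{J}^{\tp},S(M))$ is a continuous surjection (surjectivity is exactly \lemref{closure-of-action}); the point is that $\rho$ is \emph{injective} on a suitable minimal left ideal, and more: the whole Ellis semigroup structure is governed by what happens on $\mathcal{J}^{\tp}$. Concretely, for $\eta,\theta\in\overline{i(\Aut(M))}$ the composition $\eta\circ\theta$ restricted to $\mathcal{J}^{\tp}$ is $\eta|_{?}\circ(\theta|_{\mathcal{J}^{\tp}})$ — but $\theta|_{\mathcal{J}^{\tp}}$ maps $\mathcal{J}^{\tp}$ to \emph{some} substructure of $S(M)$, and to compose I need $\eta$ evaluated on that image. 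This is where I would invoke that $\eta$, being a limit of $\Aut(M)$-translates, is an $\mathcal{L}^{\tp}$-endomorphism of all of $S(M)$, hence restricts to a homomorphism on any substructure; in particular, given a retract $r\colon S(M)\to\mathcal{J}^{\tp}$ one shows $r\circ\theta|_{\mathcal{J}^{\tp}}\in\Aut(\mathcal{J}^{\tp})$ and that composition in the Ellis semigroup, after projecting via $r$, corresponds to composition in $\Aut(\mathcal{J}^{\tp})$. The minimal left ideals of $\overline{i(\Aut(M))}$ then correspond, under $\rho$ composed with $r$, to $\Aut(\mathcal{J}^{\tp})$ acting on itself by left translation, which is a group — so it is already minimal, the idempotent is the identity, and $u\circ E\cong\Aut(\mathcal{J}^{\tp})$.

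The main obstacle, and the step I would spend the most care on, is showing that the passage to $\mathcal{J}^{\tp}$ is compatible with the semigroup operation in the precise sense needed: namely that if $E$ is a minimal left ideal of the Ellis semigroup then $\rho|_E$ is a bijection onto $\Hom_{\mathcal{L}^{\tp}}(\mathcal{J}^{\tp},S(M))$ modulo the retract, and that this bijection intertwines Ellis multiplication with composition in $\Aut(\mathcal{J}^{\tp})$. Injectivity requires knowing that an element of the Ellis semigroup is determined by its restriction to $\mathcal{J}^{\tp}$ once we are inside a minimal ideal — this should follow because any $\eta$ in a minimal left ideal satisfies $\eta\circ u=\eta$ for the idempotent $u$, and $u$ (being idempotent, a homomorphism, and a limit of automorphisms) is forced to be a retraction-type map onto an embedded copy of $\mathcal{J}^{\tp}$; then $\eta$ factors through $u$, hence through $\mathcal{J}^{\tp}$. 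I would prove this by analyzing idempotents: an idempotent $u\in\overline{i(\Aut(M))}$ is an $\mathcal{L}^{\tp}$-endomorphism of $S(M)$ with $u^2=u$, so $u(S(M))$ is a $\pc$ substructure (being a retract of $S(M)$, which is $\pc$ in $\mathcal{T}^{\tp}$ when $M$ is sufficiently saturated) isomorphic to $\mathcal{J}^{\tp}$; and minimality of the ideal pins down $u(S(M))$ up to the $\Aut(M)$-action. Once idempotents are understood the rest is bookkeeping: the Ellis group $u\circ\overline{i(\Aut(M))}\circ u$ is exactly the automorphisms of $u(S(M))\cong\mathcal{J}^{\tp}$ realized by limits of $\Aut(M)$, which by homogeneity (\propref{univ-ec}(5), applied to $\mathcal{J}^{\tp}$) and \lemref{closure-of-action} is all of $\Aut(\mathcal{J}^{\tp})$.
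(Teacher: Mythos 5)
Your high-level strategy (use \lemref{closure-of-action}, a retract onto an embedded copy $J$ of $\mathcal{J}^{\tp}$, and $\End\left(J\right)=\Aut\left(J\right)$) is the right one, but there are two genuine gaps. First, you apply \lemref{closure-of-action} only with $A=\mathcal{J}^{\tp}$, which tells you about restrictions of Ellis semigroup elements to $\mathcal{J}^{\tp}$; it never tells you that the retract $r:S\left(M\right)\rightarrow J$, or the maps $\sigma\circ r$ for $\sigma\in\Aut\left(J\right)$, actually \emph{belong} to the Ellis semigroup. That membership is essential both for exhibiting a concrete minimal left ideal and for the surjectivity of your map onto $\Aut\left(\mathcal{J}^{\tp}\right)$. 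The paper's move is to apply the same lemma with $A=S\left(M\right)$ itself, identifying the entire Ellis semigroup with $Hom_{\mathcal{L}^{\tp}}\left(S\left(M\right),S\left(M\right)\right)$; then $r$ is an idempotent of the semigroup for free, $ES\cdot r$ is checked to be a minimal left ideal by a short computation using $\End\left(J\right)=\Aut\left(J\right)$, and $r\circ ES\circ r\cong\Aut\left(J\right)$ is direct bookkeeping. Without this step your argument has no way to produce the needed elements of the semigroup.

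Second, your fallback route through arbitrary idempotents rests on a false premise: you assert that $S\left(M\right)$ is $\pc$ in $\mathcal{T}^{\tp}$ for sufficiently saturated $M$, and hence that the image of any idempotent is a $\pc$ substructure isomorphic to $\mathcal{J}^{\tp}$. The paper shows (\propref{double-interval-not-pc}) that $S\left(M\right)$ need not be $\pc$ even when $M$ is the universal model, and the identity map is an idempotent of the Ellis semigroup whose image is all of $S\left(M\right)$ -- so idempotents in general do not have image $\cong\mathcal{J}^{\tp}$. What saves the argument is that one does not need to classify all idempotents: standard Ellis theory says all minimal left ideals and all idempotents in them yield isomorphic groups, so it suffices to compute with the single, explicitly constructed idempotent $r$. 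I would restructure your proof around that one idempotent rather than attempting the general analysis.
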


\begin{proof}
Let $e:\mathcal{J}^{\tp}\rightarrow S\left(M\right)$ an $\mathcal{L}^{\tp}$
immersion, and let $J=e\left(\mathcal{J}^{\tp}\right)$. Let $r:S\left(M\right)\rightarrow J$
an $\mathcal{L}^{\tp}$ homomorphism such that $r|_{J}=\id_{J}$ (which
exists by \factref{pc-universal}.2), where we consider $r$ as an
element of $Hom_{\mathcal{L}^{\tp}}\left(S\left(M\right),S\left(M\right)\right)$.

By \lemref{closure-of-action}, the Ellis semigroup $ES$ of the action
is $Hom_{\mathcal{L}^{\tp}}\left(S\left(M\right),S\left(M\right)\right)$.
Note that $r\in ES$ is an idempotent, since $r\circ r=r|_{J}\circ r=\id_{J}\circ r=r$.
$ESr$ is clearly a left ideal in $ES$, let us show that it is minimal.
Assume $f\circ r\in ESr$. Then $r\circ f\circ r\circ\id_{J}\in End\left(J\right)=\Aut\left(J\right)$,
so let $\sigma=\left(r\circ f\circ r\circ\id_{J}\right)^{-1}$ (in
$\Aut\left(J\right)$) and we find that 
\begin{align*}
\sigma\circ r\circ f\circ r=\sigma\circ r\circ f\circ r\circ r & =\sigma\circ r\circ f\circ r\circ\id_{J}\circ r=r
\end{align*}

thus $r\in ES\left(f\circ r\right)$ and thus $ESr\subseteq ES\left(f\circ r\right)\subseteq ESr$.

Then since the Ellis group $E$ is equal (up to isomorphism) to $\left(rESr,\circ\right)$,
so we need only show that $\left(rESr,\circ\right)$ is isomorphic
to $\Aut\left(J\right)\cong\Aut\left(\mathcal{J}^{\tp}\right)$. For
any $r\circ f\circ r\in rESr$, $r\circ f\circ r\circ\id_{J}\in\Aut\left(J\right)$,
so let $\psi:rESr\rightarrow\Aut\left(J\right)$ be defined as $\psi\left(r\circ f\circ r\right)=r\circ f\circ r\circ\id_{J}=r\circ f\circ\id_{J}$.
This is a homomorphism, since for $f,g\in ES$ we have 
\[
r\circ f\circ r\circ r\circ g\circ r=r\circ f\circ r\circ\id_{J}\circ r\circ g\circ r
\]
(since $\id_{J}\circ r=r$). It is surjective, since if $\sigma\in\Aut\left(J\right)$
is arbitrary then $\sigma\circ r\in ES$ and we find that $r\circ\sigma\circ r\circ r\circ\id_{J}=\id_{J}\circ\sigma\circ\id_{J}=\sigma$
(again, since $r|_{J}=\id_{J}$). Finally, if $r\circ f\circ\id_{J}=\id_{J}$
then for any $p\in S\left(M\right)$ we have that $r\left(p\right)\in J$
thus $r\left(f\left(r\left(p\right)\right)\right)=r\left(f\left(\id_{J}\left(r\left(p\right)\right)\right)\right)=\id_{J}\left(r\left(p\right)\right)=r\left(p\right)$,
that is $r\circ f\circ r=r$ is the identity in $rESr$ and thus $\psi$
is injective, as required.
\end{proof}
\begin{cor}
\label{cor:Ellis-group-coincide}If $M,N$ are positively $\aleph_{0}$-saturated
and $\aleph_{0}$-homogeneous $\pc$ models of the same $\hu$ theory
$T$, then  the Ellis groups of the actions $\Aut\left(M\right)\curvearrowright S\left(M\right)$
and $\Aut\left(N\right)\curvearrowright S\left(N\right)$ are isomorphic. 
\end{cor}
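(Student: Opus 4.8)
The plan is to read this off the preceding theorem, whose content is precisely that for any positively $\aleph_{0}$-saturated and $\aleph_{0}$-homogeneous $\pc$ model $M$ of $T$, the Ellis group of $\Aut\left(M\right)\curvearrowright S\left(M\right)$ is isomorphic to $\Aut\left(\mathcal{J}^{\tp}\right)$. The only point that deserves a word is that $\mathcal{J}^{\tp}$ is a single object up to isomorphism, depending on $T$ alone and not on the particular model used to construct it.

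First I would recall that, by definition, $\mathcal{J}^{\tp}=\Core\left(T^{\tp}\right)$ is the universal $\pc$ model of the $\hu$ theory $\mathcal{T}$ attached to $T^{\tp}$ in the language $\mathcal{L}^{\tp}$ (see \defref{max-core}). By \corref{shared-theory} applied to $T^{\tp}$, this $\mathcal{T}$ equals $\Th^{\hu}\left(S\left(M'\right)\right)$ for an \emph{arbitrary} $\pc$ model $M'$ of $T^{\tp}$, hence is intrinsic to $T$; by \corref{bounded-conditions} it is bounded, and by \propref{univ-ec} its universal $\pc$ model is unique up to isomorphism. Consequently $\Aut\left(\mathcal{J}^{\tp}\right)$ is a well-defined group canonically associated to $T$.

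Now observe that both $M$ and $N$ satisfy the hypotheses of the preceding theorem: each is a positively $\aleph_{0}$-saturated and $\aleph_{0}$-homogeneous $\pc$ model of $T$, hence (by \thmref{Ttp-T-model-correspondence}) also a $\pc$ model of $T^{\tp}$, so that $\mathcal{J}^{\tp}$ embeds as an $\mathcal{L}^{\tp}$-immersion into $S\left(M\right)$ and into $S\left(N\right)$ as used in that proof. Applying the theorem once to $M$ and once to $N$, the Ellis group of $\Aut\left(M\right)\curvearrowright S\left(M\right)$ is isomorphic to $\Aut\left(\mathcal{J}^{\tp}\right)$ and the Ellis group of $\Aut\left(N\right)\curvearrowright S\left(N\right)$ is isomorphic to the same group; composing the two isomorphisms gives the claim.

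There is no genuine obstacle here: all the substance lives in \lemref{closure-of-action} and in the preceding theorem. The only thing to be slightly careful about is matching hypotheses — confirming that ``positively $\aleph_{0}$-saturated and $\aleph_{0}$-homogeneous $\pc$ model of $T$'' is exactly the hypothesis under which that theorem produces $\Aut\left(\mathcal{J}^{\tp}\right)$ as the Ellis group, and that the identification of $S\left(M\right)$ in the languages $\mathcal{L}$ and $\mathcal{L}^{\tp}$ used there does not depend on $M$ — which we already have from the remarks preceding \lemref{closure-of-action}.
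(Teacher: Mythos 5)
Your proposal is correct and is exactly the argument the paper intends: the corollary is stated as an immediate consequence of the preceding theorem (the paper gives no separate proof), obtained by applying that theorem to $M$ and to $N$ and using that $\mathcal{J}^{\tp}$ depends only on $T$ up to isomorphism. Your added remarks on why $\Aut\left(\mathcal{J}^{\tp}\right)$ is canonically attached to $T$ are the right points to check and are handled correctly.
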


\subsection{\label{subsec:Examples}Examples}

Let us compute $\mathcal{J}$ in some specific cases. We will discuss
two specific examples: the first, the doubled interval, will demonstrate
the necessity of some of the assumptions made in various claims in
this section. The other, Hilbert spaces, is a more ``real world''\footnote{though not particularly difficult.}
example of computing the core, and is an example of the ways in which
the core can reflect properties of the original theory.

\subsubsection{Doubled Interval}
\begin{example}
\label{exa:double-interval-continued}Consider the theory in \exaref{doubled-interval}.
Consider $Q=\left(\mathbb{Q}\cap\left[0,1\right]\right)\times2$ which
as we remarked is a $\pc$ submodel of $M$.
\end{example}

\begin{prop}
\label{prop:double-interval-not-pc}$S\left(M\right)$ is not $\pc$
(in $\mathcal{L}$).
\end{prop}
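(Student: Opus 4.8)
$S(M)$ is not $\pc$ (in $\mathcal{L}$).

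The plan is to exhibit an endomorphism of $S(M)$ which is not an automorphism, which by \propref{type-space-is-p.c.} shows that $S(M)$ is not $\pc$. More precisely, I want to build an $\mathcal{L}$-homomorphism $h:S(M)\to S(M)$ which is not injective on $S_1(M)$, so that $h$ cannot be surjective onto $S_1(M)$ by a cardinality count (or simply because it identifies two distinct types while $|S_1(M)|$ is unchanged). Recall that $M$ has universe $[0,1]\times 2$, with $I_{a,b}^M = [a,b]\times 2$ and $S^M$ linking the two copies of each point. The universal model of $T$ is $M$ itself (\propref{double-interval-universal}), so by \propref{bounded-S-is-U} the sort $S_1(M)$ is in bijection with $M$ via $\iota$; its elements are the types $\tp^{\p}((r,i)/M)$, and these are separated by the atomic $\mathcal{D}$-relations exactly as computed in \propref{universal-model-Lpi}.

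The idea for $h$ is to ``collapse the doubling'': informally, send the type of $(r,0)$ and the type of $(r,1)$ both to (something like) a single type that remembers only the $I_{a,b}$-membership of $r$ but forgets the $S$-structure. Concretely, I would take a $\pc$ model where the two sheets have been glued — for instance use the retract structure, or directly a homomorphism $M\to M$ composed with restriction maps — but since $\End(M)=\Aut(M)$ this cannot be induced by an $L$-endomorphism; instead $h$ must genuinely use that $\mathcal{L}$-homomorphisms need not respect the coordinate-by-coordinate structure (the phenomenon flagged in the remark after \propref{universal-model-Lpi}). So the construction should proceed at the level of type spaces: I would define, for each $p\in S_x(M)$, a candidate $h(p)$ built from $p$ by a uniform syntactic recipe that drops the information carried by the binary relation $S$, and then verify that $h$ preserves every $\mathcal{D}_{\varphi_0,\dots,\varphi_{n-1};\alpha}$. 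The cleanest route is to realize $h$ as induced by a hypo-homomorphism or by a homomorphism between suitable $\pc$ models: find a $\pc$ model $M'$ of $T$ together with homomorphisms witnessing that some $p\ne q$ in $S_1(M)$ become indistinguishable — e.g. take $p=\tp^{\p}((r,0)/M)$, $q=\tp^{\p}((r,1)/M)$ for an irrational $r$, which agree on all $I_{a,b}$ but are distinguished only via $S$-relations to elements of $M$, and observe that any map forgetting those $S$-links is still a legal $\mathcal{L}$-homomorphism on the nose.

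The verification breaks into: (i) check $h(p)\in S(M)$, i.e. $h(p)$ is a maximal positive type consistent with $\Delta_M^{\atom}\cup T$ — this is where I pick the image to be the positive type of an actual element in a $\pc$ continuation (using \propref{max-iff-pc} and amalgamation, \propref{amalgamation}); (ii) check $h$ preserves each atomic $\mathcal{D}$-relation, which reduces to a direct unwinding of the definition using that the formulas $\varphi_i$ only involve the $I_{a,b}$ after we have discarded $S$; (iii) exhibit $p\ne q$ with $h(p)=h(q)$. The main obstacle is step (ii): one must be careful that $\mathcal{D}$-relations with parameters ranging over all of $\alpha(M)$ — including parameters that in the original space ``see'' the $S$-structure — are still respected by the collapsing map; the key point to establish is that whenever $\bigvee_i\varphi_i(x_i,c)\in p_i$ fails after collapsing, it already failed before, which should follow because collapsing only enlarges each positive type (or because the target is the type of a genuine element in a model continuing $M$, into which the relevant witnesses transfer). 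Once $h$ is shown to be a non-injective (hence non-surjective, hence non-automorphism) $\mathcal{L}$-endomorphism, \propref{type-space-is-p.c.} (the equivalence of (1) and (3), or (1) and (4)) gives that $S(M)$ is not $\pc$.
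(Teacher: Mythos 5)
Your high-level strategy --- produce a non-injective $\mathcal{L}$-endomorphism of $S\left(M\right)$ and invoke \propref{type-space-is-p.c.} --- is exactly the paper's, and your choice of witnesses $p=\tp^{\p}\left(\left(\frac{1}{\pi},0\right)/M\right)$, $q=\tp^{\p}\left(\left(\frac{1}{\pi},1\right)/M\right)$ is also the right one. But the endomorphism itself is never actually constructed, and the route you lean on most heavily (a ``uniform syntactic recipe that drops the information carried by $S$'') cannot work. Since $M$ is the universal model of $T$ (\propref{double-interval-universal}), every $\pc$ continuation of $M$ is an isomorphism (\propref{univ-ec}.4), so by \propref{bounded-S-is-U} every element of $S_{1}\left(M\right)$ is $\iota\left(\left(r,i\right)\right)$ and in particular contains the formula $x=\left(r,i\right)$. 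There is no type in $S_{1}\left(M\right)$ that ``forgets the $S$-structure'': your step (i) --- showing the collapsed object is a maximal type over $M$ --- fails, because the only available targets are the realized types, each of which carries full $S$-information. Any non-injective endomorphism must therefore send two distinct realized types to a single realized type, and this cannot be produced by deleting formulas from $p$.

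The correct construction, which you gesture at only in passing (``find a $\pc$ model $M'$\dots''), is indirect: take the $\pc$ submodel $Q=\left(\mathbb{Q}\cap\left[0,1\right]\right)\times2$, note that the restriction $r_{Q}:S\left(M\right)\rightarrow S\left(Q\right)$ of \propref{restriction} satisfies $r_{Q}\left(p\right)=r_{Q}\left(q\right)$ (the transposition of the two sheets over $\frac{1}{\pi}$ is an automorphism of $M$ over $Q$), and then use \corref{shared-theory} together with \thmref{hom-to-type-space} to obtain an $\mathcal{L}$-homomorphism $h:S\left(Q\right)\rightarrow S\left(M\right)$ going back. The composite $h\circ r_{Q}$ is the desired non-injective endomorphism, and no explicit description of where it sends $p$ is ever needed --- universality does that work for you. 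Without this detour through $S\left(Q\right)$ and the common-theory/universality machinery, your verification steps (i)--(iii) have no concrete map to verify, so as written the argument has a genuine gap.
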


\begin{proof}
Let $p=\tp^{\p}\left(\left(\frac{1}{\pi},0\right)/M\right),q=\tp^{\p}\left(\left(\frac{1}{\pi},1\right)/M\right)$.
By \corref{shared-theory}, $\Th^{\hu}\left(S\left(M\right)\right)_{\mathcal{L}}=\Th^{\hu}\left(S\left(Q\right)\right)_{\mathcal{L}}$
and by \thmref{hom-to-type-space} $S\left(M\right)$ is universal
for $\Th^{\hu}\left(S\left(M\right)\right)_{\mathcal{L}}$, thus there
is a homomorphism $h:S\left(Q\right)\rightarrow S\left(M\right)$.
On the other hand the restriction $r_{Q}:S\left(M\right)\rightarrow S\left(Q\right)$
(see \propref{restriction}) is also a homomorphism. Further, $r_{Q}$
is not injective, since $r_{Q}\left(p\right)=\tp^{\p}\left(\left(\frac{1}{\pi},0\right)/Q\right)=\tp^{\p}\left(\left(\frac{1}{\pi},1\right)/Q\right)=r_{Q}\left(q\right)$.
Thus $h\circ r_{Q}:S\left(M\right)\rightarrow S\left(M\right)$ is
a non-injective homomorphism. But by \propref{type-space-is-p.c.},
if $S\left(M\right)$ was $\pc$ every endomorphism would have been
an automorphism.
\end{proof}
\begin{rem}
Conversely, we get that there is no $\mathcal{L}_{\pi}$ homomorphism
from $S\left(Q\right)$ to $S\left(M\right)$, since every $\mathcal{L}_{\pi}$
endomorphism of $S\left(M\right)$ is injective by \propref{universal-model-Lpi},
recalling \propref{double-interval-universal}.. 
\end{rem}

\begin{prop}
\label{prop:doubleo-interval-distinct-theories}$S\left(M\right)$
and $S\left(Q\right)$ have distinct $\mathcal{L}_{\pi}$ theories.
In particular, $\neg\exists\xi:\mathcal{D}_{x_{1}Sx_{2}}\left(\xi\right)\wedge\pi_{x,x_{1}}\left(\xi\right)=\pi_{x,x_{2}}\left(\xi\right)\in\Th^{\hu}\left(S\left(M\right)\right)_{\mathcal{L}_{\pi}}\setminus\Th^{\hu}\left(S\left(Q\right)\right)_{\mathcal{L}_{\pi}}$.
\end{prop}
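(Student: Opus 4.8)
The plan is to exhibit an explicit $\mathcal{L}_\pi$-sentence, namely
\[
\sigma:\quad\neg\exists\xi:\mathcal{D}_{x_{1}Sx_{2}}\left(\xi\right)\wedge\pi_{x,x_{1}}\left(\xi\right)=\pi_{x,x_{2}}\left(\xi\right),
\]
where $\xi$ ranges over the sort $S_{x}$ with $x=\left(x_{1},x_{2}\right)$ a pair of singleton variables, and to verify that it holds in $S\left(M\right)$ but fails in $S\left(Q\right)$. Unwinding the definitions, an element $\xi$ witnessing the negation of $\sigma$ in a type space $S\left(K\right)$ (for a $\pc$ model $K$) is a type $p\left(x_{1},x_{2}\right)$ such that $x_{1}Sx_{2}\in p$ (this is what $\mathcal{D}_{x_{1}Sx_{2}}\left(\xi\right)$ says, by the definition of $\mathcal{D}$ with no parameters, together with \lemref{pp-equiv-at}(1) applied to the single relation $S$) and such that $p|_{x_{1}}=p|_{x_{2}}$, i.e. the two coordinate types agree. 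So I must show: over $M$ there is no such $p$, while over $Q$ there is.

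For the failure over $Q$: take $r=\frac{1}{\pi}$ and work in an extension $N\geq Q$ realizing the type $p=\tp^{\p}\left(\left(\left(r,0\right),\left(r,1\right)\right)/Q\right)$; concretely $M\geq Q$ already works, with the pair $\left(\left(r,0\right),\left(r,1\right)\right)$. Then $x_{1}Sx_{2}\in p$ since $M\vDash S\left(\left(r,0\right),\left(r,1\right)\right)$, and $p|_{x_{1}}=\tp^{\p}\left(\left(r,0\right)/Q\right)=\tp^{\p}\left(\left(r,1\right)/Q\right)=p|_{x_{2}}$ because the automorphism of $M$ swapping the two copies over the fiber at $r$ (as in \propref{double-interval-universal}, taking $B=[0,1]\setminus\{r\}$) fixes $Q$ pointwise and exchanges $\left(r,0\right)$ with $\left(r,1\right)$. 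Hence $S\left(Q\right)\vDash\neg\sigma$.

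For the validity over $M$: suppose toward contradiction $p\in S_{x}\left(M\right)$ has $x_{1}Sx_{2}\in p$ and $p|_{x_{1}}=p|_{x_{2}}=:s$. Realize $p$ by $\left(a_{1},a_{2}\right)$ in some $\pc$ $N\geq M$; since $M$ is the universal model (\propref{double-interval-universal}), by \propref{bounded-S-is-U} and \propref{univ-ec}(4) we may assume $a_{1},a_{2}\in M$. From $a_{1}Sa_{2}$ we get $a_{1}=\left(t,i\right),a_{2}=\left(t,1-i\right)$ for some $t\in[0,1]$. Now the point is that over $M$ the type $s=\tp^{\p}\left(a_{1}/M\right)$ already contains $x=a_{1}$ (the atomic formula picking out the element, since $a_{1}\in M$), so $s$ determines $a_{1}$, and likewise $s$ determines $a_{2}$; but $a_{1}\ne a_{2}$, contradicting $p|_{x_{1}}=p|_{x_{2}}$. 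More carefully: since $M$ is $\pc$ and $a_i\in M$, the positive formula $\left(x=a_i\right)$ lies in $\tp^{\p}\left(a_i/M\right)$, so $p|_{x_1}$ contains $\left(x=a_1\right)$ and $p|_{x_2}$ contains $\left(x=a_2\right)$; equality of these two one-variable types forces $M\vDash a_1=a_2$, which is false. Therefore $S\left(M\right)\vDash\sigma$.

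The main obstacle I anticipate is bookkeeping the reduction from the $\mathcal{L}_{\pi}$-sentence to the concrete statement about types — in particular making precise that $\mathcal{D}_{x_{1}Sx_{2}}\left(\xi\right)$ really encodes ``$x_{1}Sx_{2}\in p$'' (via \lemref{pp-equiv-at}(1), to see that the single-relation atomic formula $S\left(x_1,x_2\right)$ is among the $\mathcal{D}$-relations, with no parameters so $\alpha$ is trivially satisfied) and that $\pi_{x,x_{1}}\left(\xi\right)=\pi_{x,x_{2}}\left(\xi\right)$ encodes the equality of restricted types. Everything else is routine: the swap automorphism over $Q$ and the ``points of $M$ are named over $M$'' argument are both immediate from the explicit description of $M$ and its $\pc$ submodels established earlier in \exaref{doubled-interval} and \propref{double-interval-universal}. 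As a final remark one can note that this, together with \corref{shared-theory}, reconfirms that \lemref{pp-equiv-at}(3) cannot hold for $\mathcal{L}_{\pi}$ in general.
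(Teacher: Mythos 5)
Your proposal is correct and follows essentially the same route as the paper: the witness in $S\left(Q\right)$ is the type of the pair $\left(\left(\frac{1}{\pi},0\right),\left(\frac{1}{\pi},1\right)\right)$ over $Q$ (with the swap automorphism over the fiber at $\frac{1}{\pi}$ giving equality of the restrictions), and over $M$ the sentence holds because $x_{1}Sx_{2}\in p$ forces the realizing pair to be distinct elements of the universal model $M$ itself, whence the two restricted types differ. Your write-up merely spells out more explicitly the steps the paper leaves implicit (that realizations may be taken inside $M$, and that distinct elements of $M$ have distinct types over $M$ via the formulas $x=a_{i}$).
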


\begin{proof}
$S\left(Q\right)\vDash\exists\xi:\mathcal{D}_{x_{1}Sx_{2}}\left(\xi\right)\wedge\pi_{x,x_{1}}\left(\xi\right)=\pi_{x,x_{2}}\left(\xi\right)$
(where $\xi$ is a variable from the sort $x=\left(x_{1},x_{2}\right)$)
--- indeed 
\[
\tp\left(\left(\left(\frac{1}{\pi},0\right)\left(\frac{1}{\pi},1\right)/Q\right)\right)\vDash\mathcal{D}_{x_{1}Sx_{2}}\left(\xi\right)\wedge\pi_{x,x_{1}}\left(\xi\right)=\pi_{x,x_{2}}\left(\xi\right)
\]

However $S\left(M\right)\vDash\neg\exists\xi:\mathcal{D}_{x_{1}Sx_{2}}\left(\xi\right)\wedge\pi_{x,x_{1}}\left(\xi\right)=\pi_{x,x_{2}}\left(\xi\right)$
since $\tp\left(a_{1}a_{2}/M\right)\vDash\mathcal{D}_{x_{1}Sx_{2}}\left(\xi\right)$
implies $a_{1}Sa_{2}\Rightarrow a_{1}\neq a_{2}\Rightarrow\tp\left(a_{1}/M\right)\neq\tp\left(a_{2}/M\right)$.
\end{proof}
\begin{cor}
$\mathcal{T}_{\pi}$ is not strongly Robinson.
\end{cor}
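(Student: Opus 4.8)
The plan is to work with the doubled-interval theory $T$ of \exaref{doubled-interval} and exhibit a single $\pp$ $\mathcal{L}_{\pi}$-formula over $\emptyset$ that is consistent with $\mathcal{T}_{\pi}$ but, in a $\pc$ model of $\mathcal{T}_{\pi}$, is not equivalent to any quantifier-free positive type over $\emptyset$; by \defref{qe} and \remref{qf-properties} this shows $\mathcal{T}_{\pi}$ is not strongly Robinson. The setup: since $T$ is bounded, $\mathcal{T}_{\pi}$ is well-defined and $\pc$ bounded (\corref{bounded-conditions}), and by the corollary after \propref{type-space-is-p.c.} the core $\mathcal{J}_{\pi}=\Core_{\pi}\left(T\right)$ equals $S\left(M\right)$, where $M$ is the universal model of $T$ (\propref{double-interval-universal}); so $S\left(M\right)$ is itself a $\pc$ model of $\mathcal{T}_{\pi}$. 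Via the bijection $\iota\colon M\cong S_{x_1}\left(M\right)$ of \propref{bounded-S-is-U} I identify the one-variable type sort with $M$, and by \propref{universal-model-Lpi} the basic $\mathcal{L}$-relations on $S\left(M\right)$ pull back to (Boolean combinations of $\hu$-) definable relations on $M$.

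The witnessing formula: with $x=\left(x_1,x_2\right)$, let $\zeta_1,\zeta_2$ be of sort $x_1$ and $\xi$ of sort $x$, and set
\[
\rho\left(\zeta_1,\zeta_2\right)\ :=\ \exists\xi\bigl(\mathcal{D}_{x_1Sx_2}\left(\xi\right)\wedge\pi_{x,x_1}\left(\xi\right)=\zeta_1\wedge\pi_{x,x_2}\left(\xi\right)=\zeta_2\bigr).
\]
Reading this in $S\left(M\right)$: $\mathcal{D}_{x_1Sx_2}\left(\xi\right)$ forces $\xi$ to be the type of a pair $\left(a_1,a_2\right)$ with $a_1Sa_2$, and the two $\pi$-equations force $\tp^{\p}\left(a_i/M\right)=\zeta_i$, so the solution set of $\rho$ transports, under $\iota$, to exactly $S^{M}\subseteq M^{2}$. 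Hence $\rho$ is consistent with $\mathcal{T}_{\pi}$ — it is realized, by $\left(\tp^{\p}\left(\left(r,0\right)/M\right),\tp^{\p}\left(\left(r,1\right)/M\right)\right)$ — and its solution set is the graph of the fixed-point-free ``flip the fiber'' involution of $S_{x_1}\left(M\right)$: neither empty, nor all of $S_{x_1}\left(M\right)^{2}$, nor contained in the diagonal.

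Now assume for contradiction that $\mathcal{T}_{\pi}$ is strongly Robinson. Then $\{\rho\}$ is equivalent, in the $\pc$ model $S\left(M\right)$, to a quantifier-free positive $\mathcal{L}_{\pi}$-type $q\left(\zeta_1,\zeta_2\right)$ over $\emptyset$. Since $\zeta_1,\zeta_2$ lie in a one-variable sort — whose only subtuples are itself, on which $\pi$ is the identity, and the empty tuple, on which $\pi$ is constant — such a $q$ reduces, using \remref{projections-and-D} and \lemref{pp-equiv-at}.1, to a positive combination of atomic $\mathcal{L}$-relations together with the equality $\zeta_1=\zeta_2$; as $S^{M}$ is nonempty and disjoint from the diagonal, this means that $S^{M}$ is an intersection of sets
\[
R_{\varphi_0,\varphi_1;\alpha}\ =\ \bigl\{\left(a_0,a_1\right)\in M^{2}\ \bigm|\ \forall y\in\alpha\left(M\right)\colon\varphi_0\left(a_0,y\right)\vee\varphi_1\left(a_1,y\right)\bigr\},
\]
with $\varphi_0,\varphi_1,\alpha$ positive over $\emptyset$. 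The main obstacle is to rule this out. Here one uses that every positive $\emptyset$-definable subset of $M$ is a union of fibers $\{r\}\times2$ (two points of a fiber share a positive type, since $\Aut\left(M\right)$ flips fibers), and analyses $R_{\varphi_0,\varphi_1;\alpha}$ directly: imposing $S^{M}\subseteq R_{\varphi_0,\varphi_1;\alpha}$ on every edge forces $R_{\varphi_0,\varphi_1;\alpha}$ to contain pairs lying off $S^{M}$, the point being that separating $S^{M}$ from the rest of its clopen component of $M^{2}$ by such relations would amount to a positive definition of $\neq$ on the infinite set $M$, contradicting \propref{definable-inequality}. Hence no such $q$ exists. (Alternatively, from such an $\mathcal{L}$-description of $\rho$ one derives, via \lemref{pp-equiv-at}.2, that $\Xi=\exists\zeta\,\rho\left(\zeta,\zeta\right)$ would be equivalent to an $\mathcal{L}$-sentence with the same truth value in $S\left(M\right)$ and $S\left(Q\right)$, contradicting \propref{doubleo-interval-distinct-theories}, since by \corref{shared-theory} the two theories agree in $\mathcal{L}$.) Either way, $\mathcal{T}_{\pi}$ is not strongly Robinson, and I expect the concrete non-definability analysis in the last step to be the only non-routine part.
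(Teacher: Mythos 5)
Your witnessing formula is exactly the paper's: you take $\rho\left(\zeta_{1},\zeta_{2}\right)=\exists\xi\left(\mathcal{D}_{x_{1}Sx_{2}}\left(\xi\right)\wedge\pi_{x,x_{1}}\left(\xi\right)=\zeta_{1}\wedge\pi_{x,x_{2}}\left(\xi\right)=\zeta_{2}\right)$, observe that a quantifier-free positive $\mathcal{L}_{\pi}$-type $\Sigma$ in one-variable sorts is effectively an $\mathcal{L}$-type, and then must show that no such $\Sigma$ can define $\rho$. But that last step --- the entire content of the corollary --- is not carried out. Your main route gestures at "a positive definition of $\neq$, contradicting \propref{definable-inequality}", yet the pullbacks under $\iota$ of the $\mathcal{D}$-relations to $M^{2}$ are not positive $L$-formulas (as the footnote to \propref{universal-model-Lpi} notes, they are Boolean combinations of $\hu$ formulas), so no positive definition of inequality falls out of your reduction; moreover $\Aut\left(M\right)$-invariance does not separate $S^{M}$ from the diagonal, so the "direct analysis" you defer to is genuinely the hard part and is left undone. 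Your parenthetical alternative also has a gap: strongly Robinson only guarantees the equivalence of $\rho$ and $\Sigma$ in $\pc$ models of $\mathcal{T}_{\pi}$, and $S\left(Q\right)$ is not a model of $\mathcal{T}_{\pi}$ at all (that is precisely \propref{doubleo-interval-distinct-theories}), so you cannot transfer the equivalence to $S\left(Q\right)$ and invoke \corref{shared-theory}.

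The paper closes the gap with one line that your write-up circles around but never uses: quantifier-free positive $\mathcal{L}$-formulas are preserved by arbitrary $\mathcal{L}$-homomorphisms, and \propref{double-interval-not-pc} supplies a non-injective $\mathcal{L}$-endomorphism $h\circ r_{Q}$ of $S\left(M\right)$ with $s:=h\left(r_{Q}\left(p\right)\right)=h\left(r_{Q}\left(q\right)\right)$ for $p=\tp^{\p}\left(\left(\frac{1}{\pi},0\right)/M\right)$, $q=\tp^{\p}\left(\left(\frac{1}{\pi},1\right)/M\right)$. Since $\Sigma\left(p,q\right)$ holds and $\Sigma$ is a positive quantifier-free $\mathcal{L}$-type, $\Sigma\left(s,s\right)$ holds, hence $\rho\left(s,s\right)$ would hold --- contradicting $S\left(M\right)\vDash\neg\exists\xi:\mathcal{D}_{x_{1}Sx_{2}}\left(\xi\right)\wedge\pi_{x,x_{1}}\left(\xi\right)=\pi_{x,x_{2}}\left(\xi\right)$ from \propref{doubleo-interval-distinct-theories}. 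You should replace your non-definability "analysis" with this homomorphism argument; as it stands the proposal identifies the right formula but does not prove the statement.
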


\begin{proof}
Let $p,q,h,r_{Q}$ as in the proof of \propref{double-interval-not-pc},
denote $s=h\left(r_{Q}\left(p\right)\right)=h\left(r_{Q}\left(q\right)\right)$
and assume $\mathcal{T}_{\pi}$ is strongly Robinson. Then there is
a quantifier free positive type $\Sigma\left(\zeta_{1},\zeta_{2}\right)$
in $\mathcal{L}_{\pi}$ such that $S\left(M\right)$ thinks $\Sigma$
is equivalent to 
\[
\exists\xi:\mathcal{D}_{x_{1}Sx_{2}}\left(\xi\right)\wedge\pi_{x,x_{1}}\left(\xi\right)=\zeta_{1}\wedge\pi_{x,x_{2}}\left(\xi\right)=\zeta_{2}.
\]

Since the only subtuple of a tuple of length 1 is itself, the only
$\pi_{x_{i},x'}$ that can appear in $\Sigma$ is the identity, thus
$\Sigma$ is effectively an $\mathcal{L}$ type. Since $\Sigma\left(p,q\right)$
holds, we get $\Sigma\left(s,s\right)$ holds thus $S\left(M\right)\vDash:\mathcal{D}_{x_{1}Sx_{2}}\left(\xi\right)\wedge\pi_{x,x_{1}}\left(\xi\right)=s\wedge\pi_{x,x_{2}}\left(\xi\right)=s$.
But that is impossible, since $S\left(M\right)\vDash\neg\exists\xi:\mathcal{D}_{x_{1}Sx_{2}}\left(\xi\right)\wedge\pi_{x,x_{1}}\left(\xi\right)=\pi_{x,x_{2}}\left(\xi\right)$
by \propref{doubleo-interval-distinct-theories}.
\end{proof}
\begin{prop}
$\Core\left(T\right)=S\left(Q\right)$.
\end{prop}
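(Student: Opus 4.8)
The plan is to invoke \propref{type-space-is-p.c.}. Since $\mathcal{J}=\Core\left(T\right)$ immerses into $S\left(Q\right)$ by \thmref{hom-to-type-space}, it suffices (by conditions (2) and (5) of that proposition) to show that every $\mathcal{L}$-endomorphism $f\colon S\left(Q\right)\to S\left(Q\right)$ is surjective, as this already forces $S\left(Q\right)\cong\mathcal{J}$, i.e. $\Core\left(T\right)=S\left(Q\right)$.

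First I would unwind $S\left(Q\right)$. As every $\pc$ model of $T$ has the form $B\times2$ with $\mathbb{Q}\cap\left[0,1\right]\subseteq B\subseteq\left[0,1\right]$, each $p\in S_{x}\left(Q\right)$ is $\tp^{\p}\left(\overline{a}/Q\right)$ for a tuple $\overline{a}$ of points $\left(r_{j},i_{j}\right)\in\left[0,1\right]\times2$, and the only identification is $\tp^{\p}\left(\left(r,0\right)/Q\right)=\tp^{\p}\left(\left(r,1\right)/Q\right)$ when $r\notin\mathbb{Q}$, since then no element of $Q$ is $S$-related to such a point. Hence a type in $S_{x}\left(Q\right)$ is determined by the positions $r_{1},\dots,r_{n}$, the incidence pattern among the coordinates (which pairs are $S$-related, which are equal), and a choice of copy for each coordinate lying over a rational position, subject to that incidence. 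For $p\in S_{x}\left(Q\right)$ let $D_{p}$ be the intersection of all $\mathcal{L}$-atomic relations and all $\mathcal{L}$-type-definable-over-$\emptyset$ sets holding of $p$. The relations $\mathcal{D}_{I_{a,b}\left(x_{j}\right);\,y=y}$ pin down the position of each coordinate exactly --- for an irrational position $r$ the intersection $\bigcap_{a<r<b}\left\{ q\mid I_{a,b}\left(x_{j}\right)\in q\right\}$ isolates it, and for a rational position $q$ one gets ``coordinate $j$ lies over $q$'' --- while $\mathcal{D}_{x_{j}Sx_{k}}$ and $\mathcal{D}_{x_{j}=x_{k}}$ pin down the incidence pattern; so $D_{p}$ is exactly the finite set of types obtained from $p$ by flipping copies at rational positions consistently with the incidence. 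Since $D_{p}$ is type-definable over $\emptyset$ in $\mathcal{L}$ we get $f\left(D_{p}\right)\subseteq D_{p}$, so it remains only to see that $f$ is injective on the finite set $D_{p}$, for then $p\in D_{p}=f\left(D_{p}\right)\subseteq\image\left(f\right)$.

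For the injectivity, suppose $q',q''\in D_{p}$ are distinct; they differ in the copy of some coordinate $j_{0}$ lying over a rational position $q$. I would use $R:=\mathcal{D}_{S\left(x_{j_{0}},y\right),\,S\left(x_{j_{0}}',y\right);\,I_{q,q}\left(y\right)}$, a binary relation on the sort $S_{x}\left(Q\right)$ in which $x_{j_{0}},x_{j_{0}}'$ denote the $j_{0}$-th coordinates of the two arguments: it holds of $\left(q',q''\right)$, because the $j_{0}$-coordinates of $q'$ and of $q''$ are the two distinct copies over $q$, each $S$-related to the other, whereas $R\left(p^{*},p^{*}\right)$ fails for every $p^{*}$, since no single element is $S$-related to both points of $I_{q,q}\left(Q\right)$. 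As $f$ is a homomorphism this forces $f\left(q'\right)\neq f\left(q''\right)$; thus $f$ is injective on $D_{p}$, and as above $f$ is surjective, so $S\left(Q\right)\cong\mathcal{J}=\Core\left(T\right)$. The delicate part is the description of $D_{p}$: one must check that the $\mathcal{L}$-atomic relations genuinely pin down positions and incidence through the $S$-links among coordinates, and, for a cluster of several $S$-linked coordinates sitting over one rational position, that an appropriate variant of $R$ still separates the remaining finitely many copy-assignments; everything else is routine on top of \propref{type-space-is-p.c.} and \thmref{hom-to-type-space}.
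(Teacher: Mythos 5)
Your proof is correct and follows essentially the same route as the paper's: both reduce to showing that every $\mathcal{L}$-endomorphism of $S\left(Q\right)$ is surjective (via \propref{type-space-is-p.c.}), pin down positions using the relations $\mathcal{D}_{I_{a,b}}$, and handle the two copies over a rational $q$ with a $\mathcal{D}$-relation whose parameter set is $I_{q,q}\left(Q\right)$. The only real difference is the last step: the paper uses the covering relation $\mathcal{D}_{x=y,x=y;I_{q,q}\left(y\right)}$ to see that the image of the pair of types over $q$ is again that pair, whereas you use the anti-reflexive relation $\mathcal{D}_{S\left(x_{j_{0}},y\right),S\left(x_{j_{0}}',y\right);I_{q,q}\left(y\right)}$ to get injectivity of the endomorphism on the finite atomic orbit $D_{p}$ (and you spell out arbitrary sorts, where the paper treats sort $1$ and notes the general case is identical).
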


\begin{proof}
Take some $h\in\End\left(S\left(Q\right)\right)$. We will restrict
ourselves to $S_{1}\left(Q\right)$, but the argument is the same
in every sort.

If $r\in\left[0,1\right]\setminus\mathbb{Q}$, $\tp\left(\left(r,0\right)/Q\right)=\tp\left(\left(r,1\right)/Q\right)$
as their transposition is an automorphism of $M$ sending one to the
other (and fixing $Q$).

We find that $h\left(\tp^{\p}\left(\left(r,i\right)/Q\right)\right)=\tp^{\p}\left(\left(r,i\right)/Q\right)$
where $r\in M\setminus Q$, since $\mathcal{D}_{I_{a,b}}\left(\tp^{\p}\left(\left(r,i\right)/Q\right)\right)$
for any $a<r<b$ and the only two elements of $M$ that satisfy this
are $\left(r,0\right)$ and $\left(r,1\right)$. 

This covers the non-realized types. For realized types, we find that
\[
h\left(\left\{ \tp^{\p}\left(\left(q,0\right)/Q\right),\tp^{\p}\left(\left(q,1\right)/Q\right)\right\} \right)=\left\{ \tp^{\p}\left(\left(q,0\right)/Q\right),\tp^{\p}\left(\left(q,1\right)/Q\right)\right\} 
\]
 (where $q\in\mathbb{Q}\cap\left[0,1\right]$) since 
\[
\mathcal{D}_{x=y,x=y;I_{q,q}\left(y\right)}\left(\tp^{\p}\left(\left(q,0\right)/Q\right),\tp^{\p}\left(\left(q,1\right)/Q\right)\right)
\]
 holds.

So $h$ is surjective thus by \propref{type-space-is-p.c.} we find
$S\left(Q\right)$ is $\mathcal{J}$. 
\end{proof}
\begin{prop}
Not all $\cu$ types over $M$ which are finitely satisfiable in $Q$
are $Q$ invariant.
\end{prop}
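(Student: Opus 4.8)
The plan is to exhibit one explicit $\cu$ type over $M$, in a single variable $y$, that is finitely satisfiable in $Q$ but not $Q$-invariant. Fix an irrational $r\in[0,1]$, set $b=(r,1)\in M$, and let $p=\tp^{\hu}(b/M)$. Since $M$ is the universal model (\propref{double-interval-universal}), in particular $\pc$, and $b\in M$, $p$ is a $\cu$ type by \lemref{cu-types}.1 (with $B=M$); equivalently, $p^{-}=\tp^{\p}(b/M)$ is a maximal positive type by \factref{maximal-pp-in-pc}.

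Next I would verify that $p$ is finitely satisfiable in $Q$. A finite fragment of $p$ is of the form $\bigwedge_{i<n}\neg\varphi_{i}(y,c_{i})$ with each $\varphi_{i}$ positive, $c_{i}$ a tuple from $M$, and $b\notin\varphi_{i}(M,c_{i})$. Put $\varphi(y):=\bigvee_{i<n}\varphi_{i}(y,c_{i})$; this is a positive formula over $M$ with $b\notin\varphi(M)$, so $\varphi(M)\neq M$. The key point is that in the usual topology on $M=[0,1]\times 2$ (two disjoint copies of the interval) $M$ is compact and every relation of $L$ is closed (as already noted in the proof of \propref{double-interval-universal}); hence every positively definable subset of every $M^{k}$ is closed, since disjunctions are finite unions, conjunctions are finite intersections, and existential quantification is a projection out of a compact space. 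In particular $\varphi(M)$ is a proper closed subset of $M$, and as $Q=(\mathbb{Q}\cap[0,1])\times 2$ is dense in $M$ in this topology, $Q\not\subseteq\varphi(M)$; so there is $a\in Q$ with $M\vDash\bigwedge_{i<n}\neg\varphi_{i}(a,c_{i})$, as required. (Equivalently, this shows $\overline{Q}=M$ in the positive topology, so one may instead invoke the characterization of finite satisfiability in the remark following \propref{types-are-homomorphisms}.)

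Finally I would show $p$ is not $Q$-invariant, using the $\hu$ formula $\neg S(y,z)$. Let $c=(r,0)$ and $c'=(r,1)=b$. The map $\sigma$ of $M$ that swaps the two points over $r$ and fixes every other fibre is an automorphism by \propref{double-interval-universal}, and since $r\notin\mathbb{Q}$ it fixes $Q$ pointwise; hence $\tp^{\p}(c/Q)=\tp^{\p}(c'/Q)$. Now $M\vDash\neg S(b,c')$ because $S((r,1),(r,1))$ fails, so $\neg S(y,c')\in p$; but $M\vDash S(b,c)$ because $S((r,1),(r,0))$ holds, so $\neg S(y,c)\notin p$. Thus $p$ contains $\neg S(y,c')$ but not $\neg S(y,c)$ although $\tp^{\p}(c/Q)=\tp^{\p}(c'/Q)$, so $p$ is not $Q$-invariant.

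The only step requiring real care is the finite-satisfiability claim; its content is precisely the topological observation that positively definable subsets of $M$ are closed in the standard compact topology, so that the standard density of $Q$ in $M$ forces density in the coarser positive topology. Everything else is immediate bookkeeping with the definitions.
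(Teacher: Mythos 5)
Your proof is correct and follows essentially the same route as the paper: the same topological argument (positively definable sets are closed in the compact topology on $M$, so $\hu$-definable sets are open and meet the dense set $Q$) for finite satisfiability, and the same fibre-swapping automorphism over an irrational point for the failure of invariance. The only differences are cosmetic — you witness non-invariance with $\neg S(y,z)$ rather than with $x\neq\left(\frac{1}{\pi},1\right)$, and you phrase it directly via the definition of $\p$-invariance rather than via $\sigma(p)\neq p$.
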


\begin{proof}
Since $I_{a,b}\subseteq M$ and $S\subseteq M^{2}$ are closed (in
the product topology, in the case of $S$), $M$ is Hausdorff compact
and projections are always continuous, we find that every positive
$\emptyset$-definable set in $M^{n}$ is closed.

And since fibers of closed sets are closed, we get that indeed every
positively definable set (even over a set) is closed.

Therefore every $\hu$ definable set is open, and thus contains an
element of the dense set $Q$ --- so every $\hu$ type is finitely
satifiable in $Q$.

However, if we take for example $p=\tp^{\hu}\left(\left(\frac{1}{\pi},0\right)/M\right)$
we get that it is not $A$ invariant, since it is not invariant under
e.g. $\sigma\in\Aut\left(M/Q\right)$ which is the trasposition of
$\left(\frac{1}{\pi},0\right)$ and $\left(\frac{1}{\pi},1\right)$
--- since $\left(x\neq\left(\frac{1}{\pi},1\right)\right)\in p\setminus\sigma\left(p\right)$.
\end{proof}

\subsubsection{Inner Product Spaces}

Let us now compute the core of the theory of Hilbert Spaces, introduces
in \exaref{hilbert}. We will discuss inner product spaces over $\mathbb{R}$,
but complex spaces behave in very much the same way.
\begin{lem}
\label{lem:Hilbert-Robinson}Let $\mathcal{H}$ be a Hilbert space. 

The following are equivalent for tuples $\overline{a},\overline{a'}\in\mathcal{H}^{n}$:
\begin{enumerate}
\item There is $\sigma\in\Aut\left(\mathcal{H}\right)$ such that $\sigma\left(\overline{a}\right)=\overline{a'}$
\item $\tp^{\p}\left(\overline{a}/\emptyset\right)=\tp^{\p}\left(\overline{a'}/\emptyset\right)$
\item $\tp^{\atom}\left(\overline{a}/\emptyset\right)=\tp^{\atom}\left(\overline{a'}/\emptyset\right)$
\item For any $i,j<n$ we have $\left\langle a_{i},a_{j}\right\rangle =\left\langle a_{i}',a_{j}'\right\rangle $.
\end{enumerate}
\end{lem}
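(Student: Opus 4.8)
The plan is to prove the cycle $(1)\Rightarrow(2)\Rightarrow(3)\Rightarrow(4)\Rightarrow(1)$, with essentially all of the content concentrated in the last implication. For $(1)\Rightarrow(2)$ I would just use that an automorphism of the $L$-structure $\mathcal{H}$ preserves the truth value of every $L$-formula, in particular of every positive formula, so $\tp^{\p}(\overline{a}/\emptyset)=\tp^{\p}(\overline{a'}/\emptyset)$; and $(2)\Rightarrow(3)$ is immediate since atomic formulas are positive. For $(3)\Rightarrow(4)$ I would fix $i,j<n$ and consider the atomic formula $I_{\lambda,\mu,C,N}(x_{0},\dots,x_{n-1})$ where $\lambda\in\mathbb{F}^{n}$ has a $1$ in coordinate $i$ and zeros elsewhere, $\mu$ has a $1$ in coordinate $j$ and zeros elsewhere, and $N$ is fixed once and for all larger than both $\sum_{k}\|a_{k}\|$ and $\sum_{k}\|a'_{k}\|$. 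By the definition of the interpretation, $\mathcal{H}\models I_{\lambda,\mu,C,N}(\overline{a})$ iff $\langle a_{i},a_{j}\rangle\in C$, and likewise for $\overline{a'}$; so equality of atomic types says that $\langle a_{i},a_{j}\rangle$ and $\langle a'_{i},a'_{j}\rangle$ lie in exactly the same compact subsets $C$ of $\mathbb{F}$, and taking $C=\{\langle a_{i},a_{j}\rangle\}$ forces $\langle a'_{i},a'_{j}\rangle=\langle a_{i},a_{j}\rangle$.

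The substantive step is $(4)\Rightarrow(1)$. Assuming the Gram matrices agree, I would first observe that for all scalars $c_{0},\dots,c_{n-1}$ one has $\|\sum_{i}c_{i}a_{i}\|^{2}=\sum_{i,j}c_{i}\overline{c_{j}}\langle a_{i},a_{j}\rangle=\sum_{i,j}c_{i}\overline{c_{j}}\langle a'_{i},a'_{j}\rangle=\|\sum_{i}c_{i}a'_{i}\|^{2}$, so the assignment $a_{i}\mapsto a'_{i}$ is well defined and extends to a linear bijection, preserving the inner product, between the finite-dimensional (hence closed) subspaces $V$ and $V'$ spanned by $\overline{a}$ and $\overline{a'}$. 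Next, since $\mathcal{H}$ is infinite dimensional, the orthogonal complements $V^{\perp}$ and $(V')^{\perp}$ both have Hilbert dimension equal to $\dim\mathcal{H}$, so I would pick a unitary $V^{\perp}\to (V')^{\perp}$ and glue it to the isometry $V\to V'$ along $\mathcal{H}=V\oplus V^{\perp}=V'\oplus (V')^{\perp}$, obtaining a unitary $\sigma$ of $\mathcal{H}$ with $\sigma(\overline{a})=\overline{a'}$. Finally I would check that such a $\sigma$ is an $L$-automorphism: it preserves each $I_{\lambda,\mu,C,N}$ because that relation is defined purely from $\langle-,-\rangle$ and the norm, and it preserves each $E_{m}$ because a unitary conjugate of a unitary is again a unitary; hence $\sigma\in\Aut(\mathcal{H})$.

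I expect the only real obstacle to be the extension inside $(4)\Rightarrow(1)$, and even there it is mild: the single non-formal fact needed is that the orthogonal complement of a finite-dimensional subspace of an infinite-dimensional Hilbert space has the same Hilbert dimension as the whole space, which is standard. I would also remark that if one is uneasy about the precise meaning of $\Aut(\mathcal{H})$ in $(1)$ — whether it denotes $L$-automorphisms or Hilbert-space unitaries — the verification at the end of the previous paragraph shows the two agree on the maps we build, so the statement is insensitive to the choice.
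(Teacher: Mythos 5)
Your proposal is correct and follows essentially the same route as the paper: the implications $(1)\Rightarrow(2)\Rightarrow(3)\Rightarrow(4)$ are the easy ones (the paper dismisses them as obvious, while you usefully spell out $(3)\Rightarrow(4)$ via the relations $I_{\lambda,\mu,C,N}$), and the content of $(4)\Rightarrow(1)$ is the same well-defined inner-product-preserving map on spans, extended to an automorphism of $\mathcal{H}$ by matching up orthonormal bases of the orthogonal complements.
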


\begin{proof}
(1) $\Rightarrow$ (2) $\Rightarrow$ (3) $\Rightarrow$ (4) Obvious.

(4) $\Rightarrow$ (1) We need only verify that under the assumptions
$f\left(\stackrel[i<n]{}{\sum}\lambda_{i}a_{i}\right)=\stackrel[i<n]{}{\sum}\lambda_{i}a_{i}'$
is a well defined isomorphism (steming from the bilinearity of the
inner product and the fact $\left\langle x,x\right\rangle =0\Longleftrightarrow x=0$),
and that by choosing a suitable basis $f$ can be extended to an automorphism
of $\mathcal{H}$.
\end{proof}
\begin{defn}
Let $\mathcal{H}$ be an inner product space and $\overline{a}\in\mathcal{H}^{n}$.
Then denote $M\left(\overline{a}\right):=\left(\left\langle a_{i},a_{j}\right\rangle \right)_{i,j<n}\in M_{n}\left(\mathbb{R}\right)$.
\end{defn}

\begin{cor}
\label{cor:hilbert-type-matrix}If $\mathcal{\mathcal{H}}_{0}\leq\mathcal{\mathcal{H}}_{1}$
are Hilbert spaces and $\overline{a}\in\mathcal{H}_{1}^{n}$ satisfies
$\overline{a}\perp\mathcal{H}_{0}$ then $\tp^{\p}\left(\overline{a}/\mathcal{H}_{0}\right)$
depends only on the matrix $M\left(\overline{a}\right)$.
\end{cor}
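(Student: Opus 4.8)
The plan is to reduce the statement to \lemref{Hilbert-Robinson}, which already tells us that the positive type over $\emptyset$ of a tuple in a Hilbert space is determined by its Gram matrix. Unwound, what has to be shown is: if $\mathcal{H}_{0}\leq\mathcal{H}_{1}'$ is another Hilbert extension and $\overline{b}\in\left(\mathcal{H}_{1}'\right)^{n}$ satisfies $\overline{b}\perp\mathcal{H}_{0}$ and $M\left(\overline{b}\right)=M\left(\overline{a}\right)$, then $\tp^{\p}\left(\overline{a}/\mathcal{H}_{0}\right)=\tp^{\p}\left(\overline{b}/\mathcal{H}_{0}\right)$ as sets of positive formulas over $\mathcal{H}_{0}$. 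First I would put $\overline{a}$ and $\overline{b}$ inside one ambient structure by amalgamating over $\mathcal{H}_{0}$: take the orthogonal direct sum $\mathcal{H}=\mathcal{H}_{0}\oplus\left(\mathcal{H}_{1}\ominus\mathcal{H}_{0}\right)\oplus\left(\mathcal{H}_{1}'\ominus\mathcal{H}_{0}\right)$, a Hilbert space into which $\mathcal{H}_{1}$ and $\mathcal{H}_{1}'$ embed isometrically, both fixing $\mathcal{H}_{0}$. Since every inner product space, in particular $\mathcal{H}_{1}$ and $\mathcal{H}_{1}'$, is a $\pc$ model of $T$ (see \exaref{hilbert}), these inclusions are immersions, so $\tp^{\p}\left(\overline{a}/\mathcal{H}_{0}\right)$ and $\tp^{\p}\left(\overline{b}/\mathcal{H}_{0}\right)$ are unchanged if computed inside $\mathcal{H}$.

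Next I would pass from parameters in $\mathcal{H}_{0}$ to types over $\emptyset$. As $L$ is relational, every positive formula over $\mathcal{H}_{0}$ has the form $\varphi\left(\overline{x},\overline{h}\right)$ for an $\emptyset$-formula $\varphi\left(\overline{x},\overline{y}\right)$ and a finite tuple $\overline{h}$ from $\mathcal{H}_{0}$, and $\varphi\left(\overline{x},\overline{h}\right)\in\tp^{\p}\left(\overline{a}/\mathcal{H}_{0}\right)$ iff $\varphi\left(\overline{x},\overline{y}\right)\in\tp^{\p}\left(\overline{a}\,\overline{h}/\emptyset\right)$ (computed in $\mathcal{H}$), and likewise for $\overline{b}$. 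So it is enough to verify $\tp^{\p}\left(\overline{a}\,\overline{h}/\emptyset\right)=\tp^{\p}\left(\overline{b}\,\overline{h}/\emptyset\right)$ for every such $\overline{h}$, and by \lemref{Hilbert-Robinson} this reduces to $M\left(\overline{a}\,\overline{h}\right)=M\left(\overline{b}\,\overline{h}\right)$. This is exactly where the orthogonality assumption enters: since $\overline{a},\overline{b}\perp\mathcal{H}_{0}$, both Gram matrices are block diagonal, with diagonal blocks $M\left(\overline{a}\right)$ (resp.\ $M\left(\overline{b}\right)$) and $M\left(\overline{h}\right)$ and all mixed entries $\left\langle a_{i},h_{k}\right\rangle =\left\langle b_{i},h_{k}\right\rangle =0$; hence they agree because $M\left(\overline{a}\right)=M\left(\overline{b}\right)$, and the corollary follows.

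There is no real obstacle here; the only point needing attention is the very first step, namely that the positive type of a tuple over $\emptyset$ is an absolute invariant independent of the $\pc$ model it sits in — which is precisely what the amalgamation into $\mathcal{H}$, together with the fact that inclusions of inner product spaces are immersions, supplies. One could instead produce an automorphism $\sigma\in\Aut\left(\mathcal{H}\right)$ fixing $\mathcal{H}_{0}$ pointwise with $\sigma\left(\overline{a}\right)=\overline{b}$, by extending the linear isometry $a_{i}\mapsto b_{i}$ (well defined since the Gram matrices coincide) by the identity on $\mathcal{H}_{0}$ and then mapping an orthonormal basis of one orthogonal complement to one of the other; I would avoid this variant because it forces one to check that those two complements have equal Hilbert dimension, a bookkeeping nuisance that the Gram-matrix argument bypasses completely.
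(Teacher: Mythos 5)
Your proof is correct and takes essentially the same route as the paper: both reduce to checking that for every tuple $\overline{h}$ from $\mathcal{H}_{0}$ the Gram matrices of $\overline{a}\frown\overline{h}$ and $\overline{b}\frown\overline{h}$ coincide (the mixed entries vanish by orthogonality), and then apply \lemref{Hilbert-Robinson}. Your extra preliminary step of amalgamating the two extensions over $\mathcal{H}_{0}$ is a reasonable piece of added care that the paper leaves implicit.
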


\begin{proof}
Assume $M\left(\overline{a}\right)=M\left(\overline{a}'\right)$ where
$\overline{a},\overline{a'}\perp\overline{b}$. Then for any $\overline{b}\in\mathcal{H}_{0}^{m}$
we find that if $i<n,j<m$ then $\left\langle a_{i},b_{j}\right\rangle =0=\left\langle a_{i}',b_{j}\right\rangle $,
if $i,i'<n$ then by assumption $\left\langle a_{i},a_{i'}\right\rangle =\left\langle a_{i}',a_{i'}'\right\rangle $
and if $j,j'<m$ then obviously $\left\langle b_{j},b_{j'}\right\rangle =\left\langle b_{j},b_{j'}\right\rangle $.

Thus by \lemref{Hilbert-Robinson} we have $\tp^{\p}\left(\overline{a},\overline{b}\right)=\tp^{\p}\left(\overline{a'},\overline{b}\right)$.
\end{proof}
\begin{prop}
\label{prop:orthogonal}Assume $\mathcal{H}_{0}$ is an infinite dimensional
Hilbert space. Take some $p_{0},...,p_{k-1}\in S\left(\mathcal{H}_{0}\right)$
where $p_{i}\in S_{x_{i}}\left(\mathcal{H}_{0}\right)$. Let $\mathcal{H}_{1}\supseteq\mathcal{H}_{0}$
be another Hilbert space which is infinite dimensional over $\mathcal{H}_{0}$
such that every $p_{i}$ is realized in $\mathcal{H}_{1}$ by some
$a_{i}$.

Let $\tau\in\Aut\left(\mathcal{H}_{1}\right)$ be such that $\tau\left(a_{i}\right)\in\left(\mathcal{H}_{0}^{\perp}\right)^{n_{i}}$
for all $i$ (for instance let $B$ be an orthonormal basis of $Span\left(a_{i}\right)_{i<k}$
and let $\tau$ sending $B$ to some orthonormal set of the same size
in $\mathcal{H}_{0}^{\perp}$ by \lemref{Hilbert-Robinson}).

Define $q_{i}=\tp^{\p}\left(\tau\left(a_{i}\right)/\mathcal{H}_{0}\right)$.
Then if $\epsilon\left(\xi_{0},...,\xi_{k-1}\right)$ is an atomic
formula in $\mathcal{L}_{\pi}$ such that $\epsilon\left(p_{0},...,p_{k-1}\right)$
holds, then also $\epsilon\left(q_{0},...,q_{k-1}\right)$ holds.
\end{prop}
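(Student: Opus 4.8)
The goal is to show that atomic $\mathcal{L}_{\pi}$-relations are preserved when we pass from the types $p_i$ (over $\mathcal{H}_0$) to the types $q_i = \tp^{\p}(\tau(a_i)/\mathcal{H}_0)$ obtained by "rotating" the realizations to be orthogonal to $\mathcal{H}_0$. The plan is to reduce an atomic $\mathcal{L}_{\pi}$-formula to the basic form $\mathcal{D}_{\varphi_0,\dots,\varphi_{k-1};\alpha}(\xi_{0},\dots,\xi_{k-1})$ — using \remref{projections-and-D} to absorb the $\pi$-functions into the $\mathcal{D}$-relation (recall that $\mathcal{H}$ in the Hilbert example is semi-Hausdorff but *not* Hausdorff, so $\mathcal{L}_{\pi}$ genuinely adds power and we must keep the $\pi$'s, but \remref{projections-and-D} still lets us write any atomic $\mathcal{L}_{\pi}$-formula as a single $\mathcal{D}$-relation applied to projections of the $\xi_i$'s, and then as $\mathcal{D}'$ applied to the original variables). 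So it suffices to treat $\epsilon = \mathcal{D}_{\varphi_0,\dots,\varphi_{k-1};\alpha}$.

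\medskip

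\textbf{Main step.} Assume $\mathcal{D}_{\varphi_0,\dots,\varphi_{k-1};\alpha}(p_0,\dots,p_{k-1})$ holds, i.e. for every $c \in \alpha(\mathcal{H}_0)$ there is $i<k$ with $\varphi_i(x_i,c) \in p_i$, which means $\mathcal{H}_1 \vDash \varphi_i(a_i,c)$. We must show the same for the $q_i$: given $c \in \alpha(\mathcal{H}_0)$, find $i$ with $\mathcal{H}_1 \vDash \varphi_i(\tau(a_i),c)$ (after possibly enlarging $\mathcal{H}_1$ so that all $q_i$ are realized there; by \corref{hilbert-type-matrix} the $q_i$ are determined by $M(a_i)$, which $\tau$ preserves, so we may realize them by $\tau(a_i)$ themselves). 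The key is the following fixing trick: since $\mathcal{H}_1$ is infinite-dimensional over $\mathcal{H}_0$ and $\tau(\overline a) \perp \mathcal{H}_0$, while $\overline a$ need not be orthogonal to $\mathcal{H}_0$, I claim there is an automorphism $\rho \in \Aut(\mathcal{H}_1/\mathcal{H}_0)$ with $\rho(\tau(a_i)) = a_i$ for every $i$ — *wait*, that is false in general since $M(\overline a)$ relative to $\mathcal{H}_0$ differs. The correct move is the reverse: it is enough to produce, for each fixed parameter tuple $c$, an automorphism of $\mathcal{H}_1$ fixing $c$ and sending $\tau(a_i) \mapsto a_i'$ where $\overline{a}' $ has the *same* inner products with $c$ as $\overline a$ does and the same Gram matrix $M(\overline a')=M(\overline a)$; then $\tp^{\p}(\overline{a}' c) = \tp^{\p}(\overline a\, c)$ by \lemref{Hilbert-Robinson}, so $\mathcal{H}_1 \vDash \varphi_i(\overline{a}',c) \Leftrightarrow \varphi_i(\overline a, c)$, and since $\tp^{\p}(\tau(a_i)/\mathcal{H}_0)=\tp^{\p}(a_i'/\mathcal{H}_0)$ we conclude. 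Such an automorphism exists by \lemref{Hilbert-Robinson} (4)$\Rightarrow$(1): one needs a linear bijection on $\mathrm{Span}(\tau(\overline a), c)$ matching the Gram data, and it extends to $\Aut(\mathcal{H}_1)$ — this uses crucially that $\tau(\overline a) \perp \mathcal{H}_0 \ni c$, so we have total freedom to prescribe the inner products $\langle a_i', c_j\rangle$ to equal $\langle a_i, c_j\rangle$ while keeping $M(\overline{a}')=M(\overline a)$, which is compatible provided the resulting extended Gram matrix is positive semidefinite — and it is, being the Gram matrix of the *actual* tuple $\overline a c$ in $\mathcal{H}_1$.

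\medskip

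\textbf{Where the difficulty lies.} The routine part is the reduction to a single $\mathcal{D}$-relation and the bilinear-algebra bookkeeping. The real content is the positive-semidefiniteness argument: one must check that the "hybrid" Gram matrix — Gram of $\overline a$ in the $\overline a$-block, Gram of $c$ in the $c$-block, and the $\langle a_i, c_j\rangle$ in the cross block — is realizable by a tuple orthogonal-to-$\mathcal{H}_0$-plus-$c$ inside (a sufficiently large Hilbert extension of) $\mathcal{H}_1$. This is exactly where infinite-dimensionality of $\mathcal{H}_1$ over $\mathcal{H}_0$ is used: it guarantees enough room to house the realizations and to extend the partial isometry to a full automorphism. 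Once that is in place, \lemref{Hilbert-Robinson} and \corref{hilbert-type-matrix} close the argument, and the whole thing relativizes uniformly over $c \in \alpha(\mathcal{H}_0)$, giving $\mathcal{D}_{\varphi_0,\dots,\varphi_{k-1};\alpha}(q_0,\dots,q_{k-1})$ as desired.
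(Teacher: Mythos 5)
There is a genuine gap in your main step. You ask for an automorphism $\rho$ of $\mathcal{H}_{1}$ that fixes $c$ and sends $\tau\left(a_{i}\right)$ to a tuple $a_{i}'$ with $\left\langle a_{i}',c_{j}\right\rangle =\left\langle a_{i},c_{j}\right\rangle $. No such $\rho$ can exist unless $\overline{a}\perp c$ already: since $\rho$ preserves inner products and fixes $c$, we get $\left\langle a_{i}',c_{j}\right\rangle =\left\langle \rho\left(\tau\left(a_{i}\right)\right),\rho\left(c_{j}\right)\right\rangle =\left\langle \tau\left(a_{i}\right),c_{j}\right\rangle =0$ because $\tau\left(a_{i}\right)\perp\mathcal{H}_{0}\ni c_{j}$. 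More fundamentally, whether $\varphi_{i}\left(x_{i},c\right)\in q_{i}$ is decided by realizations of $q_{i}$, all of which are orthogonal to $c$; so "preserving the cross inner products $\left\langle a_{i},c_{j}\right\rangle $" is exactly the wrong target, and your chain $\varphi_{i}\left(\tau\left(a_{i}\right),c\right)\Leftrightarrow\varphi_{i}\left(a_{i}',c\right)\Leftrightarrow\varphi_{i}\left(a_{i},c\right)$ cannot be assembled. The paper's proof moves the \emph{parameter} instead of the realization: it chooses $\sigma\in\Aut\left(\mathcal{H}_{0}\right)$ sending $c$ into $P\left(a_{i}\right)^{\perp}$ for all $i$ (where $P$ is the projection onto $\mathcal{H}_{0}$; this uses infinite-dimensionality of $\mathcal{H}_{0}$, not of $\mathcal{H}_{1}$ over $\mathcal{H}_{0}$). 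Then $\sigma\left(c\right)$ still lies in $\alpha\left(\mathcal{H}_{0}\right)$, so the hypothesis gives some $i$ with $\varphi_{i}\left(a_{i},\sigma\left(c\right)\right)$; and the tuples $a_{i}\frown\sigma\left(c\right)$ and $\tau\left(a_{i}\right)\frown c$ now have identical Gram data (both cross blocks are zero), so \lemref{Hilbert-Robinson} yields $\varphi_{i}\left(\tau\left(a_{i}\right),c\right)$, i.e.\ $\varphi_{i}\left(x_{i},c\right)\in q_{i}$. That quantifier-over-parameters trick is the real content of the proposition and is absent from your argument; the positive-semidefiniteness discussion at the end does not substitute for it.

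There is also a smaller gap in your reduction. \remref{projections-and-D} only absorbs projections occurring as arguments of a $\mathcal{D}$-relation; it says nothing about the atomic formulas $\pi_{x_{i},x'}\left(\xi_{i}\right)=\pi_{x_{j},x'}\left(\xi_{j}\right)$, which are the other kind of atomic $\mathcal{L}_{\pi}$-formula and, as you yourself note, are precisely where $\mathcal{L}_{\pi}$ adds power in this non-Hausdorff example. The paper handles these as a separate case: from $\pi_{x_{i},x'}\left(p_{i}\right)=\pi_{x_{j},x'}\left(p_{j}\right)$ one gets equality of the $\emptyset$-types of the corresponding subtuples of $a_{i}$ and $a_{j}$, hence of $\tau\left(a_{i}\right)$ and $\tau\left(a_{j}\right)$, and then \corref{hilbert-type-matrix} (orthogonality to $\mathcal{H}_{0}$ plus equal Gram matrices) upgrades this to equality of types over $\mathcal{H}_{0}$, i.e.\ $\pi_{x_{i},x'}\left(q_{i}\right)=\pi_{x_{j},x'}\left(q_{j}\right)$. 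You would need to add such an argument for your proof to cover all atomic $\mathcal{L}_{\pi}$-formulas.
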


\begin{proof}
Assume $\epsilon$ is of the form $\pi_{x_{i},x'}\left(\xi_{i}\right)=\pi_{x_{j},x'}\left(\xi_{j}\right)$.
Then by assumption $\pi_{x_{i},x'}\left(p_{i}\right)=\pi_{x_{j},x'}\left(p_{j}\right)$
that is $\tp^{\p}\left(\pi_{x_{i},x'}\left(a_{i}\right)/\mathcal{H}_{0}\right)=\tp^{\p}\left(\pi_{x_{j},x'}\left(a_{j}\right)/\mathcal{H}_{0}\right)$
and in particular $\tp\left(\pi_{x_{i},x'}\left(a_{i}\right)/\emptyset\right)=\tp\left(\pi_{x_{j},x'}\left(a_{j}\right)/\emptyset\right)$.

This implies $\tp\left(\pi_{x_{i},x'}\left(\tau\left(a_{i}\right)\right)/\emptyset\right)=\tp\left(\pi_{x_{j},x'}\left(\tau\left(a_{j}\right)\right)/\emptyset\right)$.
But let $b\in\mathcal{H}_{0}^{m}$ be an arbitrary tuple; we find
that every pair in $\pi_{x_{i},x'}\left(\tau\left(a_{i}\right)\right)\frown b$
satisfies one of the following:
\begin{enumerate}
\item It is from $\tau\left(a_{i}\right)$ (in which case by assumption
it has the same inner product as the corresponding pair in $\pi_{x_{j},x'}\left(\tau\left(a_{j}\right)\right)$).
\item It is in $b$.
\item It contains an element of $\tau\left(a_{i}\right)$ and an element
of $\overline{b}$, and thus by assumption has inner product 0 (and
the same is true for the corresponding pair in $\pi_{x_{j},x'}\left(\tau\left(a_{j}\right)\right)\frown b$).
\end{enumerate}
Thus by \lemref{Hilbert-Robinson} we have $\pi_{x_{i},x'}\left(q_{i}\right)=\pi_{x_{j},x'}\left(q_{j}\right)$.

Assume $\epsilon$ is of the form $\mathcal{D}_{\varphi_{0},...,\varphi_{k-1};\alpha}$,
and take some $b\in\alpha\left(\mathcal{H}_{0}\right)$. Let $P$
be the projection onto $\mathcal{H}_{0}$. Let $\sigma\in\Aut\left(\mathcal{H}_{0}\right)$
be an automorphism sending $b$ into $P\left(a_{i}\right)^{\perp}$.
Then $\sigma\left(b\right)\in\alpha\left(\mathcal{H}_{0}\right)$,
thus for some $i<k$ we have $\varphi_{i}\left(x_{i},\sigma\left(b\right)\right)\in p_{i}$
that is $\varphi_{i}\left(a_{i},\sigma\left(b\right)\right)$. However,
$\tau\left(a_{i}\right)\frown b$ and $a_{i}\frown\sigma\left(b\right)$
satisfy 4 in \lemref{Hilbert-Robinson} --- since $a_{i}=P\left(a_{i}\right)+\left(I-P\right)\left(a_{i}\right)\perp\sigma\left(b\right)$
and $\tau\left(a_{i}\right)\perp b$, while both $\tau$ and $\sigma$
preserve inner products.

We conclude that $\varphi_{i}\left(\tau\left(a_{i}\right),b\right)$
holds, that is $\varphi_{i}\left(x_{i},b\right)\in q_{i}$ as required.
\end{proof}
\begin{cor}
If $\overline{p}$ is an arbitrary tuple of elements in $S\left(\mathcal{H}_{0}\right)$
then there exists a tuple $\overline{q}$ such that $\tp^{\p}\left(\overline{p}\right)\subseteq\tp^{\p}\left(\overline{q}\right)$
and every type in $q$ is the type of an element perpendicular to
$\mathcal{H}_{0}$.
\end{cor}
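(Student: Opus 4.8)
The plan is to read this off \propref{orthogonal} after a harmless reduction. The only delicate point in \propref{orthogonal} as stated is that it handles a single atomic $\mathcal{L}_{\pi}$ formula, whereas here we must control the full positive type, which contains existential quantifiers. To finesse this I would first assume, without loss of generality, that $\overline{p}=\langle p_{i}\rangle_{i\in I}$ enumerates \emph{every} element of $S\left(\mathcal{H}_{0}\right)$, in every sort: if the statement is proved for this tuple, then for an arbitrary tuple $\langle p_{j}\rangle_{j\in J}$ one substitutes, in every positive formula, the variable $\xi_{j}$ by the variable indexing the type $p_{j}$ in the big enumeration, and reads off the required $\overline{q}$ from the big $\overline{q}$.

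Next I would realize everything at once. By the corollary to \propref{amalgamation} there is a single $\pc$ extension $\mathcal{H}_{1}\supseteq\mathcal{H}_{0}$ — which, being a $\pc$ model of the theory of \exaref{hilbert}, is an inner product space, and which I may complete and enlarge by adjoining an orthonormal set — together with tuples $a_{i}$ such that $p_{i}=\tp^{\p}\left(a_{i}/\mathcal{H}_{0}\right)$, with $\mathcal{H}_{1}$ infinite dimensional over $\mathcal{H}_{0}$ and with the orthogonal complement $\mathcal{H}_{0}^{\perp}$ (taken in $\mathcal{H}_{1}$) of dimension at least that of $\mathrm{Span}\left(a_{i}:i\in I\right)$. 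Then, exactly as in \propref{orthogonal}, \lemref{Hilbert-Robinson} supplies $\tau\in\Aut\left(\mathcal{H}_{1}\right)$ carrying an orthonormal basis of $\mathrm{Span}\left(a_{i}:i\in I\right)$ onto an orthonormal subset of $\mathcal{H}_{0}^{\perp}$, so that $\tau\left(a_{i}\right)\perp\mathcal{H}_{0}$ for all $i$. Put $q_{i}=\tp^{\p}\left(\tau\left(a_{i}\right)/\mathcal{H}_{0}\right)$ and $\overline{q}=\langle q_{i}\rangle_{i\in I}$; each $q_{i}$ is by construction the type of an element perpendicular to $\mathcal{H}_{0}$.

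It remains to verify $\tp^{\p}\left(\overline{p}\right)\subseteq\tp^{\p}\left(\overline{q}\right)$. Let $\theta\left(\overline{\xi}\right)$ be a positive $\mathcal{L}_{\pi}$ formula with $S\left(\mathcal{H}_{0}\right)\vDash\theta\left(\overline{p}\right)$, and write $\theta=\exists\overline{\zeta}\,\psi$ with $\psi$ a positive quantifier-free combination of atomic $\mathcal{L}_{\pi}$ formulas. Since $\overline{p}$ enumerates all of $S\left(\mathcal{H}_{0}\right)$, a witnessing tuple for $\overline{\zeta}$ is found among the coordinates of $\overline{p}$ itself, so $S\left(\mathcal{H}_{0}\right)\vDash\psi\left(\overline{p}\right)$ (with this enlarged plugging-in); as $\psi$ is a disjunction of conjunctions of atomic formulas, one conjunction $\bigwedge_{j}\epsilon_{j}$ holds on $\overline{p}$. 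Each $\epsilon_{j}$ involves only finitely many coordinates, whose realizing elements sit among the $a_{i}$, so \propref{orthogonal} applies to each $\epsilon_{j}$ and gives $S\left(\mathcal{H}_{0}\right)\vDash\epsilon_{j}\left(\overline{q}\right)$ for all $j$, hence $\vDash\psi\left(\overline{q}\right)$, hence $\vDash\theta\left(\overline{q}\right)$. This finishes the inclusion, and with it the corollary.

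The argument is essentially routine given \propref{orthogonal}; the main thing to get right is the handling of the existential quantifier in $\theta$, which is why I pass to the tuple enumerating all of $S\left(\mathcal{H}_{0}\right)$, and — relatedly — the bookkeeping needed to realize all of these types inside one Hilbert space whose orthogonal complement of $\mathcal{H}_{0}$ is simultaneously large enough for all the $a_{i}$ (so that a single $\tau$ works).
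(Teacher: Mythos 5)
Your proof is correct and is essentially the paper's argument: in both cases the existential quantifier is handled by arranging that the witnesses appear among the coordinates of the tuple to which \propref{orthogonal} is applied, with \corref{hilbert-type-matrix} guaranteeing that the resulting perpendicular types $\overline{q}$ are well defined independently of $\tau$. The only difference is bookkeeping --- the paper augments $\overline{p}$ per formula by the finitely many witnesses $\overline{r}$ and applies \propref{orthogonal} to $\overline{r}\frown\overline{p}$, whereas you enumerate all of $S\left(\mathcal{H}_{0}\right)$ once up front (which also forces you to realize all types simultaneously and to extend \lemref{Hilbert-Robinson} to an infinite orthonormal set; both are harmless).
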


\begin{proof}
Take $\overline{q}$ from \propref{orthogonal}, noting that it does
not depend on $\tau$ by \corref{hilbert-type-matrix}. If $\exists\overline{\zeta}:\Phi\left(\overline{\zeta},\overline{\xi}\right)$
is a positive formula (where $\Phi$ is quantifier free) such that
$\exists\overline{\zeta}:\Phi\left(\overline{\zeta},\overline{p}\right)$
holds then let $\overline{r}$ be such that $\Phi\left(\overline{r},\overline{p}\right)$
holds.

Then by \propref{orthogonal} for some $\overline{s}$ we have that
$\Phi\left(\overline{s},\overline{q}\right)$ holds thus $\exists\overline{\zeta}:\Phi\left(\overline{\zeta},\overline{q}\right)$
holds as required.
\end{proof}
\begin{rem}
If $q\in S_{x}\left(\mathcal{H}_{0}\right)$ is a type of a perpendicular
element $\overline{a}$ then 
\[
\mathcal{D}_{\left\langle x_{i},y\right\rangle =0\wedge\left\Vert x_{i}\right\Vert +\left\Vert y\right\Vert \leq2N;\left\langle y,y\right\rangle \geq0\wedge\left|y\right|\leq N}\left(q\right)
\]
 for all $i$ and for all $N\geq\left\Vert a\right\Vert $, and such
a $\mathcal{D}$ only holds for the types of perpendicular elements.
Thus the positive type in $\mathcal{L}_{\pi}$ of a type (or tuple
of types) is maximal iff each of these types is perpendicular.
\end{rem}

\begin{cor}
$\mathcal{J}_{n}\cong\left\{ \tp\left(\overline{a}/\mathcal{H}\right)\mid\left|\overline{a}\right|=n,\forall i<n:a_{i}\perp\mathcal{H}\right\} \leq S_{n}^{max}\left(\mathcal{H}\right)$,
which by \corref{hilbert-type-matrix} and a well known result in
the theory of inner product spaces can be indexed as 
\[
\left\{ p_{M}\mid M\in M_{n}\left(\mathbb{R}\right)\text{ symmetric and positive-semi-definite}\right\} 
\]
 (where $\tp^{\p}\left(\overline{a}\right)=p_{M\left(\overline{a}\right)}$).
\end{cor}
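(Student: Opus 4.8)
The plan is to realise the core explicitly as a substructure of a single type space. Fix a $\pc$ model $\mathcal{H}$ of $T$ that is sufficiently positively saturated --- concretely an infinite-dimensional Hilbert space of large enough cardinality --- so that $\mathcal{T}_{\pi}=\Th^{\hu}\left(S\left(\mathcal{H}\right)\right)_{\mathcal{L}_{\pi}}$ by \claimref{Tpi-th-of-sat}; here we use that $T$, being semi-Hausdorff by \exaref{hilbert}, is thick, so that $\mathcal{T}_{\pi}$ and the core $\mathcal{J}_{n}$ (the sort of arity $n$ of $\mathcal{J}_{\pi}=\Core_{\pi}\left(T\right)$) are well defined. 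By \thmref{hom-to-type-space}(2) and the fact that $\mathcal{J}_{\pi}$ is $\pc$ there is an immersion $\iota:\mathcal{J}_{\pi}\hookrightarrow S\left(\mathcal{H}\right)$, and since an immersion is an isomorphism onto its image it suffices to identify $J=\iota\left(\mathcal{J}_{\pi}\right)$; I will show that in each sort of arity $n$ one has $J_{n}=\left\{ \tp^{\p}\left(\overline{a}/\mathcal{H}\right)\mid\left|\overline{a}\right|=n,\ \overline{a}\perp\mathcal{H}\right\} $.

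The inclusion $J_{n}\subseteq\left\{\text{types of perpendicular }n\text{-tuples}\right\}$ is soft: as $\mathcal{J}_{\pi}$ is $\pc$, each of its elements has a maximal positive $\mathcal{L}_{\pi}$-type over $\emptyset$ (\propref{max-iff-pc}), and immersions preserve positive types over $\emptyset$, so elements of $J$ do too; by the remark preceding the corollary, maximality of this type is equivalent to the element being the positive type of a tuple orthogonal to $\mathcal{H}$. For the reverse inclusion I will use universality: $S\left(\mathcal{H}\right)\vDash\mathcal{T}_{\pi}$ and $\mathcal{J}_{\pi}$ is universal for $\mathcal{T}_{\pi}$ (\propref{univ-ec}), so there is a homomorphism $r:S\left(\mathcal{H}\right)\to\mathcal{J}_{\pi}$. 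Given a perpendicular type $p$, the positive $\mathcal{L}_{\pi}$-type of $\iota\left(r\left(p\right)\right)$ over $\emptyset$ contains that of $p$ (both $r$ and $\iota$ push positive formulas forward), and both are maximal --- that of $p$ because $p$ is perpendicular, that of $\iota\left(r\left(p\right)\right)$ because $r\left(p\right)$ lies in the $\pc$ model $\mathcal{J}_{\pi}$ --- hence equal. Finally, distinct perpendicular types are separated already by atomic relations over $\emptyset$ (each Gram entry $\left\langle x_{i},x_{j}\right\rangle\in C$ and each norm bound is expressible by a $\mathcal{D}_{\varphi;\alpha}$ with trivially satisfiable $\alpha$), so a maximal positive $\mathcal{L}_{\pi}$-type determines a unique perpendicular type; thus $p=\iota\left(r\left(p\right)\right)\in J_{n}$. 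The input behind this --- that perpendicular types are exactly those carrying a maximal positive type, and that they are plentiful --- is \propref{orthogonal} with its corollary.

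The remaining assertion is pure bookkeeping. By \corref{hilbert-type-matrix}, for $\overline{a}\perp\mathcal{H}$ the type $\tp^{\p}\left(\overline{a}/\mathcal{H}\right)$ depends only on the Gram matrix $M\left(\overline{a}\right)=\left(\left\langle a_{i},a_{j}\right\rangle\right)_{i,j<n}$, and by \lemref{Hilbert-Robinson} distinct Gram matrices give distinct types, so $\overline{a}\mapsto M\left(\overline{a}\right)$ is a bijection of $J_{n}$ onto the set of Gram matrices of $n$-tuples of vectors in real Hilbert spaces; by the standard fact that a real symmetric $n\times n$ matrix is a Gram matrix if and only if it is positive semidefinite (realise it as the Gram matrix of the columns of its symmetric square root), this set is exactly the symmetric positive-semidefinite $n\times n$ real matrices, which gives the indexing $\tp^{\p}\left(\overline{a}\right)=p_{M\left(\overline{a}\right)}$.

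The step I expect to be the main obstacle is the reverse inclusion: showing that every perpendicular type actually occurs in $\mathcal{J}_{\pi}$ and not merely in $S\left(\mathcal{H}\right)$. A naive surjectivity argument of the kind used in \propref{type-space-is-p.c.} is unavailable here, since $S\left(\mathcal{H}\right)$ is strictly larger than the core; the argument above circumvents this by exploiting that, restricted to perpendicular types, the positive $\mathcal{L}_{\pi}$-type over $\emptyset$ is a complete invariant --- which is precisely what the remark preceding the corollary is arranged to supply.
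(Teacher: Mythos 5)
Your proposal is correct and follows the route the paper intends: identify the image of $\mathcal{J}$ inside $S\left(\mathcal{H}\right)$ with the types whose positive type over $\emptyset$ is maximal (via an immersion of the core and a retraction of $S\left(\mathcal{H}\right)$ onto it, which is exactly the content of \thmref{robinson-pc}.5 together with \propref{max-iff-pc}), and then use the remark preceding the corollary and \corref{hilbert-type-matrix} to identify these maximal types with the perpendicular types and index them by symmetric positive-semidefinite Gram matrices. The only differences are cosmetic: you re-derive the relevant instance of \thmref{robinson-pc}.5 instead of citing it, and you make explicit the (correct and needed) observation that a perpendicular type is recoverable from its parameter-free positive $\mathcal{L}$-type via the relations $\mathcal{D}_{\varphi;\alpha}$ with trivial $\alpha$, which upgrades ``same maximal type'' to actual equality $p=\iota\left(r\left(p\right)\right)$.
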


\section{\label{sec:Partial-Positive-Patterns}Partial Positive Patterns}

We first present the construction of the core where as type spaces
we take the space of all realized positive types over $M$ (not just
maximal) where by realized we mean the positive type of some element
in an arbitrary continuation $N$ of $M$\footnote{That is $N$ is not necessarily $\pc$.}.
As we note at the end of this section, in \subsecref{Shortcomings},
this construction is less useful than the one in \secref{Maximal-Positive-Patterns}
which is the main focus of this paper. This section is presented here
mainly for completeness.

\subsection{Basic Definitions}

Let $L$ be a language, $T$ an irreducible primitive universal theory.
\begin{defn}
\label{def:non-maximal-L}Take some $M\vDash T^{\pm}$. We define
for a homomorphism $h:M\rightarrow N\vDash T$ and $a\in N$

\[
\tp_{h}^{\p}\left(a/M\right):=\left\{ \varphi\left(x,c\right)\mid c\in M,\varphi\text{ positive},N\vDash\varphi\left(a,h\left(c\right)\right)\right\} 
\]

And then define
\begin{align*}
S^{+}\left(M\right) & =\left\{ \tp_{h}^{\p}\left(a/M\right)\mid a\in N,h:M\rightarrow N\vDash T,h\text{ a homomorphism}\right\} \\
 & =S^{\p}\left(\Delta^{\atom}\left(M\right)\cup T\right)
\end{align*}

(where $S^{\p}\left(T\right)=\left\{ p|_{\p}\mid p\in S\left(T\right)\right\} $).

Note that if $h:M\rightarrow N$ is a homomorphism then $N\vDash\Th^{\hu}\left(M\right)^{-}=T^{-}$,
and in particular $N\vDash T$ iff $N\vDash T^{\pm}$. 

We include a sort for each arity of the type.
\end{defn}

\begin{defn}
For any choice of positive formulas $\left\langle \varphi_{i}\left(x,y\right)\right\rangle _{i<n}$,
and $\alpha\left(y\right)$, define 
\[
\mathcal{R}_{\varphi_{0},...,\varphi_{n-1};\alpha}^{S^{+}\left(M\right)}=\left\{ \left(p_{0},...,p_{n-1}\right)\mid\neg\left(\exists c\in\alpha\left(M\right):\stackrel[i<n]{}{\bigwedge}\varphi_{i}\left(x,c\right)\in p_{i}\right)\right\} 
\]
.

Let $\mathcal{L}$ be the language with equality and every $\mathcal{R}$;
we consider $S^{+}\left(M\right)$ as an $\mathcal{L}$ structure.
\end{defn}

\begin{rem}
Note that since $p\in S^{+}\left(M\right)$ is defined over a homomorphism
but $\alpha$ is computed in $M$, $\mathcal{R}_{\varphi;\alpha}$
is not equivalent to $\mathcal{R}_{\varphi\wedge\alpha}$; indeed
consider for example the language $\left\{ E\right\} $ and the theory
$T=\left\{ \forall x:\neg xEx\right\} $ (the theory of directed graphs).

Let $M$ be the empty graph on $\left\{ a,b\right\} $. Let $N$ be
$\xymatrix{a\ar[r]\ar[rd] & c\ar[d]\\
 & b
}
$ and $h=\id_{M}$. Then $\tp_{h}^{\p}\left(c/M\right)\vDash\mathcal{R}_{y_{1}Ex\wedge xEy_{2};y_{1}Ey_{2}}$
but not $\tp_{h}^{\p}\left(c/M\right)\vDash\mathcal{R}_{y_{1}Ex\wedge xEy_{2}\wedge y_{1}Ey_{2}}$. 

If however $M$ is $\pc$ then the two are indeed equivalent.
\end{rem}

\subsection{Common Theory}
\begin{prop}
\label{prop:embed-in-unltrafilter}If $N\vDash\Th^{\hu}\left(M\right)$
then there exists a homomorphism $g:N\rightarrow M^{\mathcal{U}}$
for some ultrafilter $\mathcal{U}$.

Furthermore, if $h:M\rightarrow N$ is a homomorphism and $M$ is
$\pc$, we can choose $g$ such that $g\circ h$ is the natural embedding
$e:M\rightarrow M^{\mathcal{U}}$.
\end{prop}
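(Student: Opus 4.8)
The plan is to reduce the statement to the standard fact that $\Th^{\hu}(N) \subseteq \Th^{\hu}(M)$ implies $N$ admits a homomorphism into an ultrapower of $M$, which in turn is just a compactness argument. First I would record the basic principle: a homomorphism $g: N \to M^{\mathcal{U}}$ is the same as a map $N \to M^{I}$ for some index set $I$, together with an ultrafilter $\mathcal{U}$ on $I$, such that the set of coordinates on which the map respects each atomic fact $\varphi(\bar a)$ (with $\bar a \in N$) is $\mathcal{U}$-large. So the natural choice is to take $I$ to be the set of all finite $\Delta_0 \subseteq \Delta^{\atom}(N)$ (the atomic diagram of $N$, with new constants for elements of $N$); for each such $\Delta_0$, since $N \vDash \exists \bar x\, \bigwedge \Delta_0$ and this is a $\pp$ sentence, and since $\Th^{\pp}(N) \subseteq \Th^{\pp}(M)$ (because $\Th^{\hu}(M) \subseteq \Th^{\hu}(N)$, hence the reverse containment of $\pu$ sentences, hence of $\pp$ sentences by negation), we get an assignment $f_{\Delta_0}: N \to M$ satisfying every formula in $\Delta_0$ at the relevant tuple. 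Then I would let $\mathcal{U}$ be any ultrafilter on $I$ containing all the sets $J_{\Delta_0} = \{ \Delta_1 \mid \Delta_0 \subseteq \Delta_1 \}$ (these have the finite intersection property since $J_{\Delta_0} \cap J_{\Delta_0'} \supseteq J_{\Delta_0 \cup \Delta_0'}$), and the induced map $g: N \to M^{I}/\mathcal{U} = M^{\mathcal{U}}$, $g(c) = [\Delta_0 \mapsto f_{\Delta_0}(c)]$, is a homomorphism: for any atomic $\varphi(\bar c)$ holding in $N$, it holds on every coordinate in $J_{\{\varphi(\bar c)\}}$, which is $\mathcal{U}$-large, so $M^{\mathcal{U}} \vDash \varphi(g(\bar c))$.

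For the "furthermore" clause, suppose $h: M \to N$ is a homomorphism and $M$ is $\pc$ (so in particular $h$ is an immersion). The goal is to arrange $g \circ h = e$, the diagonal embedding. The idea is to refine the index set and the maps $f_{\Delta_0}$ so that they all restrict to the identity on the image $h(M)$. Concretely, in building $f_{\Delta_0}$ I would first use that $h$ is an immersion: given the finite piece $\Delta_0$, which involves finitely many elements of $N$, some of which lie in $h(M)$ and some of which don't, I want a homomorphism-like assignment that is the identity on $h(M)$ and sends the rest into $M$ witnessing the $\pp$ formula. This is exactly where immersion is used — the $\pp$ sentence $\exists \bar y\, \psi(h(\bar m), \bar y)$ (where $\bar m \in M$ are the parameters from $h(M)$ occurring in $\Delta_0$ and $\bar y$ are the others) holds in $N$, hence, since $h$ is an immersion, the pulled-back statement $\exists \bar y\, \psi(\bar m, \bar y)$ holds in $M$, giving witnesses in $M$; define $f_{\Delta_0}$ to be the identity on the relevant elements of $M$ and these witnesses elsewhere. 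Taking $\mathcal{U}$ as before, the induced $g$ satisfies $g(h(m)) = [\Delta_0 \mapsto m] = e(m)$ for every $m \in M$ (at least for $m$ appearing in cofinally many $\Delta_0$, which is all of them once we include each trivial diagram piece), so $g \circ h = e$.

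The main obstacle is the bookkeeping in the "furthermore" part: one must be careful that the elements of $M$ occurring as parameters get sent to themselves on a $\mathcal{U}$-large set of coordinates \emph{simultaneously} with witnessing the $\pp$ formulas, and that this is compatible across the directed system of finite diagrams. This is handled by the standard trick of indexing over finite subsets and using that the "identity on $h(\bar m)$" constraint, for any fixed finite $\bar m$, is satisfied cofinally (indeed on all sufficiently large $\Delta_0$), hence holds $\mathcal{U}$-almost everywhere; the immersion hypothesis is precisely what makes the local witnessing possible. Everything else is routine compactness/ultrafilter manipulation, essentially the ultraproduct proof of the compactness theorem adapted to the $\hu$ setting, so I would present it compactly rather than spelling out every coordinate.
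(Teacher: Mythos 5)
Your proposal is correct and follows essentially the same route as the paper's proof: both index an ultraproduct of $M$ by (pieces of) the positive quantifier-free diagram of $N$, use that $\pp$ sentences true in $N$ are true in $M$ to find coordinatewise witnesses, and invoke the immersion property of the $\pc$ model $M$ to fix the $h(M)$-parameters for the "furthermore" clause. The only cosmetic difference is that the paper indexes over single quantifier-free positive sentences with a filter generated by implication-sets, whereas you index over finite subsets of the atomic diagram with the filter of up-sets; these are interchangeable.
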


\begin{proof}
Let $I$ be the set of quantifier free positive sentences in $L_{N}$
that hold in $N$. For any $i\in I$, let $I_{i}$ be the set $\left\{ j\mid\Th^{\hu}\left(M\right)\vDash j\rightarrow i\right\} $;
then $\left\{ I_{i}\mid i\in I\right\} $ is a filter thus can be
extended to an ultrafilter $\mathcal{U}$. Make $M^{\mathcal{U}}$
into a model of $L_{N}$ as follows:

Take some $i=\varphi\left(\overline{a}\right)\in I$ ($\varphi$ quantifier
and parameter free, $a\in N$). Since $\forall y\neg\varphi\left(y\right)$
is $\hu$, $N\vDash\Th^{\hu}\left(M\right)$ and $N\vDash\varphi\left(\overline{a}\right)$,
it cannot be $M\vDash\forall y\neg\varphi\left(y\right)$. Thus there
exists $\overline{a'}\in M$ such that $M\vDash\varphi\left(\overline{a'}\right)$.
Let $M_{i}$ be the $i^{th}$ copy of $M$ in $M^{I}$, and define
$d_{\overline{a}}^{M_{i}}=\overline{a'}$ and other $d$'s arbitrarily.
Then for any $i\in I$, for any $j\in I_{i}$, $M_{j}\vDash j\vdash i$
thus by Łoś's theorem $M^{\mathcal{U}}\vDash i$.

For the furthermore, note that if $i$ is $\varphi\left(h\left(\overline{a}\right),\overline{b}\right)$
for $\overline{a}\in M$ then since $N\vDash\exists\overline{y}\varphi\left(h\left(\overline{a}\right),\overline{y}\right)$
we have also $M\vDash\exists\overline{y}\varphi\left(\overline{a},\overline{y}\right)$
from $\pc$, thus we can choose $d_{h\left(\overline{a}\right)}^{M_{i}}=\overline{a}$.
This choice guarantees that for $a\in M$, $g\left(h\left(a\right)\right)=d_{h\left(a\right)}^{M^{\mathcal{U}}}=\left[\left(a\right)_{i}\right]=e\left(a\right)$. 
\end{proof}
\begin{rem}
The $\pc$ assumption is essential for the second part ($h:M\rightarrow N$
being an embedding is also insufficient). Indeed consider for example
$\mathbb{Z}\subseteq\mathbb{Q}$ in $L=\left(<\right)$; then $\Th^{\forall}\left(\mathbb{Z}\right)=\Th^{\forall}\left(\mathbb{Q}\right)$
(in particular $\Th^{\hu}\left(\mathbb{Z}\right)=\Th^{\hu}\left(\mathbb{Q}\right)$)
but in any ultrapower of $\mathbb{Z}$ we find that $\left[\left(1\right)_{i}\right]$
is the successor for $\left[\left(0\right)_{i}\right]$ thus it is
impossible to embed $\mathbb{Q}$ into the ultrapower over $\mathbb{Z}$.
\end{rem}

\begin{lem}
\label{lem:+-S-homomorphisms}1. Assume $M,N\vDash T^{\pm}$, and
assume $h:M\rightarrow N$ is a homomorphism. Define for $p\in S^{+}\left(N\right)$,
$h^{*}\left(p\right)=\left\{ \varphi\left(x,c\right)\mid\varphi\left(x,h\left(c\right)\right)\in p\right\} $. 

Then $h^{*}\left(p\right)\in S^{+}\left(M\right)$, and furthermore
$h^{*}$ is an $\mathcal{L}$ homomorphism.

2. Let $\mathcal{U}$ be an ultrafilter. Then there exists an $\mathcal{L}$
homomorphism $I:S^{+}\left(M\right)\rightarrow S^{+}\left(M^{\mathcal{U}}\right)$,
defined as 
\[
I\left(p\right)=\left\{ \varphi\left(x,\left[\left(c_{i}\right)_{i}\right]\right)\mid\left\{ i\mid\varphi\left(x,c_{i}\right)\in p\right\} \in\mathcal{U}\right\} 
\]

Furthermore, if we consider the natural embedding $e:M\rightarrow M^{\mathcal{U}}$,
then $e^{*}\circ I$ is the identity.

3. If $N\vDash\Th^{\hu}\left(M\right)$, then there is a homomorphism
$f:S^{+}\left(M\right)\rightarrow S^{+}\left(N\right)$.

Furthermore if $M$ is $\pc$ model of $\Th^{\hu}\left(M\right)$
and $h:M\rightarrow N$ is a homomorphism then we can choose $f$
to be a right inverse for $h^{*}$.
\end{lem}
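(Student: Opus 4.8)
The plan is to prove the three parts in order, since each builds on the previous. For part (1), I would first check that $h^{*}(p)\in S^{+}(M)$: if $p=\tp_{g}^{\p}(a/N)$ for some homomorphism $g:N\rightarrow N'\vDash T$ and $a\in N'$, then I claim $h^{*}(p)=\tp_{g\circ h}^{\p}(a/M)$, which is immediate from the definitions since $\varphi(x,c)\in h^{*}(p)$ iff $\varphi(x,h(c))\in p$ iff $N'\vDash\varphi(a,g(h(c)))$. For the homomorphism claim, I need to show each $\mathcal{R}_{\varphi_{0},\dots,\varphi_{n-1};\alpha}$ is preserved. Suppose $(p_{0},\dots,p_{n-1})\in\mathcal{R}^{S^{+}(N)}_{\varphi_{0},\dots,\varphi_{n-1};\alpha}$; I must show there is no $c'\in\alpha(M)$ with $\bigwedge_{i<n}\varphi_{i}(x,c')\in h^{*}(p_{i})$. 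If such $c'$ existed, then $h(c')\in\alpha(N)$ (since $\alpha$ is positive and $h$ a homomorphism) and $\bigwedge_{i<n}\varphi_{i}(x,h(c'))\in p_{i}$, contradicting membership in $\mathcal{R}^{S^{+}(N)}$. Note equality is trivially preserved.

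For part (2), I would first verify $I(p)$ is well-defined (independent of the representative $(c_i)_i$ of $[(c_i)_i]$) and lies in $S^{+}(M^{\mathcal{U}})$: writing $p=\tp_{g}^{\p}(a/M)$ with $g:M\rightarrow N'$, one takes the ultrapower $g^{\mathcal{U}}:M^{\mathcal{U}}\rightarrow (N')^{\mathcal{U}}$ and checks via {\L}o{\'s}'s theorem that $I(p)=\tp_{g^{\mathcal{U}}}^{\p}(j(a)/M^{\mathcal{U}})$ where $j:N'\to (N')^{\mathcal{U}}$ is the diagonal embedding; the key point is that $\varphi(x,[(c_i)_i])\in I(p)$ iff $\{i\mid N'\vDash\varphi(a,g(c_i))\}\in\mathcal{U}$ iff $(N')^{\mathcal{U}}\vDash\varphi(j(a),g^{\mathcal{U}}([(c_i)_i]))$. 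That $I$ is an $\mathcal{L}$ homomorphism is the same kind of bookkeeping as part (1), using that $\alpha(M^{\mathcal{U}})$ is computed via {\L}o{\'s}. Finally, $e^{*}(I(p))=\{\varphi(x,c)\mid\varphi(x,e(c))\in I(p)\}=\{\varphi(x,c)\mid\{i\mid\varphi(x,c)\in p\}\in\mathcal{U}\}=\{\varphi(x,c)\mid\varphi(x,c)\in p\}=p$, since $e(c)=[(c)_i]$ is the constant sequence.

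For part (3), given $N\vDash\Th^{\hu}(M)$, I would apply \propref{embed-in-unltrafilter} to obtain a homomorphism $g:N\rightarrow M^{\mathcal{U}}$, and then set $f=g^{*}\circ I:S^{+}(M)\rightarrow S^{+}(M^{\mathcal{U}})\rightarrow S^{+}(N)$, which is a composition of $\mathcal{L}$ homomorphisms by parts (1) and (2). For the "furthermore," suppose $M$ is $\pc$ and $h:M\rightarrow N$ is a homomorphism; by the second part of \propref{embed-in-unltrafilter} we may choose $g$ so that $g\circ h=e:M\rightarrow M^{\mathcal{U}}$ is the diagonal embedding. Then $h^{*}\circ f=h^{*}\circ g^{*}\circ I=(g\circ h)^{*}\circ I=e^{*}\circ I=\id_{S^{+}(M)}$ by part (2), so $f$ is a right inverse for $h^{*}$ as desired. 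The main obstacle is the careful verification in part (2) that $I$ is well-defined and $\mathcal{L}$-preserving --- keeping track of which structure each $\alpha$ is evaluated in and invoking {\L}o{\'s}'s theorem correctly for the positive formulas with parameters --- but this is routine once the representatives are set up; everything else is straightforward diagram-chasing with the definitions.
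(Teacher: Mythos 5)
Your proposal is correct and follows essentially the same route as the paper: identifying $h^{*}(p)$ with $\tp_{g\circ h}^{\p}(a/M)$, realizing $I(p)$ as the type of the diagonal image over the ultrapower homomorphism via {\L}o{\'s}, and composing with the map from \propref{embed-in-unltrafilter} for part (3). The only step you defer as "routine bookkeeping" --- that $I$ preserves each $\mathcal{R}_{\varphi_{0},\dots,\varphi_{n-1};\alpha}$ --- does require one small extra observation beyond part (1), namely that if $A=\bigcup_{j<n}A_{j}\in\mathcal{U}$ then some $A_{j}\in\mathcal{U}$, so that a single index $j$ works on a $\mathcal{U}$-large set; this is exactly what the paper spells out and it goes through.
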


\begin{proof}
1. Take $K\vDash T^{\pm}$ and $g:N\rightarrow K\vDash T^{\pm}$ a
homomorphism, and let $a\in K$ such that $p=\tp_{g}^{\p}\left(a/N\right)$.
Then for any $\varphi\left(x,d\right)\in L_{M}$, 
\begin{align*}
\varphi\left(x,d\right) & \in h^{*}\left(p\right)\Longleftrightarrow\varphi\left(x,h\left(d\right)\right)\in p\Longleftrightarrow\\
 & K\vDash\varphi\left(a,g\left(h\left(d\right)\right)\right)\Longleftrightarrow\varphi\left(x,d\right)\in\tp_{g\circ h}^{\p}\left(a/M\right)
\end{align*}

But $g\circ h$ is a homomorphism thus $h^{*}\left(p\right)\in S^{+}\left(M\right)$.

Furthermore assume $\mathcal{R}_{\varphi_{0},...,\varphi_{n-1};\alpha}^{S^{+}\left(N\right)}\left(p_{0},...,p_{n-1}\right)$
holds. Then for any $c\in\alpha\left(N\right)$, there exists $i<n$
such that $\varphi_{i}\left(x,c\right)\notin p_{i}$.

Take some $d\in\alpha\left(M\right)$; then since $\alpha$ is positive
and $h$ is a homomorphism, $h\left(d\right)\in\alpha\left(N\right)$.
Therefore for some $i<n$, $\varphi_{i}\left(x,h\left(d\right)\right)\notin p_{i}\Rightarrow\varphi_{i}\left(x,d\right)\notin h^{*}\left(p_{i}\right)$;
thus $\mathcal{R}_{\varphi_{0},...,\varphi_{n-1};\alpha}^{S^{+}\left(M\right)}\left(h^{*}\left(p_{0}\right),...,h^{*}\left(p_{n-1}\right)\right)$
and thus $h^{*}$ is a homomorphism.

2. Take some $p\in S^{+}\left(M\right)$. Fix $h:M\rightarrow N\vDash T^{\pm}$,
$a\in N$ such that $p=\tp_{h}^{\p}\left(a/N\right)$. Then first
we claim that $h^{\mathcal{U}}\left(\left[\left(c_{i}\right)\right]\right)=\left[\left(h\left(c_{i}\right)\right)\right]$
is a well defined homomorphism from $M^{\mathcal{U}}\rightarrow N^{\mathcal{U}}$;
this is obvious from Łoś if we consider the 2-sorted structure $\left(M,N,h\right)$.

We claim $I\left(p\right)=\tp_{h^{\mathcal{U}}}^{\p}\left(\left[\left(a\right)_{i}\right]/M^{\mathcal{U}}\right)$.
Indeed 
\begin{align*}
\varphi\left(x,\left[\left(c_{i}\right)_{i}\right]\right) & \in\tp_{h^{\mathcal{U}}}^{\p}\left(\left[\left(a\right)_{i}\right]/M^{\mathcal{U}}\right)\Longleftrightarrow\\
N^{\mathcal{U}}\vDash & \varphi\left(\left[\left(a\right)_{i}\right],h^{\mathcal{U}}\left(\left[\left(c_{i}\right)_{i}\right]\right)\right)\Longleftrightarrow\\
N^{\mathcal{U}}\vDash & \varphi\left(\left[\left(a\right)_{i}\right],\left[\left(h\left(c_{i}\right)\right)_{i}\right]\right)\Longleftrightarrow\\
 & \left\{ i\mid N\vDash\varphi\left(a,h\left(c_{i}\right)\right)\right\} \in\mathcal{U}\Longleftrightarrow\\
 & \left\{ i\mid\varphi\left(x,c_{i}\right)\in p\right\} \in\mathcal{U},
\end{align*}
therefore $I\left(p\right)\in S^{+}\left(M^{\mathcal{U}}\right)$.

We claim this is also a homomorphism --- indeed if $\mathcal{R}_{\varphi_{0},...,\varphi_{n-1};\alpha}^{S^{+}\left(M\right)}\left(p_{0},...,p_{n-1}\right)$,
and $\left[\left(c_{i}\right)_{i}\right]\in\alpha\left(M^{\mathcal{U}}\right)$,
then $A=\left\{ i\mid c_{i}\in\alpha\left(M\right)\right\} \in\mathcal{U}$.
For any $i\in A$, we find that there exists $j<n$ such that $\varphi_{j}\left(x,c_{i}\right)\notin p_{j}$.
Let $A_{j}=\left\{ i\in A\mid\varphi_{j}\left(x,c_{i}\right)\notin p_{j}\right\} $.
Then $A=\bigcup_{j<n}A_{j}\in\mathcal{U}$, thus for some $j<n$,
$A_{j}\in\mathcal{U}$ (since $\mathcal{U}$ is an ultrafilter). We
conclude that $\left\{ i\mid\varphi_{j}\left(x,c_{i}\right)\notin p_{j}\right\} \in\mathcal{U}$,
thus $\mathcal{R}_{\varphi_{0},...,\varphi_{n-1};\alpha}^{S^{+}\left(M\right)}\left(I\left(p_{0}\right),...,I\left(p_{n-1}\right)\right)$
holds.

Let $p\in S^{+}\left(M\right)$. Then 
\begin{align*}
\varphi\left(x,c\right) & \in e^{*}\left(I\left(p\right)\right)\Longleftrightarrow\\
 & \varphi\left(x,\left[\left(c\right)_{i}\right]\right)\in I\left(p\right)\Longleftrightarrow\\
 & \left\{ i\mid\varphi\left(x,c\right)\in p\right\} \in\mathcal{U}\Longleftrightarrow\\
 & \varphi\left(x,c\right)\in p,
\end{align*}
thus $e^{*}\circ I=\id_{S^{+}\left(M\right)}$.

3. Let $g:N\rightarrow M^{\mathcal{U}}$ a homomorphism as in \propref{embed-in-unltrafilter}.
Then $g^{*}\circ I:S^{+}\left(M\right)\rightarrow S^{+}\left(N\right)$
is a homomorphism. Furthermore by \propref{embed-in-unltrafilter}
if $M$ is $\pc$ and $h:M\rightarrow N$ is a homomorphism then we
can choose $g$ such that $g\circ h=e:M\rightarrow M^{\mathcal{U}}$
thus $h^{*}\circ\left(g^{*}\circ I\right)=\left(g\circ h\right)^{*}\circ I=e^{*}\circ I=\id$.
\end{proof}
\begin{thm}
\label{thm:gen-common-theory}$\Th^{\hu}\left(S^{+}\left(M\right)\right)=\Th^{\hu}\left(S^{+}\left(N\right)\right)$
for all $M,N\vDash T^{\pm}$. 
\end{thm}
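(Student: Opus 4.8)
The plan is to deduce this from Lemma~\lemref{+-S-homomorphisms}, mirroring how Corollary~\corref{shared-theory} followed from the corresponding facts in the maximal case. The first step is to note that irreducibility of $T$ forces every model of $T^{\pm}$ to have the same $\hu$-theory: by Remark~\remref{meaning-of-+-}, $M\vDash T^{\pm}$ is equivalent to $\Th^{\hu}(M)=T$, and likewise $\Th^{\hu}(N)=T$. In particular $N\vDash\Th^{\hu}(M)$ and $M\vDash\Th^{\hu}(N)$, and both $S^{+}(M)$ and $S^{+}(N)$ are defined.

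Next I would apply Lemma~\lemref{+-S-homomorphisms}.3: since $N\vDash\Th^{\hu}(M)$ there is an $\mathcal{L}$-homomorphism $f\colon S^{+}(M)\to S^{+}(N)$. Because $\hu$ sentences are pulled back along homomorphisms, any $\hu$ $\mathcal{L}$-sentence holding in $S^{+}(N)$ also holds in $S^{+}(M)$, giving $\Th^{\hu}(S^{+}(N))\subseteq\Th^{\hu}(S^{+}(M))$. Running the same argument with $M$ and $N$ swapped (now using $M\vDash\Th^{\hu}(N)$) yields the opposite inclusion, hence equality.

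There is no real obstacle at this level; all the content has been pushed into Lemma~\lemref{+-S-homomorphisms}, which in turn rests on Proposition~\propref{embed-in-unltrafilter} (embedding a model of $\Th^{\hu}(M)$ into an ultrapower of $M$) and the explicit map $I\colon S^{+}(M)\to S^{+}(M^{\mathcal{U}})$. The only things worth being careful about are keeping track of which language $\mathcal{L}$ refers to in this section (the one with the $\mathcal{R}$-relations) and recording that the hypothesis $M,N\vDash T^{\pm}$ together with irreducibility of $T$ is precisely what makes the two $\hu$-theories coincide, so that Lemma~\lemref{+-S-homomorphisms}.3 applies in both directions. Note also that, although every $\hu$ sentence is a conjunction of $\pu$ sentences, the homomorphism-pullback argument disposes of all $\hu$ sentences uniformly, so no reduction to $\pu$ sentences is needed.
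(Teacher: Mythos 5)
Your proof is correct and is essentially identical to the paper's: the paper likewise combines Remark~\remref{meaning-of-+-} (so that $M,N\vDash T^{\pm}$ gives $\Th^{\hu}(M)=\Th^{\hu}(N)=T$) with Lemma~\lemref{+-S-homomorphisms}.3 to obtain $\mathcal{L}$-homomorphisms in both directions, and then concludes by pulling back $\hu$ sentences.
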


\begin{proof}
If $f:S^{+}\left(M\right)\rightarrow S^{+}\left(N\right)$ is a homomorphism,
then $\Th^{\hu}\left(S^{+}\left(N\right)\right)\subseteq\Th^{\hu}\left(S^{+}\left(M\right)\right)$;
since by \lemref{+-S-homomorphisms}.3 and \remref{meaning-of-+-}
such an $f$ exists in both direction, we have equality.
\end{proof}
\begin{defn}
We denote $\mathcal{T}^{+}=\Th^{\hu}\left(S^{+}\left(M\right)\right)$
for $M\vDash T^{\pm}$ arbitrary.
\end{defn}

Since the choice of $M$ is arbitrary, we can assume $M$ is $\pc$.
In this case we may assume that every homomorphism from $M$ is the
identity (since it must be an embedding).

\subsection{Universality and Boundedness}
\begin{thm}
\label{thm:universal-plus}Assume $M\vDash T^{\pm}$. Then any model
$A$ of $\mathcal{T}^{+}$ admits a homomorphism into $S^{+}\left(M\right)$.
In particular if $A=E$ is $\pc$, it is embeddable in $S^{+}\left(M\right)$.

In particular, $\mathcal{T}^{+}$ is bounded (by $\left|S^{+}\left(M\right)\right|$
for arbitrary $M$) thus it has a universal $\pc$ model.
\end{thm}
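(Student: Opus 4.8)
The plan is to adapt the proof of \thmref{hom-to-type-space}(1): put a compact Hausdorff topology on each sort $S^{+}\left(M\right)$ for which every relation symbol of $\mathcal{L}$ has closed interpretation, and then apply \lemref{hom-to-compact}, noting that $A\vDash\mathcal{T}^{+}=\Th^{\hu}\left(S^{+}\left(M\right)\right)$ (this is how $\mathcal{T}^{+}$ is defined, via \thmref{gen-common-theory}), so that the hypotheses of \lemref{hom-to-compact} on the shared $\hu$/$\pp$ theory are automatically met.

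First I would fix the topology. Since each sort of $S^{+}\left(M\right)$ is $\left\{ q|_{\p}\mid q\in S\left(\Delta^{\atom}\left(M\right)\cup T\right)\right\}$ (for the appropriate arity), equip it with the quotient topology induced by the surjection $\rho\colon S\left(\Delta^{\atom}\left(M\right)\cup T\right)\rightarrow S^{+}\left(M\right)$, $q\mapsto q|_{\p}$, from the usual compact Stone space. A quotient of a compact space is compact, so $S^{+}\left(M\right)$ is compact. For a positive formula $\varphi\left(x,c\right)$ over $M$, the set $\left[\varphi\right]:=\left\{ p\mid\varphi\left(x,c\right)\in p\right\}$ has $\rho$-preimage $\left\{ q\mid\varphi\left(x,c\right)\in q\right\}$ and $\left[\varphi\right]^{c}$ has $\rho$-preimage $\left\{ q\mid\neg\varphi\left(x,c\right)\in q\right\}$; both preimages are clopen in the Stone space, so $\left[\varphi\right]$ and $\left[\varphi\right]^{c}$ are clopen in $S^{+}\left(M\right)$. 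In particular two distinct positive types are separated by such a clopen set, so the topology is also Hausdorff.

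Next I would check that the relations are closed in the product topology. Equality is interpreted as genuine equality, and the diagonal is closed because the space is Hausdorff. For $\mathcal{R}=\mathcal{R}_{\varphi_{0},\dots,\varphi_{n-1};\alpha}$, unwinding the definition gives
\[
\mathcal{R}^{S^{+}\left(M\right)}=\bigcap_{c\in\alpha\left(M\right)}\bigcup_{i<n}\pi_{i}^{-1}\left(\left\{ p\mid\varphi_{i}\left(x,c\right)\notin p\right\} \right),
\]
where the $\pi_{i}$ are the coordinate projections; each $\left\{ p\mid\varphi_{i}\left(x,c\right)\notin p\right\}$ is closed by the previous paragraph, the $\pi_{i}$ are continuous, and finite unions and arbitrary intersections of closed sets are closed, so $\mathcal{R}^{S^{+}\left(M\right)}$ is closed. (It matters here that $c$ ranges over the fixed set $\alpha\left(M\right)$ and not over points of the type spaces.) Now \lemref{hom-to-compact}, applied to the structure $S^{+}\left(M\right)$ and the model $A$ of its $\hu$ theory, produces a homomorphism $A\rightarrow S^{+}\left(M\right)$; if $A=E$ is $\pc$ this homomorphism is an immersion, hence an embedding, since every atomic formula is $\pp$.

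Finally, for the last sentence: any $\pc$ model $E$ of $\mathcal{T}^{+}$ embeds into $S^{+}\left(M\right)$, so $\left|E\right|\leq\left|S^{+}\left(M\right)\right|$; hence $\mathcal{T}^{+}$ is bounded. Since $\mathcal{T}^{+}=\Th^{\hu}\left(S^{+}\left(M\right)\right)$ it is irreducible, so \propref{univ-ec} gives it a universal $\pc$ model. I do not anticipate a serious obstacle: the argument is essentially a transcription of \thmref{hom-to-type-space}(1), and the only point needing a little care is choosing the right compact topology on $S^{+}\left(M\right)$ --- the quotient of the Stone space of complete types of $\Delta^{\atom}\left(M\right)\cup T$ by ``take the positive part'' --- and verifying that it makes the sets $\left\{ p\mid\varphi\notin p\right\}$ closed, which is exactly what forces the $\mathcal{R}$'s to be closed.
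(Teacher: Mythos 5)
Your proof is correct and follows the paper's own argument: the paper likewise puts a compact topology on $S^{+}\left(M\right)$ in which each set $\left\{ p\mid\varphi\notin p\right\}$ is closed, observes that every $\mathcal{R}_{\varphi_{0},...,\varphi_{n-1};\alpha}$ is then an intersection of finite unions of preimages of such sets and hence closed in the product topology, and invokes \lemref{hom-to-compact}. The only difference is cosmetic: the paper uses the coarser topology generated by the $\left[\varphi\right]$ alone (which, as noted in \subsecref{Shortcomings}, is not Hausdorff) and proves its compactness directly by a finite-satisfiability argument, whereas you pass to the finer quotient-of-the-Stone-space topology; the Hausdorffness you obtain is harmless but not needed for \lemref{hom-to-compact}.
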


\begin{proof}
Consider the topology on $S^{+}\left(M\right)$ generated by the basis
\[
\left[\varphi\right]=\left\{ p\in S^{+}\left(M\right)\mid\varphi\in p\right\} 
\]

for all positive formulas $\varphi$ (note that formally, $p$ contains
only positive formulas). 

This space is compact, as usual (if $\left\{ \left[\varphi_{i}\right]^{C}\right\} _{i<\kappa}$
is a family of basic closed sets with the f.i.p. then $\left\{ \neg\varphi_{i}\right\} _{i<\kappa}$
is consistent with $\Delta_{M}^{\atom}$ thus can be realized over
a homomorphism).

Furthermore, for any $\mathcal{R}=\mathcal{R}_{\varphi_{0},...,\varphi_{n-1};\alpha}$,
we have that 
\begin{align*}
\mathcal{R}\left(S^{+}\left(M\right)\right) & =\\
 & \left\{ p_{0},...,p_{n-1}\in S^{+}\left(M\right)\mid\forall b\in\alpha\left(M\right):\stackrel[i<n]{}{\bigvee}\varphi_{i}\left(x,b\right)\notin p_{i}\right\} \\
 & \stackrel[b\in\alpha\left(M\right)]{}{\bigcap}\stackrel[i<n]{}{\bigcup}\left[\varphi\left(x,b\right)\right]^{C}
\end{align*}

thus closed. 

So from \lemref{hom-to-compact} we are done.
\end{proof}
\begin{defn}
We define $\Core^{+}\left(T\right)$ to be the universal $\pc$ model
of $\mathcal{T}^{+}$ in the language $\mathcal{L}$.

When $T$ is fixed, we will denote $\mathcal{J}^{+}=\Core\left(T\right)$.
\end{defn}

\subsection{Robinson}
\begin{lem}
\label{prop:pp-equiv-at-plus} Let $T$ be an irreducible primitive
universal theory. In the following we assume $M$ is a $\pc$ model
of $T$.
\begin{enumerate}
\item In $S^{+}\left(M\right)$, every atomic formula is equivalent to an
atomic relation; that is if $\varphi\left(\zeta,\xi\right)$ is a
formula consisting of a single relation symbol, it is equivalent to
a binary relation symbol in $\zeta,\xi$ --- and likewise for formulas
with more variables.
\item Assume $\left|M\right|\geq2$ and $M$ is $\pc$. Then in $S^{+}\left(M\right)$,
every finite conjunction of atomic formulas (none of which involves
$=$) is equivalent to an atomic formula, and the choice of equivalent
formula in independent of the model.
\item The family of atomic-type-definable subsets of $S^{+}\left(M\right)$
is closed under projection on all but one coordinate (that is if $A\subseteq S^{+}\left(M\right)^{k+1}$
for $k\geq1$ is atomic-type-definable, then so is 
\[
\pi_{1,...,k}\left(A\right)=\left\{ \left(p_{1},...,p_{k}\right)\in S^{+}\left(M\right)^{k}\mid\exists p_{0}:\left(p_{0},...,p_{k}\right)\in A\right\} 
\]
). Furthermore the definition of the projection is independent of
$M$, that is for any partial atomic type $\Sigma\left(x_{0},...,x_{k}\right)$
there exists $\Pi\left(x_{1},...,x_{k}\right)$ such that $\exists x_{0}\Sigma$
is equivalent to $\Pi$ in every S$^{+}\left(M\right)$.
\item Every $\pp$ formula $\Xi\left(\mu\right)$ is equivalent in $S^{+}\left(M\right)$
to a possibly infinite (but no larger than $\left|\mathcal{L}\right|=\left|L\right|$)
conjunction of atomic formulas.
\end{enumerate}
\end{lem}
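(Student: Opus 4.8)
The plan is to transcribe the proof of \lemref{pp-equiv-at}, replacing the relations $\mathcal{D}_{\varphi_{0},\dots,\varphi_{n-1};\alpha}$ (which are ``$\forall c\in\alpha(M)$-many disjunctions'') by the relations $\mathcal{R}_{\varphi_{0},\dots,\varphi_{n-1};\alpha}$ (which are ``$\neg\exists c\in\alpha(M)$-many conjunctions''), and handling the resulting dualisation. Throughout we use only that each $p\in S^{+}(M)$ is the set of positive formulas satisfied, via a homomorphism from $M$, by some element $a$ of some $N\vDash T$; in particular $p$ is closed under $\wedge$ and respects $\vee$ (that is, $(\varphi\vee\psi)(x,c)\in p$ iff $\varphi(x,c)\in p$ or $\psi(x,c)\in p$), since both reduce to the corresponding facts about the model $N$. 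For (1): after observing that $\mathcal{R}_{\varphi_{0},\dots,\varphi_{n-1};\alpha}$ is symmetric under simultaneous permutation of the $\varphi_{i}$ and their coordinates, it suffices to treat a substitution $\mathcal{R}_{\varphi_{0},\dots,\varphi_{n-1},\psi_{0},\dots,\psi_{m-1};\alpha}(\zeta,\dots,\zeta,\xi,\dots,\xi)$; closure under $\wedge$ gives $\bigwedge_{i}\varphi_{i}(x,c)\in p\iff(\bigwedge_{i}\varphi_{i})(x,c)\in p$, collapsing this to the binary relation $\mathcal{R}_{\bigwedge_{i}\varphi_{i},\bigwedge_{j}\psi_{j};\alpha}(\zeta,\xi)$ (and $k$-ary in general). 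Note this needs no hypothesis on $M$.

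For (2): exactly as in \lemref{pp-equiv-at}.2. Using $\left|M\right|\geq2$ and that $M$ is $\pc$, pick distinct $c_{1},c_{2}\in M$ and, via \factref{maximal-pp-in-pc} (refining a positive formula to a $\pp$ one), a $\pp$ formula $\varepsilon(z_{1},z_{2})\perp(z_{1}=z_{2})$ with $M\vDash\varepsilon(c_{1},c_{2})$. Then (after (1) normalises both to arity $n$ on the same variables) $\mathcal{R}_{\phi_{0},\dots,\phi_{n-1};\alpha}\wedge\mathcal{R}_{\psi_{0},\dots,\psi_{n-1};\beta}$ is shown equivalent to $\mathcal{R}_{\theta_{0},\dots,\theta_{n-1};\delta}$ with $\theta_{l}(x_{l},y_{1},y_{2},z_{1},z_{2})=(\phi_{l}(x_{l},y_{1})\wedge z_{1}=z_{2})\vee(\psi_{l}(x_{l},y_{2})\wedge\varepsilon(z_{1},z_{2}))$ and $\delta$ defined likewise, by reasoning about the negations and using that realised positive types respect $\wedge$ and $\vee$ (and that $\varepsilon\perp(z_{1}=z_{2})$ kills the ``wrong'' disjunct of each $\theta_{l}$ once the parameters are fixed inside the model). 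Since $\exists z_{1},z_{2}\,\varepsilon(z_{1},z_{2})\in T^{-}$, the same equivalence holds over every $\pc$ model of $T$, giving model-independence.

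For (3): reduce to $k=1$ by the same equality-elimination as in \lemref{pp-equiv-at}.3 (using (1) for the coordinate that becomes distinguished), so that $A=\bigcap_{i\in I}\mathcal{R}_{i}(S^{+}(M)^{2})$ with $\mathcal{R}_{i}=\mathcal{R}_{\phi_{i},\varphi_{i};\alpha_{i}}$ binary on $S_{x'}^{+}(M)\times S_{x}^{+}(M)$. The crucial (dual) reformulation is: for $q\in S_{x}^{+}(M)$, $q\in\pi_{1}(A)$ iff the theory $T\cup\Delta_{M}^{\atom}\cup\left\{ \neg\phi_{i}(\underline{c},\underline{a})\mid i\in I,\,a\in\alpha_{i}(M),\,\varphi_{i}(x,a)\in q\right\}$ --- in the language $L$ with a constant for each element of $M$ and a new constant tuple $\underline{c}$ of sort $x'$ --- is consistent; a model of it is exactly a homomorphism $h:M\rightarrow N\vDash T$ together with $c\in N^{x'}$ whose type $\tp_{h}^{\p}(c/M)$ avoids every $\phi_{i}(x',a)$ forced by $\mathcal{R}_{i}(\cdot,q)$. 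Running compactness dually, inconsistency is witnessed by finitely many $(i_{j},a_{j})_{j<m}$ with $\varphi_{i_{j}}(x,a_{j})\in q$ and a finite conjunction of atomic sentences $\theta(\bar{a},\bar{e})\in\Delta_{M}^{\atom}$ such that $T\vdash\forall x',\bar{y},z\ \neg\big(\bigwedge_{j<m}\neg\phi_{i_{j}}(x',y_{j})\wedge\theta(\bar{y},z)\big)$ and $M\vDash\exists\bar{y},z\ \theta(\bar{y},z)\wedge\bigwedge_{j<m}\alpha_{i_{j}}(y_{j})$; call this $(\star')$ --- both clauses are phrased purely in $T=\Th^{\hu}(M)$ (and $T^{-}$), hence are model-independent. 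One then checks that $\exists\xi\bigwedge_{i\in I}\mathcal{R}_{i}(\xi,\mu)$ is equivalent, in every $S^{+}(M)$, to $\bigwedge_{(\theta,\bar{i})\text{ satisfying }(\star')}\mathcal{R}^{\theta;\bar{i}}(\mu)$, where $\mathcal{R}^{\theta;\bar{i}}$ is the unary relation obtained (via (1)) from $\mathcal{R}_{\varphi_{i_{0}}(x,y_{0}),\dots,\varphi_{i_{m-1}}(x,y_{m-1});\,\theta(\bar{y},z)\wedge\bigwedge_{j<m}\alpha_{i_{j}}(y_{j})}$ applied on the diagonal: $\neg\mathcal{R}^{\theta;\bar{i}}(q)$ says precisely that $q$ contains all of $\varphi_{i_{j}}(x,a_{j})$ for some parameters making $\theta$ and the $\alpha_{i_{j}}$ hold, which is exactly the witness extracted by compactness.

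For (4): induct on the number of bound variables of the $\pp$ formula $\Xi(\mu)=\exists\bar{\nu}\,\Psi(\bar{\nu},\mu)$, using (1) and (2) to normalise the atomic conjuncts and (3) to carry out each projection; the cardinality bound $\leq\left|\mathcal{L}\right|=\left|L\right|$ is preserved because at each step $\bar{i}$ ranges over finite tuples from an index set of size $\leq\left|L\right|$ and $\theta$ over quantifier-free positive formulas. The main obstacle is (3): getting the correct dual ``consistency of an $L$-with-constants theory containing negated positive formulas'' reformulation, being careful that the compactness extraction yields a condition $(\star')$ mentioning only $T$ (so that the projection is model-independent), and identifying the right unary relations $\mathcal{R}^{\theta;\bar{i}}$. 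Parts (1), (2) and (4) are otherwise routine transcriptions of the corresponding parts of \lemref{pp-equiv-at} --- indeed (1) is slightly easier, since it uses only closure of realised types under $\wedge$ rather than any primality.
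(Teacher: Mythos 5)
Your proposal is correct and follows essentially the same route as the paper, which itself presents this lemma as the proof of Lemma \ref{lem:pp-equiv-at} with the $\mathcal{D}$-relations dualised to $\mathcal{R}$-relations. You have identified exactly the points where the dualisation matters: collapsing conjunctions (rather than disjunctions) in (1), the placement of negations in the type $\Sigma(x')$ and the condition $(\star')$ in (3) (with membership $\varphi_i(x,a)\in q$ rather than non-membership), and the fact that since $S^{+}(M)$ consists of all realised types, any model of the non-positive type $\Sigma$ directly yields the witnessing $p$ without passing to a $\pc$ continuation.
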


\begin{proof}
The proof of this proposition is essentially identical to the proof
of \lemref{pp-equiv-at}, with the following differences:

(1) Here we observe $\varphi_{i}\wedge\varphi_{j}\notin p\Longleftrightarrow\varphi_{i}\notin p\vee\varphi_{j}\notin p$
(since $p$ is still the type of an element), and conclude 
\begin{align*}
S^{+}\left(M\right) & \vDash\mathcal{R}_{\varphi_{0},...,\varphi_{n-1},\psi_{0},...,\psi_{m-1};\alpha}\left(p,...,p,q,...,q\right)\Longleftrightarrow\\
 & S^{+}\left(M\right)\vDash\mathcal{R}_{\bigwedge_{i<n}\varphi_{i},\bigwedge_{i<m}\psi_{i};\alpha}\left(p,q\right),
\end{align*}

(3) Here, we define (mirroring the role of $\mathcal{D}^{\theta;\overline{i}}$)
\[
\mathcal{R}^{\theta;\overline{i}}:=\mathcal{R}_{\stackrel[j<k]{}{\bigwedge}\varphi_{i_{j}}\left(x,y_{j}\right);\theta\wedge\stackrel[j<k]{}{\bigwedge}\alpha_{i_{j}}\left(y_{j}\right)}
\]

and are interested in pairs $\overline{i},\theta$ such that 
\begin{align*}
\left(*\right)T & \vdash\forall x',\overline{y},z:\neg\left(\stackrel[j<k]{}{\bigwedge}\neg\phi_{i_{j}}\left(x',y_{j}\right)\wedge\theta\left(\overline{y},z\right)\right)\\
M & \vDash\exists\overline{y},z:\theta\left(\overline{y},z\right),
\end{align*}

(note that we are missing negations relative to the original proof).

We define 
\[
\Sigma\left(x'\right):=\left\{ \neg\phi_{i}\left(x',a\right)\mid a\in\alpha_{i}\left(M\right),\varphi_{i}\left(x,a\right)\in q\right\} \cup\Delta_{M}^{\atom}\cup T.
\]

Note that we require $\varphi_{i}\left(x,a\right)\in q$ rather than
$\varphi_{i}\left(x,a\right)\notin q$ like in the proof of \lemref{pp-equiv-at}.
This type is not positive, but we do not require that $p$ is maximal
so we can take $p=\tp{}^{\p}\left(c/M\right)$ for any $c$ realizing
$\Sigma$ in any continuation (which since $M$ is $\pc$ any continuation
is necessarily an immersion). 

When proving that $q\in\pi_{1}\left(A\right)$ iff $\Sigma$ is inconsistent,
we do not need that the extension model also be $\pc$. 
\end{proof}
\begin{thm}
\label{thm:pattern-types-in-core-all-models}Let $T$ be an irreducible
primitive universal theory. 

1. Assume there is a $\pc$ model $M$ of $T$ such that $\left|M\right|\geq2$.
Then in every $\pc$ model of $\mathcal{T}^{+}$, every finite conjunction
of atomic formulas (none of which involves $=$) is equivalent to
an atomic formula, and the equivalence is independent of the model.

2. In every $\pc$ model of $\mathcal{T}^{+}$, every atomic formula
is equivalent to an atomic relation --- that is if $\varphi\left(\zeta,\xi\right)$
is a formula consisting of a single relation symbol, it is equivalent
to a binary relation symbol in $\zeta,\xi$, and likewise for formulas
with more parameters --- and the equivalence is independent of the
model.

3. Every $\pp$ formula $\Xi\left(\mu\right)$ is equivalent in every
$\pc$ model of $\mathcal{T}^{+}$ to a possibly infinite (but no
larger than $\left|\mathcal{L}\right|=\left|L\right|$) conjunction
of atomic formulas, and the equivalence is independent of the model.

4. $\mathcal{J}$ is homogeneous for atomic type --- if $\tp^{\atom}\left(\overline{a}\right)=\tp^{\atom}\left(\overline{b}\right)$
for $\overline{a},\overline{b}\in\mathcal{J}^{+}$ then there is an
automorphism of $\mathcal{J}^{+}$ sending $\overline{a}$ to $\overline{b}$.

5. An atomic type in $\mathcal{T}^{+}$ is the type of an element
of $\mathcal{J}^{+}$ iff it is maximal, that is there is no atomic
type consistent with $\mathcal{T}^{+}$ that strictly extends it.
In particular, if $p\in S\left(M\right)$ belongs to some embedding
of $\mathcal{J}^{+}$, the set of formulas represented in $p$ is
minimal.
\end{thm}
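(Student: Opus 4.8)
The plan is to follow the proof of \thmref{robinson-pc} almost verbatim, replacing \lemref{pp-equiv-at} and \thmref{hom-to-type-space} by their partial-type counterparts \propref{pp-equiv-at-plus} and \thmref{universal-plus}, and dropping everything concerning $\mathcal{L}_{\pi}$ (which has no analogue in this section). The starting observation is the same as there: an immersion reflects equivalences of $\emptyset$-positive formulas, so if $h\colon E\to F$ is an immersion and $\varphi(x),\psi(x)$ are positive over $\emptyset$ and equivalent in $F$, then $E\vDash\varphi(a)\iff F\vDash\varphi(h(a))\iff F\vDash\psi(h(a))\iff E\vDash\psi(a)$ for every $a\in E^{x}$, so they are equivalent in $E$. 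Now fix a $\pc$ model $M$ of $T$ --- for part~1 one with $\left|M\right|\geq2$, which exists by hypothesis. By \thmref{gen-common-theory} we have $S^{+}(M)\vDash\mathcal{T}^{+}$, and by \thmref{universal-plus} every $\pc$ model $E$ of $\mathcal{T}^{+}$ admits a homomorphism $h\colon E\to S^{+}(M)$, which is an immersion since $E$ is $\pc$. Parts~1, 2, 3 then follow by applying the relevant parts (namely parts~1, 2 and 4) of \propref{pp-equiv-at-plus} to $S^{+}(M)$ and pulling the resulting equivalences back along $h$ via the observation above; since by \propref{pp-equiv-at-plus} the equivalences in $S^{+}(M)$ do not depend on $M$, and each $\pc$ model of $\mathcal{T}^{+}$ inherits them from this fixed $S^{+}(M)$ via its immersion, they are the same for all $\pc$ models of $\mathcal{T}^{+}$.

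For part~4: $\mathcal{T}^{+}$ is irreducible and, by \thmref{universal-plus}, bounded, so \propref{univ-ec} applies to $\mathcal{J}^{+}$; in particular $\mathcal{J}^{+}$ is homogeneous for positive types of finite arity. By part~3 the $\pp$-type, hence (since a disjunction is interpreted as a union of solution sets) the positive type, of a finite tuple in $\mathcal{J}^{+}$ is determined by its atomic type, so equal atomic types give equal positive types and hence an automorphism. For part~5: if $P$ is a maximal atomic type consistent with $\mathcal{T}^{+}$, realize it by $\overline{a}$ in some $A\vDash\mathcal{T}^{+}$; by \propref{univ-ec}.3 there is a homomorphism $f\colon A\to\mathcal{J}^{+}$, whence $\mathcal{J}^{+}\vDash P(f(\overline{a}))$ and, by maximality, $P=\tp^{\atom}(f(\overline{a}))$. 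Conversely, given $a\in\mathcal{J}^{+}$ with $P=\tp^{\atom}(a)$ and an atomic $\psi$ with $\neg\psi(a)$, use \factref{maximal-pp-in-pc} to obtain a $\pp$ formula $\phi\perp\psi$ with $\mathcal{J}^{+}\vDash\phi(a)$; by part~3 write $\phi(\mathcal{J}^{+})=\bigcap_{i\in I}\Xi_{i}(\mathcal{J}^{+})$ with each $\Xi_{i}$ atomic, so $\left\{ \Xi_{i}\right\} _{i\in I}\subseteq P$; were $\left\{ \Xi_{i}\right\} _{i\in I}\cup\left\{ \psi\right\} $ consistent with $\mathcal{T}^{+}$, a homomorphic image in $\mathcal{J}^{+}$ of a realization would satisfy both $\phi$ and $\psi$, contradicting $\phi\perp\psi$; hence $\mathcal{T}^{+}\vdash\bigwedge_{j<n}\Xi_{i_{j}}\to\neg\psi$ for some finite subset and thus $P\vdash\neg\psi$, so $P$ is maximal. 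For the final ``in particular'', note that taking a single variable and no parameter formula, $\mathcal{R}_{\varphi;}(p)$ holds iff $\varphi\notin p$; therefore a $p$ lying in an embedded copy of $\mathcal{J}^{+}$ has, by the above, a maximal atomic type among those consistent with $\mathcal{T}^{+}$ --- in particular among those realized in $S^{+}(M)$ --- which is exactly the assertion that its set of represented positive formulas is minimal.

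The argument is routine given the earlier results; the one point needing care is that, unlike parts~1, 2 and 4 of \propref{pp-equiv-at-plus}, its part~3 (closure of the atomic-type-definable sets under projection) does \emph{not} transfer along immersions, since the projection of a set need not be computed correctly in an immersed substructure. So in the pull-back step one must invoke only that a $\pp$ formula is equivalent \emph{as a positive formula} to a (possibly infinite) conjunction of atomic formulas --- which genuinely is reflected by immersions --- rather than attempting to carry over the projection statement itself. This is the only respect in which the proof is not a word-for-word translation of that of \thmref{robinson-pc}.
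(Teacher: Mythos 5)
Your proposal is correct and follows exactly the route the paper takes: the paper's proof of this theorem simply says it is identical to that of \thmref{robinson-pc}, substituting \propref{pp-equiv-at-plus} and \thmref{universal-plus} for their maximal-type counterparts, which is precisely what you do. Your cautionary remark about part 3 of \propref{pp-equiv-at-plus} (projections not transferring along immersions, only the resulting infinite conjunctions) matches the corresponding caveat the paper itself makes in the proof of \thmref{robinson-pc}.
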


\begin{proof}
Identical to the proof of \thmref{robinson-pc}, but based on \propref{pp-equiv-at-plus}
and \thmref{universal-plus}.
\end{proof}

\subsection{\label{subsec:Shortcomings}Shortcomings of this Approach}

While this construction extends the one in \cite{hrushovski2020definability}
in a natural way, looking at actual examples reveals that the core
is, in many cases, degenerate.

Indeed assume that $L$ is relational 1-sorted language, and consider
for $M\vDash T^{\pm}$ the extension model $M'$ with universe $M\cup\left\{ \infty\right\} $
and exactly the same relations as $M$ (that is, $\infty$ is in no
relation other than $=$ to any element of $M$ nor itself). It is
clear that for any $h:M\rightarrow N\vDash T^{\pm}$ and $a\in N$,
we can extend $h$ to a homomorphism $h':M'\rightarrow N$ by $h'=h\cup\left\{ \left(\infty,a\right)\right\} $.
In particular for $h=\id_{M}$ we find that $\Th^{\hu}\left(M'\right)\subseteq\Th^{\hu}\left(M\right)\subseteq\Th^{\hu}\left(M'\right)$.

We also find that for any $p\in S_{1}^{+}\left(M\right)$, $\tp^{\atom}\left(\infty/M\right)\subseteq p$.
And so in particular every formula represented in $\tp^{\p}\left(\infty/M\right)$
is represented in every element of $S^{+}\left(M\right)$ thus by
\thmref{pattern-types-in-core-all-models}, $\tp^{\p}\left(\infty/M\right)$
is the unique element of $\mathcal{J}^{+}$. A similar argument shows
that the same holds for any sort of $S^{+}\left(M\right)$, though
note that the situation may be more complicated in non-relational
languages (since the existence of functions implies some more interesting
positive sentences).

One also notes that $S^{+}\left(M\right)$ is never Hausdorff (let
alone totally disconnected) --- since a non-maximal type cannot be
separated from a maximal type extending it --- thus it is not homeomorphic
to the type space of a first order theory.

To anyone familiar with positive logic, this result may be unsurprising,
since the class commonly studied in positive logic is the class of
$\pc$ models, rather than the class of all models. We will thus try
and repeat the construction, but consider only those positive types
that are realized in $\pc$ models.

\appendix

\section{Appendix: Positive Morleyzation}

Here we describe how to get a $\hu$ theory given a continuous or
first order theory, such that the spaces of types accurately represent
the original types, and likewise with the models and morphisms. We
will also present a way to, given a positive theory, construct a new
positive theory in which every $\emptyset$ type is definable.

Note that these constructions are a special case of \cite[Theorem 2.23 and Theorem 2.38]{BenYaacov2003PositiveMT}
and is also mentioned in e.g. \cite[Section 2.3]{positiveJonsson}
(which lists earlier uses of the technique). Nevertheless we will
show all steps explicitly for ease of reference.

\subsection{\label{app:morley-fo}First Order Theories}

For our setting, let $L$ be a language and $T$ a consistent (not
necessarily complete) first order theory in that language.

\subsubsection{Basic Definitions}
\begin{defn}
find We define a new language $L^{\p}$ with the same sorts as $L$
and a new theory $T^{\p}$, as follows:

For any formula $\varphi\left(x\right)$ in $L$ (where we choose
for every formula a variable tuple $x$ which is minimal, though this
is not very important), let $R_{\varphi}\left(x\right)$ be a relation
symbol on the sorts given by $x$.

We define a new $\pu$ theory
\[
T^{\p}=\left\{ \neg\exists x:\bigwedge_{i<n}R_{\varphi_{i}}\left(x\right)\mid T\vdash\neg\exists x\bigwedge_{i<n}\varphi_{i}\left(x\right)\right\} 
\]

Given a model $M$ of $T$, we define a new model $M^{\p}$ of $T^{\p}$
as expected: $M^{\p}$ has the same universe as $M$, and $R_{\varphi}^{M^{\p}}=\varphi\left(M\right)$. 

The fact that $M^{\p}\vDash T^{\p}$ is immediate from the definition
of $T^{\p}$ and $M\vDash T$. It is equally obvious that if $M_{0},M_{1}\vDash T$
then $h:M_{0}^{\p}\rightarrow M_{1}^{\p}$ is an $L^{\p}$ homomorphism
iff the same function is an $L$ elementary embedding.
\end{defn}

\begin{lem}
\label{lem:first-order-univ-class}Every model of $T^{\p}$ continues
into a model of the form $M^{\p}$ for $M\vDash T$.
\end{lem}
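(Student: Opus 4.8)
The plan is to take an arbitrary model $N \vDash T^{\p}$ and build a first-order $L$-structure $M \vDash T$ together with an $L^{\p}$-homomorphism $N \to M^{\p}$. The natural candidate for $M$ is an elementary substructure of an ultraproduct, or more directly a model obtained by compactness from the $L^{\p}$-diagram of $N$. Concretely, first I would introduce a new constant $c_a$ for each $a \in N$ and consider the $L$-theory
\[
\Gamma = T \cup \{\varphi(c_{\bar a}) \mid \bar a \in N,\ N \vDash R_\varphi(\bar a)\}.
\]
The claim is that $\Gamma$ is consistent; granting this, any model $M \vDash \Gamma$ gives, via $a \mapsto c_a^M$, a map $N \to M$ which is an $L^{\p}$-homomorphism into $M^{\p}$ by construction (whenever $R_\varphi$ holds of a tuple in $N$, the formula $\varphi$ holds of its image in $M$, i.e. the image lies in $R_\varphi^{M^{\p}} = \varphi(M)$).

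The key step is the consistency of $\Gamma$, and this is exactly where the definition of $T^{\p}$ as a $\pu$ theory does its work. By compactness it suffices to show that every finite subset is consistent, i.e. that for any finitely many tuples witnessing $R_{\varphi_0}(\bar a_0),\dots,R_{\varphi_{n-1}}(\bar a_{n-1})$ in $N$, the sentence $T \cup \{\varphi_0(c_{\bar a_0}),\dots,\varphi_{n-1}(c_{\bar a_{n-1}})\}$ is consistent. Merging the tuples into a single tuple $\bar a$ (identifying repeated elements and using a common variable tuple $\bar x$), this amounts to showing $T \vdash \exists \bar x \bigwedge_{i<n}\varphi_i(\bar x)$ — or rather that $T \nvdash \neg\exists\bar x\bigwedge_{i<n}\varphi_i(\bar x)$. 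But if $T \vdash \neg\exists\bar x\bigwedge_{i<n}\varphi_i(\bar x)$, then by definition of $T^{\p}$ the sentence $\neg\exists \bar x\bigwedge_{i<n}R_{\varphi_i}(\bar x)$ lies in $T^{\p}$, contradicting $N \vDash T^{\p}$ together with the fact that $\bar a$ witnesses $\bigwedge_{i<n} R_{\varphi_i}$ in $N$. Hence each finite fragment is consistent.

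Finally, having produced $M \vDash T$ and the $L^{\p}$-homomorphism $N \to M^{\p}$, I should double-check that the homomorphism lands in $M^{\p}$ as an $L^{\p}$-structure in the intended sense: this is immediate since $R_\varphi^{M^{\p}} = \varphi(M)$ by definition of $M^{\p}$, and the defining clauses of $\Gamma$ say precisely that $c_{\bar a}^M \in \varphi(M)$ whenever $N \vDash R_\varphi(\bar a)$. I expect the only genuine obstacle to be the bookkeeping around merging tuples and shared variables in the compactness argument; everything else is a direct unwinding of the definition of $T^{\p}$. No appeal to saturation or to $\pc$-ness is needed here, since we only ask for a homomorphism (a continuation), not an immersion.
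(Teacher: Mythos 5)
Your proposal is correct and is essentially the paper's own argument: the paper realizes the same data as a type $\Sigma(\overline{x})$ in variables indexed by the elements of $N$ rather than as a diagram with new constants, but the compactness step and the appeal to the definition of $T^{\p}$ to rule out inconsistency of a finite fragment are identical. The tuple-merging bookkeeping you flag is handled in the paper by the "subtuple of $\overline{x}$" convention and is indeed the only point requiring care.
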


\begin{proof}
Let $N$ be a model of $T^{\p}$. Let $\overline{n}$ be a tuple enumerating
$N$, let $\overline{x}$ be a corresponding variable tuple, and consider
the type 
\[
\Sigma\left(\overline{x}\right)=\left\{ \varphi\left(x'\right)\mid x'\text{ a subtuple of }\overline{x};n'\in R_{\varphi}^{N}\subseteq N^{x'}\right\} ,
\]

where $n'$ is the corresponding subtuple of $\overline{n}$ whenever
$x'$ is a subtuple of $\overline{x}$.

By the definition of $T^{\p}$, $\Sigma\left(x\right)$ is finitely
consistent with $T$, thus by compactness there exists a model $M\vDash T$
and a tuple $\overline{m}\in M^{x}$ such that $M\vDash\Sigma\left(\overline{m}\right)$.

But by the definition of $M^{\p}$ and $\Sigma$ we get that $\overline{n}\rightarrow\overline{m}$
defines an $L^{\p}$ homomorphism from $N$ to $M$.
\end{proof}
\begin{prop}
\label{prop:model-of-fo-is-pc-of-p}If $M\vDash T$ then $M^{\p}$
is a $\pc$ model of $T^{\p}$.
\end{prop}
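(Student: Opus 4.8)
The goal is to show $M^{\p}$ is a $\pc$ model of $T^{\p}$, i.e. that every homomorphism $h\colon M^{\p}\to N$ with $N\vDash T^{\p}$ is an immersion. By \lemref{first-order-univ-class}, the class $\{K^{\p}\mid K\vDash T\}$ is universal for $T^{\p}$, so by \claimref{easier-e.c.} it suffices to prove that every homomorphism $h\colon M^{\p}\to K^{\p}$ with $K\vDash T$ is an immersion. The key observation recorded just before \lemref{first-order-univ-class} is that such an $h$ is exactly an $L$-elementary embedding of $M$ into $K$. So the plan is: first reduce via \claimref{easier-e.c.} to homomorphisms into models of the form $K^{\p}$; then invoke that any such homomorphism is an $L$-elementary embedding; and finally argue that an $L$-elementary embedding is automatically a $\pc$ map in the language $L^{\p}$.

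For the last step, one must check that if $h\colon M\to K$ is $L$-elementary, then for every $\pp$ $L^{\p}$-formula $\psi(\bar x)$ and every $\bar a\in M^{\bar x}$, $K^{\p}\vDash\psi(h(\bar a))$ implies $M^{\p}\vDash\psi(\bar a)$. A $\pp$ formula in $L^{\p}$ has the form $\exists\bar z\bigwedge_{i<n}R_{\varphi_i}(\bar z,\bar x)$ (some $R$'s involving the free variables, some the bound ones). Unwinding the interpretation, $K^{\p}\vDash\exists\bar z\bigwedge_i R_{\varphi_i}(\bar z,h(\bar a))$ says $K\vDash\exists\bar z\bigwedge_i\varphi_i(\bar z,h(\bar a))$, which is an $L$-sentence with parameters $h(\bar a)$; since $h$ is $L$-elementary, the same holds in $M$ with parameters $\bar a$, giving $M^{\p}\vDash\exists\bar z\bigwedge_i R_{\varphi_i}(\bar z,\bar a)$. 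This is the crux, and it is genuinely routine — the whole point of the Morleyzation is that atomic $L^{\p}$-formulas track arbitrary $L$-formulas, so positive-existential $L^{\p}$-statements track existential $L$-statements, and elementarity handles those.

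The one place needing a little care is that the reduction through \claimref{easier-e.c.} requires $M^{\p}\vDash T^{\p}$ to begin with (so that $M^{\p}$ is a model of the theory whose $\pc$-ness we are testing), which is already noted in the excerpt as immediate from $M\vDash T$. So I would structure the argument as: (1) recall $M^{\p}\vDash T^{\p}$; (2) by \lemref{first-order-univ-class} and \claimref{easier-e.c.}, reduce to showing every $h\colon M^{\p}\to K^{\p}$ ($K\vDash T$) is an immersion; (3) recall such $h$ is an $L$-elementary embedding $M\to K$; (4) verify directly that an $L$-elementary embedding pulls back $\pp$ $L^{\p}$-formulas, by the unwinding above. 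I do not expect a real obstacle here — the main content is bookkeeping about how $\pp$ $L^{\p}$-formulas decode into existential $L$-formulas, and everything else is a citation of earlier results.
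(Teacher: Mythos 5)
Your proposal is correct and follows essentially the same route as the paper: reduce via \claimref{easier-e.c.} and \lemref{first-order-univ-class} to homomorphisms into models of the form $K^{\p}$, note these are $L$-elementary embeddings, and pull back $\pp$ $L^{\p}$-formulas by decoding them as existential $L$-formulas. No gaps.
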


\begin{proof}
By \claimref{easier-e.c.} and \lemref{first-order-univ-class} it
suffices to show that if $f:M^{\p}\rightarrow N^{\p}$ is an $L^{\p}$
homomorphism for $N\vDash T$ then it is $\pc$.

But if $\exists y\stackrel[i<n]{}{\bigwedge}R_{\varphi_{i}\left(x,y\right)}\left(x,y\right)$
is $\pp$, $a\in M^{x}$ then 
\begin{align*}
N^{\p} & \vDash\exists y\stackrel[i<n]{}{\bigwedge}R_{\varphi_{i}\left(x,y\right)}\left(f\left(a\right),y\right)\Longleftrightarrow\\
N & \vDash\exists y\stackrel[i<n]{}{\bigwedge}\varphi_{i}\left(f\left(a\right),y\right)\Longleftrightarrow\\
M & \vDash\exists y\stackrel[i<n]{}{\bigwedge}\varphi_{i}\left(a,y\right)\Longleftrightarrow\\
M^{\p} & \vDash\exists y\stackrel[i<n]{}{\bigwedge}R_{\varphi_{i}\left(x,y\right)}\left(a,y\right)
\end{align*}

since $f$ is elementary, as required.
\end{proof}

\subsubsection{Models of $T^{\protect\p}$ as Models of $T$}
\begin{prop}
\label{prop:L-structure-fo}For every $\pc$ model $E$ of $T^{\p}$
there exists an $L$ structure $\utilde{E}$ with the same universe,
such that for every $M\vDash T$, every $L^{\p}$ homomorphism $h:E\rightarrow M^{\p}$
is also an $L$ embedding to $M$.

In particular if $E=N^{\p}$ then for $M=N$ and $h=\id_{M}$ we get
that the identity is an $L$ embedding (and thus an $L$ isomorphism,
since it is surjective) from $\utilde{\left(N^{\p}\right)}$ to $N$.

In other words, $N$ and $\utilde{\left(N^{\p}\right)}$ are identical
as $L$ structures. 
\end{prop}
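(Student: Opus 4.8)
The plan is to construct $\utilde{E}$ explicitly by reading off, for each element of $E$, which $L$-formulas it should satisfy, and then to check that this reading is coherent and has the required universal property. First I would note that since $E$ is a $\pc$ model of $T^{\p}$, by \lemref{first-order-univ-class} it continues into some $M^{\p}$ with $M\vDash T$, and this continuation $h\colon E\to M^{\p}$ is automatically an immersion. So $E$ comes equipped with at least one immersion into a model of the form $M^{\p}$. The key observation is that all such immersions must give rise to the same $L$-structure on the universe of $E$: if $a\in E^{x}$ and $h\colon E\to M^{\p}$ is an immersion, then $R_{\varphi}\in\tp^{\atom}(a/\emptyset)$ iff $M\vDash\varphi(h(a))$, and since $E$ is $\pc$ the atomic type of $a$ already determines, for every $L$-formula $\varphi$, whether $R_{\varphi}(a)$ holds — so I would \emph{define} $\utilde{E}$ by declaring, for a basic relation symbol $Q(x)$ of $L$, that $Q^{\utilde{E}}=R_{Q(x)}^{E}$ (and handle function/constant symbols $f$ of $L$ via $R_{f(\bar x)=y}$, using that $\pc$-ness forces this relation to be the graph of a genuine function — this is the step where the $\pu$ sentences of $T^{\p}$ coming from the functionality axioms of $T$, together with $\pc$-ness, do the work).

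Next I would verify the universal property. Let $M\vDash T$ and let $h\colon E\to M^{\p}$ be an $L^{\p}$ homomorphism. Since $M^{\p}$ is $\pc$ (\propref{model-of-fo-is-pc-of-p}) and $E$ is $\pc$, $h$ is an immersion, so for every $L$-formula $\varphi$ and every tuple $a$ from $E$ we have $E\vDash R_{\varphi}(a)\iff M^{\p}\vDash R_{\varphi}(h(a))\iff M\vDash\varphi(h(a))$. Applying this to atomic $\varphi$ (and to the graph relations for function symbols) shows that $h$ preserves and reflects all atomic $L$-formulas, i.e. $h\colon\utilde{E}\to M$ is an $L$-embedding. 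To upgrade ``embedding'' to ``elementary embedding'' I would run an induction on $L$-formulas: the atomic and Boolean cases are immediate, and for the quantifier case I use the displayed chain of equivalences in the proof of \propref{model-of-fo-is-pc-of-p} — $M\vDash\exists y\,\varphi(h(a),y)$ pulls back through the immersion $h$ to $E\vDash\exists y\,R_{\varphi}(a,y)$, which by the already-established atomic correspondence means $\utilde{E}\vDash\exists y\,\varphi(a,y)$ — so in fact $h$ is an $L$-elementary embedding, which is certainly more than the claimed embedding.

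Finally, for the ``in particular'' clause I would specialize to $E=N^{\p}$, take $M=N$ and $h=\id$, which is trivially an $L^{\p}$ homomorphism $N^{\p}\to N^{\p}$; the universal property then says $\id\colon\utilde{(N^{\p})}\to N$ is an $L$-embedding, and since it is the identity on universes it is surjective, hence an $L$-isomorphism, so $\utilde{(N^{\p})}=N$ on the nose. I expect the main obstacle to be the handling of function and constant symbols: one must check that $\pc$-ness genuinely forces $R_{f(\bar x)=y}^{E}$ to be a total, single-valued graph (totality from the $\pp$ sentence $\exists y\,R_{f(\bar x)=y}(\bar x,y)\in\Th^{\pu}(N^{\p})$ pulled back through an immersion, single-valuedness from the $\pu$ sentence $\neg\exists \bar x,y,y'\,(R_{f(\bar x)=y}\wedge R_{f(\bar x)=y'}\wedge R_{y\ne y'})$ in $T^{\p}$), and to be careful that the chosen minimal variable tuples and the bookkeeping of subtuples in $\Sigma$ from \lemref{first-order-univ-class} do not cause a mismatch. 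For a purely relational $L$ this obstacle disappears entirely and the argument is just the atomic-correspondence paragraph above.
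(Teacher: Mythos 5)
Your proposal is correct and follows essentially the same route as the paper: define $S^{\utilde{E}}=R_{S(x)}^{E}$ for relation symbols, use the graph relations $R_{F(x)=y}$ together with $\pc$-ness (totality via pulling back the $\pp$ sentence through an immersion, single-valuedness via injectivity of homomorphisms from $\pc$ models) for function symbols, and then observe that any $L^{\p}$-homomorphism into some $M^{\p}$ is an immersion and hence preserves and reflects the relevant atomic data, making it an $L$-embedding. The only difference is that you also sketch the upgrade to elementarity, which the paper defers to \propref{fo-round-trip}; that extra material is sound but not needed for this statement.
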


\begin{proof}
Let $S\left(x\right)$ be a relation symbol in $L$; then we define
$S^{\utilde{E}}=R_{S\left(x\right)}^{E}$. Let $F:x\rightarrow y$
be a function symbol in $L$; we want to define $F^{\utilde{E}}$.

Let $M\vDash T$ and $f:E\rightarrow M^{\p}$ an $L^{\p}$ homomorphism,
which exists by \lemref{first-order-univ-class}. Then for any $a\in E^{x}$
we have that 
\[
M\vDash\exists y:F\left(f\left(a\right)\right)=y\Longleftrightarrow M^{\p}\vDash\exists y:R_{F\left(x\right)=y}\left(f\left(a\right),y\right).
\]

Since $E$ is $\pc$ we find $E\vDash\exists y:R_{F\left(x\right)=y}\left(a,y\right).$

Furthermore, assume $b,b'\in E^{y}$ satisfy $E\vDash R_{F\left(x\right)=y}\left(a,b\right)$,$R_{F\left(x\right)=y}\left(a,b'\right).$
Then 
\begin{align*}
M^{\p} & \vDash R_{F\left(x\right)=y}\left(f\left(a\right),f\left(b\right)\right),R_{F\left(x\right)=y}\left(f\left(a\right),f\left(b'\right)\right)\Longleftrightarrow\\
M & \vDash F\left(f\left(a\right)\right)=f\left(b\right)=f\left(b'\right)\Rightarrow f\left(b\right)=f\left(b'\right)\Rightarrow\\
M^{\p} & \vDash f\left(b\right)=f\left(b'\right)\Rightarrow E\vDash b=b',
\end{align*}

since every homomorphism from a $\pc$ model is injective.

Therefore we can set $F^{\utilde{E}}=R_{F\left(x\right)=y}^{E}$ and
it is a well defined function. And furthermore if $M_{1}\vDash T$
and $h:E\rightarrow M_{1}^{\p}$ is any $L^{\p}$ homomorphism (thus
embedding) then we get:

\begin{align*}
F^{\utilde{E}} & \left(a\right)=b\Longleftrightarrow\\
E & \vDash R_{F\left(x\right)=y}\left(a,b\right)\Longleftrightarrow\\
M_{1}^{\p} & \vDash R_{F\left(x\right)=y}\left(h\left(a\right),h\left(b\right)\right)\Longleftrightarrow\\
M_{1} & \vDash F\left(h\left(a\right)\right)=h\left(b\right).
\end{align*}

and 
\begin{align*}
a & \in S^{\utilde{E}}\Longleftrightarrow\\
a & \in R_{S\left(x\right)}^{E}\Longleftrightarrow\\
h\left(a\right) & \in R_{S\left(x\right)}^{M_{1}^{\p}}\Longleftrightarrow\\
h\left(a\right) & \in S^{M_{1}}
\end{align*}

Thus $h:\utilde{E}\rightarrow M$ is an $L$ embedding.
\end{proof}
\begin{prop}
\label{prop:fo-round-trip}If $E$ is $\pc$ and $f:E\rightarrow M^{\p}$
is an $L^{\p}$ homomorphism then $f:\utilde{E}\rightarrow M$ is
an $L$ elementary embedding; in particular, $\utilde{E}\vDash T$.

Furthermore, $\left(\utilde{E}\right)^{\p}$ equals $E$ for any $\pc$
model $E$ of $T^{\p}$.
\end{prop}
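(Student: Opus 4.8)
The plan is to reduce both assertions to a single claim, proved by induction on $L$-formulas: $(\star)$ for every $L$-formula $\varphi(x)$ and every $\pc$ model $E$ of $T^{\p}$ we have $R_{\varphi}^{E}=\varphi(\utilde{E})$, where $\utilde{E}$ is the $L$-structure produced by \propref{L-structure-fo}. Granting $(\star)$, both parts are short. For the first part: if $f\colon E\to M^{\p}$ is an $L^{\p}$-homomorphism then $f$ is an immersion because $E$ is $\pc$, so for every $L$-formula $\varphi$ and $a\in E^{x}$ we get $\utilde{E}\vDash\varphi(a)\iff a\in R_{\varphi}^{E}\iff f(a)\in R_{\varphi}^{M^{\p}}=\varphi(M)\iff M\vDash\varphi(f(a))$; since $f\colon\utilde{E}\to M$ is already an $L$-embedding by \propref{L-structure-fo}, it is then elementary, and in particular $\utilde{E}\vDash T$. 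For the ``furthermore'': $\utilde{E}\vDash T$ makes $(\utilde{E})^{\p}$ defined, it has the same universe as $\utilde{E}$ (hence as $E$), and $R_{\varphi}^{(\utilde{E})^{\p}}=\varphi(\utilde{E})=R_{\varphi}^{E}$ by $(\star)$, so $(\utilde{E})^{\p}=E$.

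Toward $(\star)$ I would first record the auxiliary fact that $R_{\chi}^{E}$ and $R_{\neg\chi}^{E}$ partition $E^{x}$ for every $L$-formula $\chi$: disjointness is the $\pu$-sentence $\neg\exists x\,(R_{\chi}(x)\wedge R_{\neg\chi}(x))$ of $T^{\p}$, and covering follows by taking a homomorphism $g\colon E\to M_{1}^{\p}$ (from \lemref{first-order-univ-class}, with $M_{1}\vDash T$), noting that $R_{\chi}$ and $R_{\neg\chi}$ cover $M_{1}^{\p}$ and that this reflects along the immersion $g$ since these relations are positive. Given this, the atomic case of $(\star)$ is the definition of $\utilde{E}$ on relation symbols together with an inner induction on terms (using the $\pu$-sentences of $T^{\p}$ that encode composition of function symbols, and the well-definedness of the $F^{\utilde{E}}$ obtained in \propref{L-structure-fo}), and the Boolean cases follow from the partition fact and the evident $\pu$-sentences relating $R_{\psi_{1}}$, $R_{\psi_{2}}$, $R_{\psi_{1}\vee\psi_{2}}$ and their negations.

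The real content is the existential step. Assuming $R_{\varphi}^{E}=\varphi(\utilde{E})$ for $\varphi(x,y)$, I must show $a\in R_{\exists y\varphi(x,y)}^{E}$ iff some $b\in E$ has $E\vDash R_{\varphi}(a,b)$. The direction ``$\Leftarrow$'' is once more a $\pu$-sentence plus the partition fact. For ``$\Rightarrow$'', suppose $a\in R_{\exists y\varphi}^{E}$ but no such $b$ exists, i.e.\ $a\notin\psi(E)$ for the positive $L^{\p}$-formula $\psi_{0}(x):=\exists y\,R_{\varphi}(x,y)$; since $E$ is a $\pc$ model of the $\hu$ theory $T^{\p}$, \factref{maximal-pp-in-pc} gives a positive $\psi(x)$ with $T^{\p}\vdash\forall x\,\neg(\psi(x)\wedge\psi_{0}(x))$ and $E\vDash\psi(a)$. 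Pushing $a$ forward along $g\colon E\to M_{1}^{\p}$ with $M_{1}\vDash T$ we get $M_{1}^{\p}\vDash\psi(g(a))\wedge R_{\exists y\varphi}(g(a))$, so $M_{1}\vDash\exists y\,\varphi(g(a),y)$; a witness $c$ in $M_{1}$ gives $M_{1}^{\p}\vDash R_{\varphi}(g(a),c)$ and hence $M_{1}^{\p}\vDash\psi_{0}(g(a))$, contradicting $\psi(g(a))$ together with $M_{1}^{\p}\vDash T^{\p}$. I expect this last step --- manufacturing a witness \emph{inside} $E$ out of the mere validity of the pattern relation $R_{\exists y\varphi}$ --- to be the only genuine obstacle, and, exactly as with function symbols in \propref{L-structure-fo}, it is precisely here that positive closedness of $E$ is indispensable (for a non-$\pc$ model of $T^{\p}$ such witnesses need not exist, and $\utilde{E}$ can fail to be elementary, or even to model $T$).
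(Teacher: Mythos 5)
Your proposal is correct and is essentially the paper's argument: your claim $(\star)$ is exactly the equivalence the paper labels $(*)$, stated intrinsically rather than relative to the given $f$, and your induction — including the crucial existential step, which pushes forward along a homomorphism into some $M_{1}^{\p}$, finds a witness in $M_{1}$, and then pulls the positive formula $\exists y\,R_{\varphi}(x,y)$ back to $E$ using positive closedness — is the same induction the paper performs (your detour through \factref{maximal-pp-in-pc} and a $\perp$-formula is just a slightly longer way of invoking the immersion property). The only organizational difference is that you front-load the partition fact for $R_{\chi},R_{\neg\chi}$ and an inner induction on terms, where the paper absorbs the atomic case into the statement that $f$ is already an $L$-embedding by \propref{L-structure-fo}.
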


\begin{proof}
Note first that if for a given $f:E\rightarrow M^{\p}$ (where $M\vDash T$),
and for a given $L$ formula $\varphi$, we have $\varphi\left(\utilde{E}\right)=f^{-1}\left(\varphi\left(M\right)\right)$
(that is $\utilde{E}\vDash\varphi\left(a\right)\Longleftrightarrow M\vDash\varphi\left(f\left(a\right)\right)$)
then we also have:

\[
\left(*\right)E\vDash R_{\varphi}\left(a\right)\Longleftrightarrow M^{\p}\vDash R_{\varphi}\left(f\left(a\right)\right)\Longleftrightarrow M\vDash\varphi\left(f\left(a\right)\right)\Longleftrightarrow\utilde{E}\vDash\varphi\left(a\right).
\]

We will show by induction on the complexity of $\varphi\left(x\right)$
that $\utilde{E}\vDash\varphi\left(a\right)\Longleftrightarrow M\vDash\varphi\left(f\left(a\right)\right)$. 

If $\varphi\left(x\right)$ is an atomic formula, this is just the
fact that $f$ is an $L$ embedding by \propref{L-structure-fo}.

Assume $\varphi,\psi$ satisfy that 
\begin{align*}
\utilde{E} & \vDash\varphi\left(a\right)\Longleftrightarrow M\vDash\varphi\left(f\left(a\right)\right);\\
\utilde{E} & \vDash\psi\left(b\right)\Longleftrightarrow M\vDash\psi\left(f\left(b\right)\right)
\end{align*}

Then 
\begin{align*}
\utilde{E} & \vDash\varphi\left(a\right)\wedge\psi\left(b\right)\Longleftrightarrow\utilde{E}\vDash\varphi\left(a\right),\psi\left(b\right)\Longleftrightarrow\\
M & \vDash\varphi\left(f\left(a\right)\right),\psi\left(f\left(b\right)\right)\Longleftrightarrow M\vDash\varphi\left(f\left(a\right)\right)\wedge\psi\left(f\left(b\right)\right)
\end{align*}

And likewise for $\neg\varphi$.

Finally if $\utilde{E}\vDash\exists y\varphi\left(a,y\right)$ then
for some $b\in\utilde{E}^{y}$ we have $\utilde{E}\vDash\varphi\left(a,b\right)$
thus $M\vDash\varphi\left(f\left(a\right),f\left(b\right)\right)\Rightarrow M\vDash\exists y\varphi\left(f\left(a\right),y\right)$.

On the other hand, assume $M\vDash\exists y:\varphi\left(f\left(a\right),y\right)$.
This is equivalent to 
\begin{align*}
M^{\p} & \vDash\exists y:R_{\varphi\left(x,y\right)}\left(f\left(a\right),y\right)\Longleftrightarrow\\
E & \vDash\exists y:R_{\varphi\left(x,y\right)}\left(a,y\right)\Longleftrightarrow\left(*\right)\\
\utilde{E} & \vDash\exists y:\varphi\left(a,y\right)
\end{align*}

Thus we are done showing that $f$ is an elementary embedding and
that $\left(\utilde{E}\right)^{\p}=E$ (as this claim is exactly saying
that $\left(*\right)$ holds for all $\varphi$).

Now since $f:\utilde{E}\rightarrow M$ is an elementary embedding
then in particular $\utilde{E}\equiv M\Rightarrow\utilde{E}\vDash T$.
\end{proof}

\subsubsection{Properties of $T^{\protect\p}$}
\begin{prop}
$T$ is complete iff $T^{\p}$ is irreducible.
\end{prop}

\begin{proof}
We use \propref{irreducible}.3. Assume $T$ is complete and $M,N\vDash T^{\p}$.
Let $E_{M},E_{N}$ be $\pc$ models of $T^{\p}$ continuing $M,N$
respectively by \propref{univ-ec}. Then $\utilde{E_{M}},\utilde{E_{N}}\vDash T$
and thus as $T$ is complete can be jointly elementarily embedded
in some sufficiently saturated model $\mathfrak{C}\vDash T$.

But now by the definition of the $\left(\cdot\right)^{\p}$ construction,
the elementary embeddings of $\utilde{E_{M}},\utilde{E_{N}}$ give
$L^{\p}$ homomorphisms from $\left(\utilde{E_{M}}\right)^{\p},\left(\utilde{E_{N}}\right)^{\p}$
to $\mathfrak{C}^{\p}$ --- but by \propref{fo-round-trip} this
means that $E_{M},E_{N}$ both continue into $\mathfrak{C}^{\p}$
and thus so do $M,N$ as required.

Assume on the other hand that $T^{\p}$ is irreducible and $M,N\vDash T$.
Then $M^{\p},N^{\p}\vDash T^{\p}$ thus by assumption there is some
$P\vDash T^{\p}$ such that $M^{\p},N^{\p}$ continue into $P$, and
by \propref{univ-ec} there is a $\pc$ model $E\vDash T^{\p}$ such
that $P$ continues into $E$. We get that there are $L^{\p}$ homomorphisms
from $M^{\p},N^{\p}$ to $E$ which by \propref{fo-round-trip} and
\propref{model-of-fo-is-pc-of-p} give elementary embeddings from
$\utilde{\left(M^{\p}\right)},\utilde{\left(N^{\p}\right)}$ to $\utilde{E}$.

But by \propref{L-structure-fo} we have $\utilde{\left(M^{\p}\right)}=M,\utilde{\left(N^{\p}\right)}=N$
thus $M\equiv\utilde{E}\equiv N$ that is $T$ is complete as required.
\end{proof}
\begin{cor}
In $\pc$ models of $T^{\p}$, every positive formula is equivalent
to an atomic formula and the equivalence is independent of the model,
and maximal types over $\emptyset$ in $L^{\p}$ (which are thus determined
by their atomic component) are equivalent naturally to complete $L$
types.

Furthermore the correspondence of types is a homeomorphism, and in
particular, $S\left(\emptyset\right)$ is (sortwise) totally disconnected
(thus Hausdorff).
\end{cor}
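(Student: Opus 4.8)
The plan is to push everything through the correspondence $E\leftrightarrow\utilde{E}$ between $\pc$ models of $T^{\p}$ and models of $T$ recorded in Propositions~\propref{L-structure-fo}, \propref{model-of-fo-is-pc-of-p} and~\propref{fo-round-trip}. First, for the statement about positive formulas: given a positive $L^{\p}$-formula $\varphi\left(x\right)$, built from atomic formulas $R_{\chi}$ using $\wedge$, $\vee$ and existential quantifiers, let $\theta\left(x\right)$ be the $L$-formula obtained by replacing each $R_{\chi}$ by $\chi$ --- a purely syntactic operation not depending on any model. I would then show that in every $\pc$ model $E$ of $T^{\p}$ one has $\varphi\left(E\right)=R_{\theta\left(x\right)}^{E}$: by \propref{fo-round-trip}, $\utilde{E}\vDash T$ and $\left(\utilde{E}\right)^{\p}=E$, so $R_{\chi}^{E}=\chi\left(\utilde{E}\right)$ for all $\chi$, and an easy induction on the complexity of $\varphi$ gives $\varphi\left(E\right)=\theta\left(\utilde{E}\right)=R_{\theta\left(x\right)}^{E}$. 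Since $R_{\theta\left(x\right)}$ is atomic and $\theta$ is independent of $E$, this is the claimed equivalence, uniform in the model; in particular a maximal positive $L^{\p}$-type over $\emptyset$ is determined by the set of atomic formulas it contains.

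Next I would set up the correspondence of types. To a maximal positive $L^{\p}$-type $p\left(x\right)$ consistent with $T^{\p}$, associate $q\left(x\right)=\left\{\theta\left(x\right)\mid R_{\theta\left(x\right)}\in p\right\}$. Realizing $p$ by a tuple $a$ in a $\pc$ model $E$ of $T^{\p}$, which by \propref{model-of-fo-is-pc-of-p} may be taken of the form $M^{\p}$ with $M\vDash T$ and $\utilde{E}=M$, one gets $R_{\theta}\in p\Longleftrightarrow M\vDash\theta\left(a\right)$; this shows at once that $q=\tp_{L}\left(a/\emptyset\right)$ is a complete $L$-type consistent with $T$, that $R_{\theta}\notin p\Longleftrightarrow R_{\neg\theta}\in p$, and (together with the first paragraph, applied to positive $\psi$) that $p$ is recovered from $q$. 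Conversely any complete $L$-type over $\emptyset$ consistent with $T$ is realized in some $M\vDash T$ and yields such a $p$ in $M^{\p}$, so $p\mapsto q$ is a bijection between the space $S_{x}\left(\emptyset\right)$ of maximal positive $L^{\p}$-types and the Stone space $S^{L}_{x}\left(T\right)$.

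Finally, for the topological statement: the positive topology on $S_{x}\left(\emptyset\right)$ has as a closed subbasis the sets $\left\{p\mid\psi\in p\right\}$ for $\psi$ positive, and by the first paragraph these coincide with the sets $\left\{p\mid R_{\theta}\in p\right\}$ for $L$-formulas $\theta$; by the maximality observation $R_{\theta}\notin p\Longleftrightarrow R_{\neg\theta}\in p$, so each such set is clopen and its image under $p\mapsto q$ is the basic clopen set $\left\{q\mid\theta\in q\right\}$. Since these form a (clopen) subbasis of the Stone topology, the bijection carries a subbasis onto a subbasis and is therefore a homeomorphism; hence $S_{x}\left(\emptyset\right)$ is homeomorphic to a Stone space, in particular totally disconnected and Hausdorff. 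I expect this last step to be the only delicate point: one must use maximality of positive types in $\pc$ models to see that the "half-open" basic sets of the positive topology are actually clopen here, which is exactly what makes the topology genuinely Stone rather than merely spectral; once that is noted the homeomorphism is immediate.
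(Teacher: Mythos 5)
Your proposal is correct and follows essentially the same route as the paper: the paper's proof simply invokes the induction in \propref{fo-round-trip} (which is exactly your syntactic map $\varphi\mapsto R_{\theta}$ together with the equivalences $(*)$) and defines the type correspondence as $\tp_{L^{\p}}^{\p}\left(a/\emptyset\right)\mapsto\tp_{L}\left(a/\emptyset\right)$ computed in $\utilde{E}$. You merely spell out the details the paper leaves implicit, in particular the observation that $R_{\theta}\notin p\Longleftrightarrow R_{\neg\theta}\in p$ makes the subbasic closed sets clopen and hence the bijection a homeomorphism onto the Stone space.
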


\begin{proof}
Quantifier elimination follows from the proof of \propref{fo-round-trip}.
The correspondence between types is given by $tp_{L^{\p}}^{\p}\left(a/\emptyset\right)\rightarrow tp_{L}\left(a/\emptyset\right)$
for $a\in E^{x}$, where we take $tp_{L}\left(a\right)$ in $\utilde{E}$.
\end{proof}
\begin{thm}
If $T$ is a complete first order theory, then $\Core\left(T^{\p}\right)$
and $\Core_{\pi}\left(T^{\p}\right)$ are both well defined, and furthermore
$\Core_{\pi}\left(T^{\p}\right)|_{\mathcal{L}}=\Core\left(T^{\p}\right)$
and every symbol in $\Core_{\pi}\left(T^{\p}\right)$ is $\emptyset$-type
definable in $\Core\left(T^{\p}\right)$ (in particular, $\Aut\left(\Core_{\pi}\left(T^{\p}\right)\right)=\Aut\left(\Core\left(T^{\p}\right)\right)$).
\end{thm}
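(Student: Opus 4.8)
The plan is to verify that $T^{\p}$ meets the hypotheses of \thmref{J=00003DJpi-Hausdorff} --- that it is an irreducible Hausdorff $\hu$ theory --- and then let that theorem supply essentially the whole conclusion. First I would note that $T^{\p}$ is a $\pu$ theory, hence $\hu$, and that it is irreducible, which is exactly the proposition proved just above, since $T$ is complete. Next I would check that $T^{\p}$ is Hausdorff. By the corollary preceding this theorem, the assignment $p\mapsto\tp_{L}\left(a/\emptyset\right)$ is a homeomorphism between the space of maximal positive $L^{\p}$-types over $\emptyset$, topologized by the basic closed sets $\left[\varphi\right]=\left\{ p\mid\varphi\notin p\right\}$ for $\varphi$ positive, and the (Hausdorff) Stone space of complete $L$-types; so by the remark following \defref{Hausdorff} it is immediate that $T^{\p}$ is Hausdorff. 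Explicitly, given distinct maximal positive $p\neq q$ over $\emptyset$, one picks an $L$-formula $\chi$ with $R_{\chi}\in p$ and $R_{\neg\chi}\in q$ (possible by quantifier elimination in $\pc$ models of $T^{\p}$); then $\forall x\,\left(R_{\chi}\left(x\right)\vee R_{\neg\chi}\left(x\right)\right)$ holds in every $\pc$ model of $T^{\p}$, while $R_{\neg\chi}\notin p$ and $R_{\chi}\notin q$.

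With these two properties I would then deduce that both cores are well defined. For $\Core\left(T^{\p}\right)$: \corref{shared-theory} makes $\mathcal{T}=\Th^{\hu}\left(S\left(M\right)\right)$ independent of the $\pc$ model $M$, \thmref{hom-to-type-space} shows every model of $\mathcal{T}$ continues into $S\left(M\right)$, so by \corref{bounded-conditions} $\mathcal{T}$ is bounded and by \propref{univ-ec} it has a universal $\pc$ model $\mathcal{J}=\Core\left(T^{\p}\right)$. For $\Core_{\pi}\left(T^{\p}\right)$: a Hausdorff theory is semi-Hausdorff (remark after \defref{Hausdorff}), so $\mathcal{T}_{\pi}=\Th^{\hu}\left(S\left(M\right)\right)_{\mathcal{L}_{\pi}}$ is model-independent (\thmref{common-theory}) and bounded (\corref{bounded-conditions}); by \defref{max-core} this means $\Core_{\pi}\left(T^{\p}\right)=\mathcal{J}_{\pi}$ is defined.

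Finally I would fix a $\pc$ model $M$ of $T^{\p}$ and an embedding $\mathcal{J}=\Core\left(T^{\p}\right)\leq S\left(M\right)$ and apply \thmref{J=00003DJpi-Hausdorff} to $T^{\p}$: it yields that $\mathcal{J}$ is an $\mathcal{L}_{\pi}$-substructure of $S\left(M\right)$, that each $\pi_{x,x'}$ is $\mathcal{L}$-type-definable over $\emptyset$ in $\mathcal{J}$, and that $\mathcal{J}$ with this induced $\mathcal{L}_{\pi}$-structure is universal and $\pc$ for $\mathcal{T}_{\pi}$, hence equals $\Core_{\pi}\left(T^{\p}\right)=\mathcal{J}_{\pi}$. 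Restricting to $\mathcal{L}$ gives $\Core_{\pi}\left(T^{\p}\right)|_{\mathcal{L}}=\mathcal{J}=\Core\left(T^{\p}\right)$, and every symbol of $\mathcal{L}_{\pi}$ is $\emptyset$-type-definable in $\mathcal{J}$ --- the $\mathcal{L}$-relations trivially, the functions $\pi_{x,x'}$ by the theorem. The automorphism groups then coincide: an $\mathcal{L}$-automorphism of $\mathcal{J}$ fixes every $\emptyset$-type-definable set, in particular the graphs of the $\pi_{x,x'}$, so it is an $\mathcal{L}_{\pi}$-automorphism, and the reverse inclusion is trivial. I expect the only step needing real thought to be the verification that $T^{\p}$ is Hausdorff, and even that is handed over almost verbatim by the corollary preceding the statement; everything else is assembling the cited results, with \thmref{J=00003DJpi-Hausdorff} doing the heavy lifting.
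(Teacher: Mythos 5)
Your proposal is correct and follows exactly the route of the paper, whose entire proof is the citation of \corref{bounded-conditions} and \thmref{J=00003DJpi-Hausdorff}; you have simply made explicit the preliminary verifications (irreducibility from completeness of $T$, and Hausdorffness of $T^{\p}$ from the type-space homeomorphism) that the paper establishes in the immediately preceding proposition and corollary and leaves implicit in the proof itself.
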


\begin{proof}
By \corref{bounded-conditions} and \thmref{J=00003DJpi-Hausdorff}.
\end{proof}
\begin{rem}
For any $M\vDash T$ the positive topology on $M^{\p}$ is discrete,
since for any $a$, $R_{x\neq y}\left(M^{\p},a\right)$ is closed
thus $\left(R_{x\neq y}\left(M^{\p},a\right)\right)^{c}=\left\{ a\right\} $
is open. 
\end{rem}

\subsubsection{Relation to First order Definability Patterns}

Defining the core for a first order theory by way of its positive
Morleyzation does not strictly generalize the construction in \cite{hrushovski2020definability}.
While the fact that \cite{hrushovski2020definability} defines the
relations to be $\mathcal{R}_{\varphi_{1},...,\varphi_{n};\alpha}=\left\{ \left(p_{1},...,p_{n}\right)\mid\forall a\in\alpha\left(M\right)\text{ for some }i\text{ }\varphi_{i}\left(x_{i},a\right)\notin p_{i}\right\} $,
one notes that $\dot{\text{\ensuremath{\mathcal{R}_{\varphi_{1},...,\varphi_{n};\alpha}}}}$
is the same as $\mathcal{D}_{\neg\varphi_{1},...,\neg\varphi_{n};\alpha}$.
The first order construction differs in two more ways. The first is
that $T$ is assumed to be an irreducible universal theory rather
than a complete theory, and the second is that all formulas are assumed
to be quantifier free. In order to translate this construction to
the positive framework one has to define $L^{\p}$ to only include
relation for arbitrary quantifier free formulas rather than arbotrary
formulas. Then one must reformulate the construction in this paper
to assume all formulas are quantifier free (though still positive)
formulas.

However, for much of \cite{hrushovski2020definability} the writer
assumes that $T$ is the universal part of a first order theory $\overline{T}$
with quantifier elimination, in which case the existentially closed
models of $T$ are in particular models of $\overline{T}$ thus $\pc$
models of $\overline{T}^{\p}$, and thus every $L$ formula is equivalent
to a quantifier free $L$ formula. We thus get that the assumption
that the formulas that appear in $\dot{\text{\ensuremath{\mathcal{R}_{\varphi_{1},...,\varphi_{n};\alpha}}}}$
are quantifier free is not needed. In this case we get that the core
as defined in \cite{hrushovski2020definability} is identical to $\Core\left(\overline{T}^{\p}\right)$
(under the obvious translation of the languages).

\subsection{\label{app:morley-cont}Continuous Logic}

\subsubsection{Preliminaries}

We will start with a brief overview of bounded continous logic semantics.
See e.g. \cite{mtfms} for a more comprehensive overview. 

We will make some simplifying assumptions in order to simplify notation,
but generalizing the construction here is straightforward.

\begin{defn}
A (single sorted) signature $L$ in the context of continuous logic
consists of the following:
\begin{enumerate}
\item A set of predicate symbols $P_{i}$ together with, for each predicate
symbol: 
\item An arity $n\left(P_{i}\right)\in\mathbb{N}$.
\begin{enumerate}
\item A range $I_{P_{i}}$ (which is a closed bounded interval; we will
assume $I_{P_{i}}$ is always $\left[0,1\right]$).
\item A function $\Delta_{P_{i}}:\left(0,\infty\right)\rightarrow\left(0,\infty\right)$.
\end{enumerate}
\item A set of function symbols $F_{i}$ together with, for each:
\begin{enumerate}
\item An arity $n\left(F_{i}\right)\in\mathbb{N}$.
\item A function $\Delta_{F_{i}}:\left(0,\infty\right)\rightarrow\left(0,\infty\right)$.
\end{enumerate}
\item $D_{L}\in\mathbb{R}_{\geq0}$ (we will assume $D_{L}=1$).
\end{enumerate}
\end{defn}

\begin{defn}
A structure for a signature $L$ consists of:
\begin{enumerate}
\item A complete metric space $\left(M,d\right)$ with diameter at most
$D_{L}$.
\item For each predicate symbol $P$, a function $P^{M}:M^{n\left(P\right)}\rightarrow I_{P}$
such that for all $\varepsilon>0$, if $\left\langle a_{i}\right\rangle _{i<n\left(P\right)},\left\langle b_{i}\right\rangle _{i<n\left(P\right)}\in M^{n\left(P\right)}$
satisfy $d\left(a_{i},b_{i}\right)<\Delta_{P}\left(\varepsilon\right)$
for all $i<n\left(P\right)$ then 
\[
\left|P^{M}\left(a_{i}\right)_{i<n\left(P\right)}-P^{M}\left(b_{i}\right)_{i<n\left(P\right)}\right|\leq\varepsilon.
\]
\item For each function symbol $F$, a function $F^{M}:M^{n\left(F\right)}\rightarrow M$
such that for all $\varepsilon>0$, if $\left\langle a_{i}\right\rangle _{i<n\left(F\right)},\left\langle b_{i}\right\rangle _{i<n\left(F\right)}\in M^{n\left(F\right)}$
satisfy $d\left(a_{i},b_{i}\right)<\Delta_{F}\left(\varepsilon\right)$
for all $i<n\left(F\right)$ then 
\[
d\left(F^{M}\left(a_{i}\right)_{i<n\left(F\right)},F^{M}\left(b_{i}\right)_{i<n\left(F\right)}\right)\leq\varepsilon.
\]
\end{enumerate}
If $d$ is a pseudometric or not complete, we say that $M$ is a prestructure.
The information given by the language allows us to make the completion
(of the induced metric space) of such a prestructure into a full $L$-structure
(see \cite[Section 3, Prestructures]{mtfms}).
\end{defn}

\begin{defn}
A formula for continuous logic is constructed recursively just as
in first order logic, except for the following:
\begin{itemize}
\item The equality relation symbol is replaced with the distance $d$.
\item Instead of the logical connectives $\vee,\wedge,\neg$, we have every
uniformly continuous functions from $\left[0,1\right]^{k}\rightarrow\left[0,1\right]$.
\begin{itemize}
\item We will in particular consider $\left|X-Y\right|$ and $X\dotminus Y:=\max\left\{ X-Y,0\right\} $.
\end{itemize}
\item Instead of quantifiers $\exists x,\forall x$ we have $\stackrel[x]{}{\inf},\stackrel[x]{}{\sup}$.
\end{itemize}
An example formula may be $\stackrel[x]{}{\sup}\,\stackrel[y]{}{\inf}\left|d\left(x,y\right)-P\left(F\left(x\right),G\left(y\right)\right)\right|$.

Every formula $\varphi\left(x\right)$ (where $x$ has arity $k$)
gives us a uniformly continuous function $\varphi^{M}:M^{k}\rightarrow\left[0,1\right]$
for every structure $k$ (where the unifromity is independent of $M$
--- see \cite[Theorem 3.5]{mtfms})

This is similar to the first order case, where every formula gives
$\varphi^{M}:M^{k}\rightarrow\left\{ \mathbb{T},\mathbb{F}\right\} $;
we can think of $\varphi^{M}$ as ``distance from the truth'', in
a sense.
\end{defn}

\begin{defn}
An embedding of continuous structures is an isometry $f:\left(M,d\right)\rightarrow\left(N,d\right)$
such that for every predicate symbol $P$ and $a\in M^{n\left(P\right)}$
we have $P^{M}\left(a\right)=P^{N}\left(f\left(a\right)\right)$,
and for any function symbol $F$ and $a\in M^{n\left(F\right)}$ we
have $F^{N}\left(f\left(a\right)\right)=f\left(F^{M}\left(a\right)\right)$.

An elementary embedding is a function $f:M\rightarrow N$ such that
for any formula $\varphi\left(x\right)$ and $a\in M^{x}$ we have
$\varphi^{M}\left(a\right)=\varphi^{N}\left(f\left(a\right)\right)$.
An elementary embedding is an embedding.
\end{defn}

\begin{rem*}
If $\varphi^{M}\left(a\right)=0\Longleftrightarrow\varphi^{N}\left(f\left(a\right)\right)=0$
then $f$ is an elementary embedding, since for any formula $\varphi$
and $r\in\left[0,1\right]$ we have 
\[
\varphi^{M}\left(a\right)=r\Longleftrightarrow\left|\varphi-r\right|^{M}\left(a\right)=0\Longleftrightarrow\left|\varphi-r\right|^{N}\left(f\left(a\right)\right)=0\Longleftrightarrow\varphi^{N}\left(f\left(a\right)\right)=r.
\]
\end{rem*}
\begin{defn}
A condition $E$ is a requirement of the form $\varphi\left(x\right)=0$.
Naturally, we say that $M\vDash E\left[a\right]$ iff $\varphi^{M}\left(a\right)=0$.
We say that a condition is over $A\subseteq M$ if the formula has
only parameters from $A$. 

We also write $M\vDash\Sigma\left[a\right]$ when $\Sigma\left(x\right)=\left\{ E_{i}\left(x\right)\right\} _{i\in I}$
is a set of conditions to denote that $M\vDash E_{i}\left[a\right]$
for all $i\in I$.
\end{defn}

\begin{rem}
\label{rem:logical-uniformly-continuous}Note that if $\varphi,\psi$
are formulas then , $\left|\varphi\left(x\right)-\psi\left(x\right)\right|=0\Longleftrightarrow\varphi\left(x\right)=\psi\left(x\right)$
and $\varphi\left(x\right)\dotminus\psi\left(x\right)=0\Longleftrightarrow\varphi\left(x\right)\leq\psi\left(x\right)$;
we will use these as shorthands, especially where $\psi=r$ for $r\in\left[0,1\right]$.

Note furthermore that $\varphi=0$ and $\psi=0$ iff $\frac{\varphi+\psi}{2}=0$
and in general $\bigwedge_{i<k}\varphi_{i}=0\Longleftrightarrow\frac{\sum_{i<k}\varphi_{i}}{k}=0$.

Likewise $\prod_{i<k}\varphi_{i}=0$ iff $\bigvee_{i}\varphi_{i}=0$.
\end{rem}

\begin{defn}
$E$ is closed if the formula $\varphi$ has no free variables. 

A theory $T$ is a set of closed conditions. 

A structure $M$ is a model of a theory $T$ if $M\vDash E$ for all
$E\in T$.

$E$ is a logical consequence of $T$ if every model $M$ of $T$
satisfies $M\vDash E$.

A theory $T$ is complete if for any formula $\varphi$ without free
variables $T$ implies the condition $\left|\varphi\left(x\right)-r\right|=0$
for some $r\in\left[0,1\right]$.
\end{defn}

\begin{fact}
\label{fact:premodel}(\cite[Theorem 3.7]{mtfms}) If we take a prestructure
and turn it into a structure, any conditions fulfilled by the prestructure
remain fulfilled, since the infimum and supremum on continuous functions
stay the same when we go to the completion.
\end{fact}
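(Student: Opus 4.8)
The plan is to recall the argument behind \cite[Theorem 3.7]{mtfms}. Let $M$ be a prestructure and $\bar M$ the $L$-structure obtained from it by quotienting by the pseudometric and completing, with the interpretations of predicate and function symbols extended using the uniform continuity data $\Delta_{P},\Delta_{F}$ supplied by $L$. The goal is to show, by induction on the structure of a formula $\varphi(x)$, that $\varphi^{\bar M}$ restricted to the image of $M$ in $\bar M$ coincides with $\varphi^{M}$; once this is established, any closed condition $\varphi = 0$ with $\varphi^{M} = 0$ forces $\varphi^{\bar M} = \varphi^{M} = 0$, since a closed condition involves no free variables and so $\varphi^{\bar M}$ and $\varphi^{M}$ are the same real number.

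First I would dispose of the quotient: since each predicate and function symbol is uniformly continuous with modulus prescribed by $L$, points at pseudodistance $0$ are identified by every predicate, function, and hence (by an immediate induction) every formula, so one may assume $d$ is a genuine metric. For the atomic base case, the interpretations of $d$ and of each predicate $P$ on $\bar M$ are by construction the unique uniformly continuous extensions of their restrictions to the dense subset $M$, and similarly for function symbols and terms, so these agree with the $M$-interpretations on $M$. The connective step uses only that logical connectives are uniformly continuous maps $[0,1]^{k} \to [0,1]$, so a composition of formulas each satisfying the inductive hypothesis again satisfies it. The quantifier step is the heart of the matter: for $\psi = \inf_{y} \varphi(x,y)$ (and dually for $\sup$) and $a$ the image of a point of $M$, $\psi^{\bar M}(a) = \inf_{b \in \bar M} \varphi^{\bar M}(a,b)$, and since $\varphi^{\bar M}(a,-)$ is continuous and $M$ is dense in $\bar M$ this equals $\inf_{b \in M} \varphi^{\bar M}(a,b) = \inf_{b \in M} \varphi^{M}(a,b) = \psi^{M}(a)$ by the inductive hypothesis.

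The main obstacle is exactly this last step --- guaranteeing that the infimum and supremum do not change when $M$ is enlarged to $\bar M$. This is where it matters that the modulus of uniform continuity of $\varphi$ is determined by $L$ and is therefore independent of the structure (see \cite[Theorem 3.5]{mtfms}): it ensures that $\varphi^{\bar M}(a,-)$ is genuinely continuous, so no new infimal or supremal values can appear at limit points. The remaining pieces --- the atomic case, the inductive bookkeeping, and the reduction from ``interpretations of all formulas are preserved'' to ``all closed conditions are preserved'' --- are routine, and I would keep them brief, referring to \cite{mtfms} for the underlying facts about extending structures to their completions.
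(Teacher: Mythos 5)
Your proposal is correct and reconstructs exactly the standard argument behind the cited \cite[Theorem 3.7]{mtfms}; the paper itself offers no proof beyond the citation and the parenthetical remark about infima and suprema being preserved under completion, which is precisely the quantifier step you identify as the heart of the matter. The quotient reduction, the atomic/connective induction, and the density-plus-uniform-continuity argument for $\inf$ and $\sup$ are all as in the reference.
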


\begin{defn}
Assume $T$ is complete (this assumption is not strictly necessary).

The space of types of arity $n$ for a set $A\subseteq M$ is $S_{n}\left(A\right)=\left\{ \tp\left(a/A\right)\mid a\in N^{n},N\text{ an elementary extension of }M\right\} $
where $\tp\left(a/A\right)=\left\{ \varphi\left(x\right)=0\mid\varphi\text{ has only parameters from }A\text{ and }\varphi^{N}\left(a\right)=0\right\} $.

It has a natural Hausdorff compact topology where closed sets are
$C_{\Sigma}=\left\{ p\in S_{n}\left(A\right)\mid\Sigma\subseteq p\right\} $
where $\Sigma$ is a set of conditions over $A$ (see \cite[Definition 8.4, Lemma 8.5, Proposition 8.6]{mtfms}).
\end{defn}

\begin{defn}
We say that a model $M$ is $\kappa$-saturated if whenever $\Sigma\left(x\right)$
is a set of conditions over $A\subseteq M$ such that:

- $\left|A\right|<\kappa$.

- For any finite $\Sigma_{0}\subseteq\Sigma$ and every $\varepsilon>0$,
there is some $a\in M^{x}$ such that $M\vDash\Sigma_{0}\left[a\right]$.

Then there is some $a\in M^{x}$ such that $M\vDash\Sigma\left[a\right]$.
\end{defn}

\begin{claim}
\label{claim:continuous-quantifiers}If $M$ is $\omega$-saturated,
$\left(\stackrel[x]{}{\inf}\varphi\left(x\right)\right)^{M}=0$ implies
that for some $a\in M^{x}$, $\varphi^{M}\left(a\right)=0$, since
$\Sigma\left(x\right)=\left\{ \varphi\leq\frac{1}{n}\right\} _{n<\omega}$
is finitely satisfied by assumption and only satisfied when $\varphi=0$
is.
\end{claim}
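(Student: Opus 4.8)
The final statement is \claimref{continuous-quantifiers}: if $M$ is $\omega$-saturated, then $\left(\stackrel[x]{}{\inf}\varphi\left(x\right)\right)^{M}=0$ implies that for some $a\in M^{x}$, $\varphi^{M}\left(a\right)=0$.

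\medskip

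The plan is to apply the definition of $\omega$-saturation to a carefully chosen set of conditions. First I would set $\Sigma\left(x\right)=\left\{\varphi\left(x\right)\dotminus\tfrac{1}{n}=0\right\}_{n<\omega}$, using the shorthand from \remref{logical-uniformly-continuous} that $\psi\dotminus r=0$ expresses $\psi\le r$; the parameter set $A$ can be taken to consist of whatever (finitely many) parameters occur in $\varphi$, so $\left|A\right|<\omega$ as required. The conditions are over $A\subseteq M$ and the variable tuple $x$, so the setup of the definition of $\kappa$-saturation applies with $\kappa=\omega$.

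\medskip

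Next I would verify the finite-satisfiability hypothesis: given a finite $\Sigma_{0}\subseteq\Sigma$ and $\varepsilon>0$, $\Sigma_{0}$ is contained in $\left\{\varphi\dotminus\tfrac{1}{n}=0\right\}$ for $n$ large enough that $\tfrac{1}{n}<\varepsilon$ and $n$ larger than all indices appearing in $\Sigma_{0}$, hence it suffices to find $a\in M^{x}$ with $\varphi^{M}\left(a\right)\le\tfrac1n$. But $\left(\stackrel[x]{}{\inf}\varphi\right)^{M}=0$ means precisely that the infimum over $a\in M^{x}$ of $\varphi^{M}\left(a\right)$ is $0$, so some $a$ witnesses $\varphi^{M}\left(a\right)\le\tfrac{1}{n}$ — indeed even $<\varepsilon$ — and this $a$ satisfies $\Sigma_{0}$. (Here I would note that the infimum is attained as a genuine real infimum over the set $M^{x}$, not something requiring passage to an elementary extension.) Then $\omega$-saturation yields $a\in M^{x}$ with $M\vDash\Sigma\left[a\right]$, i.e.\ $\varphi^{M}\left(a\right)\le\tfrac1n$ for all $n<\omega$, which forces $\varphi^{M}\left(a\right)=0$ since $\varphi^{M}\left(a\right)\ge0$.

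\medskip

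There is essentially no obstacle here; the only point needing a moment's care is making sure the chosen $\Sigma$ really is a set of conditions in the formal sense (each $\varphi\dotminus\tfrac1n$ is a legitimate continuous formula, being $\varphi$ composed with the uniformly continuous connective $X\dotminus\tfrac1n$) and that its finite fragments are satisfiable \emph{approximately}, which is automatic since each fragment reduces to a single inequality $\varphi\le\tfrac1n$ and the hypothesis $\inf\varphi=0$ gives points arbitrarily close to the infimum. This is the argument already sketched in the claim's statement; the proof merely spells it out.
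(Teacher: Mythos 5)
Your proof is correct and follows exactly the argument the paper itself gives inside the claim (the "since $\Sigma(x)=\{\varphi\le\tfrac1n\}_{n<\omega}$ is finitely satisfied..." clause), merely spelling out the details of finite/approximate satisfiability and the application of $\omega$-saturation. No differences worth noting.
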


\begin{fact}
\label{fact:continuous-fact}

1. (\cite[Theorem 5.12]{mtfms}) If $T$ is a theory and $\Sigma\left(x_{j}\mid j\in J\right)$
a set of conditions consistent with $T$ then there exist a model
$M$ if $T$ and elements $\left\{ a_{j}\mid j\in J\right\} \in M$
such that $M\vDash\Sigma\left[a_{j}\mid j\in J\right]$ (here $J$
can be infinite).

2. (\cite[Proposition 7.10]{mtfms}) If $M$ is a continuous structure
and $\kappa$ is a cardinal then $M$ has a $\kappa$-saturated elementary
extension.
\end{fact}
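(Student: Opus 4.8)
The plan is to treat the two parts separately: part (1) is a compactness statement, and part (2) is a saturated-model-by-chains construction that invokes (1).

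For (1) I would take the ultraproduct route, the continuous analogue of the usual proof of compactness via the \L o\'s theorem. Here "$\Sigma\left(x_j\mid j\in J\right)$ consistent with $T$" means that for every finite $\Sigma_0\subseteq\Sigma$ and every $\varepsilon>0$ there is a model of $T$ satisfying the $\varepsilon$-weakenings of the conditions in $\Sigma_0$. Index the directed set $D$ of pairs $(\Sigma_0,n)$ with $\Sigma_0\subseteq\Sigma$ finite and $n<\omega$; for each index choose $M_{(\Sigma_0,n)}\vDash T$ and a tuple $a^{(\Sigma_0,n)}$ with $\varphi^{M_{(\Sigma_0,n)}}(a^{(\Sigma_0,n)})\leq\tfrac{1}{n}$ for every condition $\varphi=0$ in $\Sigma_0$ (the relation "$\leq\tfrac{1}{n}$" being expressed by the connective $X\dotminus\tfrac{1}{n}$ of \remref{logical-uniformly-continuous}). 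Extend the filter of final segments of $D$ to an ultrafilter $\mathcal U$ and form the ultraproduct: the continuous \L o\'s theorem evaluates $\varphi$ on the classes of the chosen tuples as the $\mathcal U$-limit of $\varphi^{M_i}(a^i)$, which is $0$ for every $\varphi=0$ in $\Sigma$, while the closed conditions of $T$ hold exactly in each factor and hence in the ultraproduct. The genuinely non-routine ingredient is that the $\varepsilon$-approximate satisfaction in the hypothesis becomes exact satisfaction only after passing to the $\mathcal U$-limit, together with the standard fact that a bounded ultraproduct of complete metric structures is again a complete $L$-structure. This is precisely \cite[Theorem 5.12]{mtfms}, so in the write-up I would either reproduce this construction or cite it.

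For (2) I would run the classical union-of-elementary-chain argument. Fix a regular cardinal $\lambda>\kappa$ (for instance $\lambda=(2^{|M|+\kappa+|L|})^+$) and build an elementary chain $\langle M_\alpha\mid\alpha<\lambda\rangle$ with $M_0=M$ such that $M_{\alpha+1}$ realizes every type over every $A\subseteq M_\alpha$ with $|A|<\kappa$: realizing one such type in an elementary extension of $M_\alpha$ is an instance of (1) applied to the elementary diagram of $M_\alpha$ together with that type, and realizing all of them simultaneously is again a single application of (1) to the union of the diagram with disjoint renamed copies of the types. At limit stages one takes $M_\lambda$ to be the completion of $\bigcup_{\alpha<\lambda}M_\alpha$ — the union of complete metric structures need not be complete — and uses \factref{premodel} to see that elementarity survives. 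Then $M_\lambda$ is $\kappa$-saturated: a set of fewer than $\kappa$ conditions over $A\subseteq M_\lambda$ has, by regularity of $\lambda>\kappa$, all parameters inside some $M_\alpha$, and if it is finitely approximately satisfiable it is realized already in $M_{\alpha+1}$. This is \cite[Proposition 7.10]{mtfms}. The main obstacle, concentrated in this part, is purely technical: tracking the prestructure-to-structure completion at limit stages so that neither the consistency of conditions nor the elementarity of the chain is lost, which is handled uniformly by the moduli of continuity packaged into the continuous signature together with \factref{premodel}. Accordingly, in the paper I would present (1) via the ultraproduct (or simply cite \cite[Theorem 5.12]{mtfms}), derive (2) from it by the chain construction (or cite \cite[Proposition 7.10]{mtfms}), and note that only these two continuous-logic facts will be used in the sequel.
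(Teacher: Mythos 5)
The paper states this as a Fact imported from \cite{mtfms} and gives no proof of its own, so the only question is whether your sketches are sound; they are, and they follow the same standard routes as the cited source: part (1) by a continuous ultraproduct over an ultrafilter refining the finite-approximation filter, with the continuous {\L}o{\'s} theorem turning approximate satisfaction in the factors into exact satisfaction in the limit, and part (2) by an elementary chain of length a regular $\lambda>\kappa$ that realizes all small types at successor stages via part (1). One point in part (2) deserves to be made precise: if $M_{\lambda}$ is taken to be the completion of $\bigcup_{\alpha<\lambda}M_{\alpha}$, its elements need not lie in any single $M_{\alpha}$, so the claim that a parameter set of size $<\kappa$ sits inside some $M_{\alpha}$ by regularity is not immediate as stated. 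The standard repair is to observe that for $\lambda$ regular and uncountable the union is already complete (a Cauchy sequence is countable, hence contained in some complete $M_{\alpha}$ and convergent there), so no completion is needed at the top stage and your cofinality argument applies verbatim; alternatively one approximates parameters from the dense union using the moduli of uniform continuity built into the signature, in the spirit of \factref{premodel}. With that observation the proposal is correct.
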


\subsubsection{Basic Definitions}
\begin{defn}
Let $L$ be a continuous logic signature and $T$ a continuous consistent
theory in $L$.

Define the language $L^{\p}$ consisting of a predicate symbol $Z_{\varphi}\left(x\right)$
for every formula $\varphi\left(x\right)$ (and of the same arity).

Let $T^{\p}$ be the $\pu$ theory 
\begin{align*}
\{\neg\exists x & :\bigwedge_{i<k}Z_{\varphi_{i}}\left(x\right)\mid\\
 & \text{There is no model }M\vDash T\text{ and }a\in M^{x}\text{ such that }\varphi_{i}^{M}\left(a\right)=0\text{ for all }i<k\}.
\end{align*}

Given an $L$-structure $M$ we define $M^{\p}$ to be the $L^{\p}$
structure with the same universe and with $Z_{\varphi}^{M}=\left(\varphi^{M}\right)^{-1}\left(\left\{ 0\right\} \right)$.
\end{defn}

\begin{prop}
\label{prop:cont-L-tilde-and-L}
\begin{enumerate}
\item $M\vDash T\Rightarrow M^{\p}\vDash T^{\p}$.
\item $f:M\rightarrow N$ is an $L^{\p}$ homomorphism iff it is an $L$
elementary embedding.
\end{enumerate}
\end{prop}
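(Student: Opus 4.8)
The plan is to derive everything directly from the definitions; the one genuine ingredient is that the continuous connectives already include the value shifts $t\mapsto\left|t-r\right|$, which is precisely the feature that lets the predicates $Z_{\varphi}$ recover the exact values $\varphi^{M}\left(a\right)$, not merely their vanishing (this is the continuous analogue of the elementary-level Morleyzation recorded for first-order theories right after the definition of $T^{\p}$).

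For (1) I would simply unwind the axiom scheme of $T^{\p}$. Given an axiom $\neg\exists x\bigwedge_{i<k}Z_{\varphi_{i}}\left(x\right)\in T^{\p}$, by definition there is no $L$-structure $N\vDash T$ and tuple $a$ with $\varphi_{i}^{N}\left(a\right)=0$ for all $i<k$. Since $M\vDash T$, in particular no $a\in M^{x}$ satisfies $\varphi_{i}^{M}\left(a\right)=0$ for all $i$; but $Z_{\varphi_{i}}^{M^{\p}}=\left(\varphi_{i}^{M}\right)^{-1}\left(\left\{ 0\right\} \right)$ by the definition of $M^{\p}$, so no $a$ lies in $\bigcap_{i<k}Z_{\varphi_{i}}^{M^{\p}}$, i.e. $M^{\p}$ satisfies the axiom. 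As this holds for every axiom, $M^{\p}\vDash T^{\p}$.

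For (2), the direction ``$L$-elementary $\Rightarrow$ $L^{\p}$-homomorphism'' is immediate: if $\varphi^{M}\left(a\right)=\varphi^{N}\left(f\left(a\right)\right)$ for every formula $\varphi$ and tuple $a$, then in particular $a\in Z_{\varphi}^{M^{\p}}\Leftrightarrow f\left(a\right)\in Z_{\varphi}^{N^{\p}}$, and since $L^{\p}$ is relational (its only non-logical symbols are the $Z_{\varphi}$), a function preserving every atomic relation is a homomorphism. For the converse, suppose $f\colon M^{\p}\to N^{\p}$ is an $L^{\p}$-homomorphism, fix a formula $\varphi\left(x\right)$ and a tuple $a\in M^{x}$, and put $r=\varphi^{M}\left(a\right)\in\left[0,1\right]$. \emph{The key step is that} $\left|\varphi-r\right|$ is again an $L$-formula (the map $t\mapsto\left|t-r\right|$ being uniformly continuous, cf. \remref{logical-uniformly-continuous}), so $Z_{\left|\varphi-r\right|}$ is a symbol of $L^{\p}$; since $\left|\varphi-r\right|^{M}\left(a\right)=\left|r-r\right|=0$ we get $a\in Z_{\left|\varphi-r\right|}^{M^{\p}}$, hence $f\left(a\right)\in Z_{\left|\varphi-r\right|}^{N^{\p}}$ because $f$ is a homomorphism, hence $\left|\varphi-r\right|^{N}\left(f\left(a\right)\right)=0$, i.e. $\varphi^{N}\left(f\left(a\right)\right)=r=\varphi^{M}\left(a\right)$. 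Since $\varphi$ and $a$ were arbitrary this is exactly the definition of an $L$-elementary embedding; applying it with $\varphi=d\left(x,y\right)$ shows in passing that $f$ is an isometry, so it is an embedding in the sense of the excerpt.

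I do not expect any real obstacle here: no continuity estimates are needed, and completeness (or even consistency) of $T$ plays no role in (2). The entire substance is the single observation that the value-shifting connectives are available, which forces an $L^{\p}$-homomorphism to match all predicate values and hence to be elementary, mirroring the first-order argument behind \propref{fo-round-trip}.
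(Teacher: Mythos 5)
Your proof is correct and follows essentially the same route as the paper: part (1) by unwinding the axiom scheme, and the converse direction of (2) by applying the homomorphism to the relation $Z_{\left|\varphi-\varphi^{M}\left(a\right)\right|}$, which is exactly the paper's argument. No gaps.
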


\begin{proof}
(1) For any formula $\neg\exists x:\bigwedge_{i<k}Z_{\varphi_{i}}\left(x\right)$
in $T^{\p}$ we have that in particular for any $a\in M^{x}$ there
is some $i<k$ such that $\varphi_{i}^{M}\left(a\right)\neq0$, thus
$M^{\p}\vDash\neg\bigwedge_{i<k}Z_{\varphi_{i}}\left(a\right)$.

Therefore $M^{\p}\vDash T$.

(2) Assume $f$ is an $L$ elementary embedding. Then for any formulas
$\varphi\left(x\right)$ and $a\in M^{x}$ we have 
\begin{align*}
M^{\p} & \vDash Z_{\varphi}\left(a\right)\Rightarrow\varphi^{M}\left(a\right)=0\Rightarrow\\
 & \varphi^{N}\left(f\left(a\right)\right)=0\Rightarrow N^{\p}\vDash Z_{\varphi}\left(f\left(a\right)\right).
\end{align*}

If $f$ is an $L^{\p}$ homomorphism then for any formula $\varphi\left(x\right)$
and $a\in M^{x}$ we have 
\begin{align*}
M^{\p} & \vDash Z_{\left|\varphi\left(x\right)-\varphi^{M}\left(a\right)\right|}\left(a\right)\Rightarrow\\
N^{\p} & \vDash Z_{\left|\varphi\left(x\right)-\varphi^{M}\left(a\right)\right|}\left(f\left(a\right)\right)\Rightarrow\\
\left|\varphi^{N}\left(f\left(a\right)\right)-\varphi^{M}\left(a\right)\right|= & \left|\varphi\left(x\right)-\varphi^{M}\left(a\right)\right|^{N}\left(f\left(a\right)\right)=0\Rightarrow\\
 & \varphi^{N}\left(f\left(a\right)\right)=\varphi^{M}\left(a\right).
\end{align*}
\end{proof}

\subsubsection{Models of $T$ as models of $T^{\protect\p}$}
\begin{lem}
\label{lem:continuous-univ}If $N\vDash T^{\p}$ then it continues
into a model of $T^{\p}$ of the form $M^{\p}$ for $M\vDash T$;
furthermore we may assume $M$ is $\kappa$-saturated for some $\kappa$.
\end{lem}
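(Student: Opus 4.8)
The plan is to imitate the proof of \lemref{first-order-univ-class}, using the continuous compactness theorem (\factref{continuous-fact}) in place of the first-order one. First I would fix an injective enumeration $\overline{n}=\langle n_{j}\rangle_{j\in J}$ of the universe of $N$ together with a matching tuple of variables $\overline{x}=\langle x_{j}\rangle_{j\in J}$, and form the set of conditions
\[
\Sigma\left(\overline{x}\right)=\left\{ \varphi\left(x'\right)=0\mid x'\text{ a subtuple of }\overline{x},\ \overline{n}'\in Z_{\varphi}^{N}\right\},
\]
where $\overline{n}'$ denotes the subtuple of $\overline{n}$ indexed in the same way as $x'$. The payoff is that from any $M\vDash T$ and $\overline{m}\in M^{\overline{x}}$ with $M\vDash\Sigma\left[\overline{m}\right]$, the map $f\colon N\to M$ given by $f\left(n_{j}\right)=m_{j}$ is an $L^{\p}$-homomorphism from $N$ into $M^{\p}$: it is well defined since the enumeration is injective, and as $L^{\p}$ is relational it suffices that it preserve each $Z_{\varphi}$, which holds because $\overline{n}'\in Z_{\varphi}^{N}$ puts the condition $\varphi\left(x'\right)=0$ into $\Sigma$, so $\varphi^{M}\left(\overline{m}'\right)=0$, that is $\overline{m}'\in Z_{\varphi}^{M^{\p}}$. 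Since $M\vDash T$ gives $M^{\p}\vDash T^{\p}$ by \propref{cont-L-tilde-and-L}.1, this exhibits $N$ continuing into a model of $T^{\p}$ of the required form.

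The heart of the matter is the consistency of $\Sigma$ with $T$, which by \factref{continuous-fact}.1 reduces to checking that every finite $\Sigma_{0}\subseteq\Sigma$ is satisfiable in a model of $T$. Writing $\Sigma_{0}=\left\{ \varphi_{i}\left(x'_{i}\right)=0\right\} _{i<k}$ with $\overline{n}'_{i}\in Z_{\varphi_{i}}^{N}$, and letting $\overline{x}_{0}$ be the finite tuple of variables actually occurring, the corresponding subtuple $\overline{n}_{0}$ of $\overline{n}$ witnesses that $N$ satisfies the $\pp$ sentence $\exists\overline{x}_{0}\,\bigwedge_{i<k}Z_{\varphi_{i}}\left(x'_{i}\right)$. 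As $N\vDash T^{\p}$, its negation --- a $\pu$ sentence --- cannot lie in $T^{\p}$, so by the very definition of $T^{\p}$ there is a model $M\vDash T$ and a tuple $a\in M^{\overline{x}_{0}}$ with $\varphi_{i}^{M}\left(a'_{i}\right)=0$ for all $i<k$; thus $\Sigma_{0}$ is realized \emph{exactly}, hence in particular approximately, in a model of $T$. So $\Sigma$ is consistent with $T$, and \factref{continuous-fact}.1 supplies the required $M$ and $\overline{m}$. (The only genuine bookkeeping is matching formulas applied to subtuples of a common variable tuple against the shape appearing in the definition of $T^{\p}$, handled exactly as in \lemref{first-order-univ-class}.)

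For the saturation clause, having obtained a continuation $f\colon N\to M^{\p}$ with $M\vDash T$, apply \factref{continuous-fact}.2 to get a $\kappa$-saturated elementary extension $M'$ of $M$; by \propref{cont-L-tilde-and-L}.2 the elementary embedding $M\to M'$ is an $L^{\p}$-homomorphism $M^{\p}\to\left(M'\right)^{\p}$, and composing with $f$ shows $N$ continues into $\left(M'\right)^{\p}$, where $M'$ is $\kappa$-saturated and $\left(M'\right)^{\p}\vDash T^{\p}$ by \propref{cont-L-tilde-and-L}.1. I do not expect a real obstacle here; the one point worth care is that ``consistent with $T$'' in \factref{continuous-fact} is exactly what finite --- indeed exact --- satisfiability in models of $T$ provides, which is precisely why the definition of $T^{\p}$, handing us exact rather than merely approximate realizations, is the right input to the continuous compactness theorem.
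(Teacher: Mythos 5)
Your proposal is correct and follows exactly the route the paper takes: the paper's proof simply states that the argument is identical to \lemref{first-order-univ-class} using \factref{continuous-fact}.1 for compactness, with the saturation clause obtained from \factref{continuous-fact}.2 and \propref{cont-L-tilde-and-L}.2. Your write-up fills in the same details (the set of conditions $\Sigma$, exact finite satisfiability via the definition of $T^{\p}$, and the subtuple bookkeeping, which the paper glosses over in the same way in the first-order case), so there is nothing to add.
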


\begin{proof}
Using \factref{continuous-fact}, the proof is identical to \lemref{first-order-univ-class}.
For the ``furthermore'' we again use \factref{continuous-fact}.2,
together with \propref{cont-L-tilde-and-L}.2.
\end{proof}
\begin{prop}
\label{prop:cont-sat-is=00003Dpc}If $M\vDash T$ and $M$ is $\omega$-saturated
then $M^{\p}$ is a $\pc$ model of $T^{\p}$.
\end{prop}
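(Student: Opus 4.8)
The strategy mirrors the proof of \propref{model-of-fo-is-pc-of-p}, using \claimref{easier-e.c.} together with \lemref{continuous-univ}. By \lemref{continuous-univ} the class of models of the form $M^{\p}$ for $M\vDash T$ is a universal class (every model of $T^{\p}$ continues into one), so by \claimref{easier-e.c.} it suffices to show that for every $\omega$-saturated $M\vDash T$ and every $L^{\p}$ homomorphism $f:M^{\p}\rightarrow N^{\p}$ with $N\vDash T$, the homomorphism $f$ is $\pc$. By \propref{cont-L-tilde-and-L}.2, such an $f$ is an $L$ elementary embedding, so I can work with elementary embeddings of continuous structures.

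First I would reduce the problem to primitive positive formulas: it is enough to show that for every $\pp$ $L^{\p}$-formula $\exists \overline{y}\bigwedge_{i<k} Z_{\varphi_i}(x,y)$ and every $a\in (M^{\p})^{x}$, if $N^{\p}\vDash \exists\overline{y}\bigwedge_{i<k}Z_{\varphi_i}(f(a),y)$ then $M^{\p}\vDash \exists\overline{y}\bigwedge_{i<k}Z_{\varphi_i}(a,y)$. Translating via the definition of $M^{\p}$ and $N^{\p}$, the hypothesis says there is $b\in N^{y}$ with $\varphi_i^{N}(f(a),b)=0$ for all $i<k$; equivalently, using \remref{logical-uniformly-continuous}, $\left(\inf_{y}\frac{\sum_{i<k}\varphi_i(f(a),y)}{k}\right)^{N}=0$. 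Since $f$ is $L$-elementary, the formula $\inf_{y}\frac{\sum_{i<k}\varphi_i(x,y)}{k}$ takes the same value at $a$ in $M$ as at $f(a)$ in $N$, so $\left(\inf_{y}\frac{\sum_{i<k}\varphi_i(a,y)}{k}\right)^{M}=0$. Now I invoke $\omega$-saturation of $M$ in the form of \claimref{continuous-quantifiers}: the infimum over $y$ of the formula $\frac{\sum_{i<k}\varphi_i(a,y)}{k}$ being $0$ in $M$ implies there is an actual witness $b'\in M^{y}$ with $\frac{\sum_{i<k}\varphi_i^{M}(a,b')}{k}=0$, hence $\varphi_i^{M}(a,b')=0$ for each $i<k$, which is exactly $M^{\p}\vDash \bigwedge_{i<k}Z_{\varphi_i}(a,b')$ and therefore $M^{\p}\vDash\exists\overline{y}\bigwedge_{i<k}Z_{\varphi_i}(a,y)$.

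The one point that needs care — and is the only real obstacle — is that a general $\pp$ $L^{\p}$-formula has the form $\exists\overline{y}\bigwedge_{i<k}Z_{\psi_i}(z,y)$ where the $\psi_i$ may mix the free variables $x$ and the quantified variables $y$, and the parameters supplied are $f(a)$; I must make sure the elementarity step and the saturation step are applied to the right composite $L$-formula (namely $\inf_{y}\frac{1}{k}\sum_{i<k}\psi_i(x,y)$, where we substitute $a$ for the free variables), and that $\omega$-saturation suffices because we only ever need finitely many conditions $\{\varphi\le \tfrac1n\}_{n<\omega}$ over the finitely many parameters appearing in $a$. Since $M$ is assumed $\omega$-saturated and the parameter set is finite, \claimref{continuous-quantifiers} applies directly. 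This completes the verification that $f$ is $\pc$, and hence by \claimref{easier-e.c.} that $M^{\p}$ is a $\pc$ model of $T^{\p}$.
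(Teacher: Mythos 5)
Your proof is correct and follows essentially the same route as the paper's: reduce via \claimref{easier-e.c.} and \lemref{continuous-univ} to homomorphisms into models of the form $N^{\p}$, use \propref{cont-L-tilde-and-L} to see these are elementary, encode the $\pp$ formula as $\inf_{y}\frac{1}{k}\sum_{i<k}\varphi_{i}=0$, transfer by elementarity, and extract a witness in $M$ by $\omega$-saturation. The only cosmetic difference is that the paper restricts the universal class to $\omega$-saturated $N$, which is unnecessary for exactly the reason you implicitly use: the direction requiring saturation is only needed on the $M$ side.
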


\begin{proof}
By \claimref{easier-e.c.} and \lemref{continuous-univ} it suffices
to show that if $f:M^{\p}\rightarrow N^{\p}$ is a homomorphism for
$N$ an $\omega$-saturated $T$ model then $f$ is $\pc$.

But if $\exists y\stackrel[i<n]{}{\bigwedge}Z_{\varphi_{i}\left(x,y\right)}$
is $\pp$, $a\in M^{x}$ then by \propref{cont-L-tilde-and-L}
\begin{align*}
N^{\p} & \vDash\exists y\stackrel[i<n]{}{\bigwedge}Z_{\varphi_{i}\left(x,y\right)}\left(f\left(a\right),y\right)\Longleftrightarrow\\
 & \exists y\in N\stackrel[i<n]{}{\bigwedge}\varphi_{i}^{N}\left(f\left(a\right),y\right)=0\Longleftrightarrow\\
 & \exists y\in N\left(\frac{\sum_{i<n}\varphi_{i}\left(x,y\right)}{n}\right)^{N}\left(f\left(a\right),y\right)=0\Longleftrightarrow\\
 & \left(\stackrel[y]{}{\inf}\frac{\sum_{i<n}\varphi_{i}\left(x,y\right)}{n}\right)^{N}\left(f\left(a\right)\right)=0\Longleftrightarrow\\
 & \left(\stackrel[y]{}{\inf}\frac{\sum_{i<n}\varphi_{i}\left(x,y\right)}{n}\right)^{M}\left(a\right)=0\Longleftrightarrow\\
 & \exists y\in M\stackrel[i<n]{}{\bigwedge}\varphi_{i}^{M}\left(a,y\right)=0\Longleftrightarrow\\
M^{\p} & \vDash\exists y\stackrel[i<n]{}{\bigwedge}Z_{\varphi_{i}\left(x,y\right)}\left(a,y\right),
\end{align*}

as required.
\end{proof}

\subsubsection{Models of $T^{\protect\p}$ as Models of $T$}

For this section, we take $E$ to be a $\pc$ model of $T^{\p}$.
\begin{lem}
For any function symbol $F$ and for any $a\in E^{n\left(F\right)}$
there is a unique $b\in E$ such that $E\vDash Z_{d\left(F\left(x\right),y\right)}\left(a,b\right)$;
we can thus define $F^{E}:E^{n\left(F\right)}\rightarrow E$ as $F^{E}\left(a\right)=b$.

Furthermore $F^{E}$ commutes with $L^{\p}$ homomorphisms.
\end{lem}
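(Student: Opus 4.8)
The statement has two parts: existence and uniqueness of a $b$ witnessing $Z_{d(F(x),y)}$, and commutation with $L^{\p}$ homomorphisms. The strategy mirrors exactly the treatment of function symbols in \propref{L-structure-fo}. First I would invoke \lemref{continuous-univ} to fix an $\omega$-saturated (indeed $\kappa$-saturated for large $\kappa$) model $M \vDash T$ together with an $L^{\p}$ homomorphism $f : E \to M^{\p}$, which by \propref{cont-L-tilde-and-L}.2 is an $L$ elementary embedding.

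For existence: fix $a \in E^{n(F)}$. In $M$ we have $M \vDash Z_{d(F(x),y)}[f(a), F^{M}(f(a))]$ trivially, since $d(F(f(a)), F^{M}(f(a))) = 0$; in particular $M^{\p} \vDash \exists y\, Z_{d(F(x),y)}(f(a), y)$. Since $E$ is $\pc$ and $\exists y\, Z_{d(F(x),y)}(x,y)$ is a $\pp$ formula, this pulls back: $E \vDash \exists y\, Z_{d(F(x),y)}(a, y)$, so some $b \in E$ witnesses it. For uniqueness: suppose $b, b'$ both satisfy $E \vDash Z_{d(F(x),y)}(a, b)$ and $E \vDash Z_{d(F(x),y)}(a, b')$. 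Applying $f$, and using that $f$ is an $L^{\p}$ homomorphism, $M^{\p} \vDash Z_{d(F(x),y)}(f(a), f(b))$ and $M^{\p} \vDash Z_{d(F(x),y)}(f(a), f(b'))$, hence $d(F^{M}(f(a)), f(b)) = 0 = d(F^{M}(f(a)), f(b'))$, so $f(b) = f(b')$; and since every homomorphism out of a $\pc$ model is injective (an immersion is in particular an embedding), $b = b'$. This defines $F^{E}$ as a genuine function. One should note here that the defining formula lies in $L^{\p}$ (the relation symbol $Z_{d(F(x),y)}$), so all of this is legitimate.

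For the commutation claim: let $h : E \to E'$ be an $L^{\p}$ homomorphism to another $\pc$ model $E'$ of $T^{\p}$, and let $a \in E^{n(F)}$, $b = F^{E}(a)$. Then $E \vDash Z_{d(F(x),y)}(a, b)$, so since $h$ is an $L^{\p}$ homomorphism, $E' \vDash Z_{d(F(x),y)}(h(a), h(b))$; by the uniqueness just established (applied in $E'$), $h(b)$ is the unique witness, i.e. $h(b) = F^{E'}(h(a))$, which is exactly $h(F^{E}(a)) = F^{E'}(h(a))$.

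The only genuinely delicate point — and the one I would be careful about — is that $d$ is a metric, not equality, so the ``uniqueness'' of the witness is uniqueness up to $d$-distance zero, which on a genuine (completed) metric structure means genuine equality; this is where one uses that $E$, being $\pc$, is honestly an $L^{\p}$-structure and not merely a prestructure, together with the fact that $Z_{d(x,y)}$ forces $d$-distance zero. Everything else is a routine transport of truth along the $L^{\p}$ homomorphisms $f$ and $h$, exactly as in the first-order case. I do not anticipate any real obstacle beyond bookkeeping.
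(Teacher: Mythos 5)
Your proposal is correct and follows essentially the same route as the paper: fix an $L^{\p}$ homomorphism $f:E\rightarrow M^{\p}$ from \lemref{continuous-univ}, pull back $\exists y\,Z_{d\left(F\left(x\right),y\right)}\left(a,y\right)$ by positive closedness for existence, push forward and use injectivity of homomorphisms out of $\pc$ models for uniqueness, and observe that the definition via an $L^{\p}$ relation symbol makes commutation with homomorphisms automatic. Your spelled-out uniqueness argument for the commutation step is slightly more explicit than the paper's, but it is the same idea.
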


\begin{proof}
Existence:

Let $f:E\rightarrow M^{\p}$ be an $L^{\p}$ homomorphism as in \lemref{continuous-univ}.

Then $d^{M}\left(F^{M}\left(f\left(a\right)\right),F^{M}\left(f\left(a\right)\right)\right)=0$
thus $M^{\p}\vDash Z_{d\left(F\left(x\right),y\right)}\left(f\left(a\right),F^{M}\left(f\left(a\right)\right)\right)$
thus $M^{\p}\vDash\exists yZ_{d\left(F\left(x\right),y\right)}\left(f\left(a\right),y\right)$
thus $E\vDash\exists yZ_{d\left(F\left(x\right),y\right)}\left(a,y\right)$
(since $E$ is $\pc$).

Uniqueness:

Assume $b,b'$ both satisfy the condition. Then 
\[
M^{\p}\vDash Z_{d\left(F\left(x\right),y\right)}\left(f\left(a\right),f\left(b\right)\right),Z_{d\left(F\left(x\right),y\right)}\left(f\left(a\right),f\left(b'\right)\right),
\]
 that is 
\[
d\left(F^{M}\left(f\left(a\right)\right),f\left(b\right)\right)=d\left(F^{M}\left(f\left(a\right)\right),f\left(b'\right)\right)=0,
\]
 thus $f\left(b\right)=f\left(b'\right)$ thus $b=b'$ since homomorphisms
from $\pc$ models are injective.

Since we defined $F^{E}$ using an $L^{\p}$ relation symbol, it is
obviously preserved by homomorphisms, and note that $M^{\p}\vDash Z_{d\left(F\left(x\right),y\right)}\left(a,b\right)\Longleftrightarrow F^{M}\left(a\right)=b$.
\end{proof}
\begin{lem}
\label{lem:evaluation=00003Din=00003Dpc}For any $L$ formula $\varphi\left(x\right)$,
$e_{\varphi}^{E}:=\left\{ \left(a,r\right)\mid a\in E^{x},r\in\left[0,1\right],E\vDash Z_{\varphi=r}\left(a\right)\right\} $
is a well defined function.

Furthermore, for and $M\ensuremath{\vDash T}$ and $f:E\rightarrow M^{\p}$
an $L^{\p}$ homomorphism we have $e_{\varphi}\left(a\right)=\varphi^{M}\left(f\left(a\right)\right)$
for all $a\in E^{x}$.
\end{lem}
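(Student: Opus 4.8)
The plan is to verify separately that the relation $e_{\varphi}^{E}$ is single-valued and that it is total; once that is done the ``furthermore'' clause is immediate. For single-valuedness, fix distinct $r,r'\in\left[0,1\right]$. There is no model $M\vDash T$ and $a\in M^{x}$ with $\varphi^{M}\left(a\right)=r$ and $\varphi^{M}\left(a\right)=r'$ at the same time, so by the definition of $T^{\p}$ the $\pu$ sentence $\forall x\,\neg\left(Z_{\varphi=r}\left(x\right)\wedge Z_{\varphi=r'}\left(x\right)\right)$ lies in $T^{\p}$ and therefore holds in $E$ (here $Z_{\varphi=r}$ abbreviates $Z_{\left|\varphi-r\right|}$, cf.\ \remref{logical-uniformly-continuous}). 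Hence for each $a\in E^{x}$ there is at most one $r$ with $E\vDash Z_{\varphi=r}\left(a\right)$.

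For totality, fix $a\in E^{x}$ and suppose for contradiction that $E\nvDash Z_{\varphi=r}\left(a\right)$ for every $r\in\left[0,1\right]$. Each $Z_{\varphi=r}$ is a parameter-free atomic --- hence $\pp$, hence positive --- formula, so by \factref{maximal-pp-in-pc} we may pick for each $r$ a parameter-free positive $\psi_{r}\left(x\right)$ with $\psi_{r}\perp Z_{\varphi=r}$ and $E\vDash\psi_{r}\left(a\right)$. By \lemref{continuous-univ} there is an $L^{\p}$ homomorphism $f:E\rightarrow M^{\p}$ for some $M\vDash T$; since homomorphisms preserve positive formulas, $M^{\p}\vDash\psi_{r}\left(f\left(a\right)\right)$ for every $r$. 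Now set $r_{0}=\varphi^{M}\left(f\left(a\right)\right)\in\left[0,1\right]$. Then $\left|\varphi-r_{0}\right|^{M}\left(f\left(a\right)\right)=0$, so $f\left(a\right)\in Z_{\varphi=r_{0}}^{M^{\p}}$, and thus $M^{\p}\vDash\psi_{r_{0}}\left(f\left(a\right)\right)\wedge Z_{\varphi=r_{0}}\left(f\left(a\right)\right)$. Since $M^{\p}\vDash T^{\p}$ by \propref{cont-L-tilde-and-L}, this contradicts $\psi_{r_{0}}\perp Z_{\varphi=r_{0}}$. So some $r$ works, and together with uniqueness this shows $e_{\varphi}^{E}$ is the graph of a function $E^{x}\rightarrow\left[0,1\right]$.

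For the ``furthermore'' clause let $M\vDash T$, let $f:E\rightarrow M^{\p}$ be an $L^{\p}$ homomorphism, and let $a\in E^{x}$; writing $r=e_{\varphi}\left(a\right)$ we have $E\vDash Z_{\varphi=r}\left(a\right)$ by definition, and since $Z_{\varphi=r}$ is atomic it is preserved by $f$, giving $M^{\p}\vDash Z_{\varphi=r}\left(f\left(a\right)\right)$, i.e.\ $\varphi^{M}\left(f\left(a\right)\right)=r=e_{\varphi}\left(a\right)$. The one point requiring care is the totality argument: one must derive the contradiction from a single value $r_{0}$ (rather than trying to use the whole uncountable family $\left\{\psi_{r}\right\}_{r}$ at once), and one must use that \factref{maximal-pp-in-pc} yields \emph{parameter-free} $\psi_{r}$ so that the relation $\psi_{r}\perp Z_{\varphi=r}$ is meaningful.
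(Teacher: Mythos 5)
Your proof is correct and follows essentially the same route as the paper: uniqueness comes from the $\pu$ sentence $\forall x\,\neg\left(Z_{\varphi=r}\left(x\right)\wedge Z_{\varphi=r'}\left(x\right)\right)\in T^{\p}$, and existence together with the ``furthermore'' clause comes from a homomorphism $f:E\rightarrow M^{\p}$ supplied by \lemref{continuous-univ}, with $r_{0}=\varphi^{M}\left(f\left(a\right)\right)$. The only (harmless) difference is that for totality the paper pulls the atomic formula $Z_{\varphi=r_{0}}$ back along $f$ directly from the fact that $E$ is $\pc$ (so $f$ is an immersion), whereas you reach the same conclusion by contradiction through \factref{maximal-pp-in-pc}; both arguments rest on the same underlying property of $\pc$ models.
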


\begin{proof}
We need to show that for any $\varphi$ and any $a$ there is a unique
$r\in\left[0,1\right]$ such that $E\vDash Z_{\varphi=r}\left(a\right)$.

For uniqueness, note that since in every continuous model and for
any $r\neq r'$ we have at most one of

- $\varphi^{M}\left(a\right)=r\Longleftrightarrow\left|\varphi-r\right|^{M}\left(a\right)=0$.

- $\varphi^{M}\left(a\right)=r'\Longleftrightarrow\left|\varphi-r'\right|^{M}\left(a\right)=0$.

by definition $\neg\exists x:Z_{\varphi=r}\wedge Z_{\varphi=r'}\in T^{\p}$.

For existence and the ``furthermore'', let $f:E\rightarrow M^{\p}$
be an $L^{\p}$ homomorphism for $M$ a model of $T$ as in \lemref{continuous-univ}.

Then for $r=\varphi^{M}\left(a\right)$ we have $M^{\p}\vDash Z_{\varphi=r}\left(f\left(a\right)\right)$
thus $E\vDash Z_{\varphi=r}\left(a\right)$.
\end{proof}
\begin{prop}
\label{prop:evaluation-properties}$e_{\varphi}^{E}$ has the following
properties (we omit the $E$ from the notation):
\begin{enumerate}
\item $e_{d}$ is a metric on $E$ with diameter $\leq D_{L}$.
\item If $F$ is a function symbol, $a,b\in E^{n\left(F\right)}$, $\varepsilon>0$
and $e_{d}\left(a_{i},b_{i}\right)<\Delta_{F}\left(\varepsilon\right)$
for all $i<n\left(F\right)$ then $d\left(F^{E}\left(a\right),F^{E}\left(b\right)\right)\leq\varepsilon$.\\
Likewise if $P$ is a predicate symbol, $a,b\in E^{n\left(P\right)}$,
$\varepsilon>0$ and $e_{d}\left(a_{i},b_{i}\right)<\Delta_{P}\left(\varepsilon\right)$
for all $i<n\left(F\right)$ then $\left|e_{P}\left(a\right)-e_{P}\left(b\right)\right|\leq\varepsilon$.
\item $e_{\left(-\right)}$ respects connectors; that is if $\rho:\left[0,1\right]^{k}\rightarrow\left[0,1\right]$
is uniformly continuous and $\left\{ \varphi_{i}\right\} _{i<k}$
are formulas then $e_{\rho\left(\varphi_{0},...,\varphi_{k-1}\right)}=\rho\circ\left(e_{\varphi_{0}},...,e_{\varphi_{k-1}}\right)$.
\item $e_{\left(-\right)}$ respects continuous quantifiers; that is if
$\varphi\left(x,y\right)$ is a formula then $e_{\stackrel[x]{}{\inf}\varphi\left(x,y\right)}=\stackrel[x]{}{\inf}e_{\varphi\left(x,y\right)}$
and likewise for $\sup$.
\end{enumerate}
Furthermore, the infimum/supremum is always a minimum/maximum (that
is the value is attained).
\end{prop}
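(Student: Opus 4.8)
The plan is to reduce every clause to the original continuous structure. By \lemref{continuous-univ} fix a model $M\vDash T$ that is $\omega$-saturated together with an $L^{\p}$ homomorphism $f\colon E\to M^{\p}$; since $E$ is $\pc$ the map $f$ is injective, and by \lemref{evaluation=00003Din=00003Dpc} we have $e_{\varphi}^{E}(a)=\varphi^{M}(f(a))$ for every $L$-formula $\varphi$ and every $a\in E^{x}$ (in particular $e_{d}^{E}(a,b)=d^{M}(f(a),f(b))$ and $e_{P}^{E}(a)=P^{M}(f(a))$, and by the lemma defining $F^{E}$ also $f(F^{E}(a))=F^{M}(f(a))$). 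First I would prove (1): reflexivity, symmetry, the triangle inequality and the bound $e_{d}^{E}\le D_{L}$ transfer verbatim from $(M,d^{M})$, and if $e_{d}^{E}(a,b)=0$ then $f(a)=f(b)$, hence $a=b$ by injectivity of $f$; so $e_{d}^{E}$ is a genuine metric of diameter $\le D_{L}$.

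For (2), feed the transfer identities into the uniform-continuity clauses built into the $L$-structure $M$: if $e_{d}^{E}(a_{i},b_{i})<\Delta_{F}(\varepsilon)$ for all $i$, then $d^{M}(f(a_{i}),f(b_{i}))<\Delta_{F}(\varepsilon)$, so $d^{M}(F^{M}(f(a)),F^{M}(f(b)))\le\varepsilon$, which reads $e_{d}^{E}(F^{E}(a),F^{E}(b))\le\varepsilon$; the predicate case is identical using $e_{P}^{E}=P^{M}\circ f$. Part (3) is immediate from $e_{\rho(\varphi_{0},\dots,\varphi_{k-1})}^{E}(a)=\rho(\varphi_{0},\dots,\varphi_{k-1})^{M}(f(a))=\rho(\varphi_{0}^{M}(f(a)),\dots,\varphi_{k-1}^{M}(f(a)))=\rho(e_{\varphi_{0}}^{E}(a),\dots,e_{\varphi_{k-1}}^{E}(a))$, i.e.\ the semantics of connectors in $M$.

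The real content is (4) together with the attainment clause, and here the $\omega$-saturation of $M$ and the $\pc$-ness of $E$ must be used in tandem. By \lemref{evaluation=00003Din=00003Dpc}, $e_{\inf_{x}\varphi(x,y)}^{E}(b)=(\inf_{x}\varphi)^{M}(f(b))=\inf_{c\in M}\varphi^{M}(c,f(b))$; since $f(E)\subseteq M$ this is $\le\inf_{a\in E}\varphi^{M}(f(a),f(b))=\inf_{a\in E}e_{\varphi}^{E}(a,b)$, so only the reverse inequality (with attainment) remains. Put $r=e_{\inf_{x}\varphi(x,y)}^{E}(b)\in[0,1]$; then $\inf_{x}|\varphi(x,f(b))-r|=0$ in $M$, so by \claimref{continuous-quantifiers} (applicable since $M$ is $\omega$-saturated) there is $c\in M$ with $\varphi^{M}(c,f(b))=r$, i.e.\ $M^{\p}\vDash Z_{|\varphi(x,y)-r|}(c,f(b))$, hence $M^{\p}\vDash\exists x\,Z_{|\varphi(x,y)-r|}(x,f(b))$. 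This last formula is $\pp$ with parameter $f(b)$, so since $f$ is an $L^{\p}$ homomorphism and $E$ is $\pc$ we get $E\vDash\exists x\,Z_{|\varphi(x,y)-r|}(x,b)$; any witness $a\in E^{x}$ satisfies $e_{|\varphi(x,y)-r|}^{E}(a,b)=0$, which by (3) is exactly $e_{\varphi}^{E}(a,b)=r$. Thus the infimum over $E$ equals $r$ and is attained at $a$. The supremum case is the mirror image, replacing $|\varphi(x,y)-r|$ by $r\dotminus\varphi(x,y)$ and using that $\sup$ is a maximum in $\omega$-saturated models. The only points requiring care are that the constant $r\in[0,1]$ is a legitimate $0$-ary connector (so $|\varphi(x,y)-r|$ and $r\dotminus\varphi(x,y)$ are bona fide $L$-formulas and $Z_{|\varphi(x,y)-r|}$, $Z_{r\dotminus\varphi(x,y)}$ bona fide symbols of $L^{\p}$), and that the main obstacle is simply recognizing that this round trip to $M^{\p}$ and back along $f$ is precisely what converts the ``$\inf$ is a $\min$'' fact in the saturated model into the corresponding fact in the possibly small model $E$.
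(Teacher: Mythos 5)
Your proposal is correct and follows essentially the same route as the paper: transfer everything along a fixed $L^{\p}$-homomorphism $f\colon E\to M^{\p}$ into an $\omega$-saturated model $M$, and for (4) use saturation of $M$ to realize the value $r$ exactly, then pull the witness back into $E$ via positive closedness. The only cosmetic difference is that you invoke the stated claim about $\omega$-saturation (applied to $\inf_{x}\left|\varphi-r\right|=0$) where the paper redoes the finite-satisfiability argument inline; these are the same argument.
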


\begin{proof}
Let us first fix $f:E\rightarrow M^{\p}$ an $L^{\p}$ homomorphism
for $M\vDash T$ $\omega$-saturated, as in \lemref{continuous-univ}.

(1) Most of the requirements are immediate consequences of \lemref{evaluation=00003Din=00003Dpc}
for $f$.

The only part woth noting is that we are using the fact that $f\left(a\right)=f\left(b\right)\Longleftrightarrow a=b$
(since homomorphisms from a $\pc$ model are injective.

(2) Again, this must hold since it holds in $M$. We will spell out
the case of a function symbol:
\begin{align*}
\forall i & :e_{d}\left(a_{i},b_{i}\right)<\Delta_{P}\left(\varepsilon\right)\Rightarrow\\
\forall i & :d^{M}\left(f\left(a_{i}\right),f\left(b_{i}\right)\right)<\Delta_{P}\left(\varepsilon\right)\Rightarrow\\
 & d^{M}\left(F^{M}\left(f\left(a\right)\right),F^{M}\left(f\left(b\right)\right)\right)\leq\varepsilon\Rightarrow\\
 & d^{M}\left(f\left(F^{E}\left(a\right)\right),f\left(F^{E}\left(b\right)\right)\right)\leq\varepsilon\Rightarrow\\
 & e_{d}\left(F^{E}\left(a\right),F^{E}\left(b\right)\right)\leq\varepsilon.
\end{align*}

(3) This again holds since the same holds in $M$.

(4) We will show this for $\inf$, with $\sup$ being analogous.

Choose $b\in E^{y}$ and define $r=e_{\stackrel[x]{}{\inf}\varphi\left(x,y\right)}\left(b\right)$.
Then for any $a\in E^{x}$ 
\begin{align*}
r & =\left(\stackrel[x]{}{\inf}\varphi\right)^{M}\left(f\left(b\right)\right)=\stackrel[a'\in M^{x}]{}{\inf}\varphi^{M}\left(a',f\left(b\right)\right)\leq\\
 & \stackrel[a\in E^{x}]{}{\inf}\varphi^{M}\left(f\left(a\right),f\left(b\right)\right)=\stackrel[a\in E^{x}]{}{\inf}e_{\varphi}\left(a,b\right).
\end{align*}

On the other hand, for any $r'>r$ we have $r'>\stackrel[a'\in M^{x}]{}{\inf}\varphi^{M}\left(a',f\left(b\right)\right)$
thus for some $a'\in M^{x}$ we have $\varphi^{M}\left(a',f\left(b\right)\right)\leq r'$.

So $\left\{ \varphi\left(x,f\left(b\right)\right)\leq r'\right\} _{1\geq r'>r}$
is finitely satisfiable in $M$ thus as $M$ is saturated there is
$a_{0}'\in M^{x}$ such that $\varphi^{M}\left(a_{0}',f\left(b\right)\right)\leq r$
--- but we saw $\varphi^{M}\left(a_{0}',f\left(b\right)\right)\geq\stackrel[a'\in M^{x}]{}{\inf}\varphi^{M}\left(a',f\left(b\right)\right)=r$
thus $\varphi^{M}\left(a_{0}',f\left(b\right)\right)=r$.

We get $M^{\p}\vDash\exists x:Z_{\varphi=r}\left(x,f\left(b\right)\right)$
and so from $\pc$ $E\vDash\exists x:Z_{\varphi=r}\left(x,b\right)$
that is for some $a\in E^{x}$ we have $Z_{\varphi=r}\left(a,b\right)$
that is by definition $e_{\varphi}\left(a,b\right)=r$.

And thus $r\leq\stackrel[a\in E^{x}]{}{\inf}e_{\varphi}\left(a,b\right)\leq r$
as required.
\end{proof}
\begin{cor}
\label{cor:pc-to-continuous}The universe of $E$ together with $F^{E}$
and $P^{E}:=e_{P}$ is an $L$-prestructure we will denote $\utilde{E}$.

For any formula $\varphi$, $\varphi^{\utilde{E}}\equiv e_{\varphi}$.

Any $f:E\rightarrow M^{\p}$ (or to another $\pc$ model of $T^{\p}$)
is also an $L$ elementary embedding from $\utilde{E}\rightarrow M$
(in particular an isometry). In particular, the completion of $\utilde{E}$
is a model of $T$ (see \factref{premodel}).

The constructions $\left(\cdot\right)^{\p}$ and $\utilde{\cdot}$
are inverses, when applicable.
\end{cor}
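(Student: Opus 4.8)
The plan is to verify the four assertions of \corref{pc-to-continuous} one at a time, each being essentially an unwinding of \propref{evaluation-properties} together with the existence (by \lemref{continuous-univ}) of an $L^{\p}$-homomorphism $f\colon E\to M^{\p}$ for some $\omega$-saturated $M\vDash T$. That $\utilde{E}$ is an $L$-prestructure is immediate: clauses (1) and (2) of \propref{evaluation-properties} say precisely that $e_{d}$ is a (pseudo)metric on $E$ of diameter $\le D_{L}$, that each $P^{\utilde{E}}=e_{P}$ maps into $I_{P}=[0,1]$ with modulus of uniform continuity $\Delta_{P}$, and that each $F^{\utilde{E}}$ has modulus $\Delta_{F}$; this is the definition of a prestructure, and it is only a prestructure because $e_{d}$ need not be Cauchy-complete.

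Next I would establish $\varphi^{\utilde{E}}=e_{\varphi}$ by induction on syntactic complexity. For terms $t$, an induction using that $F^{\utilde{E}}$ commutes with any $L^{\p}$-homomorphism (the lemma stating that $F^{E}$ is well defined and homomorphism-invariant) gives $f(t^{\utilde{E}}(a))=t^{M}(f(a))$ for all $a$. For an atomic formula $\varphi=P(t_{1},\dots,t_{n})$, and identically for $d(t_{1},t_{2})$, this yields $\varphi^{\utilde{E}}(a)=e_{P}(t_{1}^{\utilde{E}}(a),\dots)=P^{M}(f(t_{1}^{\utilde{E}}(a)),\dots)=P^{M}(t_{1}^{M}(f(a)),\dots)=\varphi^{M}(f(a))=e_{\varphi}(a)$, where the second and last equalities are the ``furthermore'' of \lemref{evaluation=00003Din=00003Dpc}. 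The connective step is \propref{evaluation-properties}(3) and the quantifier step is \propref{evaluation-properties}(4), each applied to the induction hypothesis; clause (4) also gives that the relevant inf/sup is attained, which is what lets $\utilde{E}$ literally realize existentials.

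With this in hand, given an $L^{\p}$-homomorphism $f\colon E\to M^{\p}$ we have $\varphi^{\utilde{E}}(a)=e_{\varphi}(a)=\varphi^{M}(f(a))$ for every $\varphi$ and $a$ by the previous paragraph and \lemref{evaluation=00003Din=00003Dpc}; taking $\varphi=d$ makes $f$ an isometry, and in general this is exactly the statement that $f\colon\utilde{E}\to M$ is an $L$-elementary embedding (a homomorphism into another $\pc$ model $E'$ of $T^{\p}$ is handled by composing with some $E'\to M^{\p}$). In particular $\utilde{E}$ satisfies every closed condition of $T$, its value being pulled back from $M\vDash T$, so by \factref{premodel} the completion of $\utilde{E}$ is a model of $T$. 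For the round trip, $(\utilde{E})^{\p}$ has the universe of $E$ and interprets $Z_{\varphi}$ as $(\varphi^{\utilde{E}})^{-1}(\{0\})=e_{\varphi}^{-1}(\{0\})$; since $e_{\varphi}(a)=0$ iff $E\vDash Z_{\varphi=0}(a)$, and pushing through $f$ (using that $E$ is $\pc$) gives $E\vDash Z_{\varphi=0}(a)\iff\varphi^{M}(f(a))=0\iff E\vDash Z_{\varphi}(a)$, we conclude $(\utilde{E})^{\p}=E$; and when $M^{\p}$ is $\pc$ --- for instance when $M$ is $\omega$-saturated, by \propref{cont-sat-is=00003Dpc} --- applying the elementary-embedding statement to $\id\colon M^{\p}\to M^{\p}$ shows $\id\colon\utilde{(M^{\p})}\to M$ is an $L$-elementary surjection, hence an $L$-isomorphism, so $\utilde{(M^{\p})}=M$.

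I do not expect a genuine obstacle here, since the substance has been front-loaded into \propref{evaluation-properties} and the preceding lemmas; the only care needed is the bookkeeping in the formula induction --- keeping straight that $e_{(-)}$ is defined on formulas whereas terms are interpreted as elements --- and checking that the moduli $\Delta_{P},\Delta_{F}$ transfer verbatim so that $\utilde{E}$ is a prestructure for $L$ in the precise technical sense.
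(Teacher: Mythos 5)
Your proposal is correct and follows exactly the route the paper intends: the corollary is stated without an explicit proof precisely because it is the assembly of \lemref{evaluation=00003Din=00003Dpc} and \propref{evaluation-properties} that you carry out, with the formula induction (atomic via the "furthermore" clauses, connectives via (3), quantifiers via (4) together with attainment of the inf/sup) and the round-trip identities unwound as you describe. No gaps.
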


\begin{claim}
\label{claim:cont-complete-metric}If $E$ is $\omega_{1}$-positively
saturated then $\left(E,e_{d}\right)$ is complete.
\end{claim}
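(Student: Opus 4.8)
The statement to prove is \Lemmaref{cont-complete-metric}: if $E$ is $\omega_1$-positively saturated (as a $\pc$ model of $T^{\p}$), then $(E, e_d)$ is complete.

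The plan is to take a Cauchy sequence $\langle a_n\rangle_{n<\omega}$ in $(E,e_d)$ and produce a limit point in $E$ using positive saturation. First I would pass to a subsequence so that $e_d(a_n,a_{n+1}) < 2^{-n}$ for all $n$ (this does not change whether the sequence has a limit). Then I would write down the partial positive type $\Sigma(x)$ over the countable parameter set $A = \{a_n \mid n<\omega\}$ consisting of the conditions $Z_{d(x,a_n)\dotminus 2^{-n+1}}(x)$ for all $n<\omega$ — i.e. the assertions ``$e_d(x,a_n)\le 2^{-n+1}$''. Since $|A| \le \aleph_0 < \omega_1$, if I can show $\Sigma$ is finitely satisfiable in $E$, then by $\omega_1$-positive saturation there is $b\in E$ realizing all of $\Sigma$, and then $e_d(b,a_n)\le 2^{-n+1}\to 0$, so $a_n\to b$ in $(E,e_d)$, giving completeness.

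For finite satisfiability, a finite subset of $\Sigma$ is contained in $\{Z_{d(x,a_n)\dotminus 2^{-n+1}}(x) \mid n\le N\}$ for some $N$; I claim $a_N$ satisfies all of these. Indeed, by the triangle inequality for $e_d$ (\propref{evaluation-properties}.1) and the choice of subsequence, $e_d(a_n,a_N) \le \sum_{k=n}^{N-1} e_d(a_k,a_{k+1}) < \sum_{k\ge n} 2^{-k} = 2^{-n+1}$ for $n\le N$, so $E\vDash Z_{d(x,a_N)\dotminus 2^{-n+1}}(a_N)$, using \propref{evaluation-properties}.3 to see that $e_d(a_N,a_N)\dotminus\cdots$ evaluated via $e$ matches the real-number computation (the connector $X\dotminus Y$ is respected by $e_{(-)}$, and $e_{d(x,y)}$ is the actual metric $e_d$). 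This establishes finite satisfiability, hence the type is realized, hence completeness.

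The main obstacle — though it is minor — is making sure the conditions $Z_{d(x,a_n)\dotminus 2^{-n+1}}$ really are \emph{positive} formulas over $A$ so that positive saturation applies: each $Z_{\psi}$ is an atomic relation symbol of $L^{\p}$, so $Z_{d(x,a_n)\dotminus r}(x)$ is an atomic $L^{\p}$ formula with parameter $a_n\in A$, and a conjunction of such is $\pp$, so this is fine. The only other subtlety is confirming that ``$\psi^{\utilde E}(a) = 0$'' is equivalent to ``$E\vDash Z_\psi(a)$'' for the relevant $\psi$, which is exactly \corref{pc-to-continuous} together with \lemref{evaluation=00003Din=00003Dpc}; and that the real-valued bookkeeping of $X\dotminus Y$ translates correctly through $e_{(-)}$, which is \propref{evaluation-properties}.3. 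With those in hand the argument is a routine application of saturation to a Cauchy sequence, mirroring the classical proof that $\omega_1$-saturated metric structures are complete.
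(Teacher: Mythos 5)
Your proof is correct, but it takes a genuinely different route from the paper's. The paper's argument goes through an external model: it fixes an $\mathcal{L}^{\p}$-homomorphism $f:E\rightarrow M^{\p}$ (an isometry into the complete metric space $M$), finds the limit $b_{\infty}$ of $\left(f\left(a_{n}\right)\right)$ there, records the exact distances $r_{n}=d^{M}\left(f\left(a_{n}\right),b_{\infty}\right)$, uses the $\pc$ property of $E$ to pull back finite satisfiability of the type $\left\{ Z_{d=r_{n}}\left(a_{n},y\right)\right\} _{n<\omega}$ from $M^{\p}$ to $E$, and then applies $\omega_{1}$-saturation. You instead work entirely inside $E$: after passing to a fast Cauchy subsequence you write down the type asserting $e_{d}\left(x,a_{n}\right)\leq2^{-n+1}$ and witness its finite satisfiability with the tail elements $a_{N}$ themselves, using only that $e_{d}$ is a metric (\propref{evaluation-properties}.1) and that $e_{\left(-\right)}$ respects the connector $\dotminus$ (\propref{evaluation-properties}.3). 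Your version is more self-contained --- it mirrors the classical proof that $\omega_{1}$-saturated metric structures are complete and does not invoke the completeness of the target structure $M$ or the $\pc$ transfer of existential statements --- at the small cost of the subsequence bookkeeping and of getting only upper bounds $2^{-n+1}$ rather than exact distances, which is of course enough. The one point worth stating precisely in a final write-up is that the condition ``$e_{d}\left(x,a_{n}\right)\leq2^{-n+1}$'' is expressed by the atomic $\mathcal{L}^{\p}$-formula $Z_{d\left(x,y\right)\dotminus2^{-n+1}}\left(x,a_{n}\right)$, where $d\left(x,y\right)\dotminus2^{-n+1}$ is a parameter-free $L$-formula (the constant is absorbed into the connector), so that $\Sigma$ really is a set of positive formulas over the countable parameter set and positive saturation applies; you address this, so the argument is complete.
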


\begin{proof}
Let $f:E\rightarrow M^{\p}$ be an $L^{\p}$ homomorphism. Let $\left(a_{n}\right)_{n<\omega}$
be a Cauchy sequence in $\left(E,e_{d}\right)$.

Since $f$ is also an isometry by \corref{pc-to-continuous}, $\left(f\left(a_{n}\right)\right)_{n<\omega}$
is also a Cauchy sequence thus has a limit $b_{\infty}$. Define $r_{n}=d^{M}\left(f\left(a_{n}\right),b_{\infty}\right)$.

For any finite $I_{0}\subseteq\omega$, $M^{\p}\vDash\exists y:\stackrel[n\in I_{0}]{}{\bigwedge}Z_{d=r_{n}}\left(f\left(a_{n}\right),y\right)$
thus $E\vDash\exists y:\stackrel[n\in I_{0}]{}{\bigwedge}Z_{d=r_{n}}\left(a_{n},y\right)$.

By $\omega_{1}$-saturation, $\left\{ Z_{d=r_{n}}\left(a_{n},y\right)\right\} _{n<\omega}$
is satisfied by some $a_{\infty}\in E$; but this means $e_{d}\left(a_{n},a_{\infty}\right)=r_{n}\rightarrow0$
thus $a_{n}\rightarrow a_{\infty}$ in $\left(E,e_{d}\right)$ as
required.
\end{proof}

\subsubsection{Properties of $T^{\protect\p}$}
\begin{prop}
\label{prop:cont-qe}In a $\pc$ model of $T^{\p}$, every positive
formula is equivalent to an atomic formula.
\end{prop}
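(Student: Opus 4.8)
The plan is to mirror the first‑order argument used for \propref{cont-qe}'s analogue (the corollary after \propref{fo-round-trip}), now using the infrastructure built in \corref{pc-to-continuous}. The key point is that in a $\pc$ model $E$ of $T^{\p}$, every $L^{\p}$‑atomic relation $Z_{\varphi}$ is precisely $\{a\mid e_{\varphi}^{E}(a)=0\}$, and by \corref{pc-to-continuous} this equals $\{a\mid \varphi^{\utilde{E}}(a)=0\}$ — so the $L^{\p}$‑atomic relations already "see" the zero‑sets of arbitrary $L$‑formulas, including formulas with continuous quantifiers. Thus a positive $L^{\p}$‑formula, which a priori could add primitive positive quantifiers $\exists y$ over the $Z_{\varphi}$'s, should collapse to a single $Z_{\psi}$.

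First I would reduce a general positive formula to a $\pp$ formula $\Phi(x)=\exists y\,\bigwedge_{i<n}Z_{\varphi_{i}(x,y)}(x,y)$, since every positive formula is a disjunction of $\pp$ formulas and a disjunction $Z_{\psi_{1}}\vee Z_{\psi_{2}}$ is equivalent (using \remref{logical-uniformly-continuous}, $\prod_{i}\psi_{i}=0\Longleftrightarrow\bigvee_{i}\psi_{i}=0$) to $Z_{\prod_{i}\psi_{i}}$; so it suffices to handle one $\pp$ formula. Next, for the $\pp$ case I would show that in $E$,
\[
E\vDash\Phi(a)\iff e^{E}_{\inf_{y}\frac{1}{n}\sum_{i<n}\varphi_{i}(x,y)}(a)=0\iff E\vDash Z_{\inf_{y}\frac{1}{n}\sum_{i<n}\varphi_{i}(x,y)}(a),
\]
which is exactly the computation already carried out in the proof of \propref{cont-sat-is=00003Dpc}, except now internalized via $e^{E}_{(-)}$: by \propref{evaluation-properties} the function $e^{E}_{(-)}$ respects connectors and respects continuous quantifiers with the infimum attained, so $e^{E}_{\inf_{y}(\cdots)}(a)=0$ iff there is $b\in E^{y}$ with $\frac{1}{n}\sum_{i<n}e^{E}_{\varphi_{i}}(a,b)=0$ iff each $e^{E}_{\varphi_{i}}(a,b)=0$ iff $E\vDash Z_{\varphi_{i}}(a,b)$ for all $i$, i.e. $E\vDash\Phi(a)$. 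That chain of equivalences is the whole content; the equivalent atomic formula is $Z_{\inf_{y}\frac{1}{n}\sum_{i<n}\varphi_{i}}$.

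The main obstacle — really the only subtle point — is the step where I pass $\exists y\,\bigwedge_i Z_{\varphi_i}(a,y)$ to $e^{E}_{\inf_y\frac1n\sum_i\varphi_i}(a)=0$: the forward direction is immediate, but the backward direction needs the infimum in \propref{evaluation-properties}.4 to be \emph{attained} inside $E$, which is why the statement of that proposition explicitly records "the infimum/supremum is always a minimum/maximum." Once that is invoked the witness $b\in E^{y}$ is produced directly, without needing saturation of $E$ (the saturation was already spent in proving \propref{evaluation-properties}). I would also remark, as in the first‑order case, that the equivalence is independent of the $\pc$ model, since the defining formula $\inf_y\frac1n\sum_i\varphi_i$ does not depend on $E$ and any immersion between $\pc$ models preserves the equivalence of positive formulas over $\emptyset$ (noted at the start of the proof of \thmref{robinson-pc}).
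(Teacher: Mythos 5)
Your proposal is correct and follows essentially the same route as the paper: conjunctions are absorbed via $Z_{\frac{1}{n}\sum_i\varphi_i}$, disjunctions via $Z_{\prod_i\varphi_i}$ (\remref{logical-uniformly-continuous}), and existential quantifiers via $Z_{\inf_y(\cdots)}$ using \propref{evaluation-properties}.3 and 4, with the attainment of the infimum being exactly the point that makes the backward direction work. Your explicit reduction to $\pp$ formulas and your remark on model-independence are just slightly more detailed packagings of the paper's induction on complexity.
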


\begin{proof}
By \remref{logical-uniformly-continuous} and \propref{evaluation-properties}.3
and 4, we may replace:
\begin{itemize}
\item $Z_{\varphi}\wedge Z_{\psi}$ by $Z_{\frac{\varphi+\psi}{2}}$.
\item $Z_{\varphi}\vee Z_{\psi}$ by $Z_{\varphi\cdot\psi}$.
\item $\exists x:Z_{\varphi}$ by $Z_{\stackrel[x]{}{\inf}\varphi}$.
\end{itemize}
And thus we can proceed by induction on complexity.
\end{proof}
\begin{prop}
$T$ is complete iff $T^{\p}$ is irreducible.
\end{prop}

\begin{proof}
Assume $T$ is complete. Let $M$ be an $\omega$-saturated model
of $T$, which exists by \factref{continuous-fact}. Then by \propref{cont-sat-is=00003Dpc}
$M^{\p}$ is a $\pc$ model of $T^{\p}$, thus to show $T^{\p}$ is
irreducible it is enough (by \propref{irreducible}) to show that
$\Th^{\pu}\left(M^{\p}\right)\subseteq T^{\p}$.

Assume $\left\{ \varphi_{i}\left(x\right)\right\} _{i<k}$ are $L$
formulas such that $\forall x\bigvee_{i<k}\neg Z_{\varphi_{i}}\left(x\right)\notin T^{\p}$.
Then by definition of $T^{\p}$ there is a model $N$ of $T$ and
$a\in N^{x}$ such that $\varphi_{i}^{N}\left(a\right)=0$ for all
$i<k$. This means that for $\psi=\inf_{x}\frac{\sum_{i<k}\varphi_{i}\left(x\right)}{k}$
we have $\psi^{N}=0$. 

Since $T$ is complete there exists $r\in\left[0,1\right]$ such that
$T\vDash\left|\psi-r\right|=0$. It cannot be $r>0$ as that would
imply (by definition of $\vDash$ and $N$ being a model of $T$)
that $\psi^{N}=r\neq0$; therefore $T\vDash\left|\psi-0\right|=0$
that is $T\vDash\psi=0$.

In particular, $\psi^{M}=0$. Since $M$ is $\omega$-saturated, that
means by \claimref{continuous-quantifiers} that for some $a\in M^{x}$
we have 
\begin{align*}
0 & =\left(\frac{\sum_{i<k}\varphi_{i}\left(x\right)}{k}\right)^{M}\left(a\right)=\frac{\sum_{i<k}\varphi_{i}^{M}\left(a\right)}{k}\Rightarrow\\
 & \forall i<k\,\varphi_{i}^{M}\left(a\right)=0\Rightarrow\forall i<k\,a\in Z_{\varphi_{i}}^{M^{\p}},
\end{align*}
thus $\forall x\bigvee_{i<k}\neg Z_{\varphi_{i}}\left(x\right)\notin\Th^{\pu}\left(M^{\p}\right)$
as required.

Assume $T^{\p}$ is irreducible and let $\psi$ be an $L$ formula
without parameters. Fix $M$ to be some an $\omega$-saturated model
of $T$. We claim that for $r=\psi^{M}$, $T\vDash\left|\psi-r\right|=0$.

Indeed let $N$ be an arbitrary model of $T$. By \factref{continuous-fact}
take $N_{\omega}$ to be an $\omega$-saturated elemetary extension
of $N$. Then $N_{\omega}^{\p},M^{\p}$ are $\pc$ models of $T^{\p}$,
and by irreducibility they continue into the same model $C\vDash T^{\p}$.
But now $M^{\p}\vDash Z_{\left|\psi-r\right|}$ thus $C\vDash Z_{\left|\psi-r\right|}$
thus since $N_{\omega}^{\p}$ is $\pc$ $N_{\omega}^{\p}\vDash Z_{\left|\psi-r\right|}$
that is $\left|\psi-r\right|^{N_{\omega}}=0$; and since $N_{\omega}$
is an elementary extension of $N$ we find $\left|\psi-r\right|^{N}=0$
as required.
\end{proof}
\begin{cor}
$\left(\cdot\right)^{\p}$ and $\utilde{\cdot}$ are natural bijections
between the class of $\omega_{1}$ positively saturated $\pc$ models
of $T^{\p}$ and the class if $\omega_{1}$-saturated models of $M$.
\end{cor}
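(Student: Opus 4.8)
The plan is to assemble this corollary from the two ingredients explicitly flagged at the end of the statement, namely \corref{pc-to-continuous} (the fact that $(\cdot)^{\p}$ and $\utilde{\cdot}$ are mutually inverse constructions on the relevant classes) and \claimref{cont-complete-metric} (the fact that an $\omega_{1}$-positively saturated $\pc$ model $E$ of $T^{\p}$ has complete underlying metric $(E,e_d)$), and then to check that the two saturation notions match up under the correspondence. First I would recall that by \corref{pc-to-continuous} the construction $\utilde{\cdot}$ sends a $\pc$ model $E$ of $T^{\p}$ to an $L$-prestructure $\utilde E$, and since every $L^{\p}$-homomorphism $f:E\to M^{\p}$ (which exists by \lemref{continuous-univ}) is an $L$-elementary embedding $\utilde E\to M$, the prestructure $\utilde E$ already satisfies $T$ (the point being that conditions, being of the form $\varphi=0$, are preserved by elementary embeddings). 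For this to be a genuine $L$-\emph{structure} rather than merely a prestructure we need $(E,e_d)$ to be a complete metric space — and this is exactly what \claimref{cont-complete-metric} supplies once $E$ is $\omega_{1}$-positively saturated.

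Next I would verify that the correspondence respects saturation in the degree we need. In one direction: if $M\vDash T$ is $\omega_{1}$-saturated, then it is in particular $\omega$-saturated, so by \propref{cont-sat-is=00003Dpc} $M^{\p}$ is a $\pc$ model of $T^{\p}$; I would then argue directly that $M^{\p}$ is $\omega_{1}$-positively saturated. The argument: a positive partial type over a countable $A\subseteq M^{\p}$ consists (after using \propref{cont-qe}, or just directly) of conditions $Z_{\varphi}(x,a)$, and finite approximate satisfiability in $M^{\p}$ translates, via the definition of $M^{\p}$ and \remref{logical-uniformly-continuous}, into finite $\varepsilon$-satisfiability of the corresponding continuous type $\{\varphi(x,a)=0\}$ in $M$; by $\omega_{1}$-saturation of $M$ this is realized. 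In the other direction: if $E$ is an $\omega_{1}$-positively saturated $\pc$ model of $T^{\p}$, then $\utilde E$ is a genuine $L$-structure by \claimref{cont-complete-metric}, it is a model of $T$ as noted, and I would show it is $\omega_{1}$-saturated by reversing the same translation — a continuous type over countable $A\subseteq\utilde E$ that is finitely $\varepsilon$-satisfiable gives rise, using \propref{evaluation-properties} (that $e_{(-)}$ respects connectors and that infima are attained) and the $\omega_{1}$-positive saturation of $E$, to a positive type over $A$ in $E$ whose realization realizes the original continuous type; here one uses crucially that the infima/suprema in \propref{evaluation-properties} are \emph{attained}, which is what lets an approximate continuous condition be packaged as an exact positive one.

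Finally I would invoke \corref{pc-to-continuous} to conclude that $(\cdot)^{\p}$ and $\utilde{\cdot}$ are inverse to one another: $\utilde{(M^{\p})}=M$ for $M\vDash T$ (this is the in-particular clause of \propref{L-structure-fo}'s continuous analogue, i.e.\ the last sentence of \corref{pc-to-continuous}), and $(\utilde E)^{\p}=E$ for $E$ a $\pc$ model of $T^{\p}$. Combined with the saturation-matching of the previous paragraph, this exhibits $(\cdot)^{\p}$ and $\utilde{\cdot}$ as mutually inverse bijections between the $\omega_{1}$-saturated models of $T$ and the $\omega_{1}$-positively saturated $\pc$ models of $T^{\p}$, and they are natural since both constructions are the identity on underlying sets and commute with the relevant morphisms by \propref{cont-L-tilde-and-L} and \corref{pc-to-continuous}.

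I expect the main obstacle to be the careful bookkeeping in the two saturation-translation arguments — specifically, making precise how a \emph{finitely $\varepsilon$-satisfiable} set of continuous conditions corresponds to a \emph{finitely satisfiable} positive type, and conversely, so that the $\omega_{1}$-degree of saturation transfers in both directions without loss. The subtlety is that a single continuous condition $\varphi=0$ is naturally captured not by one positive formula but by the family $\{Z_{\varphi\dotminus\frac1n}\}_{n<\omega}$ (or $\{Z_{\varphi=0}\}$ directly, using that the relevant infima are attained), so one must be slightly careful that passing between the two does not change the cardinality of the parameter set or the family of conditions in a way that would break the $\omega_{1}$-bound; the fact that $L$ and $L^{\p}$ have the same sorts and that each continuous formula yields exactly one predicate symbol keeps this under control.
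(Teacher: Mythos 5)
Your proposal follows essentially the same route as the paper's proof: reduce via \corref{pc-to-continuous} and \claimref{cont-complete-metric} to showing the two saturation notions correspond, use \propref{cont-sat-is=00003Dpc} to get that $M^{\p}$ is $\pc$, and translate types back and forth using the quantifier elimination of \propref{cont-qe} and the identity $(\utilde{E})^{\p}=E$. Your extra care about approximate versus exact finite satisfiability is a reasonable refinement of a point the paper's own argument passes over quickly, but it does not change the structure of the proof.
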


\begin{proof}
Assume $M\vDash T$ is $\omega_{1}$-saturated. Then in particular
$M$ is $\omega$-saturated thus $M^{\p}$ is a $\pc$ model of $T^{\p}$.

Furthermore assume $A\subseteq M$ is at most countable, and $\Sigma\left(x\right)$
is a partial positive type (in $L^{\p}$) over $A$ which is finitely
satisfiable in $M^{\p}$.

Then by \propref{cont-qe} every positive formula $\Phi$ in $\Sigma$
is equivalent in $M^{\p}$ to an atomic formula of the form $Z_{\varphi\left(\Phi\right)}\left(x\right)$.

Consider the set of conditions $\left\{ \varphi\left(\Phi\right)=0\mid\Phi\in\Sigma\right\} $.
By assumption it is finitely satisfiable in $M$, thus by assumption
there exists $a\in M$ such that 
\begin{align*}
\varphi\left(\Phi\right)^{M}\left(a\right) & =0\Rightarrow\\
a\in Z_{\varphi\left(\Phi\right)}^{M^{\p}} & \Rightarrow\\
a\in\Phi\left(M\right)
\end{align*}

for all $\Phi\in\Sigma$; thus $M^{\p}$ is $\omega_{1}$ positively
saturated. 

Conversely, assume $E$ is an $\omega_{1}$ positively saturated $\pc$
model of $T^{\p}$. Then $\utilde{E}$ is an $L$ structure and a
model of $T$ by \claimref{cont-complete-metric} and \corref{pc-to-continuous}.
Furthermore assume $A\subseteq E$ is at most countable and $\Sigma\left(x\right)$
is a set of conditions over $A$ which is finitely satisfiable in
$\utilde{E}$.

Then since (again by \corref{pc-to-continuous}) $\left(\utilde{E}\right)^{\p}=E$
we get that $\left\{ Z_{\varphi}\left(x\right)\mid\varphi\left(x\right)=0\in\Sigma\right\} $
is finitely satisfiable in $E$, that is exists $a\in E$ such that
$a\in Z_{\varphi}^{E}\Longleftrightarrow\varphi^{\utilde{E}}\left(a\right)=0$
for all $\varphi$ such that $\varphi\left(x\right)=0\in\Sigma$,
that is $\utilde{E}$ is $\omega_{1}$-saturated.
\end{proof}
\begin{cor}
Since every type (in either $L$ or $L^{\p}$) is realized in an $\omega_{1}$-saturated
extension, $S\left(M^{\p}\right)$ is equivalent to the space of continuous
types $S\left(M\right)$ (via $p\rightarrow\left\{ \varphi=0\mid Z_{\varphi}\in p\right\} $)
and the equivalence is also a homeomorphism (since it takes basic
closed sets to basic closed sets).

This means that $T^{\p}$ is Hausdorff, since continuous type spaces
are Hausdorff (see \cite[Definition 8.4, Lemma 8.5, Proposition 8.6]{mtfms}).
\end{cor}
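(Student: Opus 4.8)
The plan is to exhibit the map $\Phi\colon p\mapsto\left\{ \varphi=0\mid Z_{\varphi}\in p\right\} $ explicitly as a homeomorphism $S\left(M^{\p}\right)\to S\left(M\right)$, where throughout we take $M$ to be $\omega$-saturated so that $M^{\p}$ is a $\pc$ model of $T^{\p}$ (\propref{cont-sat-is=00003Dpc}) and $\utilde{\left(M^{\p}\right)}=M$ (\corref{pc-to-continuous}). First I would check that $\Phi$ is well defined: given $p\in S_{x}\left(M^{\p}\right)$, write $p=\tp^{\p}\left(a/M^{\p}\right)$ with $a$ in a $\pc$ model $E\geq M^{\p}$ of $T^{\p}$, which we may enlarge so that it is $\omega_{1}$-positively saturated. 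By the preceding corollary $\utilde{E}$ is then an $\omega_{1}$-saturated model of $T$, the inclusion $M^{\p}\hookrightarrow E$ becomes an $L$-elementary embedding $M\hookrightarrow\utilde{E}$ by \corref{pc-to-continuous}, and since $e_{\varphi}=\varphi^{\utilde{E}}$ holds there, $\Phi\left(p\right)=\left\{ \varphi\left(x,m\right)=0\mid\varphi^{\utilde{E}}\left(a,m\right)=0\right\} =\tp_{L}\left(a/M\right)$ really is a continuous type over $M$.

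For bijectivity: surjectivity is immediate, since every continuous type over $M$ is realized by some $c$ in an $\omega_{1}$-saturated elementary extension $N\succeq M$ (\factref{continuous-fact}), $N^{\p}$ is then an $\omega_{1}$-positively saturated $\pc$ model of $T^{\p}$ extending $M^{\p}$ (by \propref{cont-L-tilde-and-L} and the preceding corollary), and $\Phi\left(\tp^{\p}\left(c/M^{\p}\right)\right)$ is the given type. For injectivity, $\Phi\left(p\right)=\Phi\left(q\right)$ says precisely that $p$ and $q$ contain the same atomic formulas $Z_{\psi}\left(x,m\right)$; since by \propref{cont-qe} every positive $L^{\p}$-formula over $M^{\p}$ is equivalent, in every $\pc$ model of $T^{\p}$, to such an atomic formula, it follows that $p$ and $q$ --- both realized in $\pc$ models --- contain the same positive formulas and hence coincide.

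Finally I would treat the topology and deduce Hausdorffness. A base of closed sets for $S\left(M^{\p}\right)$ is given by the sets $\left\{ p\mid\Xi\in p\right\} $ for $\Xi$ a positive $L^{\p}$-formula over $M^{\p}$; by the quantifier elimination just invoked these are exactly the sets $\left\{ p\mid Z_{\psi}\left(x,m\right)\in p\right\} $ for $L$-formulas $\psi$, and $\Phi$ carries such a set bijectively onto $\left\{ t\in S\left(M\right)\mid\left(\psi=0\right)\in t\right\} $, a basic closed set of the continuous type space; since in both spaces the closed sets are precisely the intersections of these basic ones, $\Phi$ is a homeomorphism. As continuous type spaces are Hausdorff, so is $S\left(M^{\p}\right)$, and running the identical argument with $\emptyset$ in place of $M$ identifies the space of maximal positive $L^{\p}$-types over $\emptyset$ with the continuous type space $S_{n}\left(\emptyset\right)$, which is Hausdorff; by \defref{Hausdorff} this is exactly the assertion that $T^{\p}$ is Hausdorff. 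The conceptual content is entirely in the lemmas already proved, so there is no real obstacle; the only point requiring care is keeping the correspondence between extensions straight --- that an inclusion $M^{\p}\leq E$ of $\pc$ models of $T^{\p}$ induces an elementary extension $M\preceq\utilde{E}$ --- which \corref{pc-to-continuous} supplies directly, the rest being routine.
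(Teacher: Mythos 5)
Your proposal is correct and follows essentially the same route as the paper, which proves this corollary only by the inline appeals to the preceding corollary (the $\omega_{1}$-saturation correspondence), to realizability of types in saturated extensions, and to the fact that the map preserves basic closed sets; your write-up simply makes the well-definedness, bijectivity (via \propref{cont-qe}), and topological steps explicit. The one point you handle that the paper leaves tacit --- passing from the statement over $M$ to the statement over $\emptyset$ in order to invoke \defref{Hausdorff} --- is treated correctly.
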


\begin{thm}
If $T$ is a complete continuous theory, then $\Core\left(T^{\p}\right)$
and $\Core_{\pi}\left(T^{\p}\right)$ are both well defined, and furthermore
$\Core_{\pi}\left(T^{\p}\right)|_{\mathcal{L}}=\Core\left(T^{\p}\right)$
and every symbol in $\Core_{\pi}\left(T^{\p}\right)$ is $\emptyset$-type
definable in $\Core\left(T^{\p}\right)$ (in particular, $\Aut\left(\Core_{\pi}\left(T^{\p}\right)\right)=\Aut\left(\Core\left(T^{\p}\right)\right)$).
\end{thm}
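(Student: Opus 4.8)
The plan is to deduce the statement as a mechanical corollary of the general theory of \secref{Maximal-Positive-Patterns}, exactly as in the first-order case, once the two relevant structural properties of $T^{\p}$ are in place. First I would record that, since $T$ is complete, $T^{\p}$ is an irreducible $\pu$ theory — this is the proposition proved earlier in this subsection relating completeness of $T$ to irreducibility of $T^{\p}$. Second, I would invoke the corollary proved just above, which identifies $S\left(M^{\p}\right)$ (for $M\vDash T$) with the continuous type space $S\left(M\right)$ as a topological space, together with the classical fact that continuous type spaces are Hausdorff; this yields that $T^{\p}$ is Hausdorff, hence in particular semi-Hausdorff.

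With these two facts in hand, the rest is a direct appeal to the abstract results. By \corref{bounded-conditions}, $\mathcal{T}$ is bounded, so $\Core\left(T^{\p}\right)=\mathcal{J}$ is well defined; and since $T^{\p}$ is semi-Hausdorff, the same corollary gives that $\mathcal{T}_{\pi}$ is bounded as well, so $\Core_{\pi}\left(T^{\p}\right)=\mathcal{J}_{\pi}$ is well defined too. Then I would apply \thmref{J=00003DJpi-Hausdorff} with $T$ replaced by $T^{\p}$: fixing a $\pc$ model $M^{\p}$ of $T^{\p}$ (for instance the one attached to an $\omega$-saturated $M\vDash T$, or any one supplied by \factref{pc-universal}) and viewing $\mathcal{J}=\Core\left(T^{\p}\right)$ as an $\mathcal{L}$-substructure of $S\left(M^{\p}\right)$, the theorem tells us that $\mathcal{J}$ is actually an $\mathcal{L}_{\pi}$-substructure, that each $\pi_{x,x'}$ is $\mathcal{L}$-type definable over $\emptyset$ in $\mathcal{J}$, and that $\mathcal{J}$, so enriched, is universal and $\pc$ for $\mathcal{T}_{\pi}$ and hence equals $\Core_{\pi}\left(T^{\p}\right)$. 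Reading this off gives $\Core_{\pi}\left(T^{\p}\right)|_{\mathcal{L}}=\Core\left(T^{\p}\right)$ and that the only symbols of $\mathcal{L}_{\pi}$ outside $\mathcal{L}$, namely the projection functions $\pi_{x,x'}$, are $\emptyset$-type definable in $\Core\left(T^{\p}\right)$. For the parenthetical claim, I would note that any $\mathcal{L}$-automorphism of $\Core\left(T^{\p}\right)$ preserves every $\emptyset$-type definable relation, in particular the graphs of the $\pi_{x,x'}$, so it is an $\mathcal{L}_{\pi}$-automorphism, while the converse inclusion is immediate from $\mathcal{L}\subseteq\mathcal{L}_{\pi}$; hence $\Aut\left(\Core_{\pi}\left(T^{\p}\right)\right)=\Aut\left(\Core\left(T^{\p}\right)\right)$.

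I do not expect a genuine obstacle here: the entire argument is a citation of \corref{bounded-conditions} and \thmref{J=00003DJpi-Hausdorff}, and the only point that requires any thought is verifying that $T^{\p}$ satisfies the hypotheses of the latter — irreducibility and the Hausdorff property — both of which have already been established in this subsection via the completeness of $T$ and the homeomorphism $S\left(M^{\p}\right)\cong S\left(M\right)$. The statement is thus the continuous-logic mirror of the first-order theorem, with the same one-line proof.
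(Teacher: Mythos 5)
Your proposal is correct and is essentially identical to the paper's own proof, which simply cites \corref{bounded-conditions} and \thmref{J=00003DJpi-Hausdorff}; the preliminary facts you verify (irreducibility of $T^{\p}$ from completeness of $T$, and the Hausdorff property via the homeomorphism $S\left(M^{\p}\right)\cong S\left(M\right)$) are exactly the ones established in the propositions and corollary immediately preceding the theorem.
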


\begin{proof}
By \corref{bounded-conditions} and \thmref{J=00003DJpi-Hausdorff}.
\end{proof}
\begin{prop}
For every $\omega$-saturated model of $T$, the metric topology is
the same as the positive topology (see Definition \defref{p-topology})
on $M^{\p}$.
\end{prop}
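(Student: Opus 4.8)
The plan is to show that the positive topology and the metric topology on $M^{\p}$ have exactly the same closed sets, by proving the two inclusions separately. First I would record that, since $M$ is $\omega$-saturated, $M^{\p}$ is a $\pc$ model of $T^{\p}$ by \propref{cont-sat-is=00003Dpc}, so the positive topology of \defref{p-topology} is genuinely defined on $M^{\p}$; throughout I write $d=d^{M}$ for the metric, which lives on the common universe of $M$ and $M^{\p}$.

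For the inclusion ``closed in the positive topology $\Rightarrow$ closed in the metric'', I would take an arbitrary basic closed set $\varphi\left(M^{\p},a\right)$ of the positive topology, with $\varphi\left(x,y\right)$ a positive $L^{\p}$ formula and $a$ a parameter tuple from $M^{\p}$, and apply \propref{cont-qe} to replace $\varphi\left(x,a\right)$ by an equivalent atomic formula $Z_{\psi}\left(x,a\right)$ for a suitable $L$ formula $\psi\left(x,y\right)$ (this uses that $M^{\p}$ is $\pc$). Then $\varphi\left(M^{\p},a\right)=\left\{ c\mid\psi^{M}\left(c,a\right)=0\right\}$ is the zero set of the uniformly continuous function $\psi^{M}\left(\cdot,a\right)$ on the metric space $M^{x}$ (see \cite[Theorem 3.5]{mtfms}), hence metrically closed; arbitrary intersections of such sets are again metrically closed, and every closed set of the positive topology is such an intersection. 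I would stress that $\omega$-saturation is genuinely needed: in a non-$\pc$ model the existential quantifiers allowed in a positive formula can produce a non-closed projection (the defining infimum need not be attained), so one really must route through \propref{cont-qe}.

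For the reverse inclusion it suffices to exhibit each open metric ball as open in the positive topology, since such balls form a basis of the metric topology. Given $b\in M$ and $\varepsilon>0$, I would use the uniformly continuous connective $\rho\left(t\right)=\max\left\{ 1-t/\varepsilon,0\right\}$ on $\left[0,1\right]$ together with the $L$ formula $\psi\left(x,y\right):=\rho\left(d\left(x,y\right)\right)$; then $\psi^{M}\left(c,b\right)=0$ precisely when $d\left(c,b\right)\geq\varepsilon$, so $Z_{\psi}\left(M^{\p},b\right)=\left\{ c\mid d\left(c,b\right)\geq\varepsilon\right\}$ is closed in the positive topology and its complement is the open $\varepsilon$-ball about $b$. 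Combining the two inclusions shows that the two topologies have the same closed sets, which is the assertion. I do not expect a real obstruction here; the only delicate point is the one just flagged --- that the first inclusion must be obtained via quantifier elimination in the $\pc$ model $M^{\p}$, which is precisely where the $\omega$-saturation hypothesis enters.
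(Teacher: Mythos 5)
Your proof is correct and follows essentially the same route as the paper: one direction reduces a basic positively closed set to an atomic zero set via \propref{cont-qe} (which is where $\omega$-saturation enters, through \propref{cont-sat-is=00003Dpc}) and uses continuity of $\psi^{M}$, while the other realizes each ball complement $\left\{ c\mid d\left(c,b\right)\geq\varepsilon\right\}$ as the zero set of an $L$ formula. Your explicit connective $\rho\left(t\right)=\max\left\{ 1-t/\varepsilon,0\right\}$ is just an unwinding of the paper's shorthand $Z_{d\left(x,y\right)\geq\varepsilon}$ from \remref{logical-uniformly-continuous}.
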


\begin{proof}
A basic closed set in the metric topology is $B_{\varepsilon}\left(a\right)^{c}$,
that is $Z_{d\left(x,y\right)\geq\varepsilon}\left(M,a\right)$.

Conversely, when taking a basic closed set in the positive topology,
we may without loss of generality assume it is atomic by \propref{cont-qe}.

Therefore it is $Z_{\varphi}\left(M,\overline{b}\right)$ which is
equal as a set to $i^{-1}\left(\left(\varphi^{M}\right)^{-1}\left(\left\{ 0\right\} \right)\right)$
when $i\left(a\right)=\left(a,\overline{b}\right)\in M^{1+\left|\overline{b}\right|}$.

But of course both $i$ and $\varphi^{M}$ are continuous, thus $Z_{\varphi}\left(M,\overline{b}\right)$
is closed.
\end{proof}
\begin{cor}
$T^{\p}$ is bounded iff $T$ has a compact $\omega$-saturated model.
\end{cor}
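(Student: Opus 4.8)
The final statement to prove is:

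\begin{quote}
$T^{\p}$ is bounded iff $T$ has a compact $\omega$-saturated model.
\end{quote}

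The plan is to translate "boundedness of $T^{\p}$" into a statement about $\pc$ models of $T^{\p}$ and then use the dictionary already established between $\pc$ models of $T^{\p}$ and structures coming from $T$. Recall that by \defref{bounded}, $T^{\p}$ is bounded iff there is a cardinal $\kappa$ bounding the cardinality of all $\pc$ models of $T^{\p}$. The key tool on the "if" direction is \corref{bounded-conditions}-style reasoning via \lemref{hom-to-compact}: if $T$ has a compact $\omega$-saturated model $M$, then every relation $Z_{\varphi}^{M^{\p}} = (\varphi^{M})^{-1}(\{0\})$ is closed in the product topology on $M^{n}$ (being the preimage of the closed set $\{0\}$ under the continuous function $\varphi^{M}$), and the graph of each $F^{M}$ is closed as well since $F^{M}$ is continuous. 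By \propref{cont-sat-is=00003Dpc}, $M^{\p}$ is a $\pc$ model of $T^{\p}$, and then \lemref{hom-to-compact} (or rather its corollary) shows that every $\pc$ model of $T^{\p}$ embeds into $M^{\p}$, hence has cardinality at most $|M|$. So $T^{\p}$ is bounded.

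For the converse, suppose $T^{\p}$ is bounded. By \factref{continuous-fact}.2, $T$ has an $\omega$-saturated model $M$; enlarging it if necessary, take $M$ to be $\omega_1$-saturated (or even more saturated). By \propref{cont-sat-is=00003Dpc}, $M^{\p}$ is a $\pc$ model of $T^{\p}$, but being $\pc$ and $T^{\p}$ bounded it must continue into — in fact be isomorphic to a substructure of — a $\pc$ model of bounded size; more directly, I would take $M$ to be sufficiently saturated so that $M^{\p}$ is the universal $\pc$ model $U$ of $T^{\p}$ (using boundedness, \propref{univ-ec}), or at least a $\pc$ model of size $\le \kappa_0$. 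Then via \corref{pc-to-continuous} and \claimref{cont-complete-metric}, $\utilde{(M^{\p})}$ recovers $M$ (as $\left(\cdot\right)^{\p}$ and $\utilde{\cdot}$ are inverses), so $M$ itself has cardinality $\le \kappa_0$. The point now is that a bounded $\omega$-saturated continuous structure must be compact in its metric topology: by the previous proposition the metric topology on $M$ coincides with the positive topology on $M^{\p}$, and the universal model of a bounded $\hu$ theory is compact in the positive topology (the corollary after \propref{univ-ec}, since compactness there is just $|U|^{+}$-saturation). Thus $M$ is a compact $\omega$-saturated model of $T$.

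The main obstacle I anticipate is making the converse direction fully rigorous: one needs to ensure that the $\omega$-saturated model whose metric completion we are examining is genuinely (isomorphic to) the universal $\pc$ model $U$ of $T^{\p}$, so that the compactness of $U$ in the positive topology transfers. This requires observing that, given boundedness, one can choose a $\kappa_0^{+}$-saturated continuous model $M$ with $M^{\p}$ a $\pc$ model, which by \propref{univ-ec}.4 must be $U$; and then noting $|U|\le\kappa_0 < \kappa_0^{+}$ forces $M$ (via $\utilde{\cdot}$) to be exactly that bounded-size structure, with its metric topology identified with the positive topology on $U$. The rest — that compactness in the positive topology plus $\omega$-saturation gives a compact $\omega$-saturated continuous model — is then routine from the identification of topologies already proved.
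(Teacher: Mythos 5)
Your proof is correct and follows essentially the same route as the paper: the direction from a compact $\omega$-saturated model uses \lemref{hom-to-compact} to embed every $\pc$ model of $T^{\p}$ into $M^{\p}$, and the converse takes the universal model $U$ of the bounded theory $T^{\p}$, identifies $\utilde{U}$ as an $\omega$-saturated model of $T$, and transfers compactness from the positive topology to the metric topology via the preceding proposition. The extra care you take in the converse about realizing $U$ as $M^{\p}$ for a sufficiently saturated $M$ is fine but not needed beyond what the paper does directly with $U$.
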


\begin{proof}
If $T^{\p}$ is bounded, the universal model $U$ is saturated for
every $\kappa$ thus $\utilde{U}$ is a $\omega$-saturated model
of $T$ which is compact in the positive topology thus in the metric
topology.

Conversely, if $T$ has a compact model $M$, $M^{\p}$ is a compact
(in the metric topology) $\pc$ model of $T^{\p}$ (in particular
$T^{\p}=\Th^{\pu}\left(M^{\p}\right)$), every relation in $L^{\p}$
is closed in the respective power of $M^{\p}$ (being $\left(\varphi^{M}\right)^{-1}\left(\left\{ 0\right\} \right)$
for some continuous $\varphi$).

Therefore by \lemref{hom-to-compact} every $\pc$ model of $T^{\p}$
embeds into $M^{\p}$ and thus $T^{\p}$ is $\left|M^{\p}\right|$
bounded.
\end{proof}
\begin{rem}
In continuous logic, maybe more often than in first order logic, one
has to consider type-definable sets or function rather than merely
definable ones (since a limit is generally speaking only type definable).
For this reason it is probably more appropriate to consider for continuous
logic the type-core $\Core^{\tp}\left(T^{\p}\right)$ as defined in
\subsecref{Type-Definability-Patterns}.
\end{rem}

\subsection{\label{subsec:Positive-Types}Positive Types}

Here we wish to construct, given an $\hu$ theory, a new theory in
which every (maximal) positive type over $\emptyset$ is isolated.
\begin{defn}
Let $T$ and $\hu$ theory in a language $L$. Denote by $L^{\tp}$
the language 
\[
\left\{ P_{\Sigma}\left(x\right)\mid x\text{ a variable tuple},\Sigma\left(x\right)\text{ a positive partial type, consistent with }T\right\} 
\]

where we take $L^{\tp}$ to be an extension of $L$ (that is we assume
$P_{\Sigma}=\Sigma$ is $\Sigma$ is actually a formula).

Let $T^{\tp}$ be the set of $\hu$ implications of 
\[
\overline{T}:=T\cup\left\{ \forall x:P_{\Sigma}\left(x\right)\rightarrow\psi\left(x\right)\mid\psi\in\Sigma\right\} .
\]

Note that since $T\subseteq\overline{T}$, we also have $T\subseteq T^{\tp}$.

By \cite[Section 3.1]{positiveJonsson}, $\overline{T}$ and $T^{\tp}$
have the same $\pc$ models. Note that every model $M$ of $T$ can
be extended to a model $M^{\tp}$ of $\overline{T}$ (thus of $T^{\tp}$)
by setting $P_{\Sigma}^{M}=\Sigma\left(M\right)$. 
\end{defn}

\begin{lem}
\label{Lemma:Ltp-hom}If $M\vDash\overline{T}$, every $L$ homomorphism
from $M$ to a model $N$ of $T$ is also an $L^{\tp}$ homomorphism
to $N^{\tp}$.
\end{lem}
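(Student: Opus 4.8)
\textbf{Proof plan for \Lemmaref{Ltp-hom}.}

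The statement to prove is: if $M \vDash \overline{T}$ and $h : M \to N$ is an $L$-homomorphism with $N \vDash T$, then $h : M^{?} \to N^{\tp}$ is an $L^{\tp}$-homomorphism, where I read $M^{?}$ as $M$ itself regarded as an $L^{\tp}$-structure in the natural way (since $M \vDash \overline{T}$, the symbols $P_\Sigma$ already have interpretations in $M$, so $M$ is already an $L^{\tp}$-structure; the point is only about $N$, where we pass to $N^{\tp}$ defined by $P_\Sigma^{N^{\tp}} = \Sigma(N)$). Since an $L^{\tp}$-homomorphism must preserve all atomic $L^{\tp}$-formulas, and the atomic $L^{\tp}$-formulas are exactly the atomic $L$-formulas together with the new relation symbols $P_\Sigma$, the first thing I would observe is that $h$ already preserves all atomic $L$-formulas (it is an $L$-homomorphism), so the only content is: for every positive partial type $\Sigma(x)$ consistent with $T$ and every tuple $a \in M^x$, if $a \in P_\Sigma^{M}$ then $h(a) \in P_\Sigma^{N^{\tp}} = \Sigma(N)$.

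So the core step is the following: fix such a $\Sigma$ and suppose $a \in P_\Sigma^{M}$. Because $M \vDash \overline{T}$, in particular $M$ satisfies $\forall x\, (P_\Sigma(x) \to \psi(x))$ for every $\psi \in \Sigma$, hence $M \vDash \psi(a)$ for all $\psi \in \Sigma$, i.e. $a \in \Sigma(M)$. Now each $\psi \in \Sigma$ is a positive $L$-formula, and positive formulas are pushed forward along $L$-homomorphisms, so $N \vDash \psi(h(a))$ for every $\psi \in \Sigma$; that is exactly $h(a) \in \Sigma(N) = P_\Sigma^{N^{\tp}}$. This establishes preservation of the new atomic relations, and combined with the preservation of atomic $L$-formulas we conclude $h$ is an $L^{\tp}$-homomorphism from $M$ to $N^{\tp}$.

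There is essentially no obstacle here; the lemma is a direct unwinding of the definitions of $\overline{T}$, of $N^{\tp}$, and of the fact that positive formulas (and in particular positive partial types) are preserved by homomorphisms. The only point requiring a word of care is making explicit that the $L^{\tp}$-structure on $M$ referred to in the statement is the one coming from $M \vDash \overline{T}$ (not from an artificial $(\cdot)^{\tp}$ construction applied to an $L$-reduct), so that there is nothing to check on the $M$-side beyond what the hypothesis $M \vDash \overline{T}$ already gives; once that is noted, the argument is the two-line computation above. I would present it in exactly that order: (1) reduce to checking the $P_\Sigma$ relations; (2) use $M \vDash \overline{T}$ to get $a \in \Sigma(M)$; (3) push forward along $h$ to get $h(a) \in \Sigma(N)$.
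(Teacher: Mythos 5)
Your proposal is correct and follows essentially the same route as the paper's proof: use $M\vDash\overline{T}$ to pass from $a\in P_{\Sigma}^{M}$ to $a\in\psi(M)$ for each $\psi\in\Sigma$, then push each positive $\psi$ forward along $h$ to conclude $h(a)\in\Sigma(N)=P_{\Sigma}^{N^{\tp}}$. The extra care you take in identifying the $L^{\tp}$-structure on $M$ is harmless but not needed beyond what the paper already assumes.
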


\begin{proof}
If $a\in P_{\Sigma}^{M}$, by assumption for any $\psi\in\Sigma$
we have $a\in\psi\left(M\right)$ thus $h\left(M\right)\in\psi\left(N\right)$.
We conclude that $N\vDash\Sigma\left(h\left(a\right)\right)$ thus
$h\left(a\right)\in P_{\Sigma}^{N^{\tp}}$. Thus $h:M\rightarrow N^{\tp}$
is an $L^{\tp}$ homomorphism, as required.
\end{proof}
\begin{cor}
\label{cor:Mtp-same-Aut}$\Aut\left(M\right)=\Aut\left(M^{\tp}\right)$
for any $M\vDash T$.
\end{cor}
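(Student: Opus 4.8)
The plan is to prove the two inclusions $\Aut(M) \subseteq \Aut(M^{\tp})$ and $\Aut(M^{\tp}) \subseteq \Aut(M)$ separately. For the first inclusion, let $\sigma \in \Aut(M)$. Since $\sigma$ is in particular an $L$-isomorphism from $M$ to $M$, and $M \vDash \overline{T}$, I would apply \Lemmaref{Ltp-hom} to $\sigma$: it is an $L^{\tp}$-homomorphism from $M$ to $M^{\tp}$. Applying the same lemma to $\sigma^{-1}$ gives that $\sigma^{-1}$ is also an $L^{\tp}$-homomorphism. A bijective homomorphism whose inverse is also a homomorphism is an isomorphism, so $\sigma \in \Aut(M^{\tp})$. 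Here I should be a little careful: \Lemmaref{Ltp-hom} is stated for homomorphisms into $N^{\tp}$ for $N \vDash T$; taking $N = M$ (which satisfies $T$ since $M \vDash \overline{T} \supseteq T$) gives exactly what is needed, and $M^{\tp}$ as defined by $P_\Sigma^{M} = \Sigma(M)$ is the structure we want.

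For the reverse inclusion, let $\sigma \in \Aut(M^{\tp})$. Since $L \subseteq L^{\tp}$, the reduct of $\sigma$ to $L$ is an $L$-automorphism of $M^{\tp}|_L = M$, so $\sigma \in \Aut(M)$ immediately. This direction is essentially trivial — it is just the observation that an $L^{\tp}$-automorphism preserves all $L^{\tp}$-structure, in particular the $L$-structure, and is a bijection of the common underlying set.

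Combining the two inclusions gives $\Aut(M) = \Aut(M^{\tp})$ as sets, and since the group operation (composition) is the same in both cases, they are equal as groups.

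The main (and only real) obstacle is making sure the hypotheses of \Lemmaref{Ltp-hom} genuinely apply, i.e.\ that $M \vDash \overline{T}$ — but this is exactly the setup in which $M^{\tp}$ is defined, since $M^{\tp}$ is by construction the extension of a model $M \vDash T$ to $\overline{T}$ via $P_\Sigma^{M} = \Sigma(M)$; strictly, the statement should be read as asserting this for $M^{\tp}$ where $M \vDash T$, so $M^{\tp} \vDash \overline{T}$ holds by construction. Everything else is formal. Thus the proof is genuinely just an application of \Lemmaref{Ltp-hom} in both directions plus the trivial reduct observation.

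\begin{proof}
For $\supseteq$: any $\sigma\in\Aut(M^{\tp})$ restricts to an $L$-automorphism of the $L$-reduct $(M^{\tp})|_{L}=M$, so $\sigma\in\Aut(M)$.

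For $\subseteq$: let $\sigma\in\Aut(M)$. Since $M^{\tp}\vDash\overline{T}$ (by construction of $M^{\tp}$) and $\sigma\colon M\to M$ is an $L$-isomorphism, \Lemmaref{Ltp-hom} applied with $N=M$ shows that $\sigma\colon M^{\tp}\to M^{\tp}$ is an $L^{\tp}$-homomorphism; applying the lemma to $\sigma^{-1}$ likewise gives that $\sigma^{-1}$ is an $L^{\tp}$-homomorphism. A bijection which is an $L^{\tp}$-homomorphism with an $L^{\tp}$-homomorphism inverse is an $L^{\tp}$-automorphism, so $\sigma\in\Aut(M^{\tp})$.

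Hence $\Aut(M)=\Aut(M^{\tp})$ as sets, and since composition is computed identically in both, as groups.
\end{proof}
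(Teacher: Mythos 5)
Your proof is correct and follows essentially the same route as the paper: the inclusion $\Aut(M^{\tp})\subseteq\Aut(M)$ is the trivial reduct observation, and the reverse inclusion applies \Lemmaref{Ltp-hom} to $\sigma$ and $\sigma^{-1}$ viewed as $L$-homomorphisms from $M^{\tp}\vDash\overline{T}$ into $M\vDash T$. No gaps.
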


\begin{proof}
$\Aut\left(M^{\tp}\right)\subseteq\Aut\left(M\right)$ is clear, since
$L\subseteq L^{\tp}$. On the other hand if $\sigma\in\Aut\left(M\right)$,
then $\sigma,\sigma^{-1}:M^{\tp}\rightarrow M$ are $L$-homomorphisms
from $M^{\tp}\vDash\overline{T}$ to $M\vDash T$, and thus by \Lemmaref{Ltp-hom}
they are also $L^{\tp}$ homomorphisms from $M^{\tp}$ to itself,
and they are still inverses.
\end{proof}
\begin{lem}
\label{lem:Ttp-model-is-T-model}If $M$ is a $\pc$ model of $T^{\tp}$
then $M|_{L}$ is a $\pc$ model of $T$, and further $M=\left(M|_{L}\right)^{\tp}$.
\end{lem}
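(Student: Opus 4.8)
\textbf{Proof plan for \lemref{Ttp-model-is-T-model}.}

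The plan is to prove the two assertions in turn. For the first, that $M|_{L}$ is a $\pc$ model of $T$: I would invoke \claimref{easier-e.c.} together with the fact (from \cite[Section 3.1]{positiveJonsson}, already cited in \subsecref{Positive-Types}) that $T^{\tp}$ and $\overline{T}$ have the same $\pc$ models, so in particular $M\vDash\overline{T}$; since $T\subseteq\overline{T}$ we get $M|_L\vDash T$. To see that $M|_L$ is $\pc$ as a model of $T$, by \claimref{easier-e.c.} it suffices to show that any $L$-homomorphism $h:M|_L\rightarrow N$ with $N\vDash T$ is $\pc$. Here the key point is \Lemmaref{Ltp-hom}: since $M\vDash\overline{T}$, such an $h$ is also an $L^{\tp}$-homomorphism $M\rightarrow N^{\tp}$; and $N^{\tp}\vDash\overline{T}$, hence $N^{\tp}\vDash T^{\tp}$. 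Because $M$ is a $\pc$ model of $T^{\tp}$, the homomorphism $h:M\rightarrow N^{\tp}$ is $\pc$ with respect to $L^{\tp}$, i.e. it reflects all $L^{\tp}$ $\pp$-formulas. In particular it reflects all $L$ $\pp$-formulas, which is exactly what it means for $h:M|_L\rightarrow N$ to be $\pc$. (One should note the class of $\pc$ models of $T$ is a universal class by \factref{pc-universal}, as required to apply \claimref{easier-e.c.}.)

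For the second assertion, $M=\left(M|_L\right)^{\tp}$: the two structures have the same universe and agree on all symbols of $L$ by construction, so only the interpretation of the predicates $P_\Sigma$ for $\Sigma$ not already a formula needs checking; that is, I must show $P_\Sigma^{M}=\Sigma(M|_L)$. One inclusion is immediate: since $M\vDash\overline{T}$, the axiom $\forall x:P_\Sigma(x)\rightarrow\psi(x)$ holds for every $\psi\in\Sigma$, so $P_\Sigma^{M}\subseteq\Sigma(M|_L)$. For the reverse inclusion, take $a\in\Sigma(M|_L)$, i.e. $M\vDash\psi(a)$ for all $\psi\in\Sigma$, and I need $a\in P_\Sigma^{M}$. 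The idea is to use that $M$ is $\pc$: fix a homomorphism $h:M\rightarrow N$ with $N\vDash T^{\tp}$ (take $N$ to be a $\pc$ continuation of $M$, or even $M$ itself). Then $N\vDash\overline{T}$, and in the structure $(M|_L)$ — more precisely, chasing through an appropriate model of $T$ into whose $\tp$-expansion we can realize the type — the element $a$ realizes $\Sigma$, hence lands in the set named by $P_\Sigma$ in some model of $\overline{T}$; pulling this back via $\pc$-ness of $M$ gives $a\in P_\Sigma^{M}$. Concretely: since $\Sigma$ is a partial positive type consistent with $T$ and $M|_L\vDash\Sigma(a)$, take any $\pc$ continuation $h:M\rightarrow N'$ of $M|_L$ as an $L$-structure; then $N'\vDash\psi(h(a))$ for all $\psi\in\Sigma$, so in $(N')^{\tp}$ we have $h(a)\in P_\Sigma^{(N')^{\tp}}$, i.e. $(N')^{\tp}\vDash P_\Sigma(h(a))$; since $P_\Sigma(x)$ is atomic (hence $\pp$) in $L^{\tp}$ and $M\rightarrow(N')^{\tp}$ is a homomorphism of models of $T^{\tp}$ from the $\pc$ model $M$, this pulls back to $M\vDash P_\Sigma(a)$, as desired.

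The main obstacle I anticipate is bookkeeping the two languages carefully in the second part: one must make sure the homomorphism $M\rightarrow(N')^{\tp}$ really is an $L^{\tp}$-homomorphism (this is where \Lemmaref{Ltp-hom} is used, applied to $M\vDash\overline{T}$) so that $\pc$-ness of $M$ as a $T^{\tp}$-model can be brought to bear on the atomic formula $P_\Sigma$. Once that is set up, both inclusions are short. Everything else is routine unwinding of definitions, using \factref{pc-universal}, \claimref{easier-e.c.}, \Lemmaref{Ltp-hom}, and the cited equivalence of $\pc$ models of $\overline{T}$ and $T^{\tp}$.
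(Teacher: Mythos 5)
Your proof is correct and follows essentially the same route as the paper: both assertions reduce to \Lemmaref{Ltp-hom}, which upgrades $L$-homomorphisms out of $M\vDash\overline{T}$ into models of $T$ to $L^{\tp}$-homomorphisms into their $\tp$-expansions, after which $\pc$-ness of $M$ as a model of $T^{\tp}$ does the work. The paper dispatches the second assertion in one line by observing that the identity $M\to\left(M|_{L}\right)^{\tp}$ is an $L^{\tp}$-homomorphism, hence an immersion, hence an equality of structures; your two-inclusion verification of $P_{\Sigma}^{M}=\Sigma\left(M|_{L}\right)$ is that same fact unpacked elementwise (and your detour through a $\pc$ continuation $N'$ is unnecessary --- $N'=M|_{L}$ with $h=\id$ already suffices).
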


\begin{proof}
Note that as we remarked, $M$ is model of $\overline{T}$, and also
in particular a model of $T$. Assume $N$ is an $L$-structure and
$N\vDash T$, and assume $h:M\rightarrow N$ is an $L$ homomorphism.
By the remark $h$ is also an $L^{\tp}$ homomorphism thus by assumption
on $M$ an $L^{\tp}$ immersion --- and therefore $M$ is a $\pc$
model of $T$.

Further since $Id_{M}:M\rightarrow M|_{L}$ is an $L$ homomorphism,
it is also an $L^{\tp}$ homomorphism from $M$ to $\left(M|_{L}\right)^{\tp}$.
Thus again by assumption it is an $L^{\tp}$ immersion, in particular
an $L^{\tp}$ embedding, that is $M=M|_{L}^{\tp}$ as required.
\end{proof}
\begin{cor}
Assume $M$ is a $\pc$ model of $T^{\tp}$ and $a,b$ are tuples
in $M$ of the same sort. Then they have the same positive $L^{\tp}$
type (over the empty set) iff they have the same positive $L$ type
(over the empty set).

Thus if $T$ is Hausdorff, semi-Hausdorff or thick, so is $T^{\tp}$.
\end{cor}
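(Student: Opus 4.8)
The plan is to prove the corollary in two parts. First I would establish the type-equality claim: for a $\pc$ model $M$ of $T^{\tp}$ and tuples $a,b$ in $M$ of the same sort, $\tp^{\p}_{L^{\tp}}(a/\emptyset)=\tp^{\p}_{L^{\tp}}(b/\emptyset)$ iff $\tp^{\p}_{L}(a/\emptyset)=\tp^{\p}_{L}(b/\emptyset)$. One direction is immediate since $L\subseteq L^{\tp}$, so equality of $L^{\tp}$ positive types forces equality of $L$ positive types. For the converse, suppose $\tp^{\p}_{L}(a/\emptyset)=\tp^{\p}_{L}(b/\emptyset)$; I want to show every $L^{\tp}$ positive formula (in particular every atomic $P_{\Sigma}$) in $a$ is also satisfied by $b$. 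The key point is \lemref{Ttp-model-is-T-model}: $M=(M|_{L})^{\tp}$, so $P_{\Sigma}^{M}=\Sigma(M|_{L})$, the set of tuples in $M$ realizing the positive $L$-type $\Sigma$. Hence $a\in P_{\Sigma}^{M}$ means $M|_{L}\vDash\Sigma(a)$, i.e.\ $\Sigma\subseteq\tp^{\p}_{L}(a/\emptyset)=\tp^{\p}_{L}(b/\emptyset)$, so $M|_{L}\vDash\Sigma(b)$, i.e.\ $b\in P_{\Sigma}^{M}$. More general positive $L^{\tp}$-formulas are positive Boolean combinations (with existential quantifiers) of atomic $L^{\tp}$-formulas, and since positive formulas are preserved and type-equality is symmetric, an induction on formula complexity — or simply the observation that $\tp^{\p}_{L^{\tp}}(a/\emptyset)$ is generated as a positive type by its atomic part together with $T^{\tp}$ — finishes this direction.

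Next I would deduce the preservation of the Hausdorff, semi-Hausdorff, and thick properties. For each of these properties the witnessing data (the separating pair of positive formulas for Hausdorff, the partial positive type defining type-equality for semi-Hausdorff, the partial positive type characterizing indiscernibility for thick) is formulated over $\pc$ models. Since the $\pc$ models of $T^{\tp}$ are exactly the structures $M$ with $M|_{L}$ a $\pc$ model of $T$ and $M=(M|_{L})^{\tp}$ (by \lemref{Ttp-model-is-T-model} and the remark that $\overline{T}$ and $T^{\tp}$ share $\pc$ models), and since by the first part positive $L^{\tp}$-type equality coincides with positive $L$-type equality on such models, any witness for $T$ is automatically a witness for $T^{\tp}$: the $L$-formulas and $L$-types used are also $L^{\tp}$-formulas and $L^{\tp}$-types, they define the same sets, and the relevant equivalences transfer verbatim. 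Concretely, for semi-Hausdorff the $L$-type $p(x,x')$ defining $L$-type equality still defines $L^{\tp}$-type equality in every $\pc$ model of $T^{\tp}$; for thick, the $L$-type characterizing $L$-indiscernibility also characterizes $L^{\tp}$-indiscernibility because $L^{\tp}$-type equality of finite tuples reduces to $L$-type equality; for Hausdorff, two distinct maximal positive $L^{\tp}$-types over $\emptyset$ restrict to distinct maximal positive $L$-types, which are separated by the given pair of $L$-formulas, and that same pair separates the $L^{\tp}$-types.

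The main obstacle is making the first part fully rigorous for arbitrary positive $L^{\tp}$-formulas rather than just atomic ones — i.e.\ verifying that equality of the atomic components of the positive types, together with membership in $T^{\tp}$, really does pin down the full positive type. This is where I would lean on the general fact (used repeatedly in the paper, e.g.\ via \factref{maximal-pp-in-pc} and \propref{max-iff-pc}) that in a $\pc$ model a maximal positive type is determined by which positive formulas it contains, combined with the fact that every positive $L^{\tp}$-formula is a disjunction of $\pp$ formulas and every $\pp$ formula is an existentially quantified conjunction of atomic $L^{\tp}$-formulas; a routine induction on the quantifier structure, using that homomorphic images preserve positive formulas and that $a,b$ have a common positive $L$-type hence a common atomic $L^{\tp}$-type, then closes the argument. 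Everything else is bookkeeping about which language the witnessing formulas live in.
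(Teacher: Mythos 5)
Your overall strategy is the right one and the atomic case is handled correctly: by \lemref{Ttp-model-is-T-model} we have $M=\left(M|_{L}\right)^{\tp}$, so $P_{\Sigma}^{M}=\Sigma\left(M|_{L}\right)$ and membership of a tuple in $P_{\Sigma}^{M}$ is determined by its positive $L$-type over $\emptyset$. The second half (transferring Hausdorff, semi-Hausdorff and thickness) is also fine once the first half is in place, since $\pc$ models of $T^{\tp}$ restrict to $\pc$ models of $T$ and the witnessing $L$-data works verbatim in $L^{\tp}$.

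The gap is in exactly the step you flag as the main obstacle, and the tools you name do not close it. The nontrivial positive $L^{\tp}$-formulas are the $\pp$ ones $\exists y\bigwedge_{i<k}P_{\Sigma_{i}}\left(x,y\right)$, and ``homomorphic images preserve positive formulas'' is of no use here: there is no homomorphism carrying $a$ to $b$, so a witness $c$ with $\left(a,c\right)\vDash\bigcup_{i}\Sigma_{i}$ does not directly yield a witness for $b$, and it is false in a general structure that such an existential is determined by the positive $L$-type of the free tuple. What actually makes the step work is: (i) $M\vDash\exists y\bigwedge_{i}P_{\Sigma_{i}}\left(a,y\right)$ implies that $\bigcup_{i}\Sigma_{i}\left(b,y\right)$ is finitely satisfiable in $M|_{L}$, because each finite $\Gamma\subseteq\bigcup_{i}\Sigma_{i}$ gives the positive $L$-formula $\exists y\bigwedge\Gamma\left(x,y\right)$, which lies in $\tp^{\p}_{L}\left(a/\emptyset\right)=\tp^{\p}_{L}\left(b/\emptyset\right)$; and (ii) one must then convert finite satisfiability back into an actual witness for $b$ inside $M$. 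For (ii) you need either the positive $\aleph_{0}$-saturation of $M|_{L}$ --- which is precisely the content of \thmref{Ttp-T-model-correspondence}, stated only after this corollary --- or the following direct argument: realize $\bigcup_{i}\Sigma_{i}\left(b,y\right)$ by some $c$ in an extension $N\vDash T$ of $M|_{L}$ (using the remark after \propref{max-iff-pc} that finite satisfiability in a $\pc$ model equals consistency with $T\cup\Delta^{\atom}$), note by \Lemmaref{Ltp-hom} that the inclusion $M=\left(M|_{L}\right)^{\tp}\rightarrow N^{\tp}$ is an $L^{\tp}$-homomorphism into a model of $T^{\tp}$, and pull the existential back into $M$ using that $M$ is a $\pc$ model of $T^{\tp}$. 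Without one of these two ingredients your induction does not go through; with either of them the rest of your argument is correct.
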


\begin{thm}
\label{thm:Ttp-T-model-correspondence}The class of $\pc$ models
of $T^{\tp}$ is exactly the class of models of the form $M^{\tp}$
for $M$ an positively $\aleph_{0}$-saturated $\pc$ model of $T$.
\end{thm}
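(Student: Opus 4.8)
The plan is to prove the two inclusions separately. For the easy direction, suppose $M$ is a positively $\aleph_0$-saturated $\pc$ model of $T$; I want to show $M^{\tp}$ is a $\pc$ model of $T^{\tp}$. First I would note $M^{\tp}\vDash\overline{T}$ by construction, hence $M^{\tp}\vDash T^{\tp}$. By \claimref{easier-e.c.} (using that the class of $\pc$ models of $T^{\tp}$, equivalently of $\overline{T}$, is universal by \factref{pc-universal}) it suffices to check that every $L^{\tp}$-homomorphism $h\colon M^{\tp}\to N$ with $N$ a $\pc$ model of $T^{\tp}$ is an immersion. Restricting to $L$, $h$ is an $L$-homomorphism between $\pc$ models of $T$ (using \Lemmaref{lem:Ttp-model-is-T-model} for $N$), hence an $L$-immersion. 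The point is then to upgrade this to an $L^{\tp}$-immersion: given a $\pp$ $L^{\tp}$-formula $\exists\overline{y}\,\theta(\overline{x},\overline{y})$ where $\theta$ is a conjunction of $L^{\tp}$-atoms $P_{\Sigma_k}(\cdots)$ and $L$-atoms, if $N\vDash\exists\overline{y}\,\theta(h(\overline{a}),\overline{y})$ I pick a witness $\overline{b}\in N^{\overline y}$; since $N\cong(N|_L)^{\tp}$, each conjunct $P_{\Sigma_k}(c)$ translates into the positive partial $L$-type $\Sigma_k(c)$ holding in $N|_L$. So I get that the partial positive $L$-type $\Sigma_0(\overline x,\overline y)$ obtained by expanding every $P_{\Sigma_k}$ and throwing in $\theta$'s $L$-atoms is realized by $h(\overline a)^\frown\overline b$ in $N|_L$. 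Now for each finite subtype this can be pulled back along the $L$-immersion $h$ to give a finitely satisfiable (over $\overline a$) positive $L$-type in $\overline y$ over $M$; by positive $\aleph_0$-saturation of $M$ it is realized by some $\overline b'\in M^{\overline y}$, and then $M^{\tp}\vDash\theta(\overline a,\overline b')$ since the $P_{\Sigma_k}$-conjuncts hold because $\Sigma_k(\cdot)$ holds in $M$. Hence $M^{\tp}\vDash\exists\overline y\,\theta(\overline a,\overline y)$, as needed. (A small care point: $\aleph_0$-saturation only handles finitely many parameters and a type in finitely many variables, which is exactly the situation here since $\overline a,\overline b$ are finite tuples.)

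For the converse, suppose $E$ is a $\pc$ model of $T^{\tp}$. By \Lemmaref{lem:Ttp-model-is-T-model}, $M:=E|_L$ is a $\pc$ model of $T$ and $E=M^{\tp}$. It remains to show $M$ is positively $\aleph_0$-saturated. So let $A\subseteq M$ be finite and $\Sigma(x)$ a positive $L$-type over $A$ in a variable tuple $x$ that is finitely satisfiable in $M$; write $A=\{a_1,\dots,a_n\}$ and let $\Sigma_0(x,y_1,\dots,y_n)$ be the parameter-free positive $L$-type obtained by replacing each $a_i$ by $y_i$. Finite satisfiability of $\Sigma$ in $M$ over $A$ says: for every finite $\Sigma_0'\subseteq\Sigma_0$, $M\vDash\exists x\,\bigwedge\Sigma_0'(x,\overline a)$, so $M\vDash\exists x\,P_{\Sigma_0^{(k)}}(\ldots)$-type statements — more precisely $(\overline a)$ lies in the set defined over $\emptyset$ in $M^{\tp}=E$ by the single $L^{\tp}$-atom $P_{\exists x\,\Sigma_0}(y_1,\dots,y_n)$ whenever $\exists x\,\Sigma_0$ (a positive partial type in $\overline y$, i.e. a conjunction of positive formulas $\exists x\bigwedge\Sigma_0'$) is consistent with $T$. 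The cleanest route: the positive partial $L$-type $\Theta(\overline y):=\{\exists x\,\bigwedge\Sigma_0'\mid \Sigma_0'\subseteq\Sigma_0\text{ finite}\}$ is consistent with $T$ (it is realized by $\overline a$ in $M\vDash T$), so $P_\Theta$ is a symbol of $L^{\tp}$, and by definition of $M^{\tp}$ we have $\overline a\in P_\Theta^{E}$. Now consider the partial positive $L^{\tp}$-type over $\emptyset$ in variables $x^\frown\overline y$ given by $\{P_\Theta(\overline y)\}\cup\Sigma_0(x,\overline y)$: it is finitely satisfiable in $E$ (any finite piece needs only a witness for finitely many conjuncts of $\Sigma_0$ together with membership in $P_\Theta$, and such a witness exists in $E$ since $E=M^{\tp}$ and $M$ satisfies the corresponding existential). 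Hmm — but $E$ being merely $\pc$ is not automatically saturated, so I cannot invoke saturation of $E$ either.

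So the real argument for the converse must go differently: I use that $M$ is $\pc$ and the \emph{definition} of $P_\Theta$. Concretely: since $\Theta(\overline y)$ is realized by $\overline a$ in $M$, and $M$ is a $\pc$ model of $T$, I want $\exists x\,\Sigma_0(x,\overline a)$ realized \emph{in} $M$. Consider the positive $L$-type $\Sigma_0(x,\overline a)$ over $A$; by finite satisfiability in $M$ and $M$ being $\pc$, it is consistent with $\Delta_M^{\atom}\cup T$, hence (by \propref{max-iff-pc} / \factref{pc-universal}) realized by some $c$ in a $\pc$ extension $M'\geq M$ of $T$. Then $M'^{\tp}$ is a model of $T^{\tp}$ extending $E=M^{\tp}$ as an $L^{\tp}$-structure (using \Lemmaref{Lemma:Ltp-hom} that the inclusion $M^{\tp}\to M'$ is an $L^{\tp}$-homomorphism), so $M'^{\tp}\vDash\exists x\,\bigwedge\Sigma_0'(x,\overline a)$ for every finite $\Sigma_0'$, i.e. $M^{\tp}=E$ immerses into $M'^{\tp}$ and each such existential $\pp$ $L^{\tp}$-sentence with parameters $\overline a$ is pulled back to $E$; hence $E\vDash\exists x\,\bigwedge\Sigma_0'(x,\overline a)$, i.e. $M\vDash\exists x\,\bigwedge\Sigma_0'(x,\overline a)$. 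That only re-derives finite satisfiability. To get the \emph{full} type realized in $M$ I instead argue: the single $L^{\tp}$-formula $P_{\Sigma_0}(x,\overline y)$ (where $\Sigma_0(x,\overline y)$ as a positive partial type is consistent with $T$) has the property that $M'^{\tp}\vDash P_{\Sigma_0}(c,\overline a)$, hence $\exists x\,P_{\Sigma_0}(x,\overline a)$ is a $\pp$ $L^{\tp}$-statement true in $M'^{\tp}$, hence pulled back to $E$ since $E$ immerses in $M'^{\tp}$; so $E\vDash\exists x\,P_{\Sigma_0}(x,\overline a)$, giving $c'\in E$ with $P_{\Sigma_0}(c',\overline a)$, which by the axioms of $\overline T$ (which $E$ satisfies) forces $c'\vDash\Sigma_0(x,\overline a)$ in $M$. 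This realizes $\Sigma$ in $M$.

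The main obstacle, and where I would spend the most care, is precisely this last maneuver in the converse: one must encode the \emph{whole} positive type $\Sigma$ into a single $L^{\tp}$-relation $P_{\Sigma_0}$, realize the $\pp$ $L^{\tp}$-sentence $\exists x\,P_{\Sigma_0}(x,\overline a)$ in a suitable $\pc$ extension $M'^{\tp}$ of $E$, and pull it back along the immersion $E\hookrightarrow M'^{\tp}$ — this is what converts ``finitely satisfiable'' into ``realized'' without needing $E$ itself to be saturated, and it is the crux of why saturation passes between $M$ and $M^{\tp}$ in exactly this asymmetric way. Everything else (the forward direction, and the identification $E=(E|_L)^{\tp}$) is routine given \Lemmaref{lem:Ttp-model-is-T-model}, \Lemmaref{Lemma:Ltp-hom}, \claimref{easier-e.c.}, \factref{pc-universal}, and \propref{max-iff-pc}.
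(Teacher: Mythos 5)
Your final argument is correct and is essentially the paper's own proof in both directions: the forward direction realizes the union of the $\Sigma_i$'s in $M$ via $\aleph_0$-saturation after pulling back finite fragments along the $L$-immersion, and the converse encodes the whole type as a single symbol $P_{\Sigma}$, realizes $\exists x\,P_{\Sigma}(x,a)$ in a suitable continuation, and pulls it back along the $L^{\tp}$-immersion. Your explicit reduction to $\pc$ targets via \claimref{easier-e.c.} (needed so that the target satisfies $\overline{T}$ and not merely its $\hu$ consequences) is in fact slightly more careful than the paper's write-up, and the abandoned first attempt at the converse does not affect the validity of the argument you settle on.
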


\begin{proof}
Let $M$ a $\pc$ model of $T^{\tp}$, which is equal to $\left(M|_{L}\right)^{\tp}$.
We need to show $M|_{L}$ is positively $\aleph_{0}$-saturated. Assume
$a\in M^{y}$ for $y$ a finite tuple and $\Sigma\left(x,y\right)$
is a positive partial $L$ type such that $\Sigma\left(x,a\right)$
is finitely satisfiable in $M$.

Then there is some homomorphism $h:M\rightarrow N\vDash T$ and $b\in N^{x}$
such that $N\vDash\Sigma\left(b,h\left(a\right)\right)\Rightarrow N^{\tp}\vDash P_{\Sigma}\left(b,h\left(a\right)\right)\Rightarrow N^{\tp}\vDash\exists xP_{\Sigma}\left(x,h\left(a\right)\right)$
and since by \Lemmaref{Ltp-hom} $h$ is also an $L^{\tp}$ homomorphism
to a model of $T^{\tp}$ thus an $L^{\tp}$ immersion, $M\vDash\exists xP_{\Sigma}\left(x,a\right)$
as required.

Conversely, assume that $M$ is an positively $\aleph_{0}$-saturated
$\pc$ model of $L$. Let $h:M^{\tp}\rightarrow N\vDash T^{\tp}$
an $L^{\tp}$ homomorphism, and assume that for some $y$ tuple $a$
in $M$ and some $\Sigma_{0}\left(x,y\right),\dots,\Sigma_{k-1}\left(x,y\right)$
(we may assume $\Sigma_{i}$ are all in the same variable tuples)
we have $N\vDash\exists x\stackrel[i<k]{}{\bigwedge}P_{\Sigma_{i}}\left(x,h\left(a\right)\right)$.
Let $b\in N^{x}$ such that $N\vDash\bigwedge_{i<k}P_{\Sigma_{i}}\left(b,h\left(a\right)\right)$.
Then we also have for $\Sigma=\bigcup_{i<k}\Sigma_{i}$ that $N\vDash\Sigma\left(b,h\left(a\right)\right)$.
Thus for any finite $\Gamma\subseteq\Sigma$ we have $N\vDash\exists x:\Gamma\left(x,h\left(a\right)\right)$
and since $N\vDash T$, $h$ is an $L$-immersion by assumption on
$M$ and thus $M\vDash\exists x:\Gamma\left(x,a\right)$. Thus $\Sigma\left(x,a\right)$
is finitely satisfiable in $M$ thus by saturation it is satisfiable
in $M$. Let $c\in M^{x}$ such that $M\vDash\Sigma\left(b,a\right)$
and we find $M^{\tp}\vDash\bigwedge_{i<k}P_{\Sigma_{i}}\left(b,a\right)$
thus $M^{\tp}\vDash\exists x\bigwedge_{i<k}P_{\Sigma_{i}}\left(x,a\right)$,
as required.
\end{proof}
\begin{cor}
\label{cor:Ttp-irreducible}$T$ is irreducible iff $T^{\tp}$ is.
\end{cor}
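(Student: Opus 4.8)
The plan is to use the correspondence from \thmref{Ttp-T-model-correspondence} together with the characterization of irreducibility in \propref{irreducible}. Recall that \propref{irreducible} gives three equivalent conditions, of which (3) (the Joint Continuation Property) is the most convenient to transfer, since JCP is phrased purely in terms of the existence of continuations of pairs of models. I would prove both directions by manipulating JCP.

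For the direction $T^{\tp}$ irreducible $\Rightarrow$ $T$ irreducible: suppose $M_{0},M_{1}\vDash T$. By \factref{pc-universal} (and the existence of positively saturated $\pc$ models, \propref{univ-ec}) each $M_i$ continues into a positively $\aleph_0$-saturated $\pc$ model $E_i$ of $T$; by \thmref{Ttp-T-model-correspondence}, $E_i^{\tp}$ is a $\pc$ model of $T^{\tp}$. Applying JCP for $T^{\tp}$ to $E_0^{\tp}, E_1^{\tp}$ yields $N\vDash T^{\tp}$ into which both continue via $L^{\tp}$-homomorphisms; restricting everything to $L$, $N|_L\vDash T$ and both $M_0$ and $M_1$ continue into $N|_L$. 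So $T$ has JCP, hence is irreducible.

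For the converse, $T$ irreducible $\Rightarrow$ $T^{\tp}$ irreducible: take $N_0, N_1\vDash T^{\tp}$. As remarked in the excerpt, each $N_i$ is a model of $\overline T$, in particular of $T$, so $N_i|_L\vDash T$. By JCP for $T$ there is $K\vDash T$ into which $N_0|_L$ and $N_1|_L$ continue via $L$-homomorphisms $h_i:N_i|_L\to K$. Now I would form $K^{\tp}$ (a model of $\overline T$, hence of $T^{\tp}$), and invoke \Lemmaref{Ltp-hom}: since each $N_i$ is a model of $\overline T$ and $h_i$ is an $L$-homomorphism from $N_i$ into $K\vDash T$, it is automatically an $L^{\tp}$-homomorphism $N_i\to K^{\tp}$. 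Thus both $N_0$ and $N_1$ continue into $K^{\tp}\vDash T^{\tp}$, giving JCP for $T^{\tp}$.

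The main obstacle — really the only subtle point — is making sure the $L$-homomorphisms produced by JCP for $T$ genuinely lift to $L^{\tp}$-homomorphisms in the converse direction; this is exactly what \Lemmaref{Ltp-hom} is for, and the key hypothesis it needs is that the \emph{source} is a model of $\overline T$ (which $N_i$ is, being a model of $T^{\tp}$) and the target is a model of $T$ with the $P_\Sigma$'s interpreted as $\Sigma(K)$ (which is how $K^{\tp}$ is defined). One should also double-check in the forward direction that restricting an $L^{\tp}$-homomorphism to $L$ gives an $L$-homomorphism, which is immediate since $L\subseteq L^{\tp}$, and that $N|_L\vDash T$, which holds because $T\subseteq T^{\tp}$. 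Everything else is routine.
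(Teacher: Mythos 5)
Your overall strategy is the same as the paper's: transfer JCP (condition (3) of \propref{irreducible}) back and forth through the $\left(\cdot\right)^{\tp}$ construction, using \Lemmaref{Ltp-hom} to upgrade $L$-homomorphisms to $\mathcal{L}^{\tp}$... rather, $L^{\tp}$-homomorphisms. The direction ``$T^{\tp}$ irreducible $\Rightarrow$ $T$ irreducible'' is correct, though you take a small detour: there is no need to pass to saturated $\pc$ models $E_i$, since $M_i^{\tp}\vDash T^{\tp}$ already holds for arbitrary $M_i\vDash T$ (that is how $M^{\tp}$ is defined), and restricting the resulting $L^{\tp}$-homomorphisms to $L$ finishes the argument exactly as you say.

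There is, however, a genuine gap in the converse direction. You assert that ``each $N_i$ is a model of $\overline T$'' for arbitrary $N_0,N_1\vDash T^{\tp}$, citing the remark in the text. That remark only covers $\pc$ models of $T^{\tp}$: by construction $T^{\tp}$ is merely the set of $\hu$ consequences of $\overline T$, and the axioms $\forall x\left(P_{\Sigma}\left(x\right)\rightarrow\psi\left(x\right)\right)$ are not $\hu$ (their matrix $\neg P_{\Sigma}\wedge\neg\neg\psi$ is not the negation of a positive formula), so a general model of $T^{\tp}$ may interpret $P_{\Sigma}$ strictly larger than $\Sigma\left(N_i\right)$ and fail to be a model of $\overline T$. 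Since \Lemmaref{Ltp-hom} needs its source to satisfy $\overline T$, your application of it is not justified as written. The fix is exactly the extra step the paper takes: first continue $N_0,N_1$ into $\pc$ models of $T^{\tp}$ using \factref{pc-universal}; these \emph{are} models of $\overline T$ (by \lemref{Ttp-model-is-T-model} and the cited fact that $\overline T$ and $T^{\tp}$ have the same $\pc$ models), and then your argument with JCP for $T$ and \Lemmaref{Ltp-hom} goes through, since continuations compose.
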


\begin{proof}
Assume $T$ is irreducible and $M_{0},M_{1}\vDash T^{\tp}$. Then
they can be continued into $\pc$ models $N_{0},N_{1}$ of $T^{\tp}$
respectively. Since $N_{0},N_{1}\vDash T$ and $T$ is irreducible,
there exists a model $N_{2}\vDash T$ and $L$ homomorphisms $h_{i}:N_{i}\rightarrow N_{2}$
(for $i\in\left\{ 0,1\right\} $), which by  are also $L^{\tp}$ homomorphisms
into $N_{2}^{\tp}\vDash T^{\tp}$ as required.

Conversely if $T^{\tp}$ is irreducible and $M_{0},M_{1}\vDash T$
then $M_{0}^{\tp},M_{1}^{\tp}\vDash T^{\tp}$ thus there exist a model
$N\vDash T^{\tp}$ and $h_{i}:M_{i}\rightarrow N$ which are $L^{\tp}$
homomorphisms thus $L$ homomorphisms.
\end{proof}
\begin{cor}
\label{cor:Ttp-T-type-corresponence}The function $p\mapsto p|_{L}$
is a homeomorphism from $S\left(M\right)_{L^{\tp}}$ to $S\left(M\right)_{L}$.
\end{cor}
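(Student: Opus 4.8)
The plan is to show in turn that $p\mapsto p|_{L}$ is well defined, a bijection, continuous, and closed. I write $M$ for the given $\pc$ model of $T^{\tp}$ and also for $M|_{L}$; by \lemref{Ttp-model-is-T-model} we have $M=\left(M|_{L}\right)^{\tp}$, so $P_{\Sigma}^{M}=\Sigma\left(M|_{L}\right)$ for each $\Sigma$, and by \thmref{Ttp-T-model-correspondence} $M|_{L}$ is a positively $\aleph_{0}$-saturated $\pc$ model of $T$ --- and the same holds for every $\pc$ model of $T^{\tp}$ extending $M$. \emph{Well-definedness}: if $p=\tp_{L^{\tp}}^{\p}\left(a/M\right)$ with $a$ in a $\pc$ model $K$ of $T^{\tp}$, $M\leq K$, then $K|_{L}$ is a $\pc$ model of $T$ extending $M|_{L}$, so $p|_{L}=\tp_{L}^{\p}\left(a/M\right)\in S\left(M\right)_{L}$. \emph{Surjectivity}: given $q\in S\left(M\right)_{L}$, realize it by \propref{max-iff-pc} by a tuple $a$ in a $\pc$ model of $T$ extending $M|_{L}$, then by \propref{univ-ec} in a positively $\aleph_{0}$-saturated such model $N$; by \thmref{Ttp-T-model-correspondence} $N^{\tp}$ is a $\pc$ model of $T^{\tp}$ extending $M$, and $\tp_{L^{\tp}}^{\p}\left(a/M\right)\in S\left(M\right)_{L^{\tp}}$ restricts to $q$.

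\emph{Injectivity}: suppose $p|_{L}=q|_{L}$. By the corollary to \propref{amalgamation} realize both $p$ and $q$ by tuples $a,a'$ in a single $\pc$ model $K$ of $T^{\tp}$ extending $M$, and let $\bar{m}$ be an enumeration of $M$. Then $\tp_{L}^{\p}\left(a\bar{m}/\emptyset\right)=\tp_{L}^{\p}\left(a'\bar{m}/\emptyset\right)$, as both record $p|_{L}=q|_{L}$; applying the corollary following \lemref{Ttp-model-is-T-model} in $K$ to the (possibly infinite) tuples $a\bar{m}$ and $a'\bar{m}$ gives $\tp_{L^{\tp}}^{\p}\left(a\bar{m}/\emptyset\right)=\tp_{L^{\tp}}^{\p}\left(a'\bar{m}/\emptyset\right)$, hence $p=q$. (Alternatively, argue directly: reduce to primitive positive $L^{\tp}$-formulas, replace each $P_{\Sigma}$-conjunct using $P_{\Sigma}^{K}=\Sigma\left(K|_{L}\right)$, and use the positive $\aleph_{0}$-saturation of $K|_{L}$ with \factref{maximal-pp-in-pc}.)

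\emph{Topology}: continuity is immediate, since for a positive $L$-formula $\theta$ over $M$ the preimage of $\left\{ q\in S\left(M\right)_{L}\mid\theta\in q\right\}$ is $\left\{ p\in S\left(M\right)_{L^{\tp}}\mid\theta\in p\right\}$, a sub-basic closed set. The substantive step is that the map is closed, for which I would prove a quantifier-elimination statement for $T^{\tp}$: for every positive $L^{\tp}$-formula $\chi\left(x,m\right)$ over $M$ there are positive partial $L$-types $\Sigma_{\chi,1},\dots,\Sigma_{\chi,l}$ over $M$ such that, for every $p\in S\left(M\right)_{L^{\tp}}$, $\chi\in p$ iff $\Sigma_{\chi,j}\subseteq p|_{L}$ for some $j$. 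To get this, write $\chi=\bigvee_{j}\exists\bar{z}\bigwedge_{i}\alpha_{ij}\left(x,\bar{z},m\right)$ with each $\alpha_{ij}$ an $L^{\tp}$-atomic formula; gather, for each $j$, the $L$-atomic conjuncts and the $L$-types defining the $P_{\Sigma}$-conjuncts (using $P_{\Sigma}^{K}=\Sigma\left(K|_{L}\right)$ in a $\pc$ model $K$ realizing $p$) into an $L$-type $\Gamma_{j}$, and put $\Sigma_{\chi,j}=\left\{ \exists\bar{z}\,\Gamma_{j}^{0}\left(x,\bar{z},m\right)\mid\Gamma_{j}^{0}\subseteq\Gamma_{j}\text{ finite}\right\}$; the non-trivial implication $\Sigma_{\chi,j}\subseteq p|_{L}\Rightarrow\exists\bar{z}\bigwedge_{i}\alpha_{ij}\in p$ uses the positive $\aleph_{0}$-saturation of $K|_{L}$ to upgrade finite satisfiability of $\Gamma_{j}$ to a realization. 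Granting this, the image of $\left\{ p\mid\chi\in p\right\}$ under $p\mapsto p|_{L}$ is $\bigcup_{j}\bigcap_{\psi\in\Sigma_{\chi,j}}\left\{ q\in S\left(M\right)_{L}\mid\psi\in q\right\}$, a finite union of closed sets, hence closed; so $p\mapsto p|_{L}$ is a homeomorphism. This quantifier-elimination step is the one part that is not routine bookkeeping, and I expect it to be the main obstacle.
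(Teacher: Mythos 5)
Your proposal is correct and follows essentially the same route as the paper: bijectivity via the correspondence between positive $L$-types and positive $L^{\tp}$-types of tuples in $\pc$ models of $T^{\tp}$, continuity for free, and closedness by showing that the image of a sub-basic closed set $\left\{ p\mid\chi\in p\right\}$ (for $\chi$ a positive $L^{\tp}$-formula over $M$) is an intersection (here, a finite union of intersections) of basic closed $L$-sets given by the finite approximations of the types $\Sigma$ appearing in $\chi$, with positive $\aleph_{0}$-saturation of $\pc$ models of $T^{\tp}$ supplying the key implication from finite satisfiability back to realization. The ``quantifier-elimination'' step you flag as the main obstacle is exactly the computation the paper carries out, so your sketch of it is adequate as written.
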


\begin{proof}
This mapping is a bijection since $\tp^{\p}\left(a/M\right)_{L^{\tp}}=\tp^{\p}\left(a/M\right)_{L}$
iff for any tuple $b$ in $M$ we have $\tp^{\p}\left(a,b/\emptyset\right)_{L^{\tp}}=\tp^{\p}\left(a,b/\emptyset\right)_{L}$
(when $a$ is without loss of generality a tuple in some positively
$\aleph_{0}$-saturated $\pc$ model of $M$). 

The mapping in clearly continuous, since every basic closed set in
the image is also a basic closed set in the domain. On the other hand,
assume $\left\{ p\mid\exists y\stackrel[i<k]{}{\bigwedge}P_{\Sigma_{i}}\left(x,y,a\right)\in p\right\} $
is a subbasic closed set in the domain, then its image is equal to
the closed 
\[
\bigcap\left\{ \left\{ p\mid\exists y\bigwedge\Gamma\left(x,y,a\right)\in p\right\} \mid\Gamma\subseteq\bigcup_{i<k}\Sigma_{i}\text{ finite}\right\} .
\]
Indeed assume $p$ is a maximal positive $L^{\tp}$ type, and let
$b$ realizing $p$ in some positively $\aleph_{0}$-saturated $\pc$
extension $N$ of $M$. Then by saturation
\begin{align*}
\exists y & \stackrel[i<k]{}{\bigwedge}P_{\Sigma_{i}}\left(x,y,a\right)\in p\Longleftrightarrow N\vDash\exists y\stackrel[i<k]{}{\bigwedge}P_{\Sigma_{i}}\left(b,y,a\right)\Longleftrightarrow\\
\exists c\in & N\text{ s.t. }N\vDash P_{\Sigma_{i}}\left(b,c,a\right)\Longleftrightarrow\exists c\in N\text{ s.t. }b,c,a\vDash\bigcup_{i<k}\Sigma_{i}\Longleftrightarrow\\
 & \forall\Gamma\subseteq\bigcup_{i<k}\Sigma_{i}\text{ finite}:N\vDash\exists y\bigwedge\Gamma\left(b,y,a\right).
\end{align*}
\end{proof}
\bibliographystyle{alpha}
\phantomsection\addcontentsline{toc}{section}{\refname}\bibliography{0C__Users_Ori_Segel_Dropbox_Math_Thesis__maybe_refrences}

\end{document}